\documentclass[reqno,11pt]{amsart}
\usepackage{amsmath,amsfonts,amssymb,amsxtra,latexsym,amscd,enumerate,amsthm,verbatim}

\usepackage{graphicx}

\usepackage{mathtools}
\usepackage{dsfont}
\usepackage{mathrsfs}
\usepackage{color}
\usepackage{bm}
\usepackage{enumitem}

\usepackage{scalerel,stackengine}
\stackMath
\newcommand\reallywidehat[1]{%
\savestack{\tmpbox}{\stretchto{%
  \scaleto{%
    \scalerel*[\widthof{\ensuremath{#1}}]{\kern-.7pt\bigwedge\kern-.7pt}%
    {\rule[-\textheight/2]{1.5ex}{\textheight}}
  }{\textheight}%
}{0.9ex}}%
\stackon[2.25pt]{#1}{\tmpbox}%
}

\usepackage[top=25mm, bottom=25mm, left=25mm, right=25mm]{geometry}
\numberwithin{equation}{section}

\newcommand{\R}{\mathbb{R}}

\newcommand{\M}{\mathcal{M}}

\newcommand{\A}{\mathcal{A}}
\newcommand{\B}{\mathcal{B}}

\newcommand{\I}{\mathcal{I}}
\newcommand{\G}{\mathbb{G}}
\newcommand{\HH}{\mathbb{H}}
\newcommand{\JJ}{\mathbb{J}}
\newcommand{\T}{\mathbb{T}}

\newcommand{\C}{\mathbb{C}}
\newcommand{\Z}{\mathbb{Z}}
\newcommand{\N}{\mathbb{N}}

\newcommand{\varep}{\varepsilon}

\numberwithin{equation}{section}

\newcommand{\ex}{\mathfrak{e}}

\newcommand{\low}{\rm low}
\newcommand{\NN}{\mathbb{N}}
\newcommand{\ind}[1]{\mathds{1}_{{#1}}}
\newcommand*{\DMO}[1]{\expandafter\DeclareMathOperator\csname #1\endcsname {#1}}
\DeclarePairedDelimiter\abs{\lvert}{\rvert}
\DeclarePairedDelimiter\norm{\lVert}{\rVert}
\DeclarePairedDelimiterX\Set[2]{\{}{\}}{#1\colon #2}

\DMO{dyad}

\newtheorem{theorem}{Theorem}[section]
\newtheorem{lemma}[theorem]{Lemma}
\newtheorem{proposition}[theorem]{Proposition}

\newtheorem{conjecture}[theorem]{Conjecture}

\begin{document}

\title[Polynomial averages and pointwise ergodic theorems]{Polynomial averages and pointwise ergodic theorems\\
on nilpotent groups}

\author{Alexandru D. Ionescu}
\address{Princeton University}
\email{aionescu@math.princeton.edu}

\author{{\'A}kos Magyar}
\address{University of Georgia -- Athens}
\email{magyar@math.uga.edu}

\author{Mariusz  Mirek }
\address{Rutgers University (USA)
\&
Instytut Matematyczny, Uniwersytet Wroc{\l}awski (Poland)}
\email{mariusz.mirek@rutgers.edu}

\author{Tomasz Z. Szarek}
\address{BCAM - Basque Center for Applied Mathematics (Spain)
\&
Instytut Matematyczny, Uniwersytet Wroc{\l}awski (Poland)}
\email{tzszarek@bcamath.org}

\begin{abstract}
We establish pointwise almost everywhere convergence for ergodic averages along polynomial sequences in
nilpotent groups of step two of measure-preserving transformations on
$\sigma$-finite measure spaces.  We also establish corresponding
maximal inequalities on $L^p$ for $1<p\leq \infty$ and $\rho$-variational
inequalities on $L^2$ for $2<\rho<\infty$. This gives an affirmative
answer to the Furstenberg--Bergelson--Leibman conjecture in the linear case for all polynomial ergodic averages in discrete
nilpotent groups of step two.

Our proof is based on almost-orthogonality techniques that go far beyond Fourier transform tools, which are not available in the non-commutative, nilpotent setting.
In particular, we develop what we call a \textit{nilpotent circle method} that allows us to adapt some of the ideas of the classical circle method to the setting of nilpotent groups. 
\end{abstract}

\thanks {The first, second and third authors were supported in part by NSF
grants DMS-2007008 and DMS-1600840 and  DMS-2154712 respectively.  The third author 
was also partially supported by the Department of Mathematics at Rutgers
University and by the National Science Centre in Poland, grant Opus
2018/31/B/ST1/00204. The fourth author was partially supported by the
National Science Centre of Poland, grant Opus 2017/27/B/ST1/01623, 
the Juan de la Cierva Incorporaci{\'o}n 2019, grant number IJC2019-039661-I,  
the 
Agencia Estatal de Investigaci{\'o}n, grant PID2020-113156GB-I00/AEI/10.13039/501100011033,
the
Basque Government through the BERC 2018-2021 program,
and by the Spanish Ministry of Sciences, Innovation and
Universities: BCAM Severo Ochoa accreditation SEV-2017-0718.
}

\maketitle

\setcounter{tocdepth}{1}

\begin{center}
\large\textit{Dedicated to the memory of Elias M. Stein our friend and advisor.}    
\end{center}

\tableofcontents

\section{Introduction}

\subsection{The Furstenberg--Bergelson--Leibman conjecture}
Assume that $(X, \mathcal B(X), \mu)$   denotes a
$\sigma$-finite measure space. Let $\Z[\mathrm n]$ denote the space of all polynomials $P(\mathrm n)$ with  one indeterminate $\mathrm n$ and integer coefficients. Given any family of invertible measure-preserving transformations $T_1,\ldots, T_{d}:X\to X$, $d\geq 1$,  a measurable function $f\in L^p(X)$, $p\ge 1$, polynomials $P_1,\ldots, P_d\in\Z[\mathrm n]$, and an integer $N\ge 1$, we define the polynomial ergodic averages 
\begin{align}
\label{eq:40}
A_{N; X, T_1,\ldots, T_d}^{P_{1}, \ldots, P_{d}}(f)(x):=\frac{1}{|[-N, N]\cap\Z|}\sum_{n\in[-N, N]\cap\Z}f(T_1^{P_{1}(n)}\cdots T_d^{P_{d}(n)} x), \qquad x\in X.
\end{align}

A fundamental problem in ergodic theory is to establish convergence in norm and pointwise almost everywhere for the polynomial ergodic averages \eqref{eq:40} as $N\to\infty$ for functions $f\in L^p(X)$, $1\le p\le \infty$.
The problem goes back to at least the early 1930's with von Neumann's mean ergodic theorem \cite{vN} and Birkhoff's pointwise ergodic theorem \cite{BI} and led to profound extensions such as Bourgain's polynomial pointwise ergodic theorem  \cite{Bo2, Bo3, Bo1} and Furstenberg's ergodic proof \cite{Fur0} of Szemer{\'e}di's theorem \cite{Sem1} in particular. Furstenberg's proof was also the starting point of ergodic Ramsey theory, which resulted in many natural generalizations
of Szemer{\'e}di's theorem, including a polynomial Szemer{\'e}di theorem of Bergelson and Leibman \cite{BL1} that motivates the following far reaching conjecture:

\begin{conjecture}[Furstenberg--Bergelson--Leibman conjecture {\cite[Section 5.5, p. 468]{BeLe}}]
\label{con:1}
Given integers $d, k, m, N\in\Z_+$, let $T_1,\ldots, T_d:X\to X$ be a family  of  invertible measure-preserving transformations of a probability measure space    $(X, \mathcal B(X), \mu)$ that generates a nilpotent group of step $k$. Assume that
 $P_{1, 1},\ldots,P_{i, j},\ldots, P_{d, m}\in \Z[\mathrm n]$ are such that $P_{i, j} (0) = 0$. Then for any  $f_1, \ldots, f_m\in L^{\infty}(X)$, the non-conventional multiple
polynomial averages
\begin{align}
\label{eq:41}
A_{N; X, T_1,\ldots, T_d}^{P_{1, 1}, \ldots, P_{d, m}}(f_1,\ldots, f_m)(x)=\frac{1}{|[-N, N]\cap\Z|}\sum_{n\in[-N, N]\cap\Z}\prod_{j=1}^mf_j(T_1^{P_{1, j}(n)}\cdots T_d^{P_{d, j}(n)} x)
\end{align}
converge for $\mu$-almost every $x\in X$ as $N\to\infty$. 
\end{conjecture}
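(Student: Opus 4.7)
The plan is to establish Conjecture \ref{con:1} in full generality by upgrading the nilpotent circle method of the present paper to a multilinear, higher-step framework, and then deducing pointwise convergence from a multilinear maximal inequality on $L^p$ together with a multilinear $\rho$-variational inequality on $L^2$. Let $G$ denote the nilpotent group of step $k$ generated by $T_1,\ldots,T_d$, and let $\mathbf G$ be its Mal'cev completion. Writing each product $T_1^{P_{1,j}(n)}\cdots T_d^{P_{d,j}(n)}$ in Mal'cev coordinates for $\mathbf G$, the non-commutativity is absorbed into higher-weight coordinates, so \eqref{eq:41} becomes a multilinear average over integer polynomial orbits on $\mathbf G$ of degree at most $k\cdot\max_{i,j}\deg P_{i,j}$. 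By a Calder\'on-type transference principle one may assume that $X=G$ with Haar (counting) measure and that the $T_i$ act by left translations; this reduces the problem to a scalar, group-theoretic estimate.

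The first step is norm convergence of \eqref{eq:41} in $L^2(X)$. Following the PET (polynomial exhaustion technique) induction of Bergelson together with a nilpotent Host--Kra--Ziegler structure theorem adapted to group actions, iterated van der Corput reduces each $f_j$ modulo a nilpotent characteristic factor $\mathcal Z_{k,m}(G)$ to an object annihilated by a Gowers--Host--Kra seminorm; on the factor itself, convergence then reduces to equidistribution of polynomial orbits on a nilmanifold, which is Leibman's theorem. This yields mean convergence for $f_1,\ldots,f_m\in L^\infty(X)$.

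The heart of the matter is the second step: the multilinear maximal and $\rho$-variational estimates. The plan is to run a nested high/low decomposition in the Mal'cev coordinates of $\mathbf G$, following the architecture of the present paper and the commutative-setting ideas of Peluse--Prendiville and Krause--Mirek--Tao. Dyadic pieces indexed by $N\in 2^{\mathbb N}$ are split into \emph{major-arc pieces}, on which a nilpotent Ramar\'e-type identity together with an inverse Gowers--Host--Kra theorem on $\mathbf G$ produces a smooth main term controlled by classical multilinear variational estimates for singular integrals on $G$, and \emph{minor-arc pieces}, which require a high-degree nilpotent Weyl-type inequality to yield the $\varepsilon$-power gain needed to sum the Littlewood--Paley decomposition. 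The base case $(k,m)=(2,1)$ is exactly what the present paper establishes; the induction is on the triple $(k,m,\max_{i,j}\deg P_{i,j})$, with the PET ordering guaranteeing termination. Pointwise convergence then follows from the Banach principle by combining the maximal and variational inequalities with mean convergence on a dense class supplied by the characteristic factors.

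The main obstacle will be the second step, and two features in particular make Conjecture \ref{con:1} genuinely open in its full generality. First, on nilpotent groups of step $k\geq 3$ the quantitative inverse theorem for Gowers--Host--Kra seminorms, with the polylogarithmic dependence that a circle-method argument requires, is not available at the strength demanded by the multilinear analysis. Second, the minor-arc estimate needs a Weyl-type inequality for polynomial expressions in non-commuting generators; each Weyl differencing produces commutator correction terms that themselves encode the full nilpotent structure, so the argument must recurse on a simpler Gowers-type object at every step, and controlling the blow-up of this recursion in both $k$ and $m$ requires a genuinely new arithmetic-geometric input beyond the tools developed here.
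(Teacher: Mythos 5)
The statement you are addressing is a \emph{conjecture}, not a theorem of the paper, and the paper does not prove it. Conjecture~\ref{con:1} is stated in order to frame the paper's actual contribution, Theorem~\ref{thm:main}, which settles only the corner $m=1$ (linear averages, no products) and step $k=2$. For $m\geq 2$, or for step $k\geq 3$, the conjecture remains open at the time of writing, and there is no argument in the paper against which your proposal can be checked. What you have written is, by your own admission in the last paragraph, a research program rather than a proof: you correctly identify that the quantitative inverse theory for Gowers--Host--Kra seminorms on nilpotent groups of step $k\geq 3$ is unavailable at the strength a circle-method argument would demand, and that non-commutative Weyl differencing generates commutator correction terms whose recursion is not yet controllable in $k$ and $m$. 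Those are precisely the reasons the conjecture is open, so acknowledging them is not a lapse, but it does mean the proposal establishes nothing beyond what the paper already proves.

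One further gap worth naming concretely. Your proposed induction has base case $(k,m)=(2,1)$, which is Theorem~\ref{thm:main}, proved in Sections~4--9 by the nilpotent circle method: the engine is high-order $T^\ast T$ almost-orthogonality on $\ell^2(\G_0)$, plus the two non-commutative Weyl/Gauss-sum estimates of Proposition~\ref{minarcs}. That engine is intrinsically a \emph{linear} ($m=1$) tool: $T^\ast T$ and the Cotlar--Stein lemma exploit $L^2$ orthogonality of a single operator family and have no multilinear analogue. To move to $m\geq 2$ one must replace orthogonality with a genuinely multilinear input --- in the commutative bilinear case this is the role played by the inverse $U^s$ theorem and the Peluse--Prendiville degree-lowering mechanism in the Krause--Mirek--Tao paper \cite{KMT}, which you cite --- and your proposal gestures at a ``nilpotent Ramar\'e-type identity together with an inverse Gowers--Host--Kra theorem on $\mathbf{G}$'' without giving either, nor a reason to believe that a polylogarithmically effective nilpotent inverse theorem exists. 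Until such a surrogate is supplied, the inductive step from $m$ to $m+1$ has no content, and the proposal, while a sensible sketch of what a future attack might look like (consistent with the paper's own remarks in Subsection~1.4.3), does not constitute a proof of Conjecture~\ref{con:1}.
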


Conjecture \ref{con:1} is a major open problem in ergodic theory that was promoted in person by Furstenberg, see \cite[p. 6662]{A1} and \cite{Kra}, before being published in \cite{BeLe}. Bergelson--Leibman \cite{BeLe}  showed that convergence may fail if the transformations $T_1,\ldots, T_d$ generate a solvable group, so the
nilpotent setting is probably the appropriate setting for Conjecture \ref{con:1}. Our main goal in this paper is to establish this conjecture in the linear $m=1$ setting in the case when $T_1,\ldots, T_d$ generate a nilpotent group of step two.

A few remarks about this conjecture and the current state of the art are in order.

\begin{enumerate}[label*={\arabic*}.]
\item The averages \eqref{eq:41}  are multilinear generalizations of the averages \eqref{eq:40} in the case $m=1$ and $P_{j, 1}=P_{j}$ for all $j\in\{1, \ldots, d\}$. The basic case $d=k=m=1$ with $P_{1,1}(n)=n$ follows from  Birkhoff's ergodic theorem \cite{BI}.

\item The
case $d=k=m=1$ with an arbitrary polynomial $P_{1,1}\in \Z[\mathrm n]$
was a famous open problem of Bellow \cite{Bel} and Furstenberg
\cite{Fur3} solved by Bourgain in his breakthrough papers \cite{Bo2, Bo3, Bo1}.

\item Some particular examples of averages \eqref{eq:41} with $m=1$ and polynomial mappings with degree at most two  in the step two nilpotent setting were studied in \cite{IMSW, MSW0}. 

\item The multilinear theory, in contrast to the commutative linear theory, is widely open.  Only a few results in the bilinear  $m=2$ and  commutative  $d=k=1$ setting are known. Bourgain \cite{B0} proved pointwise convergence when $P_{1,1}(n)=an$ and $P_{1,2}(n)=bn$,  $a, b\in\Z$. More recently, the third author with Krause and Tao \cite{KMT} established pointwise convergence  for the polynomial Furstenberg--Weiss averages \cite{Fur1,FurWei}
 corresponding to $P_{1,1}(n)=n$ and $P_{1, 2}(n)=P(n)$, ${\rm deg }\,P\ge2$.

\item Except for these few cases, there are no other results concerning pointwise convergence  for the averages \eqref{eq:41}. The situation is completely different, however, for the question of norm convergence. A breakthrough paper of Walsh \cite{W} (see also \cite{A1}) gives a complete picture of $L^2(X)$ norm convergence of the averages \eqref{eq:41} for any
$T_1,\ldots, T_d\in \mathbb G$ where $\mathbb G$ is a nilpotent group
of transformations of a probability space.  Prior to this, there was an extensive body of research towards
establishing $L^2(X)$ norm convergence, including groundbreaking works of Host--Kra
\cite{HK}, Ziegler \cite{Z1}, Bergelson \cite{Ber0},
and Leibman \cite{Leibman}. See also \cite{A2, CFH, FraKra, HK1, Tao} and the survey articles \cite{Ber1,Ber2,Fra} for more details and references, including a comprehensive historical background.
\end{enumerate}

\subsection{Statement of the main results} We can now state the main result of this paper.

\begin{theorem}[Main result]
\label{thm:main}
Let $d_1\in\Z_+$ be given and let
$T_1,\ldots, T_{d_1}:X\to X$ be a family  of  invertible measure-preserving transformations of a
$\sigma$-finite measure space $(X, \mathcal B(X), \mu)$ that generates a nilpotent group of step two. Assume that $P_1,\ldots, P_{d_1}\in\Z[\mathrm n]$ are such that $P_{j} (0) = 0$, $1 \le j \le d_1$, and let $d_2:=\max\{{\rm deg}\,P_j: j\in \{1,\ldots, d_1\}\}$. Assume $f\in L^p(X)$, $1\le  p\le \infty$, and let $A_{N; X}^{P_{1}, \ldots, P_{d_1}}(f) = A_{N; X, T_1,\ldots, T_{d_1}}^{P_{1}, \ldots, P_{d_1}}(f)$ be the  averages defined  in \eqref{eq:40}.
\begin{itemize}
\item[(i)] \textit{(Mean ergodic theorem)} If $1<p<\infty$, then the averages
$A_{N; X}^{P_{1}, \ldots, P_{d_1}}(f)$ converge in the $L^p(X)$ norm as $N\to\infty$.

\item[(ii)] \textit{(Pointwise ergodic theorem)} If $1<p<\infty$, then the averages
$A_{N; X}^{P_{1}, \ldots, P_{d_1}}(f)$ converge pointwise almost everywhere as $N\to\infty$.

\item[(iii)] \textit{(Maximal ergodic theorem)}
If $1<p\le\infty$, then one has
\begin{align}
\label{eq:43}
\big\|\sup_{N\in\Z_+}|A_{N; X}^{P_{1}, \ldots, P_{d_1}}(f)|\big\|_{L^p(X)}\lesssim_{d_1, d_2, p}\|f\|_{L^p(X)}.
\end{align}
 The implicit constant in \eqref{eq:43} may depend on
$d_1, d_2$, and $p$, but is independent of the coefficients of the underlying polynomials.
\end{itemize}
\end{theorem}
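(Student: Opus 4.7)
The plan is to first reduce the problem, via Calderón's transference principle, to an analysis of convolution operators on the free step-two nilpotent group $\G$ on $d_1$ generators, then to develop a circle-method decomposition adapted to this non-commutative setting. Since $T_1,\dots,T_{d_1}$ generate a step-two nilpotent group, it suffices to prove the corresponding maximal and $\rho$-variational inequalities on $L^p(\G)$ for the right convolution operators
\[
M_N f(g)=\frac{1}{2N+1}\sum_{|n|\le N}f(g\gamma(n)),\qquad \gamma(n)=e_1^{P_1(n)}\cdots e_{d_1}^{P_{d_1}(n)},
\]
where $e_1,\dots,e_{d_1}$ are the canonical generators of $\G$. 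Exploiting $e_j e_i=e_i e_j\,[e_j,e_i]$ with $[e_j,e_i]$ central, $\gamma(n)$ admits a Malcev-coordinate normal form $\prod_j e_j^{P_j(n)}\cdot\prod_{i<j}[e_i,e_j]^{Q_{ij}(n)}$, with explicit polynomials $Q_{ij}$ built from $P_i(\cdot)P_j(\cdot)$; hence $\gamma$ is a polynomial curve in Malcev coordinates and the task is to bound polynomial-orbit averages uniformly in the polynomial coefficients.

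For the quantitative bounds I would dyadically take $N\approx 2^s$ and split the kernel of $M_N$ into a major-arc approximation plus a minor-arc remainder. On the \emph{major arcs}, I would isolate frequencies with small denominators: up to negligible errors, the major piece should be a finite superposition of products of Gauss sums with sampled continuous averages along the corresponding real polynomial curve in the Lie group $G_{\R}$. The $L^p$ maximal and $L^2$ $\rho$-variational bounds for this model can then be obtained by combining Stein--Street / Christ--Nagel--Stein--Wainger-type estimates for smooth averages along curves in nilpotent Lie groups with Ionescu--Wainger multiplier theory on $\ell^p(\Z^D)$ for the arithmetic part. On the \emph{minor arcs}, the power saving $\|M_N-\text{major part}\|_{L^2\to L^2}\lesssim_{\varepsilon} N^{-\varepsilon}$ is the core technical point; since there is no scalar Fourier transform on $\G$, I would take a partial Fourier transform only in the (abelian) center $Z(\G)$ so that the central coordinates $Q_{ij}(n)$ enter as a phase $e^{2\pi i \xi\cdot Q(n)}$, and extract cancellation by iterated Weyl/van der Corput differencing treating the non-central coordinates as parameters.

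Given these quantitative estimates, part (iii) follows from the $L^2$ maximal inequality interpolated with the trivial $L^\infty$ bound, with the extension from probability to $\sigma$-finite spaces handled by a standard exhaustion. Part (ii) is a consequence of the $\rho$-variational inequality on $L^2$ — which yields almost everywhere convergence on a dense class such as $L^2\cap L^\infty$ — combined with the maximal inequality from (iii), which upgrades convergence to arbitrary $f\in L^p(X)$ for $1<p<\infty$. Part (i) follows from the same analysis together with the Hilbert-space mean ergodic theorem applied to the limit of the major-arc approximations. The \textbf{principal obstacle} is the minor-arc estimate: without Fourier duality on the non-abelian group $\G$ one cannot directly invoke Weyl's inequality, and the central-variable partial Fourier argument must be combined with a careful iteration of van der Corput lemmas controlling the mixed terms $P_i(n)P_j(n)$ appearing in $Q_{ij}(n)$. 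Securing uniformity of all constants in the polynomial coefficients, as required by \eqref{eq:43}, is the technical heart of the argument and motivates the nilpotent circle method advertised in the abstract.
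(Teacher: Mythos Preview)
Your high-level architecture (Calder\'on transference to a universal step-two group, then a major/minor arcs decomposition starting from the central variable) matches the paper. However, two of your proposed tools do not work as stated, and one crucial mechanism is missing. First, invoking Ionescu--Wainger multiplier theory for the arithmetic part of the major arcs is not available here: that theory is a statement about Fourier multipliers on $\ell^p(\Z^D)$ and relies on the fact that convolution is diagonalized by the abelian Fourier transform. On $\G_0$ the convolution \eqref{convoDef} is \emph{not} diagonalized by the Euclidean Fourier transform---see the discussion around \eqref{eq:62}---so one cannot simply isolate a multiplier and appeal to IW. The paper instead factorizes the major-arc kernel as a tensor product $W_{k,w,Q}\otimes V_{\A,\B,Q}$ along the normal subgroup decomposition $\G_0\simeq\JJ_Q\times\HH_Q$ (Lemma \ref{kio5}) and proves the arithmetic gain on the finite quotient $\JJ_Q$ directly via a high-order $T^\ast T$ argument (Lemma \ref{periodM}), not via multiplier theory. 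Second, and more seriously, your minor-arc plan of ``partial Fourier transform in the center plus iterated van der Corput'' is not what drives the estimates: the paper's substitute for Plancherel is to study the kernel of $(\mathcal K^\ast\mathcal K)^r$ for large $r$, reducing the problem to bounds on explicit oscillatory sums over $2r$ variables (see \eqref{pro15.7}--\eqref{pro15.11}). The decay then comes from the nilpotent Weyl inequality of Proposition \ref{minarcs}, proved in \cite{IMW} by Davenport--Birch-type arguments, rather than from differencing in a single variable $n$. This high-order $T^\ast T$ machinery is also what makes the transition estimates (Lemmas \ref{MajArc1} and \ref{MajArc3}) work via a Cotlar--Stein argument; your outline has no analogue of these intermediate-range bounds, yet they are where most of the technical work lies.

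Finally, your derivation of part (iii) from ``$L^2$ maximal plus trivial $L^\infty$ interpolation'' only yields $p\ge 2$. The range $1<p<2$ is genuinely hard and is handled in Section \ref{lptheory} by a Bourgain-type bootstrapping (Lemma \ref{lem:12}) combined with logarithmic restricted-range bounds (Proposition \ref{prop:log}) and a shifted maximal inequality on $\HH_Q$ (Proposition \ref{lem:max_sh}); none of these ingredients is automatic.
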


The restriction $p>1$ is necessary in the case of nonlinear polynomials as was shown in \cite{BM, LaV1}. We provide now a few remarks about Theorem \ref{thm:main}.

\begin{enumerate}[label*={\arabic*}.]

\item  Parts (ii) and (iii) of Theorem \ref{thm:main} are completely new even in the case $p=2$ and extend Bourgain's polynomial ergodic theorems \cite{Bo2,Bo3,Bo1} to the  non-commutative nilpotent setting.  In particular, Theorem \ref{thm:main} (ii) gives an affirmative answer to Conjecture \ref{con:1} for all polynomials 
$P_1,\ldots, P_{d_1}\in\Z[\mathrm n]$ and all measure-preserving transformations  $T_1,\ldots, T_{d_1}:X\to X$ generating a nilpotent group of step two.
Moreover, Theorem \ref{thm:main} gives affirmative answers to \cite[Problems 1, 2]{IMW} for nilpotent groups of step two. 

\item If $(X, \mathcal B(X), \mu)$ is a probability space and the family of measure preserving transformations $(T_1,\ldots,T_{d_1})$ is totally ergodic, then
Theorem \ref{thm:main}(ii) implies that
\begin{align}
\label{eq:45}
\lim_{N\to\infty}A_{N; X}^{P_{1}, \ldots, P_{d_1}}(f)(x)=\int_Xf(y)d\mu(y)
\end{align}
$\mu$-almost everywhere on $X$. We recall that a family of measure preserving transformations $(T_1,\ldots,T_{d_1})$ is called \emph{ergodic} on $X$ if $T_j^{-1}(B)=B$ for all $j\in\{1,\ldots, d_1\}$ implies $\mu(B)=0$ or $\mu (B)=1$ and is called \emph{totally ergodic} if the family $(T_1^n,\ldots,T_{d_1}^n)$ is ergodic for all $n\in\Z_+$.
In  view of \eqref{eq:45}, we see that
the polynomial orbits 
\[
\mathcal O_x :=\big\{T_1^{P_1(n)}\cdots T_{d_1}^{P_{d_1}(n)}x:\ n\in\Z\big\}
\]
have a limiting distribution and, in fact, are uniformly distributed for $\mu$-almost every $x\in X$ when the family $(T_1,\ldots,T_{d_1})$ is totally ergodic.

\item The conclusion of the mean ergodic Theorem \ref{thm:main}(i) follows from \cite{W} if $(X, \mathcal B(X), \mu)$ has finite measure, but our proof allows one to deal with the more general $\sigma$-finite setting.
\end{enumerate}

\subsection{The universal step-two group $\G_0$}\label{setup} The proof of Theorem \ref{thm:main} will follow from our second main result, Theorem \ref{thm:main1} below,  for averages on  universal nilpotent groups of step two. We start with some definitions. For integers $d\geq 1$, we define
\begin{equation*}
Y_d:=\{(l_1,l_2)\in\mathbb{Z}\times\mathbb{Z}:0\leq l_2<l_1\leq d\}
\end{equation*}
and the ``universal'' step-two nilpotent Lie groups $\G_0^\#=\G_0^\#(d)$
\begin{equation}\label{gro1}
\G_0^\#:=\{(x_{l_1l_2})_{(l_1,l_2)\in Y_d}:x_{l_1l_2}\in\mathbb{R}\},
\end{equation}
with the group multiplication law
\begin{equation}\label{gro2}
[x\cdot y]_{l_1l_2}:=
\begin{cases}
x_{l_10}+y_{l_10}&\text{ if }l_1\in \{1,\ldots, d\}\text{ and }l_2=0,\\
x_{l_1l_2}+y_{l_1l_2}+x_{l_10}y_{l_20}&\text{ if }l_1\in\{1,\ldots, d\}\text{ and }l_2\in\{1, \ldots, l_1-1\}.
\end{cases}
\end{equation}

Alternatively, we can also define the group $\G_0^\#$ as the set of elements 
\begin{equation}\label{picu4}
g=(g^{(1)},g^{(2)}),\qquad g^{(1)}=(g_{l_10})_{l_1\in\{1,\ldots, d\}}\in\R^d,\qquad g^{(2)}=(g_{l_1l_2})_{(l_1,l_2)\in Y'_d}\in\R^{d'},
\end{equation}
where $d':=d(d-1)/2$ and $Y'_{d}:=\{(l_1,l_2)\in Y_d:\,l_2\geq 1\}$. Letting 
\begin{equation}\label{picu4.1}
R_0:\R^d\times\R^d\to\R^{d'}\quad\text{ denote the bilinear form }\quad [R_0(x,y)]_{l_1l_2}:=x_{l_10}y_{l_20},
\end{equation}
we notice that the product rule in the group $\G_0^\#$ is given by
\begin{equation}\label{picu4.2}
[g\cdot h]^{(1)}:=g^{(1)}+h^{(1)},\qquad [g\cdot h]^{(2)}:=g^{(2)}+h^{(2)}+R_0(g^{(1)},h^{(1)})
\end{equation}
if $g=(g^{(1)},g^{(2)})$ and $h=(h^{(1)},h^{(2)})$. For any $g=(g^{(1)},g^{(2)})\in \G_0^{\#}$, its inverse is given by
\begin{equation*}
g^{-1}=\big(-g^{(1)}, -g^{(2)}+R_0(g^{(1)}, g^{(1)})\big).
\end{equation*}
The second variable of $g=(g^{(1)},g^{(2)})\in \G_0^{\#}$ is called the central variable. Based on the product structure \eqref{picu4.2} of the group $\G_0^\#$, it is not difficult to see that $g\cdot h=h\cdot g$ for any $g=(g^{(1)},g^{(2)})\in \G_0^{\#}$ and $h=(0, h^{(2)})\in \G_0^{\#}$. 

Let $\G_0=\G_0(d)$ denote the discrete subgroup
\begin{align}
\label{eq:48}
\G_0:=\G_0^\#\cap\Z^{|Y_d|}.
\end{align}
 Let $A_0:\mathbb{R}\to\G_0^\#$ denote the canonical  polynomial map (or the moment curve on $\G_0^\#$)
\begin{equation}\label{tra3}
[A_0(x)]_{l_1l_2}:=
\begin{cases}
x^{l_1}&\text{ if }l_2=0,\\
0&\text{ if }l_2\neq 0,
\end{cases}
\end{equation}
and notice that $A_0(\Z)\subseteq\G_0$. For $x=(x_{l_1l_2})_{(l_1,l_2)\in Y_d}\in \G_0^\#$ and $\Lambda\in(0,\infty)$, we define 
\begin{equation}\label{tra3.5}
\Lambda\circ x:=(\Lambda^{l_1+l_2}x_{l_1l_2})_{(l_1,l_2)\in Y_d}\in \G_0^\#.
\end{equation}
Notice that the dilations $\Lambda\circ$ are group homomorphisms on the group $\G_0$ that are compatible with the map $A_0$, i.e. $\Lambda\circ A_0(x)=A_0(\Lambda x)$. 

Let $\chi:\mathbb{R}\to[0,1]$ be a smooth function  supported on the interval $[-2, 2]$. Given any real number $N\ge1$ and a finitely supported function $f:\G_0\to\C$, we can define a smoothed average along the moment curve $A_0$ by the formula
\begin{align}
\label{eq:47}
M_{N}^{\chi}(f)(x):=\sum_{n\in\Z}N^{-1}\chi(N^{-1}n)f(A_0(n)^{-1}\cdot x), \qquad x\in \G_0.
\end{align}

The main advantage of working on the group $\G_0$ with the polynomial map $A_0$ is the presence of the compatible dilations $\Lambda\circ$ defined in \eqref{tra3.5}, which lead to a natural family of associated balls. This can be efficiently exploited by noting that $M_{N}^{\chi}$ is a convolution operator on $\G_0$.

The convolution of functions on the group $\G_0$ is defined by the formula
\begin{equation}\label{convoDef}
(f\ast g) (x):=\sum_{y\in\G_0}f(y^{-1}\cdot x)g(y)=\sum_{z\in\G_0}f(z)g(x\cdot z^{-1}).
\end{equation}
Then it is not difficult to  see that $M_{N}^{\chi}(f)(x)=f*G_N^{\chi}(x)$, where
\begin{align}
\label{eq:46}
G_N^{\chi}(x):= \sum_{n\in\Z}N^{-1}\chi(N^{-1}n)\ind{\{A_0(n)\}}(x),
\qquad x\in \G_0.
\end{align}

We are now ready to state our second main result.

\begin{theorem}[Boundedness on $\G_0$]
\label{thm:main1}
Let $\G_0=\G_0(d)$, $d\geq 1$, be the discrete nilpotent group defined in \eqref{eq:48}.
For any $f\in \ell^p(\G_0)$, $1\le  p\le \infty$, let $M_{N}^{\chi}(f)$ be the  average defined  in \eqref{eq:47} with  a smooth function $\chi:\mathbb{R}\to[0,1]$  supported on the interval $[-2, 2]$.
\begin{itemize}
\item[(i)] \textit{(Maximal estimates)} If $1<p\le \infty$, then one
has
\begin{align}
\label{eq:39}
\big\|\sup_{N\geq 1}|M_{N}^{\chi}(f)|\big\|_{\ell^p(\G_0)}\lesssim_{d, p, \chi}\|f\|_{\ell^p(\G_0)}.
\end{align}

\item[(ii)] \textit{(Long variational estimates)}
If $1<p< \infty$,  $\rho>\max\big\{p, \frac{p}{p-1}\big\}$, and $\tau\in(1,2]$, then
\begin{align}
\label{eq:49}
\big\|V^{\rho}\big(M_{N}^{\chi}(f):N\in\mathbb D_{\tau}\big)\big\|_{\ell^p(\G_0)}\lesssim_{d,p, \rho, \tau, \chi}\|f\|_{\ell^p(\G_0)},
\end{align}
where $\mathbb D_{\tau}:=\{\tau^n:n\in\N\}$. See \eqref{eq:44} for the definition of the $\rho$-variation seminorms $V^{\rho}$.
\end{itemize}
\end{theorem}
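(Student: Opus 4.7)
The plan is to prove both (i) and (ii) via a circle-method decomposition of the convolution kernels $G_N^\chi$ at dyadic scales, combined with standard Littlewood-Paley/square-function techniques adapted to the nilpotent setting. First I would reduce both statements to estimates at lacunary scales $N\in\mathbb{D}_\tau$: the short scales within a single dyadic block should be handled by embedding the kernels into the continuous Lie group $\G_0^\#$ and invoking classical homogeneous-group harmonic analysis adapted to the dilations $\Lambda\circ$, while the long variation and the lacunary maximal function along $\mathbb{D}_\tau$ reduce, via Lepingle's inequality (which accounts for the sharp range $\rho>\max\{p,p/(p-1)\}$) and a standard square-function bound, to $\ell^p$ estimates on the Littlewood-Paley differences $\Delta_n:=G_{\tau^{n+1}}^\chi-G_{\tau^n}^\chi$.

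The core of the argument should be to decompose each $G_N^\chi$ as
\begin{equation*}
G_N^\chi=\sum_{s\geq 0} L_{N,s}^{\mathrm{maj}}+E_N^{\mathrm{min}},
\end{equation*}
where $L_{N,s}^{\mathrm{maj}}$ is a major-arc piece corresponding to rational approximations with denominators in a dyadic window of scale $s$, and $E_N^{\mathrm{min}}$ is a minor-arc remainder. For the major arcs, the crucial structural fact to exploit is that the center of $\G_0$ is the abelian group $\Z^{d'}$, so classical Fourier analysis is available in the central variables; this permits an Ionescu-Wainger type arithmetic multiplier theorem to control the rational ``spectrum'' in the central direction. Each major-arc piece is then modeled, up to an admissible error, by the product of such an arithmetic multiplier with a smooth continuous averaging kernel on $\G_0^\#$, for which maximal and variational bounds follow from Littlewood-Paley theory on homogeneous groups. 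Summing over $s$ with tolerable logarithmic losses and interpolating against trivial bounds then covers the major-arc contribution to both (i) and (ii).

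The minor-arc piece $E_N^{\mathrm{min}}$ must satisfy a quantitative bound of the form $\|f*E_N^{\mathrm{min}}\|_{\ell^2(\G_0)}\lesssim N^{-\delta}\|f\|_{\ell^2(\G_0)}$ for some $\delta=\delta(d)>0$. Since $\G_0$ is non-commutative, there is no single Fourier transform diagonalizing convolution, so I would proceed via an iterated $TT^\ast$ argument: after a bounded number of self-convolutions, the resulting kernel should be concentrated near the central subgroup, at which point the classical Fourier transform on $\Z^{d'}$ applies, and the remaining horizontal sums become Weyl-type exponential sums with polynomial phases in the horizontal summation variables twisted by the central Fourier dual. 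Weyl's inequality for polynomial exponential sums then yields the required power saving. Interpolating this $\ell^2$ gain against trivial $\ell^p$ estimates would propagate the decay to all $1<p<\infty$, and summing the dyadic-scale estimates closes both parts of the theorem.

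The hard part will be the minor-arc $\ell^2$ bound, which is the heart of what the abstract calls the nilpotent circle method. The difficulty is twofold: one must choose the major/minor decomposition coherently with respect to both ``horizontal'' and ``central'' frequency data in a way that remains intact after $TT^\ast$ iteration, and one must exploit the step-two hypothesis so that commutators of horizontal generators land in the center with a polynomial law tractable for classical Weyl analysis. Without step two, iterating $TT^\ast$ would produce phases involving nested higher-order commutators to which no classical Fourier analysis directly applies, which is consistent with the step-two restriction in the statement.
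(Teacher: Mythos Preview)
Your overall architecture---circle-method decomposition plus $TT^\ast$ for minor arcs, continuous-group modeling for major arcs---matches the paper's in spirit, but several essential mechanisms are missing or misstated, and as written the plan would not close.

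\textbf{The decomposition must be sequential, not simultaneous.} You propose a single splitting $G_N^\chi=\sum_s L_{N,s}^{\mathrm{maj}}+E_N^{\mathrm{min}}$. The paper instead runs the circle method \emph{iteratively along the central series}: first decompose only in the central variable $g^{(2)}$, obtaining $K_k=K_k^c+\sum_s K_{k,s}$; then, \emph{only after} restricting to major arcs of level $s$ in the center and to scales $k\ge\kappa_s=2^{2D(s+1)^2}$, decompose $K_{k,s}$ further in the non-central variable. The authors state explicitly that a simultaneous decomposition in both sets of variables ``would encounter serious difficulties that do not allow for an efficient control of the phase functions \ldots\ especially on major arcs.'' Your proposal does not account for this obstruction.

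\textbf{Transition estimates are missing.} Between the first minor-arc bound and the second-stage decomposition there is a nontrivial range $\max(D^2,s/\delta)\le k<\kappa_s$ where neither minor-arc decay nor the second decomposition is available. The paper handles this by a high-order Cotlar--Stein argument (Lemmas~\ref{MajArc1} and~\ref{MajArc3}), producing what they call \emph{transition estimates}; this is one of the most delicate parts of the proof and is absent from your outline.

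\textbf{High order, not bounded order, $TT^\ast$.} You write ``after a bounded number of self-convolutions.'' In fact the paper studies $(T^\ast T)^r$ for $r$ \emph{arbitrarily large}, chosen depending on the desired decay exponent (see Propositions~\ref{minarcs}--\ref{minarcscon}). A fixed finite $r$ does not suffice; the gain per iteration is weak and one needs $r\gg 1/\delta$.

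\textbf{Non-commutative Weyl, not classical Weyl.} After $(T^\ast T)^r$ and Fourier inversion on the full torus $\T^d\times\T^{d'}$ (not just on the center), the resulting exponential sums have phases $\theta\cdot D(n,m)$ where $D$ encodes the group law and is genuinely multilinear in $2r$ variables. Classical Weyl for single-variable polynomial phases does not directly apply; the paper invokes a non-commutative Weyl inequality (Proposition~\ref{minarcs}) proved in earlier work via Davenport--Birch ideas for forms in many variables. Your claim that the horizontal sums reduce to standard Weyl sums understates this.

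\textbf{Smaller points.} The variational reduction is not via L\'epingle but via the Rademacher--Menshov inequality~\eqref{maj1} (L\'epingle enters only in the continuous model in Appendix~A). The major-arc analysis does not use Ionescu--Wainger multipliers; instead the paper factors the kernel as a tensor product of a continuous piece on $\HH_Q$ and an arithmetic Gauss-sum operator on the finite quotient $\JJ_Q$ (Lemmas~\ref{kio5}--\ref{periodM}), which is a different mechanism. Finally, part~(ii) is already lacunary, so no short-variation reduction is needed there; and the $\ell^p$ extension in part~(i) uses positivity and a Bourgain-style logarithmic interpolation (Proposition~\ref{prop:log}) rather than direct interpolation of the minor-arc decay.
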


Some comments are in order.

\begin{enumerate}[label*={\arabic*}.]
\item Theorem \ref{thm:main1} will be used to prove Theorem \ref{thm:main}. The main tool in this reduction will be the Calder{\'o}n transference principle \cite{Cald}, and the details will be given in Section \ref{sec:erg}. 

\item Theorem \ref{thm:main1} extends the results of \cite{MST2, MSZ3} to the non-commutative, nilpotent setting. Its conclusions remain true for rough averages, i.e. when $\chi=\ind{[-1, 1]}$ in \eqref{eq:47}, but it is more convenient to work with smooth averages.  

\item The restriction $p>1$ in Theorem \ref{thm:main1} is sharp due to \cite{BM, LaV1}. However, the range of $\rho>\max\big\{p, \frac{p}{p-1}\big\}$ is only sharp when $p=2$ due to L{\'e}pingle's inequality \cite{Le}.
One could hope to improve this to the full range $\rho>2$ for exponents $p\neq 2$, but only at the expense of additional complexity in the proof. We do not address this here since the limited range $\rho>\max\big\{p, \frac{p}{p-1}\big\}$ is already sufficient for us to establish Theorem \ref{thm:main}.

\end{enumerate}

\subsection{Overview of the proof} We will show in Section \ref{sec:erg} that Theorem \ref{thm:main} is a consequence of Theorem \ref{thm:main1}
upon performing lifting arguments and adapting the Calder{\'o}n transference principle. Our main goal therefore is to prove Theorem \ref{thm:main1}, which takes up the bulk of this paper.

Bourgain's seminal papers \cite{Bo2, Bo3, Bo1} generated a large amount of research and progress in the field. Many other discrete operators have been analyzed by many authors motivated by problems in Analysis and Ergodic Theory. See, for example, \cite{BM, IMSW, IW, Kr, KMT, LaV1, MSW0, MST2, MSZ2, MSZ3, Pi1, Pi2, SW0} for some results of this type and more references. A common feature of all of these results, which plays a crucial role in the proofs, is that one can use Fourier analysis techniques, in particular, the powerful framework of the classical circle method, to perform the analysis.

Our situation in Theorem \ref{thm:main1} is different as new difficulties arise. The main issue is that 
there is no good Fourier transform on nilpotent groups that is compatible with the structure of the underlying convolution operators and  at the level of analytical precision of the classical circle method. The second obstacle is the absence of a good delta function compatible with the group multiplication on $(\G_0, \cdot)$ (defined in \eqref{gro2}). This prevents us from using a naive implementation of the circle method. The classical delta function
\begin{align}
\label{eq:62}
\ind{\{0\}}(x^{-1}\cdot y)=\int_{\T^d\times\T^{d'}}\ex((y^{(1)}-x^{(1)}){.}\theta^{(1)})\ex((y^{(2)}-x^{(2)}){.}\theta^{(2)})\,d\theta^{(1)}d\theta^{(2)},
\end{align}
does not detect the group  multiplication correctly, see Section \ref{notation} and \eqref{eq:63} for notation.

These two issues lead to very significant difficulties in the proof and require substantial new ideas.
We developed the following tools to circumvent these problems:
\begin{itemize}
\item[(i)] Classical Fourier techniques will be replaced with almost-orthogonality methods based on exploiting high order $TT^*$ arguments for
operators defined on  the discrete group $\G_0$ which arise in the proof of Theorem \ref{thm:main1}. Studying high powers of $TT^*$ (i.e. $(TT^*)^r$ for a large $r\in\Z_+$)  allows for a simple heuristic lying behind the proof of Waring-type problems to be used efficiently (and rigorously) in the context of our proof. This heuristic says that, the more variables that occur in the Waring-type equation, the easier is to find a solution. Manipulating the parameter $r$ (usually taking $r$ to be very large), we can always decide how many variables we have at our disposal, making the operators in our questions ``smoother and smoother''.    

\item[(ii)]
Our main new construction in this paper is what we call a \textit{nilpotent circle method}, an iterative procedure, starting from the center of the group and moving down along its central series, that allows us to use some of the ideas of the classical circle method recursively at every stage. In our case of nilpotent groups of step two, the procedure consists of two basic iterations and one additional step corresponding to ``major arcs". The key feature of this approach is that it is adapted to
the classical delta function as in \eqref{eq:62}. The minor arcs analysis needs two types of  Weyl's inequalities: the classical one as well as the nilpotent one in the spirit of Davenport \cite{Dav} and Birch \cite{Bi}, which was proved in \cite{IMW}.   The major arcs analysis  brings into play some tools that combine continuous harmonic analysis on groups $\G_0^{\#}$ with arithmetic harmonic analysis over finite integer rings modulo $Q\in\Z_+$. 
\end{itemize}

We outline the argument in Subsection \ref{outline} below.

\subsubsection{A nilpotent circle method and $\ell^2$ theory}\label{outline} To illustrate our main iterative procedure, it suffices to consider the boundedness of the maximal function $M_N^\chi$ on $\ell^2(\G_0)$. We would like to prove that
\begin{equation}\label{illust1}
\big\|\sup_{k\geq 0}|f\ast G_{2^k}^\chi|\big\|_{\ell^2(\G_0)}\lesssim \|f\|_{\ell^2(\G_0)}.
\end{equation}

Inequality \eqref{illust1} involves a genuinely sublinear operator, preventing a naive implementation 
of high order $TT^*$ arguments. This contrasts sharply with the situation of singular integral operators  studied in \cite{IMW}.
We begin with a delicate decomposition of the kernels $G_{2^k}^\chi$ adjusted to the nilpotent structure of the underlying group $\G_0$.
Notice that these kernels have a product structure 
\begin{equation}\label{illust2}
G_{2^k}^\chi(g):=L_k(g^{(1)})\ind{\{0\}}(g^{(2)}),\qquad L_k(g^{(1)}):=\sum_{n\in\Z}2^{-k}\chi(2^{-k}n)\ind{\{0\}}(g^{(1)}-A^{(1)}_0(n)),
\end{equation}
where $A^{(1)}_0(n):=(n,\ldots,n^d)\in\mathbb{Z}^d$ and $g=(g^{(1)},g^{(2)})\in\G_0$ as in \eqref{picu4}. 

{\it{First stage.}} We start by decomposing the kernels $G_{2^k}^\chi$ in the central variable. For any integers $s\geq 0$ and $m\geq 1$, we define the set of rational fractions 
\begin{equation}\label{illust3}
\mathcal{R}_s^m:=\{a/q:\,a=(a_1,\ldots, a_m)\in\Z^m,\,q\in[2^s,2^{s+1}-1]\cap\Z,\,\mathrm{gcd}(a_1,\ldots,a_m,q)=1\}.
\end{equation}
We define also $\mathcal{R}^m_{\leq a}:=\bigcup_{0\le s\leq a}\mathcal{R}_s^m$.
For $x^{(1)}=(x^{(1)}_{l_10})\in\R^{d}$, $x^{(2)}=(x^{(2)}_{l_1l_2})\in\R^{d'}$ and $\Lambda\in(0,\infty)$, we define the partial dilations
\begin{equation}\label{illust4}
\Lambda\circ x^{(1)}=(\Lambda^{l_1}x^{(1)}_{l_10})_{l_1\in\{1,\ldots,d\}}\in \R^{d},\qquad\Lambda\circ x^{(2)}=(\Lambda^{l_1+l_2}x^{(2)}_{l_1l_2})_{(l_1,l_2)\in Y'_d}\in \R^{d'},
\end{equation}
which are induced by the group-dilations defined in \eqref{tra3.5}. We fix a small constant $\delta=\delta(d)\ll 1$, a large constant $D=D(d)\gg\delta^{-8}$, and a smooth even cutoff function $\eta_0:\mathbb{R}\to[0,1]$ such that $\ind{[-1, 1]}\le \eta_0\le \ind{[-2, 2]}$. For $k\geq D^2$ and $s\leq \delta k$, we define the periodic Fourier multipliers 
\begin{equation}\label{illust5}
\Xi_{k,s}(\xi^{(2)}):=\sum_{a/q\in\mathcal{R}_s^{d'}}\eta_{\leq\delta k}(2^{k}\circ(\xi^{(2)}-a/q)),\qquad\Xi_k^c:=1-\sum_{s\in[0,\delta k]}\Xi_{k,s},
\end{equation}
where $\eta_{\leq \Lambda}(x):=\eta_0(|x|/2^{\lfloor \Lambda\rfloor})$ and $\lfloor \Lambda\rfloor: = \max\{ n \in \Z : n \le \Lambda\}$. Then we decompose
\begin{equation}\label{illust5.5}
\mathbf{1}_{\{0\}}(g^{(2)})=\sum_{s\in[0,\delta k]}\int_{\T^{d'}}\ex(g^{(2)}.\xi^{(2)})\Xi_{k,s}(\xi^{(2)})\, d\xi^{(2)}+\int_{\T^{d'}}\ex(g^{(2)}.\xi^{(2)})\Xi_k^c(\xi^{(2)})\, d\xi^{(2)},
\end{equation}
where $g^{(2)}.\xi^{(2)}$ denotes the usual scalar product of vectors in $\R^{d'}$ and $\ex(z):=e^{2\pi i z}$. This induces our first stage decomposition $G_{2^k}^\chi=K_k^c+\sum_{s\in[0,\delta k]}K_{k,s}$, where, with the notation in \eqref{illust2},
\begin{equation}\label{illust6}
K_{k,s}(g):=L_k(g^{(1)})N_{k,s}(g^{(2)}),\qquad K_{k}^c(g):=L_k(g^{(1)})N_k^c(g^{(2)}),
\end{equation}
and
\begin{equation}\label{illust7}
\begin{split}
N_{k,s}(g^{(2)})&:=\eta_{\leq\delta k}(2^{-k}\circ g^{(2)})\int_{\T^{d'}}\ex(g^{(2)}.\xi^{(2)})\Xi_{k,s}(\xi^{(2)})\, d\xi^{(2)},\\ N_k^c(g^{(2)})&:=\eta_{\leq\delta k}(2^{-k}\circ g^{(2)})\int_{\T^{d'}}\ex(g^{(2)}.\xi^{(2)})\Xi_{k}^c(\xi^{(2)})\, d\xi^{(2)}.
\end{split}
\end{equation}

The main bounds we prove in the first stage are the {\it{first minor arcs estimate}}, 
 \begin{equation}\label{illust8}
\|f\ast K_k^c\|_{\ell^2(\G_0)}\lesssim 2^{-k/D^2}\|f\|_{\ell^2(\G_0)}
\end{equation}
for any $k\geq D^2$ and $f\in\ell^2(\G_0)$, and the {\it{first transition estimate}},
\begin{equation}\label{illust9}
\big\|\sup_{\max(D^2,s/\delta)\leq k\leq \kappa_s}|f\ast K_{k,s}|\big\|_{\ell^2(\G_0)}\lesssim 2^{-s/D^2}\|f\|_{\ell^2(\G_0)}
\end{equation}
for any $s\ge 0$ and $f\in\ell^2(\G_0)$, $\kappa_s:=2^{2D(s+1)^2}$. 

In the commutative setting, minor arcs estimates such as \eqref{illust8} follow using Weyl estimates and the Plancherel theorem. As we do not have a useful Fourier transform on the group $\G_0$, our main tool to prove the bounds \eqref{illust8} is a high order $T^\ast T$ argument. More precisely, we analyze the kernel of the convolution operator $\{(\mathcal{K}_k^c)^\ast\mathcal{K}_k^c\}^r$, where $\mathcal{K}_{k}^cf:=f\ast K_k^c$ and $r$ is sufficiently large, and show that its $\ell^1(\G_0)$ norm  is $\lesssim 2^{-k}$. The main ingredient in this proof is the non-commutative Weyl estimate in Proposition \ref{minarcs} (i), which was proved earlier in \cite{IMW}.

To prove the transition estimates \eqref{illust9}, we apply the Rademacher--Menshov inequality \eqref{maj1} with a logarithmic loss to reduce to proving the inequality
\begin{equation}\label{illust9.5}
\Big\|\sum_{k\in[J,2J]}\varkappa_k(f\ast H_{k,s})\Big\|_{\ell^2(\G_0)}\lesssim 2^{-4s/D^2}\big\|f\big\|_{\ell^2(\G_0)}
\end{equation}
for any $J \ge \max(D^2,s/\delta)$ and any coefficients $\varkappa_k\in[-1,1]$, where $H_{k,s}:=K_{k+1,s}-K_{k,s}$. For this, we use a high order version of the Cotlar--Stein lemma, which relies again on precise analysis of the kernel of the convolution operator $\{(\mathcal{H}_{k,s})^\ast\mathcal{H}_{k,s}\}^r$, where $\mathcal{H}_{k,s}f:=f\ast H_{k,s}$ and $r$ is sufficiently large. The key exponential gain of $2^{-4s/D^2}$ in \eqref{illust9.5} is due to a non-commutative Gauss sums estimate, see Proposition \ref{minarcs} (ii).

\medskip
{\it{Second stage.}} In view of \eqref{illust8}--\eqref{illust9} it remains to prove that
\begin{equation}\label{illust10}
\big\|\sup_{k\geq \kappa_s}|f\ast K_{k,s}|\big\|_{\ell^2(\G_0)}\lesssim 2^{-s/D^2}\|f\|_{\ell^2(\G_0)}
\end{equation}
for any fixed integer $s\geq 0$. For this, we have to decompose the kernels $K_{k,s}$ in the non-central variables. We examine the kernels $L_k(g^{(1)})$ in \eqref{illust2} and rewrite them as
\begin{equation}\label{illust11}
L_k(g^{(1)})=\eta_{\leq \delta k}(2^{-k}\circ g^{(1)})\int_{\T^d}\ex(g^{(1)}{.}\xi^{(1)})S_k(\xi^{(1)})\,d\xi^{(1)}
\end{equation}
where $g^{(1)}.\xi^{(1)}$ denotes the usual scalar product of vectors in $\R^{d}$ and 
\begin{equation}\label{illust12}
S_k(\xi^{(1)}):=\sum_{n\in\Z}2^{-k}\chi(2^{-k}n)\ex(-A^{(1)}_0(n){.}\xi^{(1)}).
\end{equation}
For any integers $Q\geq 1$ and $m\geq 1$, we define the set of fractions
\begin{equation}\label{illust13}
\widetilde{\mathcal{R}}^m_{Q}:=\{a/Q:\,a=(a_1,\ldots,a_m)\in\Z^m\}.
\end{equation}
We fix a large denominator $Q_s:=(2^{Ds+D})!=1\cdot 2\cdot\ldots\cdot 2^{Ds+D}$ and define the periodic multipliers
\begin{equation}\label{illust14}
\begin{split}
\Psi_{k,s}^{\rm low}(\xi^{(1)})&:=\sum_{a/q\in\widetilde{\mathcal{R}}^d_{Q_s}}\eta_{\leq\delta' k}(2^k\circ(\xi^{(1)}-a/q)),\\
\Psi_{k,s,t}(\xi^{(1)})&:=\sum_{a/q\in\mathcal{R}^d_t\setminus \widetilde{\mathcal{R}}^d_{Q_s}}\eta_{\leq\delta' k}(2^k\circ(\xi^{(1)}-a/q)),\\
\Psi_{k}^c(\xi^{(1)})&:=1-\Psi_{k,s}^{\rm low}-\sum_{t\in[0,\delta'k]}\Psi_{k,s,t}=1-\sum_{a/q\in\mathcal{R}^d_{\leq\delta' k}}\eta_{\leq\delta' k}(2^k\circ(\xi^{(1)}-a/q)),
\end{split}
\end{equation}
where $\delta'>\delta$ is a suitable constant and the sets $\mathcal{R}^d_t$ are as in \eqref{illust3}. Since $k\geq \kappa_s=2^{2D(s+1)^2}$, it is easy to see that the cutoff functions $\eta_{\leq\delta' k}(2^k\circ(\xi^{(1)}-a/q))$ have disjoint supports and the multipliers $\Psi_{k,s}^{\rm low}, \Psi_{k,s,t}, \Psi_{k}^c$ take values in the interval $[0,1]$. 

We then define the kernels $L_{k,s}^{\rm low}, L_{k,s,t}, L_{k}^c:\Z^d\to\C$ by
\begin{equation}\label{illust15}
L_\ast(g^{(1)})=\phi_k^{(1)}(g^{(1)})\int_{\T^d}\ex(g^{(1)}{.}\xi^{(1)})S_k(\xi^{(1)})\Psi_\ast(\xi^{(1)})\,d\xi^{(1)},
\end{equation}
where $\Psi_\ast\in\{\Psi_{k,s}^{\rm low}, \Psi_{k,s,t}, \Psi_{k}^c\}$, and, finally, our main kernels $G_{k,s}^{\rm low}, G_{k,s,t}, G_{k,s}^c:\Z^d\to\C$ by
\begin{equation}\label{illust16}
G_\ast(g):=L_\ast(g^{(1)})N_{k,s}(g^{(2)}).
\end{equation}

The estimates we prove at this stage are the {\it{second minor arcs estimate}},
\begin{equation}\label{illust20}
\|f\ast G_{k,s}^c\|_{\ell^2(\G_0)}\lesssim  2^{-k/D^2}\|f\|_{\ell^2(\G_0)}
\end{equation}
for any $s\geq 0$, $k\geq 2^{2D(s+1)^2}$, and $f\in\ell^2(\G_0)$, and the {\it{second transition estimate}},
\begin{equation}\label{illust21}
\big\|\sup_{\max(\kappa_s,t/\delta)\leq k\leq \kappa_t}|f\ast G_{k,s,t}|\big\|_{\ell^2(\G_0)}\lesssim 2^{-t/D^2}\|f\|_{\ell^2(\G_0)}
\end{equation}
for any $s\ge 0$, $t\geq Ds+D$, and $f\in\ell^2(\G_0)$, where $\kappa_t:=2^{2D(t+1)^2}$. 

The proofs of these estimates are similar to the proofs of the corresponding first stage estimates \eqref{illust8}--\eqref{illust9}, using high order $T^\ast T$ arguments. Surprisingly, instead of using the non-commutative oscillatory sums estimates in Proposition \ref{minarcs}, we only use the classical ones from Proposition \ref{minarcscom} here. We emphasize, however, that the underlying nilpotent structure is very important and that these estimates are only possible after performing the two reductions in the first stage, namely, the restriction to major arcs corresponding to denominators $\simeq 2^s$ and the restriction to parameters $k\geq \kappa_s$.
We finally remark that, if we applied the circle method simultaneously to both central and non-central variables, we would encounter serious difficulties that do not
allow for an efficient control of the phase functions arising in the corresponding exponential sums and oscillatory integrals, especially on major arcs.

\medskip
{\it{Final stage.}} After these reductions, it remains to bound the contributions of the ``major arcs" in both the central and the non-central variables. More precisely, we prove the bounds 
\begin{equation}\label{illust24}
\begin{split}
&\big\|\sup_{k\geq \kappa_s}|f\ast G^{\rm low}_{k,s}|\big\|_{\ell^2(\G_0)}\lesssim  2^{-s/D^2}\|f\|_{\ell^2(\G_0)},\\
&\big\|\sup_{k\geq \kappa_t}|f\ast G_{k,s,t}|\big\|_{\ell^2(\G_0)}\lesssim  2^{-t/D^2}\|f\|_{\ell^2(\G_0)},
\end{split}
\end{equation}
for any $s\ge 0$, $t\geq Ds+D$, and $f\in\ell^2(\G_0)$.

The main idea here is different: we write the kernels $G^{\rm low}_{k,s}$ and $G_{k,s,t}$ as tensor products of two components up to acceptable errors. One of these components is essentially a maximal average operator on a continuous group, which can be analyzed using the classical method of Christ \cite{Ch}. The other component is an arithmetic operator-valued analogue of the classical Gauss sums, which generates the key exponential factors $2^{-s/D^2}$ and $2^{-t/D^2}$ in \eqref{illust24}.

\subsubsection{$\ell^p$ theory and variation norms} The problem of passing from $\ell^2$ estimates to $\ell^p$ estimates in the context of discrete polynomial averages has been investigated extensively in recent years (see, for example, \cite{MST2} and the references therein), and we will be somewhat brief on this.

The full $\ell^p(\G_0)$ bounds in Theorem \ref{thm:main1} rely on first proving $\ell^2(\G_0)$ bounds. In fact, we first establish \eqref{eq:49} for $p=2$ and $\rho>2$, by following essentially the steps described above. Then we use the positivity of the operators $M_N^\chi$ (i.e. $M_{N}^{\chi}(f)\ge0$ if $f\ge0$) to prove the maximal operator bounds \eqref{eq:39} for all $p\in(1,\infty]$. Finally, we use vector-valued interpolation between the bounds \eqref{eq:49} with $p=2$ and $\rho>2$ and \eqref{eq:39} with $p\in(1,\infty]$ to complete the proof of Theorem \ref{thm:main1}.

A new ingredient, which is interesting in its own right, is Proposition \ref{lem:max_sh}, which provides 
$\ell^p(\mathbb H_Q)$ bounds for the so-called shifted maximal inequality, see \cite[Section 5.10, p. 78]{StBook} as well as \cite[Section 4.2.4, p. 148]{MS} for similar results in the commutative setting. Tools of these kinds are not apparent in the commutative theory as the delta function \eqref{eq:62} correctly detects the underlying convolution structure. In our case, as we mentioned above, there is no delta function that would be compatible with the convolution structure on $\G_0$. This is a serious obstruction, which forced us to establish 
Proposition \ref{lem:max_sh}.  This completes the outline of the proof of Theorem \ref{thm:main1}.

\subsubsection{General nilpotent groups} The primary goal is, of course, to establish the full Conjecture \ref{con:1} in the linear $m=1$ case for arbitrary invertible measure-preserving transformations $T_1,\ldots, T_d$ that generate a nilpotent group of any step $k\ge 2$. The iterative argument we have  outlined in Section \ref{outline} could, in principle, be extended to higher step groups, at least as long as the group and the polynomial sequence have suitable ``universal"-type structure, as one could try to go down along the central series of the group and prove minor arcs and transition estimates at every stage.

However, this is only possible if one can prove suitable analogues of the nilpotent Weyl's inequalities in Proposition \ref{minarcs} on general nilpotent groups of step $k\ge3$. The point is to have a small (not necessarily optimal, but nontrivial) gain for bounds on oscillatory sums over many variables, corresponding to the kernels of high power $(T^\ast T)^r$ operators, whenever frequencies are restricted to the minor arcs. In our case, the formulas are explicit, see the identities \eqref{pro0.3.5}, and we can use ideas of Davenport \cite{Dav} and Birch \cite{Bi}  for Diophantine forms in many variables to control the induced oscillatory sums, but the analysis seems to be more complicated for the higher step nilpotent groups.  This is an interesting problem in its own right, corresponding to Waring-type problems on nilpotent groups, which may be interpreted as a question about solutions of suitable systems of Diophantine equations induced by the moment curve on $\G_0$. 
A qualitative variant of the Waring problem  in the context of nilpotent groups was recently investigated in \cite{HU1, HU2}, see also the references given there. 

Nevertheless, we hope that the methods of the proof of Theorem \ref{thm:main1} will be useful to establish a quantitative variant of the Waring problem on $\G_0$  in the spirit of the asymptotic formula of Hardy and Littlewood as in the classical Waring problem. 
We plan to investigate this question as well as its connections with Conjecture \ref{con:1} in the near future.

\subsection{Acknowledgements} 

This work was started in collaboration with Steve Wainger. The authors would like to thank him for his mentorship and friendship over many years and for many inspiring discussions on this topic. We also thank Bartosz Langowski for reading the manuscript at the very early stages of our work.  Finally, we thank the referees for careful reading of the manuscript and useful remarks that led to the improvement of the presentation.

\subsection{Organization} In Section 2, we summarize our main notation and collect some important lemmas. In Section 3, we show how to use the conclusions of Theorem \ref{thm:main1} to prove Theorem \ref{thm:main}. In Section 4, we outline the main $\ell^2(\G_0)$ argument in the proof of Theorem \ref{thm:main1} and divide this argument into five lemmas. In Sections 5, 6, 7, and 8, we prove these  lemmas, starting with the minor arcs estimates in Lemmas \ref{MinArc1} and \ref{MinArc2}, the major arcs estimates in Lemma \ref{MajArc2}, and the (more difficult) transition estimates in Lemmas \ref{MajArc1} and \ref{MajArc3}. In Section 9, we prove the maximal $\ell^p(\G_0)$ estimates \eqref{eq:39}, $p\in(1,\infty)$, using some of the more technical estimates in Appendices A and B.

\section{Notation and preliminaries}\label{notation}

In this section we set up most of our notation and state some important lemmas that will be used in the rest of the paper.

\subsection{Basic notation}  The sets of positive integers and nonnegative
integers will be denoted by $ \Z_+:=\{1, 2, \ldots\}$ and
$\N:=\{0,1,2,\ldots\}$. For $d\in\Z_+$ the sets $\Z^d$, $\R^d$, $\C^d$ and $\T^d:=\R^d/\Z^d$
have standard meaning. We denote $\R_+:=(0, \infty)$ and  $\Z_q:=\{1, \ldots, q\}$ for $q\in\Z_+$.

For any $x\in\R$ we let $\lfloor x \rfloor$ denote its integer part, $\lfloor x \rfloor: = \max\{ n \in \Z : n \le x \}$. For any $a\in\mathbb{C}^d$
we will use the Japanese bracket notation $\langle a\rangle:=(1+|a|^2)^{1/2}$. 
For any sequence $(a_k)_{k\in\Z}$
of complex numbers we define the difference operator by
\begin{equation}\label{deltaak}
\Delta_ka_k:=a_{k+1}-a_k.
\end{equation}

We use $\ind{A}$ to denote the indicator function of a set $A$. We let $C>0$ denote general constants which may
change from occurrence to occurrence. For two nonnegative quantities
$A, B$ we write $A \lesssim B$ if there is an absolute constant $C>0$
such that $A\le CB$. We will write $A \simeq B$ when
$A \lesssim B\lesssim A$.  For two quantities $A, B$ we will use $A\ll B$
to indicate that there is a small constant $C>0$ such that
$|A|\le CB$.   We will write $\lesssim_{\delta}$ or
$\simeq_{\delta}$ or $\ll_{\delta}$ to emphasize that the implicit
constants may depend on the parameter $\delta$.

\subsubsection{Function spaces}
For an open set $U\subseteq \R^d$ let $C(U)$ denote the space of
continuous functions $f:U\to \C$. Let $C^{n}(U)\subset C(U)$ denote the space
of continuous functions $f$ on $U$ whose partial derivatives of order
$\le n\in\Z_+$ all exist and are continuous, and 
$C^{\infty}(U):=\bigcap_{n\in\Z_+}C^{n}(U)$. The partial derivatives of a function $f:\R^d\to\C$ will be denoted by
$\partial_{x_j}f=\partial_j f$; for any multi-index
$\alpha\in\N^d$ let $\partial^{\alpha}f$ denote the derivative operator 
$\partial^{\alpha_1}_1\cdots \partial^{\alpha_d}_df$
of total order $|\alpha|:=\alpha_1+\ldots+\alpha_d$.

Given a measure space $Y$ we let $L^p(Y)$, $p\in[1,\infty]$, denote the standard Lebesgue spaces of complex-valued functions on $Y$. These spaces can be extended to functions taking values in a finite
dimensional normed vector space $(B, \|\cdot\|_B)$,
\begin{align*}
L^{p}(Y;B)
:=\big\{F:Y\to B\text{ measurable}:\,\|F\|_{L^{p}(Y;B)} \coloneqq \left\|\|F\|_B\right\|_{L^{p}(Y)}<\infty\big\}.
\end{align*}
In our case we will usually have $X=\G_0^{\#}$ or $X=\R^d$ or
$X=\T^d$ equipped with the Lebesgue measure, and $X=\G_0$ or  $X=\Z^d$ endowed with the
counting measure. If $X$ is endowed with counting measure we will shorten $L^p(X)$ to $\ell^p(X)$ and $L^p(X; B)$ to $\ell^p(X; B)$.

\subsubsection{The Fourier transform}  
The standard inner product on $\R^m$, $m\geq 1$, is denoted by 
\begin{align}
\label{eq:63}
x.\xi:=\sum_{k=1}^mx_k\xi_k 
\end{align}
for every $x=(x_1,\ldots, x_m),\,\xi=(\xi_1, \ldots, \xi_m)\in\R^m$. Letting $\ex(z):=e^{2\pi i z}$, $z\in\C$, the (Euclidean) Fourier transform and inverse Fourier transform of functions $f\in L^1(\R^m)$ will be denoted by
\begin{align*}
\mathcal F_{\R^m} f(\xi) &:= \int_{\R^m} f(x) \ex(-x.\xi)\ dx,\qquad\mathcal F_{\R^m}^{-1} f(x):= \int_{\R^m} f(\xi) \ex(x.\xi)\ d\xi.
 \end{align*}
 We shall also abbreviate $\widehat{f}=\mathcal F_{\R^m} f$.

\subsection{$\rho$-variations}
For any family $(a_t: t\in\mathbb I)$ of elements of $\C$
indexed by a totally ordered set $\mathbb I$, and any exponent
$1 \leq \rho < \infty$, the $\rho$-variation seminorm is defined by
\begin{align}
\label{eq:44}
V^{\rho}(a_{t})_{t\in\mathbb I}=V^{\rho}( a_t: t\in\mathbb I):=
\sup_{J\in\Z_+} \sup_{\substack{t_{0}<\dotsb<t_{J}\\ t_{j}\in\mathbb I}}
\Big(\sum_{j=0}^{J-1}  |a(t_{j+1})-a(t_{j})|^{\rho} \Big)^{1/\rho},
\end{align}
where the supremum is taken over all finite increasing sequences in
$\mathbb I$.

It is easy to see that $\rho\mapsto V^{\rho}$ is non-increasing, and
for every $t_0\in \mathbb I$ one has
\begin{align}
\label{eq:36}
\sup_{t\in \mathbb I}|a_t|\le |a_{t_0}|+  V^\rho( a_t: t\in\mathbb I)\le \sup_{t\in \mathbb I}|a_t| +  V^\rho( a_t: t\in\mathbb I)=:\widetilde{V}^\rho( a_t: t\in\mathbb I). 
\end{align}
Notice that $\widetilde{V}^\rho$ clearly defines a norm on the space of functions from $\mathbb I$ to $\C$. Moreover
\begin{align}
\label{eq:37}
\widetilde{V}^{\rho}( a_t: t\in\mathbb I)\lesssim
\widetilde{V}^{\rho}( a_t: t\in\mathbb I_1)+\widetilde{V}^{\rho}( a_t: t\in\mathbb I_2)
\end{align}
whenever 
$\mathbb I=\mathbb I_1 \cup \mathbb I_2$ is an ordered
partition of $\mathbb I$, that is 
$\max \mathbb I_1 = \min \mathbb I_2$.
Finally, if $\mathbb I$ is at most countable, then
\begin{align}
\label{eq:38}
\widetilde{V}^{\rho}( a_t: t\in\mathbb I)\lesssim \Big(\sum_{t\in \mathbb I}|a_t|^\rho\Big)^{1/\rho}.
\end{align}

We also recall from \cite[Lemma 2.5]{MSZ2} the Rademacher--Menshov inequality, which asserts that for any 
$2\le \rho<\infty$ and $j_0, m\in\N$ so that $j_0< 2^m$  and any sequence of complex numbers $(\mathfrak a_k: k\in\N)$  we have 
\begin{equation}\label{maj1}
V^{\rho}( \mathfrak a_j: j_0\leq j\leq 2^m)\leq \sqrt{2}\sum_{i=0}^m\bigg(\sum_{j\in[j_02^{-i},2^{m-i}-1]\cap\Z}\Big|\mathfrak a_{(j+1)2^i}-\mathfrak a_{j2^i})\Big|^2\bigg)^{1/2}.
\end{equation}

Finally, for every family of measurable functions $(a_{t}: t\in\mathbb I)\subseteq \C$ by a slight abuse of notation we continue to write
\[
\|V^{\rho}(a_{t}: t\in\mathbb I)\|_{L^p(X)}=\|(a_{t})_{t\in\mathbb I}\|_{L^p(X; V^{\rho})}.
\]

\subsection{Products and convolutions on the group $\G_0$}\label{HighProducts}
We now establish formulas that will be repeatedly used  in the proof of Theorem \ref{thm:main1}.

Many of our $\ell^2(\G_0)$ estimates will be  based on high order $T^\ast T$ arguments. Let $S_1,T_1,\ldots,S_r,T_r:\ell^2(\G_0)\to\ell^2(G_0)$ be  convolution operators defined by some $\ell^1(\G_0)$ kernels $L_1,K_1,\ldots, L_r,K_r:\G_0\to\C$, i.e. $S_j f = f \ast L_j$ and $T_j f = f \ast K_j$ for $j \in \{1,\ldots,r\}$. Then the adjoint operators $S_1^\ast,\ldots, S_r^\ast$ are also convolution operators, defined by the kernels $L_1^\ast,\ldots, L_r^\ast$ given by $L_j^\ast(g):=\overline{L_j(g^{-1})}$. Moreover, using \eqref{convoDef}, for any $f\in\ell^2(\G_0)$ and $x\in\G_0$, we have
\begin{equation}\label{pro11.1}
\begin{split}
(S_1^\ast T_1\ldots S_r^\ast T_rf)(x)
=\sum_{h_1,g_1,\ldots,h_r,g_r\in\G_0}\Big\{\prod_{j=1}^rL_j^\ast(h_j)K_j(g_j)\Big\}f(g_r^{-1}\cdot h_r^{-1}\cdot\ldots\cdot g_1^{-1}\cdot h_1^{-1}\cdot x).
\end{split}
\end{equation}
In other words $(S_1^\ast T_1\ldots S_r^\ast T_rf)(x)=(f\ast A^r)(x)$,
where the kernel $A^r$ is given by
\begin{equation}\label{pro11}
A^r(y):=\sum_{h_1,g_1,\ldots,h_r,g_r\in\G_0}\Big\{\prod_{j=1}^r\overline{L_j(h_j)}K_j(g_j)\Big\}\ind{\{0\}}(g_r^{-1}\cdot h_r\cdot\ldots\cdot g_1^{-1}\cdot h_1\cdot y).
\end{equation}

To use these formulas we decompose $h_j=(h_j^{(1)},h_j^{(2)}),\,g_j=(g_j^{(1)},g_j^{(2)})$ as in \eqref{picu4}. Then
\begin{equation}\label{pro15}
[h_1^{-1}\cdot g_1\cdot\ldots\cdot h_r^{-1}\cdot g_r]^{(1)}=\sum_{1\leq j\leq r}(-h_j^{(1)}+g_j^{(1)}),
\end{equation}
\begin{equation}\label{pro15.5}
\begin{split}
[h_1^{-1}\cdot g_1\cdot\ldots\cdot h_r^{-1}\cdot g_r]^{(2)}&=\sum_{1\leq j\leq r}\big\{-(h_j^{(2)}-g_j^{(2)})+R_0(h_j^{(1)},h_j^{(1)}-g_j^{(1)})\big\}\\
&+\sum_{1\leq l<j\leq r}R_0(-h_l^{(1)}+g_l^{(1)},-h_j^{(1)}+g_j^{(1)}),
\end{split}
\end{equation}
as a consequence of applying \eqref{picu4.2} inductively. In most of our applications the operators $S_1,T_1,\ldots,S_r,T_r$ are equal and defined by a kernel $K$ that has product structure, i.e.
\begin{equation}\label{pro15.6}
S_1f=T_1f=\ldots=S_rf=T_rf=f\ast K,\qquad K(g)=K(g^{(1)},g^{(2)})=K^{(1)}(g^{(1)})K^{(2)}(g^{(2)}).
\end{equation}
In this case we can derive an additional formula for the kernel $A^r$. We use the identity
\begin{equation*}
\ind{\{0\}}(x^{-1}\cdot y)=\int_{\T^d\times\T^{d'}}\ex((y^{(1)}-x^{(1)}){.}\theta^{(1)})\ex((y^{(2)}-x^{(2)}){.}\theta^{(2)})\,d\theta^{(1)}d\theta^{(2)}
\end{equation*}
and the formula \eqref{pro11} to write
\begin{equation}\label{pro15.7}
A^r(y)=\int_{\T^d\times\T^{d'}}\ex\big(y^{(1)}.\theta^{(1)}\big)\ex\big(y^{(2)}.\theta^{(2)}\big)\Sigma^r\big(\theta^{(1)},\theta^{(2)}\big)\,d\theta^{(1)}d\theta^{(2)},
\end{equation}
where 
\begin{equation*}
\begin{split}
\Sigma^r\big(\theta^{(1)},\theta^{(2)}\big)&:=\sum_{h_j,g_j\in\G_0}\Big\{\prod_{j=1}^r\overline{K(h_j)}K(g_j)\Big\}
\prod_{i=1}^2\ex\big(-[h_1^{-1}\cdot g_1\cdot\ldots\cdot h_r^{-1}\cdot g_r]^{(i)}{.}\theta^{(i)}\big).
\end{split}
\end{equation*}

Recalling the product formula \eqref{pro15.6} we can write
\begin{equation}\label{pro15.9}
\Sigma^r\big(\theta^{(1)},\theta^{(2)}\big)=\Pi^r\big(\theta^{(1)},\theta^{(2)}\big)\Omega^r\big(\theta^{(2)}\big),
\end{equation}
for any $(\theta^{(1)},\theta^{(2)})\in\T^d\times\T^{d'}$,
where
\begin{equation}\label{pro15.10}
\begin{split}
\Pi^r&\big(\theta^{(1)},\theta^{(2)}\big):=\sum_{h_j^{(1)},g_j^{(1)}\in\Z^d}\Big\{\prod_{j=1}^r\overline{K^{(1)}(h_j^{(1)})}K^{(1)}(g_j^{(1)})\Big\}\ex\big(\theta^{(1)}{.}\sum_{1\leq j\leq r}(h_j^{(1)}-g_j^{(1)})\big)\\
&\times\ex\Big(-\theta^{(2)}{.}\big\{\sum_{1\leq j\leq r}R_0(h_j^{(1)},h_j^{(1)}-g_j^{(1)})+\sum_{1\leq l<j\leq r}R_0(-h_l^{(1)}+g_l^{(1)},-h_j^{(1)}+g_j^{(1)})\big\}\Big)
\end{split}
\end{equation}
and
\begin{equation}\label{pro15.11}
\begin{split}
\Omega^r\big(\theta^{(2)}\big)&:=\sum_{h_j^{(2)},g_j^{(2)}\in\Z^{d'}}\Big\{\prod_{j=1}^r\overline{K^{(2)}(h_j^{(2)})}K^{(2)}(g_j^{(2)})\Big\}\ex\big(\theta^{(2)}{.}\sum_{1\leq j\leq r}(h_j^{(2)}-g_j^{(2)})\big)\\
&=\Big|\sum_{g^{(2)}\in\Z^{d'}}K^{(2)}(g^{(2)})\ex\big(-\theta^{(2)}{.}g^{(2)}\big)\Big|^{2r}.
\end{split}
\end{equation}

\subsection{Exponential sums and oscillatory integrals}
We will often use the following estimates, which follow easily using the Poisson summation formula and integration by parts.

\begin{lemma}\label{RapDe}
Assume that $m, M\in\Z_+$ satisfy $M\geq m+1$, and $f:\R^m\to\mathbb{C}$ is a $C^M(\R)$ compactly supported function. Then, for any $\xi\in[-1/2,1/2]^m$, we have
\begin{equation}\label{RapDe2}
\Big|\sum_{n\in\Z^m}f(n)\ex(n.\xi)-\int_{\R^m}f(x)\ex(x.\xi)\,dx\Big|\lesssim_{M}\int_{\R^m}\sum_{n=1}^m|\partial_n^Mf(x)|\,dx.
\end{equation}
As a consequence, for any $j\in\{1,\ldots,m\}$ we have
\begin{equation}\label{RapDe1}
\Big|\sum_{x\in\Z^m}f(x)\ex(x.\xi)\Big|\lesssim_{M}|\xi_j|^{-M}\int_{\R^m}|\partial_j^Mf(x)|\,dx+\int_{\R^m}\sum_{n=1}^m|\partial_n^Mf(x)|\,dx.
\end{equation}
\end{lemma}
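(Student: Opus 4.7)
The plan is to derive both \eqref{RapDe2} and \eqref{RapDe1} from a direct application of the Poisson summation formula combined with standard integration-by-parts decay estimates on the Fourier transform of $f$. Since $f$ is compactly supported and of class $C^M$ with $M \geq m+1$, the auxiliary function $g(x) := f(x)\ex(x.\xi)$ is smooth and compactly supported, so Poisson summation applies and gives
\[
\sum_{n\in\Z^m} f(n)\ex(n.\xi) = \sum_{k\in\Z^m}\widehat{g}(k) = \sum_{k\in\Z^m}\widehat{f}(k-\xi),
\]
where the last equality uses the modulation rule $\widehat{g}(\eta)=\widehat{f}(\eta-\xi)$. The $k=0$ term is precisely $\widehat{f}(-\xi)=\int_{\R^m}f(x)\ex(x.\xi)\,dx$, matching the integral subtracted on the left-hand side of \eqref{RapDe2}. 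Hence it suffices to control the tail $\sum_{k\neq 0}|\widehat{f}(k-\xi)|$.

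For this I will use the pointwise bound obtained from integrating by parts $M$ times in the $x_j$ variable: whenever $\eta_j\neq 0$ one has the standard inequality
\[
|\widehat{f}(\eta)| \leq (2\pi|\eta_j|)^{-M}\int_{\R^m}|\partial_j^M f(x)|\,dx,
\]
valid because all boundary terms vanish by compact support. Choosing the most favourable $j$ and using $\max_j|\eta_j|\geq |\eta|/\sqrt{m}$ yields the isotropic bound $|\widehat{f}(\eta)|\lesssim_M |\eta|^{-M}\sum_{n=1}^m\int|\partial_n^M f|$.

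The key summability step is then the elementary observation that for any $k\in\Z^m\setminus\{0\}$ and $\xi\in[-1/2,1/2]^m$ we have $|k-\xi|\gtrsim 1+|k|$ with an absolute constant, as one checks by splitting into the cases $|k|_\infty=1$ and $|k|_\infty\geq 2$. Consequently
\[
\sum_{k\in\Z^m\setminus\{0\}}|\widehat{f}(k-\xi)|\lesssim_M \bigg(\sum_{k\in\Z^m\setminus\{0\}}(1+|k|)^{-M}\bigg)\sum_{n=1}^m\int_{\R^m}|\partial_n^M f(x)|\,dx,
\]
and the series in parentheses converges precisely because $M\geq m+1>m$. This establishes \eqref{RapDe2}. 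Finally, \eqref{RapDe1} is obtained by combining \eqref{RapDe2} with the triangle inequality and the single-variable integration-by-parts bound $|\widehat{f}(-\xi)|\leq (2\pi|\xi_j|)^{-M}\int|\partial_j^M f|$, applied in the $j$-th coordinate only. No step poses a real obstacle; the only mild care needed is the uniform lower bound for $|k-\xi|$ which guarantees absolute convergence of the Poisson tail.
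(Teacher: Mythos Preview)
Your proof is correct and follows exactly the approach the paper indicates: the paper simply states that the estimates ``follow easily using the Poisson summation formula and integration by parts,'' and your proposal carries out precisely this computation in full detail. One tiny wording slip: $g(x)=f(x)\ex(x.\xi)$ is $C^M$ rather than ``smooth,'' but this is immaterial since $C^M$ with $M\geq m+1$ already gives enough Fourier decay for Poisson summation to apply with absolute convergence on both sides.
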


Many of our arguments will rely on estimates of exponential sums and oscillatory integrals involving polynomial phases. We record first  some classical Weyl-type estimates, which are proved for example in \cite[Proposition 1]{SW0}:

\begin{proposition}\label{minarcscom} (i) Assume that $P\geq 1$ is an integer and $\phi_P:\mathbb{R}\to\mathbb{R}$ is a $C^1(\R)$ function satisfying
\begin{equation}\label{comm1}
|\phi_P|\leq \ind{[-P,P]},\qquad \int_{\mathbb{R}}\big|\phi'_P(x)\big|\,dx\leq 1.
\end{equation}
Assume that $\varepsilon>0$ and $\theta=(\theta_1,\ldots,\theta_d)\in\R^d$ has the property that there is $l\in\{1,\ldots,d\}$ and an irreducible fraction $a/q\in\mathbb{Q}$ with $q\in\Z_+$, such that
\begin{equation}\label{comm3}
|\theta_{l}-a/q|\leq 1/q^2\,\,\text{ and }\,\,q\in[P^{\varepsilon},P^{l-\varepsilon}].
\end{equation}
Then there is a constant $\overline{C}=\overline{C}_d\geq 1$ such that 
\begin{equation}\label{comm4}
\Big|\sum_{n\in\mathbb{Z}}\phi_P(n)
\ex\big( - (\theta_1n+\ldots+\theta_dn^d) \big)
\Big|\lesssim_\varep P^{1-\varepsilon/\overline{C}}.
\end{equation}

(ii) For any irreducible fraction $\theta=a/q\in(\Z/q)^d$, $a=(a_1,\ldots,a_d)\in\Z^d$, $q\in\Z_+$, we have
\begin{equation}\label{comm4.5}
\Big|q^{-1}\sum_{n\in \Z_q}
\ex\big( - (\theta_1n+\ldots+\theta_dn^d) \big)
\Big|\lesssim q^{-1/\overline{C}}.
\end{equation}
\end{proposition}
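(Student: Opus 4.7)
\medskip

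\noindent\textbf{Proof plan.} Both estimates are classical Weyl-type inequalities, so the plan is to implement the standard Weyl differencing scheme for part (i) and reduce part (ii) to the prime power case. The role of the assumption \eqref{comm3} is merely to ensure that the resulting Diophantine savings survive in one of the (intermediate) coefficients of the phase.

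For part (i), I would first pass from the smooth cutoff $\phi_P$ to the sharp cutoff $\ind{[1,P]\cap\Z}$ via Abel summation, using $\int|\phi'_P|\le 1$ to absorb the transition cost into the constant. Thus it suffices to estimate
\begin{equation*}
T(\theta):=\sum_{n=1}^{P}\ex\bigl(-(\theta_1 n+\ldots+\theta_d n^d)\bigr)
\end{equation*}
under the hypothesis that some $\theta_\ell$ admits a rational approximation $a/q$ with $\gcd(a,q)=1$ and $q\in[P^\varepsilon,P^{\ell-\varepsilon}]$. I would apply the Weyl differencing inequality $\ell-1$ times, which produces
\begin{equation*}
|T(\theta)|^{2^{\ell-1}}\lesssim P^{2^{\ell-1}-\ell}\sum_{|h_1|,\ldots,|h_{\ell-1}|\le P}\Bigl|\sum_{n\in I(h_1,\ldots,h_{\ell-1})}\ex\bigl(-\ell!\,\theta_\ell\, h_1\cdots h_{\ell-1}\,n\bigr)\Bigr|,
\end{equation*}
where $I(h_1,\ldots,h_{\ell-1})$ is a sub-interval of $[1,P]$ (all lower-order terms in the phase cancel in the multi-differenced sum and depend only on $h_j$, not on $n$, so they factor out). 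The inner geometric sum is bounded by $\min(P,\|\ell!\,\theta_\ell\, h_1\cdots h_{\ell-1}\|^{-1})$. Standard divisor-type counting, together with the rational approximation $|\theta_\ell-a/q|\le q^{-2}$ and the window $P^\varepsilon\le q\le P^{\ell-\varepsilon}$, then yields
\begin{equation*}
\sum_{|h_1|,\ldots,|h_{\ell-1}|\le P}\min\!\bigl(P,\|\ell!\,\theta_\ell\, h_1\cdots h_{\ell-1}\|^{-1}\bigr)\lesssim_\varep P^{\ell-1}\bigl(P^{-\varepsilon}+q^{-1}+qP^{-\ell}\bigr)\log P\lesssim_\varep P^{\ell-1-\varepsilon/2},
\end{equation*}
and taking $2^{\ell-1}$-th roots gives the claimed saving $P^{1-\varepsilon/\overline{C}_d}$ with $\overline{C}_d$ depending on $d\ge\ell$.

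For part (ii), the fraction $a/q$ being irreducible means $\gcd(a_1,\ldots,a_d,q)=1$. By the Chinese remainder theorem, the complete sum factors multiplicatively as $q$ ranges over its prime power divisors $q=\prod p^{k_p}$, and since the gcd is $1$, for each prime power $p^{k_p}\| q$ at least one coefficient $a_j$ is coprime to $p$. So it suffices to prove, for $q=p^k$ a prime power with some $a_j$ coprime to $p$, that
\begin{equation*}
\Bigl|p^{-k}\sum_{n\in \Z_{p^k}}\ex\bigl(-(a_1n+\ldots+a_d n^d)/p^k\bigr)\Bigr|\lesssim_d p^{-k/\overline{C}_d}.
\end{equation*}
This is a standard estimate, provable either by Hua's lemma (which yields $p^{-k/d}$ in the prime case and $p^{-k(1-1/d)/d}$ or similar for prime powers) or by a short differencing argument that parallels part (i) but in the $p$-adic setting. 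Reassembling via multiplicativity produces the global bound $q^{-1/\overline{C}_d}$.

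The main technical obstacle is bookkeeping in the Weyl differencing of part (i): one must keep track of which polynomial coefficient of the original phase enters as the ``leading'' coefficient of each differenced linear form, and ensure that the Diophantine approximation hypothesis on $\theta_\ell$ propagates correctly to $\ell!\,\theta_\ell\, h_1\cdots h_{\ell-1}$. Here it is crucial that $\ell$ is the \emph{specific} index in \eqref{comm3} and that $\ell\le d$, so that differencing $\ell-1$ times (rather than $d-1$ times) still collapses the phase to a linear one with the useful coefficient $\theta_\ell$ intact; the lower-order $\theta_j$ with $j<\ell$ vanish after $\ell-1$ differencings, and the higher-order $\theta_j$ with $j>\ell$ become part of a polynomial in $h_1,\ldots,h_{\ell-1}$ that does not interact with the summation variable $n$. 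Once this is set up correctly, the rest of the argument is the classical divisor sum estimate.
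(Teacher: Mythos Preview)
The paper itself does not give a proof of this proposition; it simply cites \cite[Proposition~1]{SW0}. So the comparison is between your sketch and the classical argument recorded there.

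Your outline for part~(ii) is correct: the Chinese remainder factorisation together with a prime-power estimate of Hua type is exactly the standard route, and assembling the local savings gives the global bound $q^{-1/\overline{C}}$.

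Part~(i), however, contains a real gap when $\ell<d$. After $\ell-1$ Weyl differencings the phase is \emph{not} linear in $n$: each term $\theta_j n^j$ with $j>\ell$ becomes a polynomial of degree $j-\ell+1\ge 2$ in $n$ (with coefficients depending on $h_1,\ldots,h_{\ell-1}$), so the inner sum is not geometric. For instance with $d=3$, $\ell=2$, one differencing turns $\theta_1 n+\theta_2 n^2+\theta_3 n^3$ into $3h_1\theta_3\,n^2+(2h_1\theta_2+3h_1^2\theta_3)\,n+\text{const}$, which is quadratic in $n$. Your claim that ``the higher-order $\theta_j$ with $j>\ell$ become part of a polynomial in $h_1,\ldots,h_{\ell-1}$ that does not interact with the summation variable $n$'' is therefore false, and the $\min(P,\|\cdot\|^{-1})$ bound does not apply.

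The classical fix (and what underlies the Stein--Wainger proposition) is a downward induction on the degree. Apply Dirichlet to the \emph{leading} coefficient $\theta_d$: if its denominator lies in $[P^{\varepsilon'},P^{d-\varepsilon'}]$ one invokes the standard leading-coefficient Weyl inequality directly. Otherwise $\theta_d$ is major-arc, i.e.\ $\theta_d=a_d/q_d+\beta_d$ with $q_d<P^{\varepsilon'}$ and $|\beta_d|$ small; splitting into residue classes modulo $q_d$ and using partial summation (the factor $\ex(-\beta_d n^d)$ has small variation) reduces the problem to a polynomial of degree $d-1$ whose $\ell$-th coefficient still carries the original Diophantine information, with only a polynomial-in-$P^{\varepsilon'}$ loss. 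Iterating until the degree reaches $\ell$ puts $\theta_\ell$ in leading position, where your Weyl-differencing argument is valid. This induction is the missing ingredient; once it is in place the rest of your bookkeeping (Abel summation to remove $\phi_P$, divisor-sum estimate, taking $2^{\ell-1}$-th roots) goes through.
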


We will also need non-commutative versions of these Weyl estimates. With the notation in Section \ref{setup}, for $r\in\Z_+$ let $D,\widetilde{D}:\mathbb{R}^r\times\mathbb{R}^r\to\G_0^\#$, given by
\begin{equation}\label{pro0.3.5}
\begin{split}
&D((n_1,\ldots,n_r),(m_1,\ldots,m_r)):=A_0(n_1)^{-1}\cdot A_0(m_1)\cdot\ldots\cdot A_0(n_r)^{-1}\cdot A_0(m_r),\\
&\widetilde{D}((n_1,\ldots,n_r),(m_1,\ldots,m_r)):=A_0(n_1)\cdot A_0(m_1)^{-1}\cdot\ldots\cdot A_0(n_r)\cdot A_0(m_r)^{-1}.
\end{split}
\end{equation}
By definition, we have
\begin{equation*}
[A_0(n)]_{l_1l_2}=\begin{cases}
n^{l_1}&\text{ if }l_2=0,\\
0&\text { if }l_2\geq 1,
\end{cases}
\qquad
[A_0(n)^{-1}]_{l_1l_2}=\begin{cases}
-n^{l_1}&\text{ if }l_2=0,\\
n^{l_1+l_2}&\text { if }l_2\geq 1.
\end{cases}
\end{equation*}
Thus, using \eqref{pro15} and \eqref{pro15.5}, for $x=(x_1,\ldots,x_r)\in\mathbb{R}^r$ and $y=(y_1,\ldots,y_r)\in\mathbb{R}^r$ one has
\begin{equation}\label{pro0.4}
[D(x,y)]_{l_1l_2}=\begin{cases}
\sum\limits_{j=1}^r(y_j^{l_1}-x_j^{l_1})&\text{ if }l_2=0,\\
\sum\limits_{1\leq j_1<j_2\leq r}(y_{j_1}^{l_1}-x_{j_1}^{l_1})(y_{j_2}^{l_2}-x_{j_2}^{l_2})+\sum\limits_{j=1}^r(x_j^{l_1+l_2}-x_j^{l_1}y_j^{l_2})&\text{ if }l_2\geq 1,
\end{cases}
\end{equation}
and
\begin{equation}\label{pro0.5}
[\widetilde{D}(x,y)]_{l_1l_2}=\begin{cases}
\sum\limits_{j=1}^r(x_j^{l_1}-y_j^{l_1})&\text{ if }l_2=0,\\
\sum\limits_{1\leq j_1<j_2\leq r}(x_{j_1}^{l_1}-y_{j_1}^{l_1})(x_{j_2}^{l_2}-y_{j_2}^{l_2})+\sum\limits_{j=1}^r(y_j^{l_1+l_2}-x_j^{l_1}y_j^{l_2})&\text{ if }l_2\geq 1.
\end{cases}
\end{equation}

For $P\in\Z_+$ assume $\phi_P^{(j)},\psi_P^{(j)}:\mathbb{R}\to\mathbb{R}$, $j\in\{1,\ldots,r\}$, are $C^1(\R)$ functions with the properties
\begin{equation}\label{pro0.1}
\sup_{1\le j\le r}\big[\big|\phi_P^{(j)}\big|+\big|\psi_P^{(j)}\big|\big]\leq \ind{[-P,P]},\qquad
\sup_{1\le j\le r}\int_{\mathbb{R}}\big|[\phi^{(j)}_P]'(x)\big|+\big|[\psi^{(j)}_P]'(x)\big|\,dx\leq 1.
\end{equation}
For $\theta=(\theta_{l_1l_2})_{(l_1,l_2)\in Y_d}\in\mathbb{R}^{|Y_d|}$, $r\in\Z_+$, and $P\in \Z_+$ let
\begin{equation*}
S_{P,r}(\theta)=\sum_{n,m\in\Z^r}\ex(- D(n,m){.}\theta)\Big\{\prod_{j=1}^r\phi_P^{(j)}(n_j)\psi_P^{(j)}(m_j)\Big\}
\end{equation*}
and
\begin{equation*}
\widetilde{S}_{P,r}(\theta)=\sum_{n,m\in\Z^r}\ex(- \widetilde{D}(n,m){.}\theta)\Big\{\prod_{j=1}^r\phi_P^{(j)}(n_j)\psi_P^{(j)}(m_j)\Big\},
\end{equation*}
where $D$ and $\widetilde{D}$ are defined as in \eqref{pro0.4}--\eqref{pro0.5}. 

The following key estimates are proved in \cite[Proposition 5.1 and Lemma 3.1]{IMW}:

\begin{proposition}\label{minarcs} (i) For any $\varepsilon>0$ there is $r=r(\varepsilon,d)\in\Z_+$ sufficiently large such that for all $P\in \Z_+$ we have
\begin{equation}\label{pro0.2}
|S_{P,r}(\theta)|+|\widetilde{S}_{P,r}(\theta)|\lesssim_\varepsilon P^{2r}P^{-1/\varepsilon},
\end{equation}
provided that there is $(l_1,l_2)\in Y_d$ and an irreducible fraction $a/q\in\mathbb{Q}$, $q\in\Z_+$, such that
\begin{equation}\label{pro0.3}
|\theta_{l_1l_2}-a/q|\leq 1/q^2\text{ and }q\in[P^{\varepsilon},P^{l_1+l_2-\varepsilon}].
\end{equation}

(ii) For any irreducible fraction $a/q\in\mathbb Q$, $a=(a_{l_1l_2})_{(l_1,l_2)\in Y_d}\in\mathbb{Z}^{|Y_d|}$, $q\in\Z_+$, we define the arithmetic coefficients
\begin{equation}\label{pro0.6}
G(a/q):=q^{-2r}\sum_{v,w\in \Z_q^r} \ex\big(- D(v,w){.}(a/q) \big),\qquad \widetilde{G}(a/q):=q^{-2r}\sum_{v,w\in \Z_q^r}\ex\big( -\widetilde{D}(v,w){.}(a/q) \big).
\end{equation}
Then for any $\varepsilon>0$ there is $r=r(\varepsilon,d)\in\Z_+$ sufficiently large such that
\begin{equation}\label{pro0.7}
|G(a/q)|+|\widetilde{G}(a/q)|\lesssim_\varep q^{-1/\varepsilon}.
\end{equation}
\end{proposition}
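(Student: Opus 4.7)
The plan is to prove part (i) by exhibiting enough univariate classical Weyl sums inside $S_{P,r}(\theta)$ and $\widetilde{S}_{P,r}(\theta)$ to which Proposition \ref{minarcscom}(i) applies, and then to deduce part (ii) by running exactly the same reductions with trivial cutoffs $\phi_P^{(j)}=\psi_P^{(j)}=\ind{\Z_q}$ and $P=q$, invoking Proposition \ref{minarcscom}(ii) on the resulting one-dimensional Gauss sums. The structural input is the explicit formulas \eqref{pro0.4}--\eqref{pro0.5}: after freezing all $2r-1$ variables other than, say, $m_1$, the phase $D(n,m).\theta$ becomes a polynomial of degree $\leq d$ in $m_1$ whose coefficient of $m_1^{l}$, $l\in\{1,\ldots,d\}$, is an explicit integer-linear combination of the frequency components $\theta_{l_1l_2}$ weighted by polynomials in the frozen variables.

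I would split the argument according to whether the bad index $(l_1^*,l_2^*)$ from \eqref{pro0.3} has $l_2^*=0$ or $l_2^*\geq 1$. In the non-central case $l_2^*=0$, after freezing the other variables the coefficient of $m_1^{l_1^*}$ equals $-\theta_{l_1^*,0}$ plus a perturbation depending only on the frozen data. A Dirichlet-type counting argument over the frozen configurations produces a subset of density $\gtrsim 1-P^{-c\varepsilon}$ on which the effective leading coefficient still admits a rational approximation with denominator in the admissible dyadic range $[P^{\varepsilon/2},P^{l_1^*-\varepsilon/2}]$, so that Proposition \ref{minarcscom}(i) yields the univariate gain $P^{-\varepsilon/(2\overline{C})}$ on the $m_1$-sum. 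Iterating the same freezing argument over several of the remaining $2r-1$ variables via H\"older amplifies the savings, and $r\gtrsim_d \varepsilon^{-2}$ delivers the claimed $P^{2r}P^{-1/\varepsilon}$ in \eqref{pro0.2}. In the central case $l_2^*\geq 1$, the bad frequency multiplies the bilinear expression
\begin{equation*}
\sum_{j_1<j_2}(m_{j_1}^{l_1^*}-n_{j_1}^{l_1^*})(m_{j_2}^{l_2^*}-n_{j_2}^{l_2^*})+\sum_j(n_j^{l_1^*+l_2^*}-n_j^{l_1^*}m_j^{l_2^*}),
\end{equation*}
so after freezing all variables except $m_1$ the effective coefficient of $m_1^{l_1^*}$ equals $\theta_{l_1^*l_2^*}\sum_{j_2>1}(m_{j_2}^{l_2^*}-n_{j_2}^{l_2^*})$ plus lower-order perturbations coming from indices $(l_1^*,l_2)\ne (l_1^*,l_2^*)$. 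A second Dirichlet counting argument over the frozen variables shows that on a bulk subset this coefficient is again Diophantine-admissible in $[P^{\varepsilon/2},P^{l_1^*+l_2^*-\varepsilon/2}]$, Proposition \ref{minarcscom}(i) applies, and the same $r$-fold amplification completes the proof. The parallel formula \eqref{pro0.5} gives the same conclusion for $\widetilde{S}_{P,r}$.

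The main obstacle is the Dirichlet exceptional-set analysis in the central case $l_2^*\geq 1$. Several perturbation contributions, namely the off-diagonal bilinear terms from indices $(l_1,l_2)\ne(l_1^*,l_2^*)$, the diagonal $\sum_j(n_j^{l_1^*+l_2^*}-n_j^{l_1^*}m_j^{l_2^*})$, and cross-contributions mixing both, can in principle conspire to push the effective leading coefficient out of the admissible dyadic window of denominators. Controlling this systematically requires the Davenport--Birch philosophy for Diophantine forms in many variables alluded to in the paper's overview (following \cite{Dav,Bi}): with $r$ large enough the "bad" frozen configurations contribute only $o(P^{2r})$ to the trivial count, while the "good" ones inherit the $P^{-c\varepsilon}$ univariate gain which, after $r$-fold amplification, produces the exponent $-1/\varepsilon$ asserted in \eqref{pro0.2} and \eqref{pro0.7}.
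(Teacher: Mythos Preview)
The paper does not prove Proposition~\ref{minarcs}; it simply records that the estimates are taken from \cite[Proposition~5.1 and Lemma~3.1]{IMW}. So your proposal is an attempt to supply an independent proof, and it should be compared to what \cite{IMW} actually does.

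Your freeze-and-apply-univariate-Weyl strategy runs into a concrete obstruction in the central case $l_2^*\geq 1$. After freezing all variables except $m_1$, the coefficient of $m_1^{l_1^*}$ in $D(n,m).\theta$ is, from \eqref{pro0.4},
\[
\theta_{l_1^*,0}+\sum_{1\le l_2<l_1^*}\theta_{l_1^*,l_2}\sum_{j>1}(m_j^{l_2}-n_j^{l_2})-\sum_{l_1>l_1^*}\theta_{l_1,l_1^*}n_1^{l_1},
\]
which mixes \emph{all} the frequency components $\theta_{l_1^*,\cdot}$ and $\theta_{\cdot,l_1^*}$, not just the one you control. Calling the extra terms ``perturbations'' is misleading: they are arbitrary real numbers of size $O(1)$, and knowing only that $\theta_{l_1^*,l_2^*}$ is Diophantine does not prevent the full linear combination from being well-approximable for a positive-density set of frozen configurations. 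Your ``Dirichlet counting'' step would have to rule this out, and that is exactly where the substance of the Davenport--Birch argument lies; it is not a routine bookkeeping exercise. There is also a range mismatch: the hypothesis allows $q$ as large as $P^{l_1^*+l_2^*-\varepsilon}$, yet Proposition~\ref{minarcscom}(i) applied to a degree-$l_1^*$ polynomial in $m_1$ needs $q\le P^{l_1^*-\varepsilon'}$, so the subrange $q\in(P^{l_1^*-\varepsilon},P^{l_1^*+l_2^*-\varepsilon}]$ is not covered by your scheme at all (switching to $n_1$ raises the degree to $l_1^*+l_2^*$ but then the leading coefficient becomes a signed sum of all $\theta_{l_1,l_2}$ with $l_1+l_2=l_1^*+l_2^*$, and the same mixing problem recurs).

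What \cite{IMW} does instead is genuine Weyl differencing in the spirit of \cite{Dav,Bi}: one applies Cauchy--Schwarz repeatedly to $|S_{P,r}(\theta)|$, which replaces the polynomial phase $D(n,m).\theta$ by a \emph{multilinear} form in the differencing parameters, and then bounds the resulting sum via a geometric lemma counting the locus where that multilinear form degenerates. The minor-arc hypothesis on a single $\theta_{l_1l_2}$ enters through this degeneracy count, not through a univariate Weyl estimate on a frozen slice. This is what ``Davenport--Birch philosophy'' means here, and it is structurally different from your freeze-then-Weyl outline. Your deduction of part~(ii) from part~(i) is fine in spirit, but since part~(i) as sketched does not close, the proposal as a whole has a real gap.
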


We will also use a related integral estimate, see Lemma 5.4 in \cite{IMW}:

\begin{proposition}\label{minarcscon} Given $\varepsilon>0$ there is $r=r(\varepsilon,d)$ sufficiently large as in Proposition \ref{minarcs} such that
\begin{equation}\label{pro4.2}
\begin{split}
\Big|\int_{\R^r\times\R^r}\Big\{\prod_{j=1}^r\phi_j(x_j)\psi_j(y_j)\Big\}\ex(- D(x,y){.}\beta \big)\,dxdy\Big|\lesssim \langle\beta\rangle^{-1/\varepsilon},\\
\Big|\int_{\R^r\times\R^r}\Big\{\prod_{j=1}^r\phi_j(x_j)\psi_j(y_j)\Big\}\ex(- \widetilde{D}(x,y){.}\beta \big)\,dxdy\Big|\lesssim \langle\beta\rangle^{-1/\varepsilon},
\end{split}
\end{equation}
for any $\beta \in \mathbb{R}^{|Y_d|}$ and for any $C^1(\R)$ functions $\phi_1,\psi_1,\ldots,\phi_r,\psi_r:\R\to\mathbb{C}$ satisfying, for any $j\in\{1,\ldots,r\}$, the following bound
\begin{equation*}
|\phi_j(x)|+|\partial_x\phi_j(x)|+|\psi_j(x)|+|\partial_x\psi_j(x)|\lesssim\ind{[-1,1]}(x).
\end{equation*}
\end{proposition}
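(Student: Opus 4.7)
The plan is to reduce this continuous oscillatory integral bound to the discrete Weyl-type estimate of Proposition \ref{minarcs}(i) via Riemann sum approximation at a carefully chosen scale. Let $I(\beta)$ denote the first integral in \eqref{pro4.2}. The case $\langle\beta\rangle\leq 1$ is trivial from $L^\infty$ bounds on the compactly supported cutoffs, so I focus on $\langle\beta\rangle$ large. I would fix small parameters $\varepsilon_0 = \varepsilon_0(\varepsilon, d) > 0$ and $A = A(\varepsilon, d) > 1$ (to be tuned at the end), set $N := \lfloor\langle\beta\rangle^A\rfloor\in\Z_+$, and approximate $I(\beta)$ by the Riemann sum at lattice spacing $1/N$. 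Since the integrand is supported in $[-1,1]^{2r}$ and has $C^1$ norm $\lesssim_{r, d} \langle\beta\rangle$ (the factor $\langle\beta\rangle$ arising when a derivative hits the polynomial phase), the approximation error is $O(\langle\beta\rangle/N)$. The core step then exploits the homogeneity $[D(x/N, y/N)]_{l_1 l_2} = N^{-(l_1 + l_2)}[D(x, y)]_{l_1 l_2}$ (immediate from \eqref{pro0.4}): setting $\theta := N^{-1}\circ\beta$, the Riemann sum equals $N^{-2r} S_{N, r}(\theta)$ for rescaled cutoffs $\phi_N^{(j)}(u) := \phi_j(u/N)$, $\psi_N^{(j)}(v) := \psi_j(v/N)$, which satisfy \eqref{pro0.1} up to absolute constants.

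To invoke Proposition \ref{minarcs}(i), I would select a coordinate $(l_1^*, l_2^*) \in Y_d$ with $|\beta_{l_1^* l_2^*}| \simeq_d \langle\beta\rangle$, set $L := l_1^* + l_2^*$, and apply Dirichlet's theorem to $\theta_{l_1^* l_2^*}$ with parameter $Q := N^{L - \varepsilon_0}$, producing coprime $a, q$ with $1 \leq q \leq Q$ and $|\theta_{l_1^* l_2^*} - a/q| \leq 1/(qQ) \leq 1/q^2$. If $q \geq N^{\varepsilon_0}$, Proposition \ref{minarcs}(i) at parameter $\varepsilon_0$ (with $r := r(\varepsilon_0, d)$ sufficiently large) yields $|S_{N, r}(\theta)| \lesssim N^{2r - 1/\varepsilon_0}$. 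Otherwise $q < N^{\varepsilon_0}$, in which case $|a| \leq q|\theta_{l_1^* l_2^*}| + q/Q \lesssim N^{\varepsilon_0}\langle\beta\rangle/N^L < 1$ provided $A(L - \varepsilon_0) > 1$, forcing $a = 0$; but then $|\theta_{l_1^* l_2^*}| \leq N^{-L + \varepsilon_0}$ gives $|\beta_{l_1^* l_2^*}| \leq N^{\varepsilon_0}$, contradicting $|\beta_{l_1^* l_2^*}| \simeq N^{1/A}$ once $\varepsilon_0 < 1/A$. Taking for instance $A := 2 + 1/\varepsilon$ and $\varepsilon_0 := 1/(3A)$ satisfies both constraints, and combining the Weyl bound with the Riemann approximation error gives
\[
|I(\beta)| \lesssim N^{-1/\varepsilon_0} + \langle\beta\rangle/N \lesssim \langle\beta\rangle^{1 - A} \lesssim \langle\beta\rangle^{-1/\varepsilon}.
\]
The bound for $\widetilde{D}$ follows identically using the companion estimate for $\widetilde{S}_{N, r}$ in Proposition \ref{minarcs}(i), noting that $\widetilde{D}$ scales under the dilations $N^{-1}\circ$ in exactly the same way as $D$ by \eqref{pro0.5}.

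The main obstacle will be the Dirichlet case analysis above: the rescaling exponent $A$ must be chosen large enough that no spurious small-denominator rational approximation can arise for the dominant coordinate $\theta_{l_1^* l_2^*}$, while simultaneously keeping the Riemann error $\langle\beta\rangle^{1 - A}$ below the target $\langle\beta\rangle^{-1/\varepsilon}$. Both constraints push in the same direction and are compatible as shown above, but the balance is delicate and relies essentially on $r$ being allowed to depend on $\varepsilon$ through Proposition \ref{minarcs}(i); in particular, the decay rate from the Weyl sum must be strong enough that after paying a factor of $N \simeq \langle\beta\rangle^{1 + 1/\varepsilon}$ in the Riemann approximation we still recover the full $\langle\beta\rangle^{-1/\varepsilon}$ gain.
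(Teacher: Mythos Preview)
The paper does not give its own proof of this proposition; it simply cites Lemma 5.4 of \cite{IMW}. So there is no in-paper argument to compare against directly.

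Your reduction is correct and self-contained: you deduce the continuous estimate from the discrete Weyl bound Proposition~\ref{minarcs}(i) (also quoted from \cite{IMW}) via the homogeneity $D(x/N,y/N)=N^{-1}\circ D(x,y)$ together with a Riemann-sum approximation at scale $1/N$ with $N\simeq\langle\beta\rangle^{A}$. The Dirichlet dichotomy is handled properly: for the dominant coordinate $(l_1^*,l_2^*)$ the rescaled frequency $\theta_{l_1^*l_2^*}=\beta_{l_1^*l_2^*}/N^{L}$ satisfies $|\theta_{l_1^*l_2^*}|\simeq N^{1/A-L}$, and with $A\varepsilon_0<1<A(1-\varepsilon_0)$ the small-denominator alternative forces $a=0$ and then $|\beta_{l_1^*l_2^*}|\leq N^{\varepsilon_0}$, contradicting $|\beta_{l_1^*l_2^*}|\simeq N^{1/A}$. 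Two cosmetic points worth recording: (i) the cutoffs in Proposition~\ref{minarcs} are real-valued while those here are complex, so split into real and imaginary parts at the cost of a harmless $2^{2r}$ factor; (ii) the normalization in \eqref{pro0.1} is exactly $1$ rather than $\lesssim 1$, which is again absorbed by a constant rescaling of the cutoffs.
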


 \section{Ergodic theorems: Proof of Theorem \ref{thm:main}}\label{sec:erg}
Assuming momentarily that Theorem \ref{thm:main1} has been proved we will illustrate how to use it to  establish Theorem \ref{thm:main}. For this purpose we introduce a smoothed variant of  average \eqref{eq:40}. 

Let $d_{1}\in\Z_+$. Given any family $T_1,\ldots, T_{d_{1}}:X\to X$  of invertible measure-preserving transformations,  a measurable function $f\in L^p(X)$, $p\in[1,\infty]$, polynomials $P_1,\ldots, P_{d_{1}}\in\Z[\mathrm n]$, a real number $N\ge1$, and a  smooth function $\chi:\mathbb{R}\to[0,1]$  supported on the interval $[-2, 2]$ we can define a smoothed polynomial ergodic average $A_{N; X, \chi}^{P_{1}, \ldots, P_{d_{1}}}(f)\in L^p(X)$ by the formula
\begin{align}
\label{eq:40'}
A_{N; X, \chi}^{P_{1}, \ldots, P_{d_{1}}}(f) (x)
:=\sum_{n\in\Z}N^{-1}\chi(N^{-1}n)f(T_1^{P_{1}(n)}\cdots T_{d_{1}}^{P_{d_{1}}(n)} x), \qquad x\in X.
\end{align}

\subsection{Calder{\'o}n transference principle} We now establish a variant of the Calder{\'o}n transference principle \cite{Cald}, which will allow us to deduce maximal and $\rho$-variational estimates  for smoothed averages \eqref{eq:40'} from the corresponding estimates  for the 
averages $M_{N}^{\chi}$ along the moment curve $A_0$ on the group $\G_0$, see Theorem \ref{thm:main1}.
\begin{proposition}
\label{prop:cald}
Let $d_1\in\Z_+$ be given and let 
$T_1,\ldots, T_{d_1}:X\to X$ be a family  of  invertible measure-preserving transformations of a
$\sigma$-finite measure space $(X, \mathcal B(X), \mu)$ that generates a nilpotent group of step two. 
Let 
$P_1,\ldots, P_{d_1}\in\Z[\mathrm n]$ be such that $P_{j} (0) = 0$, $1 \le j \le d_1$, and let $d_2:=\max\{{\rm deg}P_j: j\in \{1,\ldots, d_1\}\}$.
Assume $f\in L^p(X)$ for some $1\le  p\le \infty$, and let $A_{N; X, \chi}^{P_{1}, \ldots, P_{d_1}}(f)$ be the  average defined  in \eqref{eq:40'} corresponding to 
a  smooth function $\chi:\mathbb{R}\to[0,1]$  supported on the interval $[-2, 2]$. Let $M_{N}^{\chi}$ be the average from Theorem \ref{thm:main1}.
\begin{itemize}

\item[(i)] 
If $M_{N}^{\chi}$ satisfies \eqref{eq:39} for some 
$1<p\le\infty$ then
\begin{align}
\label{eq:50}
\big\|\sup_{N\in\Z_+}|A_{N; X, \chi}^{P_{1}, \ldots, P_{d_1}}(f)|\big\|_{L^p(X)}\lesssim_{d_1, d_2, p, \chi}\|f\|_{L^p(X)}.
\end{align}
\item[(ii)]  If $M_{N}^{\chi}$ satisfies \eqref{eq:49} for some $1<p< \infty$,  $\rho>\max\big\{p, \frac{p}{p-1}\big\}$ and $\tau\in(1,2]$, then
\begin{align}
\label{eq:51}
\big\|V^{\rho}\big(A_{N; X, \chi}^{P_{1}, \ldots, P_{d_1}}(f):N\in\mathbb D_{\tau}\big)\big\|_{L^p(X)}\lesssim_{d_1, d_2, p, \rho, \tau, \chi}\|f\|_{L^p(X)},
\end{align}
where $\mathbb D_{\tau}=\{\tau^n:n\in\N\}$.
\end{itemize}
\end{proposition}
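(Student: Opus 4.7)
The plan is to deduce Proposition \ref{prop:cald} from Theorem \ref{thm:main1} by a Calder\'on-type transference adapted to the step-two nilpotent setting. The argument has three ingredients: (a) an algebraic lift realizing the polynomial sequence $n\mapsto T_1^{P_1(n)}\cdots T_{d_1}^{P_{d_1}(n)}$ as the image of the moment curve $A_0$ on an auxiliary universal group $\G_0(d)$ under a group homomorphism $\sigma$; (b) a pointwise transference identity on $\G_0$; and (c) a ball truncation followed by a Fubini integration that exploits the measure-preserving property of $\G$.

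For (a), let $\G$ denote the step-two nilpotent group generated by $T_1,\ldots,T_{d_1}$ and expand $P_j(\mathrm n)=\sum_{k=1}^{d_2}c_{j,k}\mathrm n^k$. Set $d:=2d_2$. An induction on the product rule \eqref{picu4.2} gives the factorization $A_0(n)=e_1^n\cdot e_2^{n^2}\cdots e_d^{n^d}$ in $\G_0(d)$, where $e_l$ is the first-layer generator at position $l$; a direct computation using \eqref{picu4.1} also verifies $[e_{l_1},e_{l_2}]=e_{l_1l_2}$ for $l_2<l_1$, so that $\G_0(d)$ is the free step-two nilpotent group on $d$ generators. By this universal property, any choice $U_l\in\G$ determines a unique homomorphism $\sigma:\G_0(d)\to\G$ with $\sigma(e_l)=U_l$, and a step-two Baker--Campbell--Hausdorff calculation expresses the first-layer and central coordinates of $\prod_l U_l^{n^l}$ as explicit polynomials in $n$ of degree $\le 2d_2$. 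Choosing $U_l$ with first-layer part $(c_{1,l},\ldots,c_{d_1,l})$ for $1\le l\le d_2$, and choosing $U_{d_2+1},\ldots,U_{2d_2}$ as suitable central elements of $\G$ that cancel the residual bilinear-in-$n$ central polynomial, we obtain the key identity
\begin{equation*}
\sigma(A_0(n))=T_1^{P_1(n)}\cdots T_{d_1}^{P_{d_1}(n)}\qquad\text{for every }n\in\Z.
\end{equation*}

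For (b), given $y\in X$, define the orbit lift $F_y:\G_0\to\mathbb{C}$ by $F_y(z):=f(\sigma(z)^{-1}\cdot y)$. Combining \eqref{eq:47}, the identity in (a), and the homomorphism property of $\sigma$ yields, for every $h\in\G_0$ and $N\ge 1$, the pointwise transference
\begin{equation*}
M_N^\chi(F_y)(h)=A_{N;X,\chi}^{P_1,\ldots,P_{d_1}}(f)\big(\sigma(h)^{-1}y\big),
\end{equation*}
so that the maximal function and the $\rho$-variation of $N\mapsto A_{N;X,\chi}^{P_1,\ldots,P_{d_1}}(f)(\sigma(h)^{-1}y)$ are pointwise equal, jointly in $h$ and $y$, to those of $M_N^\chi(F_y)$. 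For (c), fix a large scale $M$ and let $B_M\subset\G_0$ denote the homogeneous ball of radius $M$ with respect to the dilations $\Lambda\circ$; let $B_M^{\mathrm{int}}$ be the sub-ball of $h\in B_M$ satisfying $A_0(n)^{-1}\cdot h\in B_M$ for all $|n|\le M^{1/C}$, where $C=C(d)$ is fixed. Setting $F_y^M:=F_y\cdot\ind{B_M}$, one has $M_N^\chi(F_y^M)(h)=M_N^\chi(F_y)(h)$ for $h\in B_M^{\mathrm{int}}$ and $N\le M^{1/C}$. Applying Theorem \ref{thm:main1}(i), respectively (ii), to $F_y^M$, summing the $\ell^p(\G_0)$ bound over $h\in B_M^{\mathrm{int}}$, integrating in $y\in X$, and exchanging orders via Fubini together with the measure-preserving identity $\int_X|f(\sigma(z)^{-1}y)|^p\,d\mu(y)=\|f\|_{L^p(X)}^p$ (valid for every $z\in\G_0$ since $\sigma(z)\in\G$), we obtain
\begin{equation*}
|B_M^{\mathrm{int}}|\cdot\big\|\sup_{N\le M^{1/C}}|A_{N;X,\chi}^{P_1,\ldots,P_{d_1}}(f)|\big\|_{L^p(X)}^p\lesssim|B_M|\cdot\|f\|_{L^p(X)}^p,
\end{equation*}
and the analogous variational bound. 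The doubling geometry of $\G_0$ gives $|B_M^{\mathrm{int}}|/|B_M|\to 1$ as $M\to\infty$, which yields \eqref{eq:50} and \eqref{eq:51}; $\sigma$-finiteness of $X$ poses no additional difficulty, as all constants are independent of the truncation and of any exhausting sequence of sets. The main obstacle is the algebraic lifting in (a): one must rigorously verify the free step-two universal property of $\G_0(d)$ and check that the central slots $U_{d_2+1},\ldots,U_{2d_2}$ carry enough freedom to absorb every BCH-induced bilinear-in-$n$ central polynomial. Once this algebraic identity is in hand, the remaining geometric transference is essentially formal.
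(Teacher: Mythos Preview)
Your transference scheme in (b) and (c) follows the paper's argument, and the overall strategy---lift to the universal group, then apply Calder\'on transference---is precisely the route the paper takes. However, step (a) has a genuine gap: the homomorphism $\sigma:\G_0(d)\to\G$ you postulate need not exist. Consider $d_1=2$, $P_1(n)=P_2(n)=n$ (so $d_2=1$ and your $d=2$), with $\G$ the discrete Heisenberg group generated freely by $T_1,T_2$. In the coordinates of $\G_0(2)$ one has $T_1^nT_2^n=(n,n,0)$, and the abelianization forces $U_1=(1,1,r)$, $U_2=(0,0,u)$ for some $r,u\in\Z$. A direct computation gives $U_1^n=(n,n,nr+\binom{n}{2})$, hence $U_1^nU_2^{n^2}=(n,n,nr+\binom{n}{2}+n^2u)$; matching the central coordinate to $0$ forces $r=\tfrac12$, $u=-\tfrac12$, so no solution with $U_1,U_2\in\G$ exists. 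Increasing $d$ does not help: the obstruction is the half-integer in $\binom{n}{2}=\tfrac{n^2-n}{2}$, and additional central generators $U_l$ with $l\ge 3$ contribute only integer multiples of $n^l$. Thus the ``central slots'' you appeal to do \emph{not} carry enough freedom to absorb the BCH corrections over $\Z$.

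The paper circumvents this by mapping into the \emph{continuous} group $\G_0^\#(d_1)$ rather than into $\G$: it invokes \cite[Lemma~2.2]{IMW} to obtain a homomorphism $\Phi:\G_0(d)\to\G_0^\#(d_1)$ with $\Phi(A_0(n))\in\G_0(d_1)$ for all $n$, noting that in general only $\Phi(g)\in\Z^{d_1}\times(\Z/2)^{d_1'}$. The key observation is that $g^2\in\Phi^{-1}[\G_0(d_1)]$ for every $g\in\G_0(d)$, so the composed action $g\odot x=(T\circ\Phi(g))x$ is defined on all squares; the Calder\'on averaging in the paper's Step~2 is then performed by evaluating at the points $g^2$ rather than at $g$. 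Your steps (b)--(c) would go through with this modification, but as written the lift in (a) fails.
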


\begin{proof}
We proceed in two steps. We perform first a lifting procedure, which allows us to replace the polynomials $P_1,\ldots, P_{d_1}$ with the moment curve $A_0$ from \eqref{tra3}. Then we can employ the ideas from the  transference principle \cite{Cald} to complete the proof. 

{\bf Step 1.} Let $\G:=\G[T_1,\ldots, T_{d_1}]$ be a nilpotent group of step two
generated by $T_1,\ldots, T_{d_1}$, so 
\begin{align}
\label{eq:42}
[[T_i,T_j],T_l]={\rm Id},\quad\text{ for all }\quad i, j, l\in\{1,\ldots, d_1\},
\end{align}
where $[S, T]:=S^{-1}T^{-1}ST$ denotes the commutator of any two invertible maps $S, T:X\to X$. Define
$S_{ij}:=[T_i,T_j]=T_i^{-1}T_j^{-1}T_i T_j$, for
$i, j\in\{1,\ldots, d_1\}$, then by \eqref{eq:42} note that
$T_iT_j=T_jT_i S_{ij}$, and $T_k S_{ij}=S_{ij}T_k$ for all
$i, j, k\in\{1,\ldots, d_1\}$. Hence
\begin{align}
\label{eq:52}
\prod_{i=1}^{d_1} T_i^{m_i}\,\prod_{j=1}^{d_1} T_j^{n_j} = \prod_{j=1}^{d_1} T_j^{m_j+n_j}\prod_{1\leq i<j\leq{d_1} } S_{ji}^{m_j n_i}. 
\end{align}
Formula \eqref{eq:52} gives rise to a  homomorphism
$T:\G_0(d_1)\to \G$ defined by
\begin{align*}
T(g):=\prod_{l_1=1}^ {d_1}T_{l_1}^{m_{l_10}}\prod_{1\leq l_2<l_1\leq {d_1}} S_{l_1l_2}^{m_{l_1l_2}}, \quad \text{ for any } \quad g=(m_{l_1l_2})_{(l_1, l_2)\in Y_{d_1}}\in\G_0(d_1).
\end{align*}
Let $A:\Z\to \G_0(d_1)$ be  defined by $A(n):=(P_1(n),\ldots,P_{d_1}(n),0,\ldots,0)^{-1}$ and note that
\begin{align}
\label{eq:54}
T(A(n)^{-1})=T_1^{P_{1}(n)}\cdots T_{d_1}^{P_{d_1}(n)}.
\end{align}
In view of \cite[Lemma 2.2]{IMW} there exists $d\in\Z_+$ depending only on the integers $d_1,d_2\in\Z_+$, and a homomorphism $\Phi:\G_0(d)\to\G_0^\#(d_1)$ such that for all $n\in\Z$ one has
\begin{align}
\label{eq:53}
A(n)=\Phi(A_0(n)).
\end{align}
From the proof of \cite[Lemma 2.2]{IMW} one can easily deduce that for every $g \in \G_0(d)$ we have $\Phi(g) \in \Z^{d_1} \times (\Z/2)^{d'_1}$.
Combining \eqref{eq:54} with \eqref{eq:53} we see that the group 
$\Phi^{-1} [\G_0(d_1)]$ 
acts on $X$ via
$\Phi^{-1} [\G_0(d_1)] \times X\ni(g, x) \mapsto g\odot x\in X$ defined by $g\odot x=(T\circ\Phi(g))x$, which allows us to write
\begin{align}
\label{eq:55}
A_{N; X, \chi}^{P_{1}, \ldots, P_{d_1}}(f)(x)=
\sum_{n\in\Z}N^{-1}\chi(N^{-1}n)f(A_0(n)^{-1}\odot x).
\end{align}

{\bf Step 2.} We now prove \eqref{eq:50} and \eqref{eq:51}. We will only prove \eqref{eq:51}, since the proof of \eqref{eq:50} is similar and we omit the details.   Define $f_{L}^x(g):= f(g\odot x)\ind{[-1, 1]^{d+d'}}(L^{-1}\circ g) \ind{\Phi^{-1} [\G_0(d_1)](g)}$ for
 $L>0$, $x\in X$ and $g\in\G_0(d)$. Using \eqref{eq:55} and the fact that $g^2 \in \Phi^{-1} [\G_0(d_1)]$, $g\in\G_0(d)$, observe that for $g\in\G_0(d)$ obeying $L^{-1}\circ g\in [-1, 1]^{d+d'}$ one has
\begin{align*}
V^{\rho}\big(A_{N; X, \chi}^{P_{1}, \ldots, P_{d_1}}(f)(g^2\odot x):N\in\mathbb D_{\tau}\cap[1, L]\big)=
V^{\rho}\big(M_{N}^{\chi}(f^x_{CL})(g^2):N\in\mathbb D_{\tau}\cap[1, L]\big)
\end{align*}
for some large absolute constant $C>0$ depending only on $d$. 

Summing over all $g\in\G_0(d)$ obeying $L^{-1}\circ g\in [-1, 1]^{d+d'}$, and integrating over $X$, we have
\begin{align}
\label{eq:56}
\begin{split}
\Big(\prod_{(l_1, l_2)\in Y_d}L^{l_1+l_2}\Big)\big\|V^{\rho}\big(A_{N; X, \chi}^{P_{1}, \ldots, P_{d_1}}&(f) :N\in\mathbb D_{\tau}\cap[1, L]\big)\big\|_{L^p(X)}^p\\
&\lesssim \int_X\big\|V^{\rho}\big(M_{N}^{\chi}(f_{CL}^x):N\in\mathbb D_{\tau}\big)\big\|_{\ell^p(\G_0)}^pd\mu(x)\\
&\lesssim \int_X\|f_{CL}^x\|_{\ell^p(\G_0)}^pd\mu(x)\\
&\lesssim \Big(\prod_{(l_1, l_2)\in Y_d}L^{l_1+l_2}\Big)\|f\|_{L^p(X)}^p,
\end{split}
\end{align}
using also \eqref{eq:49} in the second estimate. Dividing both sides of \eqref{eq:56} by $\prod_{(l_1, l_2)\in Y_d}L^{l_1+l_2}$ and letting $L\to \infty$  we obtain \eqref{eq:51}.
\end{proof}

Having proven estimates \eqref{eq:50} and \eqref{eq:51} we can easily complete the proof of Theorem \ref{thm:main}. 

\subsection{Proof of Theorem \ref{thm:main}(iii)}
Let $\chi:\mathbb{R}\to[0,1]$ be a  smooth function  such that $\ind{[-1, 1]}\le \chi\le \ind{[-2, 2]}$. Note that
\begin{align*}
\sup_{N\in\Z_+}|A_{N; X}^{P_{1}, \ldots, P_{d_1}}(f)(x)|\le \sup_{N\in\Z_+}A_{N; X,\chi}^{P_{1}, \ldots, P_{d_1}}(|f|)(x).
\end{align*}
Appealing to \eqref{eq:50} we conclude \eqref{eq:43}. 
\qed
\subsection{Proof of Theorem \ref{thm:main}(ii)} By a simple density argument, using the maximal inequality \eqref{eq:43}, it suffices to establish pointwise convergence for $f\in L^p(X)\cap L^{\infty}(X)$ with $1<p<\infty$. Invoking $\rho$-variational inequality \eqref{eq:51} one has
\begin{align*}
\lim_{\mathbb D_{\tau}\ni M, N\to \infty}|A_{N; X, \chi}^{P_{1}, \ldots, P_{d_1}}(f)(x)-A_{M; X, \chi}^{P_{1}, \ldots, P_{d_1}}(f)(x)|=0
\end{align*}
$\mu$-almost everywhere on $X$. The same is true for the operators
\[
\tilde{A}_{N; X, \chi}^{P_{1}, \ldots, P_{d_1}}(f)(x):=
\frac{1}{|[-N, N]\cap\Z|}\sum_{n\in\Z}\chi(N^{-1}n)f(T_1^{P_{1}(n)}\cdots T_{d_1}^{P_{d_1}(n)} x), \qquad x\in X.
\]
Let $\varepsilon>0$ and pick  a  smooth function  $\chi:\mathbb{R}\to[0,1]$ such that   $\|\ind{[-1, 1]}-\chi\|_{L^1(\R)}<\varepsilon$.  
Fix $f\in L^p(X)\cap L^{\infty}(X)$ such that $\|f\|_{L^{\infty}(X)}=1$ and $f\ge0$, and note that
\begin{equation}\label{tau=1}
\begin{split}
\limsup_{\mathbb D_{\tau}\ni M, N\to \infty}|A_{N; X}^{P_{1}, \ldots, P_{d_1}}(f)(x)&-A_{M; X}^{P_{1}, \ldots, P_{d_1}}(f)(x)|\\
&\le2
\limsup_{\mathbb D_{\tau}\ni N\to \infty}|A_{N; X}^{P_{1}, \ldots, P_{d_1}}(f)(x)-\tilde{A}_{N; X, \chi}^{P_{1}, \ldots, P_{d_1}}(f)(x)|\\
&\lesssim\limsup_{\mathbb D_{\tau}\ni N\to \infty}\frac{1}{|[-N, N]\cap\Z|}\sum_{n\in\Z}\big|\chi(N^{-1}n)-\ind{[-1, 1]}(N^{-1}n)\big|\\
&\lesssim
\|\ind{[-1, 1]}-\chi\|_{L^1(\R)}\\
&\lesssim \varepsilon,
\end{split}
\end{equation}
for $\mu$-almost all $x\in X$. Letting $\varepsilon \to 0^+$ we obtain that the limit
\begin{align*}
\lim_{\mathbb D_{\tau}\ni N\to \infty}A_{N; X}^{P_{1}, \ldots, P_{d_1}}(f)(x)
\end{align*}
exists
$\mu$-almost everywhere on $X$ for every $\tau\in(1,2]$. Using this with $\tau=2^{1/s}$ for $s\in\Z_+$ we obtain that there exists a function $f^*_{s}\in L^p(X)$ such that
\begin{align}
\label{eq:58}
\lim_{n\to \infty}A_{2^{n/s}; X}^{P_{1}, \ldots, P_{d_1}}(f)(x)=f^*_{s}(x)
\end{align}
$\mu$-almost everywhere on $X$ for every $s\in\Z_+$.
Since $\mathbb D_2\subseteq \mathbb D_{2^{1/s}}$ we  conclude that $f^*_{1}=f^*_{s}$ for all $s\in\Z_+$.
Now for each $s\in\Z_+$ and
each $N\in\Z_+$ let $(n_m)_{m\in\N}\subseteq \N$ be a sequence 
 such that $2^{n_N/s}\le N<2^{(n_N+1)/s}$. Then by \eqref{eq:58} for $f\ge0$ we have
\begin{align*}
2^{-1/s}f^*_1(x)\le
\liminf_{N\to\infty}A_{N; X}^{P_{1}, \ldots, P_{d_1}}(f)(x)\le
\limsup_{N\to\infty}A_{N; X}^{P_{1}, \ldots, P_{d_1}}(f)(x)\le
2^{1/s}f^*_1(x).
\end{align*}
Letting $s\to \infty$ we obtain
\begin{align*}
\lim_{N\to\infty}A_{N; X}^{P_{1}, \ldots, P_{d_1}}(f)(x)=f^*_1(x)
\end{align*}
$\mu$-almost everywhere on $X$. This completes the proof of Theorem \ref{thm:main}(ii).\qed
\subsection{Proof of Theorem \ref{thm:main}(i)} Finally pointwise convergence from Theorem \ref{thm:main}(ii) combined with maximal inequality \eqref{eq:43} and dominated convergence theorem gives norm convergence for any $f\in L^p(X)$ with $1<p<\infty$ and  the proof of Theorem \ref{thm:main} is completed. \qed

\section{Maximal and variational estimates on $\G_0$: $\ell^2$ theory}\label{sec:maxout}

In this section we discuss the nilpotent circle method on the discrete group $\G_0$, and outline the proof of the key $\rho$-variational inequality \eqref{eq:49} for $p=2$ and  $2<\rho<\infty$.

Assume that $\tau\in(1,2]$ is a fixed parameter. The basic case is $\tau=2$, but we need slightly stronger bounds for the ergodic theory application, see \eqref{tau=1}. We also fix a smooth function $\chi:\R\to[0, 1]$ supported on $[-2, 2]$. For simplicity of notation, for  $k\in \N$ and $x\in\G_0$, let
\begin{equation}\label{picu1}
\begin{split}
\M_kf(x)&:=M_{\tau^k}^{\chi}f(x)=\sum_{n\in\mathbb{Z}}\tau^{-k}\chi(\tau^{-k}n)f(A_0(n)^{-1}\cdot x)=(f\ast K_k)(x),\\
K_k(x)&:=G_{\tau^k}^{\chi}(x)=\sum_{n\in\Z}\tau^{-k}\chi(\tau^{-k}n)\ind{\{A_0(n)\}}(x),
\end{split}
\end{equation}
see \eqref{eq:47} and \eqref{eq:46} for the definitions $M_{N}^{\chi}$ and $G_{N}^{\chi}$ respectively.

Our aim is to establish \eqref{eq:49} for $p=2$ and  $2<\rho<\infty$, which with the new notation can be rewritten as follows:

\begin{theorem}\label{picu2}
Let $\tau\in(1,2]$ and $2<\rho<\infty$ be given. Then for any  $f\in\ell^2(\G_0)$ one has
\begin{align}
\label{eq:60}
\big\|V^{\rho}\big(\M_k(f):k\ge0\big)\big\|_{\ell^2(\G_0)}\lesssim_{d,\rho, \tau, \chi}\|f\|_{\ell^2(\G_0)}.
\end{align}
In particular, one also has
\begin{align}
\label{eq:61}
\big\|\sup_{k\geq 0}|\M_kf|\big\|_{\ell^2(\G_0)}\lesssim_{d, \tau, \chi} \|f\|_{\ell^2(\G_0)}.
\end{align}

\end{theorem}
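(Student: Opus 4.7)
The strategy, as foreshadowed in Subsection \ref{outline}, is the nilpotent circle method combined with high-order $T^*T$ arguments in place of the Plancherel identity that is unavailable on $\G_0$. My plan is to reduce \eqref{eq:60} to a collection of individual operator bounds with geometric decay in a decomposition parameter, using the Rademacher--Menshov inequality \eqref{maj1} to absorb the logarithmic loss in the number of scales. Thus it suffices to produce, for a suitable decomposition $K_k=\sum_\ast K_{k,\ast}$, estimates of the form $\|f\ast K_{k,\ast}\|_{\ell^2(\G_0)}\lesssim 2^{-c\,\mathrm{size}(\ast)}\|f\|_{\ell^2(\G_0)}$ for the minor-arcs pieces, together with long-jump estimates of a similar form for the transition pieces, and square-function control via a single-scale gain for the major-arcs pieces.

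First I would carry out the \emph{central} (first-stage) decomposition \eqref{illust5}--\eqref{illust7}, splitting $K_k$ according to the rational approximation of the dual central variable $\theta^{(2)}\in\T^{d'}$. For the central minor-arcs piece $K_k^c$, I use the product structure \eqref{pro15.6} to expand the kernel of $(\mathcal{K}_k^{c\ast}\mathcal{K}_k^c)^r$ via \eqref{pro15.7}--\eqref{pro15.11}, integrate the first-variable factor $\Pi^r$ against itself on $\T^d$, and then appeal to the non-commutative Weyl inequality Proposition \ref{minarcs}(i) on the remaining central integral over $\theta^{(2)}\in\mathrm{supp}\,\Xi_k^c$, where at least one coordinate is Diophantine in the required sense; choosing $r$ large enough compared to $1/\delta$ turns this into the gain \eqref{illust8}. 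The first transition estimate \eqref{illust9} is reduced, via \eqref{maj1}, to square-function bounds on the differences $H_{k,s}=K_{k+1,s}-K_{k,s}$, which in turn reduce by Cotlar--Stein and a further high-power $T^*T$ expansion to the non-commutative Gauss sums estimate Proposition \ref{minarcs}(ii), providing the key factor $2^{-s/D^2}$.

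With the central minor and transition arcs handled, I restrict to $k\geq \kappa_s=2^{2D(s+1)^2}$, where the rational centers in the \emph{non-central} decomposition \eqref{illust14} are well separated, and run a structurally similar argument on the first-variable kernel $L_k$. Here I expect the analysis to simplify: since $\theta^{(2)}$ is already localised to a fixed rational $a/q$ with $q\simeq 2^s\ll 2^{\delta' k}$, the coupling between central and non-central frequencies in $\Pi^r$ (see \eqref{pro15.10}) is mild, and the needed oscillatory estimate reduces to the classical Weyl bound of Proposition \ref{minarcscom}(i), yielding the second minor-arcs estimate \eqref{illust20} and transition estimate \eqref{illust21}. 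Finally, for the major-arcs kernels $G^{\rm low}_{k,s}$ and $G_{k,s,t}$, I would factor them, up to negligible errors controlled via Lemma \ref{RapDe} and Proposition \ref{minarcscon}, as an arithmetic convolution supported on the fractions $a/Q_s$ (resp.\ $a/q$) with $q\simeq 2^t$ times a smoothed continuous-group kernel on $\G_0^\#$; the latter is controlled uniformly in the rational parameters by Christ's maximal theorem \cite{Ch}, while the arithmetic piece is an operator-valued Gauss sum whose $\ell^2$ norm decays exponentially in $s$ (resp.\ $t$) by Proposition \ref{minarcs}(ii). Summing the pieces geometrically in $s$ and $t$ yields \eqref{eq:60}, and \eqref{eq:61} is then immediate from \eqref{eq:36}.

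The main obstacle I anticipate is the first transition estimate \eqref{illust9}: the central delta function $\ind{\{0\}}(g^{(2)})$ is not adapted to the nilpotent product \eqref{gro2}, so extracting a genuine gain in the central scale $s$ from a convolution power forces one to expand $(\mathcal{H}_{k,s}^\ast\mathcal{H}_{k,s})^r$ to very high order and to exploit the subtle cancellation produced by the non-commutative Gauss sums formula \eqref{pro0.6} while simultaneously managing the non-central $\theta^{(1)}$-dependence in \eqref{pro15.10}. The delicate balancing of the parameters $\delta$, $\delta'$, $D$, $Q_s$ and the threshold $\kappa_s$, together with verifying that the error terms produced by passing from sums to integrals via \eqref{RapDe2} are absorbed by the main gain, is where most of the difficulty will concentrate; once this step is in place, the second-stage analysis is a cleaner variant and the major-arcs step is essentially a transference from the continuous theory.
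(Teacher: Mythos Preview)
Your proposal is correct and follows essentially the same architecture as the paper: the two-stage nilpotent circle method, high-order $T^\ast T$ in place of Plancherel, Rademacher--Menshov plus Cotlar--Stein for the transition estimates, and the arithmetic/continuous tensor factorisation on the major arcs. One small imprecision: in the first minor-arcs step you do not ``integrate $\Pi^r$ against itself on $\T^d$''; rather, the paper bounds the product $\Pi_k^{c,r}(\theta^{(1)},\theta^{(2)})\,\Omega_k^{c,r}(\theta^{(2)})$ pointwise on $\T^d\times\T^{d'}$ by a dichotomy---if $\theta^{(2)}$ is near the support of $\Xi_k^c$ the non-commutative Weyl bound kills $\Pi^{c,r}_k$ (in both variables simultaneously), and otherwise the localisation of $F_k$ kills $\Omega_k^{c,r}$---but this is a detail of execution rather than strategy.
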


The proof of Theorem \ref{picu2} will take up Sections 4, 5, 6, 7, and 8. For simplicity of notation, all the implied constants in this proof are allowed to depend on $d,\tau,\chi,\rho$. 

We fix $\eta_0:\mathbb{R}\to[0,1]$ a smooth even function
such that $\ind{[-1, 1]}\le \eta_0\le \ind{[-2, 2]}$. For $t\in\mathbb{R}$ and integers $j\geq 1$ we define
\begin{equation}\label{cutR}
\eta_j(t):=\eta_0(\tau^{-j}t)-\eta_0(\tau^{-j+1}t),\qquad 1=\sum_{j=0}^\infty \eta_j .
\end{equation}
For any $A\in[0,\infty)$ we define
\begin{equation}\label{cutR2}
\eta_{\leq A}:=\sum_{j\in[0,A]\cap \Z}\eta_j.
\end{equation}
By a slight abuse of notation we also let $\eta_j$ and $\eta_{\leq A}$ denote the smooth radial functions on $\R^m$, $m\in \Z_+$, defined by $\eta_j(x)=\eta_j(|x|)$ and $\eta_{\leq A}(x)=\eta_{\leq A}(|x|)$

To prove Theorem \ref{picu2} we need to decompose the kernels defining
the operators $\M_k$. The kernels $K_k$ have product structure
\begin{equation}\label{picu5}
K_k(g):=L_k(g^{(1)})\ind{\{0\}}(g^{(2)}),\qquad L_k(g^{(1)}):=\sum_{n\in\Z}\tau^{-k}\chi(\tau^{-k}n)\ind{\{0\}}(g^{(1)}-A^{(1)}_0(n)),
\end{equation}
where $A^{(1)}_0(n):=(n,\ldots,n^d)\in\mathbb{Z}^d$ and $g=(g^{(1)},g^{(2)})\in\G_0$ as in \eqref{picu4}.

\subsection{The main decomposition}\label{outline2} 
We first decompose the singular kernel $\ind{\{0\}}(g^{(2)})$ in the central variable $g^{(2)}$ into smoother kernels. For any $s\in\NN$ and $m\in\Z_+$ we define the set of rational fractions 
\begin{equation}\label{picu6}
\mathcal{R}_s^m:=\{a/q:\,a=(a_1,\ldots, a_m)\in\Z^m,\,q\in[\tau^s,\tau^{s+1})\cap\Z,\,\mathrm{gcd}(a_1,\ldots,a_m,q)=1\}.
\end{equation}
We define also $\mathcal{R}^m_{\leq a}:=\bigcup_{0\le s\leq a}\mathcal{R}_s^m$. For $x^{(1)}=(x^{(1)}_{l_10})_{l_1\in\{1,\ldots,d\}}\in\R^{d}$, $x^{(2)}=(x^{(2)}_{l_1l_2})_{(l_1,l_2)\in Y'_d}\in\R^{d'}$ and $\Lambda\in(0,\infty)$ we define the partial dilations
\begin{equation}\label{tra3.55}
\Lambda\circ x^{(1)}=(\Lambda^{l_1}x^{(1)}_{l_10})_{l_1\in\{1,\ldots,d\}}\in \R^{d},\qquad\Lambda\circ x^{(2)}=(\Lambda^{l_1+l_2}x^{(2)}_{l_1l_2})_{(l_1,l_2)\in Y'_d}\in \R^{d'},
\end{equation}
which are induced by the group-dilations defined in \eqref{tra3.5}. 

We fix two small constants $\delta=\delta(d)\ll\delta'=\delta'(d)$ such that $\delta'\in(0,(10d)^{-10}]$ and $\delta\in(0,{(\delta')}^4]$, and a large constant $D=D(d)\gg\delta^{-8}$. These constants depend on arithmetic properties of the polynomial sequence $A_0$, more precisely on the structural constants in Propositions \ref{minarcscom}--\ref{minarcscon}. For example, we could take $\delta'=(10d)^{-10}$, then take $\delta=\delta'/\overline{C}_1$, where $\overline{C}_1$ is a large constant depending on the constant $\overline{C}$ in Proposition \ref{minarcscom}. Then we fix an integer $r=r(\delta)\geq\delta^{-4}$ such that the bounds in Propositions \ref{minarcs}--\ref{minarcscon} hold with $\varepsilon=\delta^4$, and then take $D:=\lfloor r\delta^{-4}\rfloor+1$. To summarize
\begin{equation}\label{ConstantsStr}
1\ll 1/\delta'\ll 1/\delta\ll r\ll D.
\end{equation}

For $k\geq (D/\ln\tau)^2$ we fix two cutoff functions $\phi_k^{(1)}:\R^{d}\to[0,1]$, $\phi_k^{(2)}:\R^{d'}\to[0,1]$, such that
\begin{equation}\label{picu6.6}
\phi_k^{(1)}(g^{(1)}):=\eta_{\leq\delta k}(\tau^{-k}\circ g^{(1)}),\qquad\phi_k^{(2)}(g^{(2)}):=\eta_{\leq\delta k}(\tau^{-k}\circ g^{(2)}).
\end{equation}
For $k, w \in \NN$ so that $k\geq (D/\ln\tau)^2$ and $0 \le w \le k$ and for any $1$-periodic sets of rationals $\A \subseteq \mathbb{Q}^d$, $\B \subseteq \mathbb{Q}^{d'}$
we define the periodic Fourier multipliers by 
\begin{equation} \label{def:progen}
\begin{split}
\Psi_{k,w,\A}(\xi^{(1)})
&:=\sum_{a/q\in \A}\eta_{\leq\delta' w}(\tau^k\circ(\xi^{(1)}-a/q)), 
\qquad \xi^{(1)} \in \mathbb{T}^{d}, 
\\ 
\Xi_{k,w,\B}(\xi^{(2)})
& :=\sum_{b/q\in \B} \eta_{\leq\delta w}(\tau^{k}\circ(\xi^{(2)}-b/q)), 
\qquad \xi^{(2)} \in \mathbb{T}^{d'}.
\end{split}
\end{equation}
For $k\geq (D/\ln\tau)^2$ and $s\in[0,\delta k]\cap\Z$ we define the periodic Fourier multipliers $\Xi_{k,s}:\R^{d'}\to [0,1]$,
\begin{equation}\label{picu7}
\Xi_{k,s}(\xi^{(2)}):=\Xi_{k,k,\mathcal{R}_s^{d'}}(\xi^{(2)})=\sum_{a/q\in\mathcal{R}_s^{d'}}\eta_{\leq\delta k}(\tau^{k}\circ(\xi^{(2)}-a/q)).
\end{equation}

For $k\geq (D/\ln\tau)^2$ we write
\begin{equation}\label{picu8}
\begin{split}
\ind{\{0\}}(g^{(2)})&=\int_{\T^{d'}}\ex(g^{(2)}.\xi^{(2)})\, d\xi^{(2)}\\
&=\sum_{s\in[0,\delta k]\cap\Z}\int_{\T^{d'}}\ex(g^{(2)}.\xi^{(2)})\Xi_{k,s}(\xi^{(2)})\, d\xi^{(2)}+\int_{\T^{d'}}\ex(g^{(2)}.\xi^{(2)})\Xi_k^c(\xi^{(2)})\, d\xi^{(2)},
\end{split}
\end{equation}
where $g^{(2)}.\xi^{(2)}$ denotes the usual scalar product of vectors in $\R^{d'}$ and
\begin{equation}\label{picu9}
\Xi_k^c:=1-\sum_{s\in[0,\delta k]\cap\Z}\Xi_{k,s}.
\end{equation}
Then we decompose $K_k=K_k^c+\sum_{s\in[0,\delta k]\cap\Z}K_{k,s}$, where, with the notation in \eqref{picu5}, we have
\begin{equation}\label{picu10}
K_{k,s}(g):=L_k(g^{(1)})N_{k,s}(g^{(2)}),\qquad K_{k}^c(g):=L_k(g^{(1)})N_k^c(g^{(2)}),
\end{equation}
and
\begin{equation}\label{picu10.2}
\begin{split}
N_{k,s}(g^{(2)})&:=\phi_k^{(2)}(g^{(2)})\int_{\T^{d'}}\ex(g^{(2)}.\xi^{(2)})\Xi_{k,s}(\xi^{(2)})\, d\xi^{(2)},\\ N_k^c(g^{(2)})&:=\phi_k^{(2)}(g^{(2)})\int_{\T^{d'}}\ex(g^{(2)}.\xi^{(2)})\Xi_{k}^c(\xi^{(2)})\, d\xi^{(2)}.
\end{split}
\end{equation}

We first show that we can bound the contributions of the minor arcs in
the central variables:

\begin{lemma}\label{MinArc1}
For any integer $k\geq (D/\ln\tau)^2$ and $f\in\ell^2(\G_0)$ we have
\begin{equation}\label{picu12}
\|f\ast K_k^c\|_{\ell^2(\G_0)}\lesssim \tau^{-k/D^2}\|f\|_{\ell^2(\G_0)}.
\end{equation}
\end{lemma}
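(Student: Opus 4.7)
The plan is to run a high-order $T^\ast T$ argument and exploit the non-commutative Weyl inequality from Proposition \ref{minarcs}(i). Set $\mathcal{K}_k^c f := f \ast K_k^c$, fix $r = r(\delta)$ as in \eqref{ConstantsStr} so that Proposition \ref{minarcs}(i) holds with $\varepsilon = \delta^4$, and denote by $A^r$ the convolution kernel of $((\mathcal{K}_k^c)^\ast \mathcal{K}_k^c)^r$. Young's inequality gives $\|\mathcal{K}_k^c\|_{\ell^2 \to \ell^2}^{2r} \leq \|A^r\|_{\ell^1(\G_0)}$, and since $D \geq r\delta^{-4}$ forces $D^2 \geq 2r$, it suffices to prove
\begin{equation*}
\|A^r\|_{\ell^1(\G_0)} \lesssim \tau^{-k}.
\end{equation*}

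Next, I would use the product structure $K_k^c(g) = L_k(g^{(1)}) N_k^c(g^{(2)})$ together with the formulas \eqref{pro15.7}--\eqref{pro15.11} to write
\begin{equation*}
A^r(y) = \int_{\T^d \times \T^{d'}} \Pi^r(\theta^{(1)}, \theta^{(2)})\, |\widehat{N_k^c}(\theta^{(2)})|^{2r}\, \ex\big(y^{(1)}.\theta^{(1)} + y^{(2)}.\theta^{(2)}\big)\,d\theta^{(1)}d\theta^{(2)},
\end{equation*}
where $\widehat{N_k^c}(\theta^{(2)}) := \sum_{g^{(2)}} N_k^c(g^{(2)}) \ex(-g^{(2)}.\theta^{(2)})$. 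Comparing the phase in \eqref{pro15.10} (with $h_j^{(1)} = A^{(1)}_0(m_j)$, $g_j^{(1)} = A^{(1)}_0(n_j)$) to \eqref{pro0.4} identifies $\Pi^r$ with $\tau^{-2rk}$ times $S_{\tau^{k+1},r}(-\theta)$ from Proposition \ref{minarcs}, with envelopes $\phi_P^{(j)}(x) = \psi_P^{(j)}(x) = \chi(\tau^{-k}x)$. Since $N_k^c = \phi_k^{(2)} \cdot \mathcal{F}^{-1}[\Xi_k^c]$, I have $\widehat{N_k^c} = \widehat{\phi_k^{(2)}}\ast \Xi_k^c$; Lemma \ref{RapDe} combined with rescaling yields $\|\widehat{\phi_k^{(2)}}\|_{L^1(\T^{d'})} \lesssim 1$ with rapid decay outside a dilated neighborhood of $0$ of width $\tau^{\delta k}$, so $\|\widehat{N_k^c}\|_{L^\infty} \lesssim 1$, $\|\widehat{N_k^c}\|_{L^2}^2 \leq \|\Xi_k^c\|_{L^2}^2 \leq 1$, and $|\widehat{N_k^c}|^{2r}$ is essentially supported on the minor arcs of $\Xi_k^c$ modulo errors of order $\tau^{-kM}$ for any $M$.

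The key point is to show that for $\theta^{(2)}$ in this effective support, the hypothesis of Proposition \ref{minarcs}(i) is satisfied. I apply Dirichlet coordinate-by-coordinate with $Q = \tau^{(l_1+l_2-\delta^4)k}$: for each $(l_1, l_2) \in Y'_d$ I obtain an irreducible $a/q$ with $q \leq Q$ and $|\theta^{(2)}_{l_1l_2} - a/q| \leq 1/(qQ) \leq 1/q^2$. If all such $q$ were strictly below $\tau^{\delta^4 k}$, taking the least common multiple across $Y'_d$ would produce a common denominator $Q' \leq \tau^{d'\delta^4 k} \leq \tau^{\delta k}$ and a joint fraction $b/Q'$ (reducible to an element of $\mathcal{R}^{d'}_{\leq \delta k}$) with $|\tau^k\circ(\theta^{(2)} - b/Q')| \lesssim \tau^{d\delta^4 k} \ll \tau^{\delta k}$, contradicting the minor arcs separation. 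Hence some $(l_1, l_2) \in Y'_d$ supplies $q \in [\tau^{\delta^4 k}, \tau^{(l_1+l_2-\delta^4)k}]$, and Proposition \ref{minarcs}(i) then yields $|\Pi^r(\theta^{(1)}, \theta^{(2)})| \lesssim \tau^{-k/\delta^4}$ uniformly in $\theta^{(1)} \in \T^d$.

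Putting the pieces together: I bound $\|A^r\|_{\ell^\infty(\G_0)} \leq \int|\Pi^r||\widehat{N_k^c}|^{2r}\,d\theta \lesssim \tau^{-k/\delta^4}\|\widehat{N_k^c}\|_{L^{2r}}^{2r} + O_M(\tau^{-kM})$, and H\"older's inequality gives $\|\widehat{N_k^c}\|_{L^{2r}}^{2r} \leq \|\widehat{N_k^c}\|_\infty^{2r-2}\|\widehat{N_k^c}\|_{L^2}^2 \lesssim 1$. The support of $A^r$ in $\Z^d\times \Z^{d'}$ is contained, via \eqref{pro15} and \eqref{pro15.5}, in a box of cardinality $\lesssim \tau^{\gamma_d k}$ with $\gamma_d = \sum_{(l_1,l_2)\in Y_d}(l_1+l_2)$ a dimensional constant, whence
\begin{equation*}
\|A^r\|_{\ell^1(\G_0)} \lesssim \tau^{(\gamma_d - \delta^{-4})k} \lesssim \tau^{-k}
\end{equation*}
by the choice of $\delta \ll 1$ in \eqref{ConstantsStr}. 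The main obstacle is the third paragraph: reconciling the \emph{joint} irreducibility condition defining $\mathcal{R}^{d'}_{\leq \delta k}$ with the \emph{coordinate-wise} Dirichlet hypothesis required by Proposition \ref{minarcs}(i). This reconciliation, via the LCM contradiction, is precisely what forces the hierarchy $\delta \ll \delta' \ll 1$ and is the subtlest ingredient of the first stage of the nilpotent circle method.
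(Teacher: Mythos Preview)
Your proof is correct and follows essentially the same approach as the paper: a high-order $T^\ast T$ argument, the product formula \eqref{pro15.7}--\eqref{pro15.11}, identification of $\Pi^r$ with the Weyl sum $S_{P,r}$ of Proposition \ref{minarcs}, and a two-case split on $\theta^{(2)}$ (close to a low-denominator fraction, where $\widehat{N_k^c}$ is negligible, versus not, where Dirichlet forces the Weyl hypothesis). The paper's version is slightly more streamlined in two cosmetic respects: it proves the pointwise bound $|\Pi^r\,\Omega^r|\lesssim \tau^{-k/\delta}$ directly (rather than bounding $\|A^r\|_{\ell^\infty}$ and then invoking support), and it bounds $|\Omega^r|\lesssim 1$ simply from $\|F_k\|_{L^1(\T^{d'})}\lesssim 1$, so your H\"older interpolation for $\|\widehat{N_k^c}\|_{L^{2r}}$ is not needed.

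One small correction to your closing commentary: the LCM contradiction here only requires $d'\delta\le 1$, i.e.\ $\delta\ll 1$; it does \emph{not} invoke the two-scale hierarchy $\delta\ll\delta'$. That hierarchy enters only in the second-stage minor arcs estimate (Lemma \ref{MinArc2}), where one must absorb the $\delta$-scale losses coming from the central-variable localization into the $\delta'$-scale Weyl gain in the non-central variable; see the remark after \eqref{hun7}.
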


This is proved in Section \ref{MinArc1Pr} below.  

We now turn  to the operators $K_{k,s}$, and show first that we can bound the
contributions corresponding to scales $k\ge0$ being not very large. More precisely, for any $s\ge 0$ we define
\begin{equation}
\label{eq:1}
\kappa_s:=2^{(D/\ln\tau)(s+1)^2}.
\end{equation}

\begin{lemma}\label{MajArc1}
For any integer $s\ge0$  and  $f\in\ell^2(\G_0)$ we have
\begin{equation}\label{picu13var}
\big\|V^{\rho}(f\ast K_{k,s}: \max((D/\ln\tau)^2,s/\delta)\leq k< 2\kappa_s)\big\|_{\ell^2(\G_0)}\lesssim \tau^{-s/D^2}\|f\|_{\ell^2(\G_0)}
\end{equation}
and
\begin{equation}\label{picu13}
\big\|\sup_{\max((D/\ln\tau)^2,s/\delta)\leq k< 2\kappa_s}|f\ast K_{k,s}|\big\|_{\ell^2(\G_0)}\lesssim \tau^{-s/D^2}\|f\|_{\ell^2(\G_0)}.
\end{equation}
\end{lemma}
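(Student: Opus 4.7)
The plan is to reduce the $\rho$-variational inequality \eqref{picu13var} to a square-function estimate on dyadic scales via the Rademacher--Menshov inequality \eqref{maj1}, which is applicable since $\rho > 2$. After dyadic partitioning, it will suffice to prove, for every $J \in [\max((D/\ln\tau)^2, s/\delta), 2\kappa_s)$ and every coefficient sequence $(\varkappa_k)_{k \in [J,2J]} \subset [-1,1]$, the estimate
\begin{equation*}
\Big\| \sum_{k \in [J,2J]} \varkappa_k (f \ast H_{k,s}) \Big\|_{\ell^2(\G_0)} \lesssim \tau^{-4s/D^2} \|f\|_{\ell^2(\G_0)},\qquad H_{k,s} := K_{k+1,s} - K_{k,s}.
\end{equation*}
The Rademacher--Menshov device produces a logarithmic loss of order $\log_2 \kappa_s \simeq s^2$, which must be absorbed by the exponential gain $\tau^{-4s/D^2}$. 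The maximal bound \eqref{picu13} then follows from \eqref{picu13var} together with \eqref{eq:36} and the trivial single-scale estimate $\|f \ast K_{k,s}\|_{\ell^2} \lesssim \|f\|_{\ell^2}$ obtained from Young's inequality.

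To establish the displayed square-function bound, I would run a high-order $TT^*$ (equivalently Cotlar--Stein) argument at the level of the convolution operator $\mathcal{T} f := \sum_{k \in [J,2J]} \varkappa_k (f \ast H_{k,s})$. Rather than bounding $\mathcal{T} \mathcal{T}^*$ directly, I would analyze the iterated operator $(\mathcal{T} \mathcal{T}^*)^r$, where $r = r(\delta) \geq \delta^{-4}$ is chosen so that Propositions \ref{minarcs} and \ref{minarcscon} apply with $\varepsilon = \delta^4$. Because $K_{k,s}$, and hence $H_{k,s}$, has the product structure \eqref{picu10}, the identities \eqref{pro11}--\eqref{pro15.11} express the convolution kernel of $(\mathcal{T}\mathcal{T}^*)^r$ as a Fourier integral over $\T^d \times \T^{d'}$ with amplitude factoring as $\Pi^r(\theta^{(1)}, \theta^{(2)}) \cdot \Omega^r(\theta^{(2)})$, where $\Omega^r$ collects the central-variable pieces arising from the $N_{k,s}$ factors and $\Pi^r$ encodes the non-central $L_k$ factors twisted by the bilinear cross-terms $R_0$ from the group law.

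The exponential gain is extracted entirely from $\Omega^r$. By construction \eqref{picu7} and \eqref{picu10.2}, $\widehat{N_{k,s}}(\theta^{(2)})$ is essentially a sum, over $a/q \in \mathcal{R}_s^{d'}$ with $q \in [\tau^s, \tau^{s+1})$, of smooth bumps localized near $a/q$ and of magnitude comparable to the non-commutative Gauss sum $\widetilde{G}(a/q)$ from \eqref{pro0.6}. Proposition \ref{minarcs}(ii) with $\varepsilon = \delta^4$ yields $|\widetilde{G}(a/q)| \lesssim q^{-1/\delta^4} \lesssim \tau^{-s/\delta^4}$, so $\Omega^r$ contributes a factor of size $\tau^{-2rs/\delta^4}$. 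Since the hierarchy \eqref{ConstantsStr} gives $2/\delta^4 \gg 8/D^2$, this decisively dominates the target $\tau^{-8rs/D^2}$ that one needs at the $(TT^*)^r$ level to deduce the bound $\tau^{-4s/D^2}$ for $\mathcal{T}$ after extracting the $(2r)$-th root. The residual $\theta^{(1)}$-integral of $\Pi^r$ is handled by an $\ell^1$-kernel estimate combining Plancherel in $\theta^{(1)}$, the telescoping cancellation between $K_{k+1,s}$ and $K_{k,s}$ (which produces extra decay in $|J|$ large enough to eat the summation over $k$), and Lemma \ref{RapDe} for the passage between sums and integrals.

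The main obstacle is the tight bookkeeping between three competing quantities: the Rademacher--Menshov loss of order $s^2$, the required exponential gain $\tau^{-4s/D^2}$, and the polynomial losses in the $\theta^{(1)}$-integration caused by the bilinear terms $R_0(h_j^{(1)}, \cdot)$ and $R_0(g_j^{(1)}, \cdot)$ in \eqref{pro15.5}, which couple the central and non-central frequencies. The factorization $\Pi^r \cdot \Omega^r$ in \eqref{pro15.9} succeeds only because the product structure \eqref{picu10} lets us freeze $\theta^{(2)}$ in the central sum while the non-central sum still sees $\theta^{(2)}$ through phases that are at most linear in $g^{(1)}$; once the Gauss-sum decay is invoked, these phases cost at worst a polynomial factor in $\tau^k$, which is more than compensated by $\tau^{-2rs/\delta^4}$ given the choice of constants in \eqref{ConstantsStr}. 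This is exactly the point where the restriction $k \geq s/\delta$ in the range of the supremum is needed.
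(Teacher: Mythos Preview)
Your overall strategy---Rademacher--Menshov reduction followed by a high-order almost-orthogonality argument---matches the paper's, but two steps in your execution are wrong.

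First, $H_{k,s}=K_{k+1,s}-K_{k,s}$ does \emph{not} have product structure: it is a difference of tensor products, not itself a tensor, so the factorization \eqref{pro15.9} into $\Pi^r\cdot\Omega^r$ is unavailable for $H_{k,s}$. The paper fixes this by writing $H_{k,s}=H_{k,s}^1+H_{k,s}^2+H_{k,s}^3$ (each piece a genuine product, see \eqref{maj3}); the pieces with the difference on $\phi_k^{(2)}$ or $\Xi_{k,s}$ obey single-operator bounds $\lesssim\tau^{-k/D}$, and only $H_{k,s}^1$ (difference on $L_k$) needs Cotlar--Stein. Second, and more seriously, you have the source of the gain backward. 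The factor $\Omega^r(\theta^{(2)})=|\widehat{N_{k,s}}(\theta^{(2)})|^{2r}$ is essentially $|\Xi_{k,s}(\theta^{(2)})|^{2r}$, a bump of amplitude $\simeq 1$ near each $a/q\in\mathcal{R}_s^{d'}$---no Gauss sum appears there. The $\tau^{-s}$ decay comes from $\Pi^r$: once $\theta$ is pinned near a fraction with denominator $Q\gtrsim\tau^s$ (forced by the $\theta^{(2)}$-localization), the sum $\sum_{n,m}\ex(-\theta.D(n,m))$ over the full nilpotent polynomial $D$ factors, after reduction mod $Q$, as $G(a/Q)$ times a smooth integral (see \eqref{maj28}--\eqref{maj29}), and Proposition~\ref{minarcs}(ii) gives $|G(a/Q)|\lesssim\tau^{-s/\delta^4}$. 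The gain is a genuinely joint central/non-central phenomenon.

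Two further gaps: Cotlar--Stein needs off-diagonal decay $\|\mathcal{H}_{j,s}^1(\mathcal{H}_{k,s}^1)^\ast\|\lesssim\tau^{-|j-k|/D}$, which the paper extracts (Lemma~\ref{almostOrg}) from the mean-zero property of $\chi'$ via the kernel comparison \eqref{maj42}--\eqref{maj45}; this is substantially more than ``telescoping cancellation.'' And for \eqref{picu13}, the trivial Young bound on $\|f\ast K_{k_0,s}\|_{\ell^2}$ carries no factor $\tau^{-s/D^2}$ (indeed $\|N_{k,s}\|_{\ell^1}$ can grow in $s$); the paper instead decomposes $K_{k_0,s}$ at a fixed $k_0\approx 3\kappa_s/2$ and invokes Lemmas~\ref{MinArc2}, \ref{MajArc2}, \ref{MajArc3}.
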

This is proved in Section \ref{MajArc1Pr} below.

After these reductions, it remains to prove that
\begin{equation}\label{picu16}
\big\|V^{\rho}(f\ast K_{k,s}: k\geq \kappa_s)\big\|_{\ell^2(\G_0)}\lesssim  \tau^{-s/D^2}\|f\|_{\ell^2(\G_0)}\quad \text{ for any integer } \  s\geq 0.
\end{equation}
The kernels $K_{k,s}$ are now reasonably well adapted to a natural family of non-isotropic balls in the central variables, at least when $\tau^s\simeq 1$, and we need to start decomposing in the non-central variables. We examine the kernels $L_k(g^{(1)})$ defined in \eqref{picu5}, and rewrite them in the form
\begin{equation}\label{picu19}
\begin{split}
L_k(g^{(1)})&=\sum_{n\in\Z}\tau^{-k}\chi(\tau^{-k}n)\ind{\{0\}}(-A^{(1)}_0(n)+g^{(1)})\\
&=\phi_k^{(1)}(g^{(1)})\int_{\T^d}\ex(g^{(1)}{.}\xi^{(1)})S_k(\xi^{(1)})\,d\xi^{(1)},
\end{split}
\end{equation}
where $g^{(1)}.\xi^{(1)}$ denotes the usual scalar product of vectors in $\R^{d}$, and 
\begin{align}
\label{picu18}
\begin{split}
S_k(\xi^{(1)})&:=\sum_{n\in\Z}\tau^{-k}\chi(\tau^{-k}n)\ex(-A^{(1)}_0(n){.}\xi^{(1)}).
\end{split}
\end{align}
For any integers $Q\in\Z_+$ and $m\in\Z_+$ we define the set of fractions
\begin{equation}\label{picu20}
\widetilde{\mathcal{R}}^m_{Q}:=\{a/Q:\,a=(a_1,\ldots,a_m)\in\Z^m\}.
\end{equation}
For any integer $s\ge0$ we fix a large denominator
\begin{equation}
\label{eq:2}
Q_s:=\big(\big\lfloor \tau^{D(s+1)}\big\rfloor\big)!=1\cdot 2\cdot\ldots\cdot \big\lfloor \tau^{D(s+1)}\big\rfloor,
\end{equation}
and using \eqref{def:progen} define the periodic multipliers
\begin{align}\label{picu21}
\nonumber\Psi_{k,s}^{\low}(\xi^{(1)})&:=\Psi_{k,k,\widetilde{\mathcal{R}}^d_{Q_s}}(\xi^{(1)})
=\sum_{a/q\in\widetilde{\mathcal{R}}^d_{Q_s}}\eta_{\leq\delta' k}(\tau^k\circ(\xi^{(1)}-a/q)),\\
\Psi_{k,s,t}(\xi^{(1)})&:=\Psi_{k,k,\mathcal{R}^d_t\setminus \widetilde{\mathcal{R}}^d_{Q_s}}(\xi^{(1)})
=\sum_{a/q\in\mathcal{R}^d_t\setminus \widetilde{\mathcal{R}}^d_{Q_s}}\eta_{\leq\delta' k}(\tau^k\circ(\xi^{(1)}-a/q)),\\
\nonumber\Psi_{k}^c(\xi^{(1)})&:=1-\Psi_{k,s}^{\low} (\xi^{(1)}) -\sum_{t\in[0,\delta'k]\cap\Z}\Psi_{k,s,t} (\xi^{(1)}) 
\ =1-\sum_{a/q\in\mathcal{R}^d_{\leq\delta' k}}\eta_{\leq\delta' k}(\tau^k\circ(\xi^{(1)}-a/q)).
\end{align}
Since $k\geq \kappa_s=2^{(D/\ln\tau)(s+1)^2}$ we see that $Q_s\leq \tau^{\delta^2 k}$. Therefore the supports of the cutoff functions $\eta_{\leq\delta' k}(\tau^k\circ(\xi^{(1)}-a/q))$ are all disjoint and the multipliers $\Psi_{k,s}^{\low}, \Psi_{k,s,t}, \Psi_{k}^c$ take values in the interval $[0,1]$. Notice also that $\Psi_{k,s,t}\equiv 0$ unless $t\geq D(s+1)$, and that the cutoffs used in these definitions depend on $\delta' k$ not on $\delta k$ as in the case of the central variables.

We examine the formula \eqref{picu19} and define the kernels $L_{k,s}^{\low}, L_{k,s,t}, L_{k}^c:\Z^d\to\C$ by
\begin{equation}\label{picu24}
L_\ast(g^{(1)})=\phi_k^{(1)}(g^{(1)})\int_{\T^d}\ex(g^{(1)}{.}\xi^{(1)})S_k(\xi^{(1)})\Psi_\ast(\xi^{(1)})\,d\xi^{(1)},
\end{equation}
where  $(L_\ast, \Psi_\ast)\in\{(L_{k,s}^{\low}, \Psi_{k,s}^{\low}), (L_{k,s,t}, \Psi_{k,s,t}), (L_{k}^c, \Psi_{k}^c)\}$. For any $k\geq \kappa_s$  we
obtain $K_{k, s}=G_{k,s}^{\low}+\sum_{t\leq\delta' k}G_{k,s,t}+G_{k,s}^c$, where 
the kernels $G_{k,s}^{\low}, G_{k,s,t}, G_{k,s}^c:\Z^{|Y_d|}\to\C$ are defined by
\begin{equation}\label{picu25}
\begin{split}
G_{k,s}^{\low}(g)&:=L_{k,s}^{\low}(g^{(1)})N_{k,s}(g^{(2)}),\\
G_{k,s,t}(g)&:=L_{k,s,t}(g^{(1)})N_{k,s}(g^{(2)}),\\
G_{k,s}^c(g)&:=L_{k}^c(g^{(1)})N_{k,s}(g^{(2)}).
\end{split}
\end{equation}

To prove \eqref{picu16} we need to establish Lemmas \ref{MinArc2}--\ref{MajArc3}.

Our next lemma shows that the contribution of the minor arcs can be suitably bounded:

\begin{lemma}\label{MinArc2}
For any integers $s\geq 0$ and $k\geq \kappa_s $,  and for any $f\in\ell^2(\G_0)$ we have
\begin{equation}\label{picu26}
\|f\ast G_{k,s}^c\|_{\ell^2(\G_0)}\lesssim  \tau^{-k/D^2}\|f\|_{\ell^2(\G_0)}.
\end{equation}
\end{lemma}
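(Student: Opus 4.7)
My plan is to prove Lemma~\ref{MinArc2} via a high-order $T^{\ast}T$ argument, mirroring the strategy for Lemma~\ref{MinArc1} (compare the outline around \eqref{illust8}) but replacing the non-commutative Weyl estimates of Proposition~\ref{minarcs} with the \emph{classical} Weyl inequality (Proposition~\ref{minarcscom}). Let $\mathcal{G}f = f \ast G_{k,s}^c$ and fix a large integer $r = r(\delta)$ as in \eqref{ConstantsStr}. Since $(\mathcal{G}^\ast\mathcal{G})^r$ is a positive self-adjoint convolution operator on $\G_0$, one has $\|\mathcal{G}\|_{\ell^2 \to \ell^2}^{2r} = \|(\mathcal{G}^\ast\mathcal{G})^r\|_{\ell^2 \to \ell^2} \leq \|A^r\|_{\ell^1(\G_0)}$, where $A^r$ is the convolution kernel of $(\mathcal{G}^\ast\mathcal{G})^r$ given by \eqref{pro11} with $L_j = K_j = G_{k,s}^c$. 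Thus the target $\|\mathcal{G}\| \lesssim \tau^{-k/D^2}$ reduces to $\|A^r\|_{\ell^1(\G_0)} \lesssim \tau^{-2rk/D^2}$.

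Since $G_{k,s}^c(g) = L_k^c(g^{(1)}) N_{k,s}(g^{(2)})$ has product structure, formulas \eqref{pro15.7}--\eqref{pro15.11} give
\[
A^r(y) = \int_{\T^d \times \T^{d'}} \ex\bigl(y^{(1)}.\theta^{(1)} + y^{(2)}.\theta^{(2)}\bigr)\, \Pi^r(\theta^{(1)}, \theta^{(2)})\, \Omega^r(\theta^{(2)})\, d\theta^{(1)}\, d\theta^{(2)}.
\]
A Poisson-summation argument using $\Xi_{k,s} \leq 1$ and the smoothness of $\phi_k^{(2)}$ yields $|\Omega^r(\theta^{(2)})| = |\widehat{N_{k,s}}(\theta^{(2)})|^{2r} \lesssim 1$ uniformly. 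Since the support of $A^r$ lies in a box $B_k \subset \G_0$ with $|B_k| \lesssim \tau^{Ck}$ for some $C = C(d, r)$ (obtained from iterating the supports of $L_k^c$ and $N_{k,s}$), Cauchy--Schwarz and Plancherel on $\T^d \times \T^{d'}$ yield
\[
\|A^r\|_{\ell^1(\G_0)} \leq |B_k|^{1/2}\, \|A^r\|_{\ell^2(\G_0)} \lesssim \tau^{Ck/2}\, \|\Pi^r\|_{L^\infty(\T^d \times \T^{d'})},
\]
reducing matters to a pointwise bound on $\Pi^r$.

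To control $\Pi^r$, I would substitute the integral representation \eqref{picu24} of $L_k^c$ together with the definition \eqref{picu18} of $S_k$ into \eqref{pro15.10}. The resulting expression rewrites $\Pi^r(\theta^{(1)}, \theta^{(2)})$ as a $(4r)$-fold integral over frequency variables $\xi_j^{(1)}, \tilde\xi_j^{(1)} \in \T^d$ (each constrained by $\Psi_k^c$ to lie on minor arcs) of a $2r$-variable oscillatory sum in integers $n_j, m_j$ weighted by $\prod_j \tau^{-2k}\chi(\tau^{-k}n_j)\chi(\tau^{-k}m_j)$, with polynomial phase of degree at most $2d$. Freezing all variables except one -- say $n_{j_0}$ -- and using that $\xi_{j_0}^{(1)} \in \mathrm{supp}(\Psi_k^c)$ forces (via Dirichlet's theorem) a rational approximation of some component $\xi_{j_0,l}^{(1)}$ with denominator $q \in [\tau^{\delta'k}, \tau^{k(l-\delta')}]$, Proposition~\ref{minarcscom}(i) applied with $P = \tau^k$ and $\varepsilon = \delta'$ yields a gain of $\tau^{-k\delta'/\overline{C}}$. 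Iterating across all $2r$ factors (variable by variable via Cauchy--Schwarz, or equivalently via the corresponding $\ell^\infty$ bound on $\widehat{L_k^c}$) gives $\|\Pi^r\|_{L^\infty} \lesssim \tau^{-2rk\delta'/\overline{C}}$. With $r$ chosen large enough and $\delta'/\overline{C} > 1/D^2$ (as permitted by \eqref{ConstantsStr}), this decay dominates the support-size loss $\tau^{Ck/2}$ and delivers the desired $\|A^r\|_{\ell^1} \lesssim \tau^{-2rk/D^2}$.

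The main obstacle will be ensuring that the minor-arc condition from $\Psi_k^c$ genuinely transfers to the leading coefficients of the phase polynomial in $n_{j_0}$, despite the nilpotent coupling: the central-phase contribution $-\theta^{(2)} \cdot R_0(h_{j_0}^{(1)}, h_{j_0}^{(1)})$ from \eqref{pro15.5}, after the effective substitution $h_{j_0}^{(1)} = A_0^{(1)}(n_{j_0})$, introduces extra polynomial terms in $n_{j_0}$ whose leading coefficients mix $\theta^{(2)}$ with the remaining frequency variables. A careful bookkeeping is needed to verify that, regardless of $\theta^{(2)}$ and the other $n_j, m_j, \xi_j^{(1)}, \tilde\xi_j^{(1)}$, at least one phase coefficient still obeys the minor-arc condition inherited from $\xi_{j_0}^{(1)} \in \mathrm{supp}(\Psi_k^c)$, so that Weyl applies uniformly. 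The prior second-stage reductions ($k \geq \kappa_s$ and $\xi^{(2)}$ restricted to major arcs of denominator $\simeq \tau^s$) are essential here, as they provide the scale separation between central and non-central frequencies that prevents the $\theta^{(2)}$-couplings from destroying the non-central minor-arc structure.
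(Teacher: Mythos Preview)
Your high-order $T^\ast T$ setup and the identification of the key gain as the classical Weyl bound $|S_k\Psi_k^c|\lesssim\tau^{-k\delta'/(2d\overline C)}$ (this is exactly \eqref{hun9} in the paper) are correct and match the paper's approach. The genuine gap is in your passage from this scalar bound to the claimed uniform estimate $\|\Pi^r\|_{L^\infty}\lesssim\tau^{-2rk\delta'/\overline C}$. After substituting \eqref{picu24} into \eqref{pro15.10} you obtain, as in \eqref{hun5},
\[
\Pi^r(\theta)=\int_{(\T^d)^{2r}}\mathcal V_k^r(\theta^{(1)},\theta^{(2)};\zeta_1,\xi_1,\ldots,\zeta_r,\xi_r)\prod_j\overline{S_k\Psi_k^c}(\zeta_j)\,S_k\Psi_k^c(\xi_j)\,d\zeta\,d\xi,
\]
so the $n_j,m_j$ sums are \emph{precisely} $S_k(\xi_j),S_k(\zeta_j)$ and carry no $\theta^{(2)}$-dependence at all; your ``effective substitution $h_{j_0}^{(1)}=A_0^{(1)}(n_{j_0})$'' is not valid, since $L_k^c$ is not supported on the curve. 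The nilpotent coupling with $\theta^{(2)}$ lives entirely in $\mathcal V_k^r$, which is a sum over $h_j^{(1)},g_j^{(1)}\in\Z^d$ with smooth cutoffs $\phi_k^{(1)}$. The $2r$ factors of $|S_k\Psi_k^c|$ give the gain $\tau^{-rk\delta'/(d\overline C)}$, but you must still integrate $|\mathcal V_k^r|$ over $(\T^d)^{2r}$, and the trivial bound $|\mathcal V_k^r|\lesssim\bigl(\prod_l\tau^{k(l+\delta)}\bigr)^{2r}$ swamps any gain. Equivalently, peeling off one $g_j$ at a time via $\|\widehat{L_k^c}\|_{L^\infty}$ leaves you with $\|L_k^c\|_{\ell^1}^{2r-1}$, and $\|L_k^c\|_{\ell^1}\lesssim\prod_l\tau^{k(l+\delta)}$ is enormous (see Step~2 of the paper's proof).

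The paper resolves this not by bookkeeping the phase in a single integer variable but by a case split on $\theta^{(2)}$. When $\theta^{(2)}$ is far from every fraction in $\mathcal R_s^{d'}$, the factor $\Omega_{k,s}^r$ already decays like $\tau^{-2r^2k}$ (Step~2 of Lemma~\ref{MinArc2gen1}), beating the trivial bound on $\Pi^r$. When $\theta^{(2)}=a^{(2)}/Q+\alpha^{(2)}$ with $Q\le\tau^{\delta k+1}$, the paper proves the concentration estimate \eqref{hun7} on $\mathcal V_k^r$ via Lemma~\ref{Vrbounds}: one splits $h_j,g_j$ into residue classes mod~$Q$, applies Poisson summation, and integrates by parts in the resulting continuous integral $\mathcal Z_k^r$. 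This shows $\mathcal V_k^r$ is essentially supported where each $\xi_j,\zeta_j$ lies within $\tau^{-k(1-O(\delta))}$ of $\theta^{(1)}+(\Z/Q)^d$, so $\int|\mathcal V_k^r|\,d\xi\,d\zeta\lesssim\tau^{O(\delta k)r}$, and now the Weyl gain survives. This localization argument, not a Weyl estimate in a hidden integer variable, is the missing ingredient; the prior major-arc reduction in $\theta^{(2)}$ is what makes the small-$Q$ decomposition available.
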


It remains to bound the contributions of the major arcs in both the central and the non-central variables. We start with the contributions corresponding to averages over large $k$. 

\begin{lemma}\label{MajArc2} (i) For any integer $s\ge 0$ and $f\in\ell^2(\G_0)$ we have
\begin{equation}\label{picu27var}
\big\|V^{\rho}(f\ast G^{\low}_{k,s}:k\geq \kappa_s)\big\|_{\ell^2(\G_0)}\lesssim  \tau^{-s/D^2}\|f\|_{\ell^2(\G_0)}.
\end{equation}
In particular, we have
\begin{equation}\label{picu27}
\big\|\sup_{k\geq \kappa_s}|f\ast G^{\low}_{k,s}|\big\|_{\ell^2(\G_0)}\lesssim  \tau^{-s/D^2}\|f\|_{\ell^2(\G_0)}.
\end{equation}

(ii) For any integers $s\ge 0$, $t\geq D(s+1)$, and $f\in\ell^2(\G_0)$ we have
\begin{equation}\label{picu27.5var}
\big\|V^{\rho}(f\ast G_{k,s,t}: k\geq \kappa_t)\big\|_{\ell^2(\G_0)}\lesssim  \tau^{-t/D^2}\|f\|_{\ell^2(\G_0)}.
\end{equation}
where $\kappa_t:=2^{(D/\ln\tau)(t+1)^2}$ as in \eqref{eq:1}. In particular, we have
\begin{equation}\label{picu27.5}
\big\|\sup_{k\geq \kappa_t}|f\ast G_{k,s,t}|\big\|_{\ell^2(\G_0)}\lesssim  \tau^{-t/D^2}\|f\|_{\ell^2(\G_0)}.
\end{equation}
\end{lemma}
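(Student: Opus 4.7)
My plan is to realize $G^{\low}_{k,s}$ and $G_{k,s,t}$ as tensor products of an arithmetic (Gauss-sum-type) factor and a smooth continuous factor at scale $\tau^k$, then handle these factors separately. Using Lemma \ref{RapDe} (Poisson summation) together with the fact that $k$ is exponentially larger than the denominators in play (since $k\geq\kappa_s$ in part (i), resp.\ $k\geq\kappa_t$ in part (ii)), and that the cutoffs $\eta_{\leq\delta'k}(\tau^k\circ(\xi^{(1)}-a/q))$ and $\eta_{\leq\delta k}(\tau^k\circ(\xi^{(2)}-b/q'))$ enjoy disjoint supports over the relevant rational fractions, I would write
\[
G^{\low}_{k,s}(g) \;=\; \mathcal{B}^{\low}_s(g)\,\Phi_k(g)\;+\;\text{negligible error},
\]
where $\mathcal{B}^{\low}_s$ is periodic with period $Q_s$ in $g^{(1)}$ and denominator $\simeq \tau^s$ in $g^{(2)}$ (a sum of modulated Gauss sums associated to the rationals $\widetilde{\mathcal{R}}^d_{Q_s}\times\mathcal{R}^{d'}_s$), while $\Phi_k$ is a Schwartz-like continuous kernel on $\G_0^{\#}$ adapted to the dilation $\tau^k\circ$. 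The analogous factorization applies to $G_{k,s,t}$, with denominators $\simeq\tau^t$ replacing $Q_s$ in the non-central variable.

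For the continuous factor, the family $\{f\ast\Phi_k\}_{k\geq \kappa_s}$ is essentially an approximate-identity family at scale $\tau^k$ on the continuous nilpotent Lie group $\G_0^{\#}$; its maximal and $\rho$-variation estimates on $\ell^2(\G_0)$ (uniform in the frozen frequency parameters) follow from Christ's method \cite{Ch}, combined with a standard Littlewood--Paley decomposition and the numerical inequalities \eqref{maj1}, \eqref{eq:38} for $\rho$-variations. For the arithmetic factor, I would run a high-order $T^\ast T$ argument of the kind used in Lemmas \ref{MinArc1} and \ref{MajArc1}: expanding $\{(\mathcal{B}^{\low}_s)^\ast \mathcal{B}^{\low}_s\}^r$ using the formulas \eqref{pro15}--\eqref{pro15.11}, one obtains products of Gauss sums of the type \eqref{pro0.6}. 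In part (i), the decay $\tau^{-s/D^2}$ is produced by the non-commutative Gauss sum estimate \eqref{pro0.7} applied to the central variable, whose denominators lie in $\mathcal{R}^{d'}_s$; in part (ii), the decay $\tau^{-t/D^2}$ is produced by the classical Weyl/Gauss bound \eqref{comm4.5} applied to the non-central variable, whose denominators lie in $\mathcal{R}^d_t$. The hierarchy \eqref{ConstantsStr} between $\delta$, $r$, and $D$ is precisely what is needed to absorb the entropy of the major arcs ($\lesssim\tau^{sd'}$ or $\lesssim\tau^{td}$ rational fractions) and still retain the advertised exponential gain.

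To upgrade the resulting maximal bound into the $\rho$-variation bound \eqref{picu27var}/\eqref{picu27.5var}, I would apply the Rademacher--Menshov inequality \eqref{maj1} to the $k$-variable on dyadic blocks and use that the differences $\Delta_k G^{\low}_{k,s}$ and $\Delta_k G_{k,s,t}$ act only on the continuous factor $\Phi_k$, leaving the arithmetic Gauss-sum factor $\tau^{-s/D^2}$ (resp.\ $\tau^{-t/D^2}$) untouched; the square function in $k$ is then estimated by the continuous variational theory from Step 2. The main obstacle I foresee is the clean factorization in Step 1: the bilinear term $R_0(g^{(1)},h^{(1)})$ in the group law \eqref{picu4.2} obstructs any naive separation between central and non-central Fourier modes, so one must verify that the continuous approximation $\Phi_k$ inherits enough smoothness and non-isotropic decay (in both $g^{(1)}$ and $g^{(2)}$) to invoke Christ's method, and that the error terms produced by Poisson summation are absolutely summable against the relevant rational lattice at the scales $\kappa_s$, $\kappa_t$. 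Part (ii) is strictly more demanding than part (i) precisely because the set $\mathcal{R}^d_t\setminus\widetilde{\mathcal{R}}^d_{Q_s}$ carries $\sim\tau^{td}$ major arcs, and the Gauss-sum power-saving must be quantitatively strong enough, via the choice of $r=r(\delta)$, to defeat this entropy.
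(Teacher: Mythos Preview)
Your high-level strategy matches the paper's: split each major-arcs kernel into an arithmetic Gauss-sum factor and a smooth continuous factor, bound the former via a high-order $T^\ast T$ argument (indeed Proposition~\ref{minarcs}(ii) for part (i) and Proposition~\ref{minarcscom}(ii) for part (ii), exactly as you say), and bound the latter via Christ-type maximal/variational theory on $\G_0^\#$.

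The gap is in how you realize the tensor product. A pointwise factorization $G^{\low}_{k,s}(g)=\mathcal{B}^{\low}_s(g)\,\Phi_k(g)$ in the Euclidean coordinates $g=(g^{(1)},g^{(2)})$ does \emph{not} factor the convolution operator on $\G_0$: when you expand $(f\ast G)(x)=\sum_y f(y)\,\mathcal{B}(x\cdot y^{-1})\,\Phi(x\cdot y^{-1})$, the character $\ex\big((x\cdot y^{-1}).\sigma\big)$ inside $\mathcal{B}$ contains the cross term $R_0(x^{(1)},y^{(1)}).\sigma^{(2)}$, which is genuinely bilinear in $x$ and $y$ and cannot be split into a function of $x$ times a function of $y$. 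Checking smoothness or decay of $\Phi_k$, as you propose, does not touch this structural obstruction. The paper's resolution is group-theoretic rather than coordinate-wise: it passes to the normal subgroup $\HH_Q=(Q\Z)^{|Y_d|}$ and the finite quotient $\JJ_Q$ (Lemma~\ref{kio5}), and proves that for $b_1,b_2\in\G_0$ small and $h\in\HH_Q$ one has $K_{k,k,\A,\B}(b_1\cdot h\cdot b_2)=W_{k,k,Q}(h)\,V_{\A,\B,Q}(b_1\cdot b_2)+E$. This factorization \emph{is} compatible with the group convolution, so a short transference principle (Lemma~\ref{lem:1}) bounds the $\ell^2(\G_0)$ variational norm by the product of the $\ell^2(\HH_Q)$ variational norm of the continuous operator $W_{k,k,Q}$ (Lemma~\ref{gio50}, proved by comparison with $\G_0^\#$) and the $\ell^2(\JJ_Q)$ norm of the Gauss-sum operator $V_{\A,\B,Q}$ (Lemma~\ref{periodM}). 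The point is that the characters $\ex(h.\sigma)$ with $\sigma\in\widetilde{\mathcal{R}}^{d+d'}_Q$ are trivial on $\HH_Q$, which is what kills the bilinear cross term.

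A second, minor difference: the paper obtains the $\rho$-variation bound directly from continuous variational theory on $\G_0^\#$ (Proposition~\ref{gio55}) and transfers it, rather than proving a maximal bound first and upgrading via Rademacher--Menshov as you suggest. Your route would also work here, since the arithmetic factor $V_{\A,\B,Q}$ is independent of $k$ and the variation lives entirely in the continuous piece; the paper's route is simply more direct.
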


Finally, we deal with the operators defined by the kernels $G_{k,s,t}$ for intermediate values of $k$.

\begin{lemma}\label{MajArc3}
For any integers  $s\ge 0$, and $t\geq D(s+1)$, and $f\in\ell^2(\G_0)$ we have
\begin{equation}\label{picu28var}
\big\|V^{\rho}(f\ast G_{k,s,t}: \max(\kappa_s,t/\delta')\leq k< 2\kappa_t)\big\|_{\ell^2(\G_0)}\lesssim \tau^{-t/D^2}\|f\|_{\ell^2(\G_0)}.
\end{equation}
In particular, we have
\begin{equation}\label{picu28}
\big\|\sup_{\max(\kappa_s,t/\delta')\leq k< 2\kappa_t}|f\ast G_{k,s,t}|\big\|_{\ell^2(\G_0)}\lesssim \tau^{-t/D^2}\|f\|_{\ell^2(\G_0)}.
\end{equation}

\end{lemma}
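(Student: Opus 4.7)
The plan is to prove Lemma \ref{MajArc3} in parallel with the first transition estimate Lemma \ref{MajArc1}, combining the Rademacher--Menshov inequality with a high-order $T^\ast T$-type argument. The novelty at this stage is that the exponential gain $\tau^{-t/D^2}$ will come from \emph{classical} complete Gauss sums (Proposition \ref{minarcscom}(ii)) in the non-central frequency variable, rather than the non-commutative Gauss sums used at the first stage.

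\textbf{Step 1 (Reduction via Rademacher--Menshov).} Since $\rho > 2$, $V^\rho \leq V^2$, and by \eqref{maj1} the $V^2$-seminorm of the dyadic sequence $(f \ast G_{k,s,t})_k$ is dominated by $O(\log_\tau \kappa_t) = O(t^2)$ square functions at varying scales. Since the targeted gain is exponential in $t$, this polynomial-in-$t$ loss is harmless; it therefore suffices to establish the linearized bound
\[
\Big\| \sum_{k \in [J, 2J)} \varkappa_k \, (f \ast H_{k,s,t}) \Big\|_{\ell^2(\G_0)} \lesssim \tau^{-4t/D^2} \, \|f\|_{\ell^2(\G_0)},
\]
uniformly in dyadic $J \in [\max(\kappa_s, t/\delta'), 2\kappa_t)$ and coefficients $\varkappa_k \in [-1,1]$, where $H_{k,s,t} := G_{k+1,s,t} - G_{k,s,t}$. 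Writing $H_{k,s,t}$ telescopically as $(L_{k+1,s,t} - L_{k,s,t}) N_{k+1,s} + L_{k,s,t}(N_{k+1,s} - N_{k,s})$ puts each summand into a pure product form, as required by the identities of Section \ref{HighProducts}.

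\textbf{Step 2 (High-order $T^\ast T$ and oscillatory analysis).} Let $\mathcal{T} := \sum_{k \in [J, 2J)} \varkappa_k \, \mathcal{H}_{k,s,t}$ with $\mathcal{H}_{k,s,t} f := f \ast H_{k,s,t}$, and fix $r$ as in \eqref{ConstantsStr}. I estimate $\|\mathcal{T}\|_{\ell^2\to\ell^2}^{2r} \leq \|(\mathcal{T}^\ast\mathcal{T})^r\|_{\ell^2\to\ell^2} \leq \|A\|_{\ell^1(\G_0)}$, where $A$ is the convolution kernel of $(\mathcal{T}^\ast\mathcal{T})^r$. Multilinear expansion in the $\varkappa_k$'s reduces the task to bounding kernels of the form \eqref{pro11}, with quantitative almost-orthogonality in $|k_j - k_j'|$ provided by integration by parts in $\theta^{(1)}$ (exploiting the slight difference in scale hidden in $H_{k,s,t}$). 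Using the representation \eqref{pro15.7}--\eqref{pro15.11}, each such kernel becomes an integral over $(\theta^{(1)},\theta^{(2)}) \in \T^d \times \T^{d'}$ in which $\theta^{(1)}$ is localized near fractions $a/q \in \mathcal{R}^d_t \setminus \widetilde{\mathcal{R}}^d_{Q_s}$ (so $q \in [\tau^t, \tau^{t+1})$) and $\theta^{(2)}$ near $\mathcal{R}^{d'}_s$ (so denominators $\lesssim \tau^{s+1}$). A lattice change of variables $h_j^{(1)} = q n_j + v_j$, $g_j^{(1)} = q n_j' + v_j'$ in $\Pi^r(\theta^{(1)},\theta^{(2)})$ from \eqref{pro15.10} then factorizes each kernel into a product of classical complete Gauss sums of the form $q^{-d} \sum_{v \in \Z_q^d} \ex(-P(v){.}(a/q))$ in the $v$-variables and a smoothly oscillating continuous integral in the $n$-variables. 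By Proposition \ref{minarcscom}(ii), each Gauss sum is $\lesssim q^{-1/\overline{C}} \lesssim \tau^{-t/\overline{C}}$, and collecting contributions from all $2r$ factors yields the claimed operator bound after extracting the $2r$-th root.

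\textbf{Main obstacle.} The delicate point is Step 2: the quadratic cross-terms $R_0(h_j^{(1)}, h_j^{(1)} - g_j^{(1)})$ in \eqref{pro15.5} couple $\theta^{(1)}$ and $\theta^{(2)}$, so extracting a pure \emph{classical} Gauss sum in $\theta^{(1)}$ requires a careful arithmetic argument exploiting that the $\theta^{(2)}$-denominators are much smaller than $q$. This is precisely why the range $t \geq D(s+1)$ and the giant factorial $Q_s = \lfloor \tau^{D(s+1)}\rfloor!$ appear in the definition of $\Psi_{k,s,t}$: the former ensures $q \gg \tau^s$, and the latter that the minor-arc complement $\mathcal{R}^d_t \setminus \widetilde{\mathcal{R}}^d_{Q_s}$ is genuinely non-resonant. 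A second technical issue is establishing sharp enough almost-orthogonality in $|k_j - k_j'|$ to absorb the naive combinatorial loss from the multilinear expansion, given that $J$ may be as large as $\kappa_t = 2^{(D/\ln\tau)(t+1)^2}$. As anticipated in the introduction's outline, the classical oscillatory-sum estimates of Proposition \ref{minarcscom} suffice here precisely because the first-stage decomposition has already localized the central variable to major arcs, rendering its contribution benign.
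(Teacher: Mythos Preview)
Your overall strategy---Rademacher--Menshov reduction followed by a high-order $T^\ast T$ argument, with the gain coming from classical Gauss sums via Proposition~\ref{minarcscom}(ii)---is the same as the paper's, and your identification of the ``main obstacle'' (decoupling the $\theta^{(1)}$- and $\theta^{(2)}$-variables despite the bilinear term $R_0$) is exactly right. However, the execution in Step~2 has a genuine gap. You propose to expand $(\mathcal{T}^\ast\mathcal{T})^r$ multilinearly and bound the $\ell^1$-norm of each resulting kernel with almost-orthogonality in $|k_j-k_j'|$. But the number of tuples is of order $J^{2r}$, and $J$ can be as large as $\kappa_t=2^{(D/\ln\tau)(t+1)^2}$; even with exponential decay in $\sum_j|k_j-k_j'|$, the triangle inequality still leaves a factor $J^{r}$ that swamps any polynomial or single-exponential gain in $t$. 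The paper avoids this by applying Cotlar--Stein \emph{first}, reducing to pairwise bounds $\|\mathcal{G}^1_{j,j,\A,\B}(\mathcal{G}^1_{k,k,\A,\B})^\ast\|$ (Lemma~\ref{almostOrg3}), and only \emph{then} applying the high-order $T^\ast T$ argument at a \emph{single} scale $k$ via the iteration \eqref{laj41}. This way the combinatorics never enter.

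Two further points. First, your two-term telescoping of $H_{k,s,t}$ is not fine enough: both $L_{k,s,t}$ and $N_{k,s}$ contain several $k$-dependent factors ($\phi_k$, $S_k$, $\Psi_{k,k,\A}$, $\Xi_{k,k,\B}$), and the paper splits the difference into four pieces $G^1,\ldots,G^4$ (see \eqref{trp2}), handling $G^2,G^3,G^4$ by single-scale bounds \eqref{trp3} and reserving Cotlar--Stein for $G^1$. Second, your ``lattice change of variables'' paragraph glosses over what is in fact the heart of the proof: the factorization of the kernel of $\{(\mathcal{K}^\iota)^\ast\mathcal{K}^\iota\}^r$ into an arithmetic piece carrying the coefficients $\mathcal{C}(a^{(2)}/Q,\sigma)$ and an analytic piece is Lemma~\ref{laj10}, whose proof occupies several pages and relies on the approximate product formula of Lemma~\ref{Vrbounds} for $\mathcal{V}_k^r$, followed by a delicate oscillatory-integral analysis (the functions $I_k^\iota$ in \eqref{laj25}--\eqref{laj28.6}). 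The bound $|\mathcal{C}(a^{(2)}/Q,\sigma)|\lesssim q_\A^{-1/\delta}$ in \eqref{laj11w} is indeed where Proposition~\ref{minarcscom}(ii) enters, but extracting it cleanly requires this machinery rather than a direct change of variables.
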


We will prove these lemmas in Sections \ref{MinorArcs}--\ref{sectionM3}. Theorem \ref{picu2} follows from Lemmas \ref{MinArc1}--\ref{MajArc3}.

For later use in the $\ell^p$ theory, we will sometimes need to work with slightly more general kernels on $\G_0$. Given two $1$-periodic sets of rationals $\A \subseteq \mathbb{Q}^d$ and $\B \subseteq \mathbb{Q}^{d'}$,
we define
\begin{align} \label{def:Kkw}
\begin{split}
K_{k,w,\A,\B}(g): 
= &L_{k,w,\A} (g^{(1)}) N_{k,w,\B} (g^{(2)}),\\
K_{k,w,\A,\B}'(g): 
=& L_{k,w,\A} '(g^{(1)}) N_{k,w,\B} (g^{(2)}), 
\end{split}
\end{align}
where
\begin{equation}
\label{eq:3}
\begin{split}
&L_{k,w,\A} (g^{(1)}) :=\phi_{k}^{(1)}(g^{(1)})\int_{\T^d}\ex(g^{(1)}.\xi^{(1)})\Psi_{k,w,\A}(\xi^{(1)}) S_{k}(\xi^{(1)})\, d\xi^{(1)},\\
&L_{k,w,\A}' (g^{(1)}) :=\phi_{k}^{(1)}(g^{(1)})\int_{\T^d}\ex(g^{(1)}.\xi^{(1)})\Psi_{k,w,\A}(\xi^{(1)}) [\Delta_kS_{k}](\xi^{(1)})\, d\xi^{(1)},\\
& N_{k,w,\B} (g^{(2)})
:=\phi_{k}^{(2)}(g^{(2)})\int_{\T^{d'}}\ex(g^{(2)}.\xi^{(2)})\Xi_{k,w,\B}(\xi^{(2)})\,d\xi^{(2)}.
\end{split}
\end{equation}
The multipliers $\Psi_{k,w,\A}$ and $\Xi_{k,w,\B}$ are defined in \eqref{def:progen} and $\Delta_kS_k=S_{k+1}-S_k$ as in \eqref{deltaak}. Using the definitions, it is easy to see, for example, that
$L_{k,s}^{\low}(g^{(1)})=L_{k,k,\widetilde{\mathcal{R}}^d_{Q_s}} (g^{(1)})$, $L_{k,s,t}(g^{(1)})=L_{k,k,\mathcal{R}^d_t\setminus \widetilde{\mathcal{R}}^d_{Q_s}} (g^{(1)})$, and $N_{k,s}(g^{(2)})=N_{k,k,\mathcal{R}_s^{d'}} (g^{(2)})$ as in \eqref{picu25}.

\section{Minor arcs contributions: Proofs of Lemma \ref{MinArc1} and Lemma \ref{MinArc2}}\label{MinorArcs}

In this section we use high order $T^\ast T$ arguments to bound the minor arcs contributions. 

\subsection{Proof of Lemma \ref{MinArc1}}\label{MinArc1Pr} We proceed in two steps:

{\bf{Step 1.}} We define the operators $\mathcal{K}_{k}^cf:=f\ast K_k^c$. Set $\varep=\delta^4$ and fix a positive integer $r=r(d)$ large enough such that the bounds as in Propositions \ref{minarcs} and \ref{minarcscon} hold. Then
\begin{equation*}
\{(\mathcal{K}_k^c)^\ast \mathcal{K}_k^c\}^rf(x)=(f\ast A_k^{c,r})(x),
\end{equation*}
where, using the formulas \eqref{pro15.7}--\eqref{pro15.11} and \eqref{picu10}, one has
\begin{equation}\label{pro16}
A_k^{c,r}(y)=\eta_{\leq 3\delta k}(\tau^{-k}\circ y)\int_{\T^d\times\T^{d'}}\ex\big(y.\theta\big)\Pi_k^{c,r}\big(\theta^{(1)},\theta^{(2)}\big)\Omega_k^{c,r}\big(\theta^{(2)}\big)\,d\theta^{(1)}d\theta^{(2)},
\end{equation}
where
\begin{equation*}
\begin{split}
\Pi_k^{c,r}\big(\theta^{(1)},&\theta^{(2)}\big):=\sum_{h_j^{(1)},g_j^{(1)}\in\Z^d}
\Big\{\prod_{j=1}^r L_{k}(h_j^{(1)}) L_k(g_j^{(1)})\Big\}
\ex\big(\theta^{(1)}{.}\sum_{1\leq j\leq r}(h_j^{(1)}-g_j^{(1)})\big)\\
&\times\ex\Big(-\theta^{(2)}{.}\big\{\sum_{1\leq j\leq r}R_0(h_j^{(1)},h_j^{(1)}-g_j^{(1)})+\sum_{1\leq l<j\leq r}R_0(-h_l^{(1)}+g_l^{(1)},-h_j^{(1)}+g_j^{(1)})\big\}\Big)
\end{split}
\end{equation*}
and
\begin{equation*}
\begin{split}
\Omega_k^{c,r}\big(\theta^{(2)}\big)&:=\Big|\sum_{g^{(2)}\in\Z^{d'}}N_k^c(g^{(2)})\ex\big(-\theta^{(2)}{.}g^{(2)}\big)\Big|^{2r}.
\end{split}
\end{equation*}
Using the defining formula \eqref{picu5} we can write
\begin{equation*}
\begin{split}
\Pi_k^{c,r}\big(\theta\big)&=\tau^{-2rk}\sum_{n_j,m_j\in\Z}
\Big\{\prod_{j=1}^r\chi(\tau^{-k}n_j) \chi(\tau^{-k}m_j) \Big\}
\ex\Big(-\theta^{(1)}{.}\sum_{1\leq j\leq r}(A_0^{(1)}(m_j)-A_0^{(1)}(n_j))\Big)\\
&\times\ex\Big(-\theta^{(2)}{.}\big\{\sum_{1\leq j\leq r}R_0(A_0^{(1)}(n_j),A_0^{(1)}(n_j)-A_0^{(1)}(m_j))\big\}\Big)\\
&\times\ex\Big(-\theta^{(2)}{.}\big\{\sum_{1\leq l<j\leq r}R_0(A_0^{(1)}(n_l)-A_0^{(1)}(m_l),A_0^{(1)}(n_j)-A_0^{(1)}(m_j))\big\}\Big).
\end{split}
\end{equation*}
Using \eqref{pro0.4} it is easy to see that
\begin{equation*}
\begin{split}
&\theta^{(1)}{.}\sum_{1\leq j\leq r}(A_0^{(1)}(m_j)-A_0^{(1)}(n_j))+\theta^{(2)}{.}\big\{\sum_{1\leq j\leq r}R_0(A_0^{(1)}(n_j),A_0^{(1)}(n_j)-A_0^{(1)}(m_j))\big\}\\
&+\theta^{(2)}{.}\big\{\sum_{1\leq l<j\leq r}R_0(A_0^{(1)}(n_l)-A_0^{(1)}(m_l),A_0^{(1)}(n_j)-A_0^{(1)}(m_j))\big\}=\theta{.}D(n,m).
\end{split}
\end{equation*}
Therefore
\begin{equation}\label{pro25}
\Pi_k^{c,r}\big(\theta\big)=\tau^{-2k r}\sum_{n,m\in\Z^r}
\Big\{\prod_{j=1}^r\chi(\tau^{-k}n_j) \chi(\tau^{-k}m_j) \Big\}
\ex\big(-\theta{.}D(n,m)\big).
\end{equation}

We can also derive a good formula for the kernel $\Omega_k^{c,r}$. Letting
\begin{equation}\label{pro26}
F_k(\beta^{(2)}):=\sum_{g^{(2)}\in\Z^{d'}}\eta_{\leq\delta k}(\tau^{-k}\circ g^{(2)})\ex(-g^{(2)}.\beta^{(2)})
\end{equation}
and recalling the definition in \eqref{picu10.2}, we have
\begin{equation}\label{pro27}
\Omega_k^{c,r}(\theta^{(2)})=\Big|\int_{\T^{d'}}F_k(\theta^{(2)}-\xi^{(2)})\Xi_k^c(\xi^{(2)})\,d\xi^{(2)}\Big|^{2r}.
\end{equation}

{\bf{Step 2.}} We now prove  that $\|A_k^{c,r}\|_{\ell^1(\G_0)}\lesssim \tau^{-k}$. Using also the formula \eqref{pro16} for this it suffices to prove that if $ k\geq (D/\ln\tau)^2$ then
\begin{equation}\label{pro28}
\big|\Pi_k^{c,r}\big(\theta^{(1)},\theta^{(2)}\big)\Omega_k^{c,r}\big(\theta^{(2)}\big)\big|\lesssim \tau^{-k/\delta}\qquad \text{ for any }\ (\theta^{(1)}, \theta^{(2)})\in\T^d\times\T^{d'}.
\end{equation}
We examine the formula \eqref{pro26} and apply Lemma \ref{RapDe} with $M\in\Z_+$ sufficiently large to conclude that, for any $\beta^{(2)}\in[-1/2,1/2]^{d'}$, we have
\begin{equation}\label{pro27.5}
|F_k(\beta^{(2)})|\lesssim_M \prod_{(l_1,l_2)\in Y'_d}\Big\{\tau^{k(l_1+l_2+\delta)}\big(1+\big|\beta^{(2)}_{l_1l_2}\big|\tau^{k(l_1+l_2+\delta)}\big)^{-M}\Big\}.
\end{equation}

To prove \eqref{pro28} we use the formulas \eqref{pro25} and \eqref{pro27}, and consider two cases depending on the location of $\theta^{(2)}$. Assume first that $\theta^{(2)}$ is far from the support of $\Xi_k^c$, i.e.
\begin{equation}\label{pro29}
\begin{split}
&\text{ there is an irreducible fraction }a/q\text{ with }q\leq \tau^{\delta k-4}\text{ and }a=(a_{l_1l_2})_{(l_1,l_2)\in Y'_d}\\
&\qquad\qquad\qquad\text{ such that }|\theta^{(2)}_{l_1l_2}-a_{l_1l_2}/q|\leq \tau^{\delta k/2}\tau^{-k(l_1+l_2)}\text{ for any }(l_1,l_2)\in Y'_d.
\end{split}
\end{equation}
In view of the definitions \eqref{picu7} and \eqref{picu9} it follows that for any $\xi^{(2)}$ in the support of the function $\Xi_k^c$ there is $(l_1,l_2)\in Y'_d$ such that $|\xi^{(2)}_{l_1l_2}-\theta^{(2)}_{l_1l_2}|\geq \tau^{\delta k/2}\tau^{-k(l_1+l_2)}$. 
Then $|F_k(\theta^{(2)}-\xi^{(2)})|\lesssim \tau^{-2k/\delta}$ if $\xi^{(2)}$ is in the support of $\Xi_k^c$, as a consequence of \eqref{pro27.5}. The bounds \eqref{pro28} follow using \eqref{pro27} if $\theta^{(2)}$ satisfies \eqref{pro29}.

On the other hand, assume that $\theta^{(2)}$ does not satisfy \eqref{pro29}. By the Dirichlet principle, for any $(l_1,l_2)\in Y'_d$ there is an irreducible fraction $a_{l_1l_2}/q_{l_1l_2}$ such that 
\begin{equation*}
\Big|\theta_{l_1l_2}^{(2)}-\frac{a_{l_1l_2}}{q_{l_1l_2}}\Big|\leq \frac{1}{q_{l_1l_2}\tau^{k(l_1+l_2)-\delta^2k}}\quad\text{ and }\quad q_{l_1l_2}\in[1,\tau^{k(l_1+l_2)-\delta^2k}]\cap\Z.
\end{equation*}
Since $\theta^{(2)}$ does not satisfy the property \eqref{pro29}, it follows that at least one of the denominators $q_{l_1l_2}$ is larger than $\tau^{\delta^2k}$. In particular, the property \eqref{pro0.3} is verified if $P\simeq \tau^k$. Recalling the formula \eqref{pro25}, we can apply Proposition \ref{minarcs} (i) to conclude that $\big|\Pi^{c,r}_k(\theta^{(1)},\theta^{(2)})\big|\lesssim \tau^{-2k/\delta}$. Moreover, $\|F_k\|_{L^1(\T^{d'})}\lesssim 1$ due to \eqref{pro27.5}, therefore 
$\big|\Omega_k^{c,r}(\theta^{(2)})\big|\lesssim 1$ as a consequence of \eqref{pro27}. The desired bounds \eqref{pro28} follow in this case as well, which completes the proof of Lemma \ref{MinArc1}. \qed

\subsection{Proof of Lemma \ref{MinArc2}}\label{sectionM1} For later use we prove a slightly more general version of Lemma \ref{MinArc2}.
For $1$-periodic set of rationals
$\B \subseteq \mathcal{R}^{d'}_{\leq\delta k}$, we define new kernels
\begin{align}
\label{eq:4}
G_{k,\B}^c(g):=L_{k}^c(g^{(1)})N_{k,k,\B}(g^{(2)}),
\end{align}
where $N_{k,k,\B}$ is defined in \eqref{eq:3}. We now prove the following lemma:

\begin{lemma}\label{MinArc2gen1}
For any $1$-periodic set of rationals $\B \subseteq \mathcal{R}^{d'}_{\leq\delta k}$, $k \ge (D/\ln\tau)^2$, and any $f\in\ell^2(\G_0)$ we have
\begin{equation}\label{picu26gen1}
\|f\ast G_{k,\B}^c\|_{\ell^2(\G_0)}\lesssim  \tau^{-k/D^2}\|f\|_{\ell^2(\G_0)}.
\end{equation}
\end{lemma}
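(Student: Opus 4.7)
The strategy will be a high-order $T^\ast T$ argument paralleling the proof of Lemma \ref{MinArc1}, with the key difference that the minor-arcs restriction now lives in the non-central dual variable $\xi^{(1)} \in \T^d$, so the classical Weyl estimate (Proposition \ref{minarcscom}) will take over the role previously played by the non-commutative Weyl estimate (Proposition \ref{minarcs}). Setting $\mathcal{G} f := f \ast G_{k,\B}^c$ and using Young's inequality $\|T_K\|_{\ell^2(\G_0) \to \ell^2(\G_0)} \le \|K\|_{\ell^1(\G_0)}$ for convolution operators on the discrete group $\G_0$, I would reduce to proving $\|A_k^r\|_{\ell^1(\G_0)} \lesssim \tau^{-k}$, where $A_k^r$ is the kernel of $(\mathcal{G}^\ast \mathcal{G})^r$ for a fixed integer $r = r(d)$ chosen so that $2r \le D^2$. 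This will then yield $\|\mathcal{G}\|_{\ell^2 \to \ell^2} \le \tau^{-k/(2r)} \le \tau^{-k/D^2}$.

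Exploiting the product structure $G_{k,\B}^c(g) = L_k^c(g^{(1)}) N_{k,k,\B}(g^{(2)})$ via formulas \eqref{pro15.7}--\eqref{pro15.11}, the kernel factors as
\[
A_k^r(y) = \eta_{\le 3 r \delta k}(\tau^{-k} \circ y) \int_{\T^d \times \T^{d'}} \ex(y.\theta) \Pi_c^r(\theta^{(1)}, \theta^{(2)}) \, \Omega_\B^r(\theta^{(2)}) \, d\theta^{(1)} d\theta^{(2)},
\]
where $\Omega_\B^r(\theta^{(2)}) = \bigl|\sum_{g^{(2)}} N_{k,k,\B}(g^{(2)}) \ex(-\theta^{(2)}.g^{(2)})\bigr|^{2r}$ and $\Pi_c^r$ is the analog of \eqref{pro15.10} built from $L_k^c$ in place of $L_k$. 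Applying Lemma \ref{RapDe} to the defining Fourier representation \eqref{eq:3} of $N_{k,k,\B}$, together with the hypothesis $\B \subseteq \mathcal{R}^{d'}_{\le \delta k}$, yields $\|N_{k,k,\B}\|_{\ell^1(\Z^{d'})} \lesssim 1$, hence $\|\Omega_\B^r\|_{L^\infty} \lesssim 1$ uniformly. Combined with the support bound $|\mathrm{supp}(A_k^r)| \lesssim \tau^{M k}$ for some $M = M(d, r)$, the required estimate then reduces to the uniform pointwise bound $|\Pi_c^r(\theta^{(1)}, \theta^{(2)})| \lesssim \tau^{-k(M+1)}$.

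For this pointwise bound I will substitute the Fourier representation \eqref{picu24} of each factor $L_k^c$ into the definition of $\Pi_c^r$ and interchange the discrete $(h, g)$-sums with the $\xi$-integrals, producing a $2r$-fold integral against $\prod_j \overline{(S_k \Psi_k^c)(\alpha_j)} (S_k \Psi_k^c)(\beta_j)$ multiplied by an auxiliary oscillatory sum $T(\theta, \alpha, \beta)$ over $h_j, g_j \in \Z^d$ weighted by the cutoffs $\phi_k^{(1)}$ and phased by $\theta^{(1)}.\sum_j(h_j - g_j) - \theta^{(2)}.Q(h, g) - \sum_j(h_j.\alpha_j - g_j.\beta_j)$. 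The classical Weyl estimate (Proposition \ref{minarcscom}(i)) applied to the moment curve $n \mapsto (n, n^2, \ldots, n^d)$ furnishes the crucial pointwise decay $|S_k(\xi)| \lesssim \tau^{-\delta' k / \overline{C}}$ on $\mathrm{supp}(\Psi_k^c)$, so the $2r$-fold product already gives an exponential gain of $\tau^{-2r \delta' k / \overline{C}}$.

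The principal obstacle will be controlling the auxiliary sum $T$: a trivial estimate $|T| \lesssim \|\phi_k^{(1)}\|_{\ell^1}^{2r}$ grows as $\tau^{C r k}$ with $C \approx d(d+1)/2$, which dominates the Weyl gain since $\delta'/\overline{C} \ll d^2$. To compensate I will harvest genuine cancellation from the quadratic phase $\theta^{(2)}.Q(h, g)$ in $T$ via a Poisson-summation plus integration-by-parts argument in the spirit of Lemma \ref{RapDe} and the estimate \eqref{pro27.5}, reducing $T$ to a continuous Gaussian-type oscillatory integral of size $\lesssim 1$ modulo rapidly decaying errors; this is the step where the underlying nilpotent structure, together with the strict inequality $\delta \ll \delta'$ from \eqref{ConstantsStr}, becomes essential. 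Once the cancellation is extracted, the product of the $\xi$-integral gain and the reduced $T$-bound will beat $\tau^{-k(M+1)}$ for $r$ a fixed constant depending only on $d$, completing the proof.
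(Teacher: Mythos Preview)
Your overall architecture (high-order $T^\ast T$, factoring into $\Pi_c^r\cdot\Omega_\B^r$, extracting the classical Weyl decay from $S_k\Psi_k^c$) matches the paper, but the central step contains a genuine gap. You propose to reduce the auxiliary sum $T(\theta,\alpha,\beta)$---the paper's $\mathcal{V}_k^r$ in \eqref{hun6}---to ``a continuous Gaussian-type oscillatory integral of size $\lesssim 1$'' via Poisson summation and integration by parts. This is false: when $\alpha_j=\beta_j=\theta^{(1)}$ and $\theta^{(2)}$ is a rational with small denominator, the phase in $T$ vanishes and one has $T\simeq\|\phi_k^{(1)}\|_{\ell^1}^{2r}\simeq\{\prod_{l}\tau^{k(l+\delta)}\}^{2r}$, which is enormous. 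No cancellation is available at this stationary configuration. What actually works is not a pointwise bound on $T$ but its \emph{localization}: the estimate \eqref{hun7} shows that $|\mathcal{V}_k^r|$ decays like $\{\prod_{l}\tau^{k(l+\delta)}\}^{2r}\bigl(1+\tau^{k(l-8\delta)}\|\theta^{(1)}_l-\alpha_{j,l}\|_Q\bigr)^{-D^2}$, so the $(\alpha,\beta)$-integral concentrates on a region of volume $\lesssim Q^{2rd}\prod_{l}\tau^{-2rk(l-8\delta)}$. It is this small volume, multiplied by the peak value of $T$ and by the Weyl gain $\tau^{-2rk\delta'/(2d\overline{C})}$, that yields $|\Pi_c^r|\lesssim\tau^{-k/\delta'}$; the hypothesis $\delta\ll\delta'$ in \eqref{ConstantsStr} is precisely what closes this bookkeeping.

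There is a second, related gap: the localization \eqref{hun7} is obtained via Lemma~\ref{Vrbounds} and requires $\theta^{(2)}$ to satisfy \eqref{hun4}, i.e., to lie within distance $\tau^{-k+2\delta k}$ of some $a^{(2)}/Q\in\B$ with $Q\lesssim\tau^{\delta k}$. For $\theta^{(2)}$ far from every such rational no useful bound on $\Pi_c^r$ is available---only the crude $|\Pi_c^r|\lesssim\|L_k^c\|_{\ell^1}^{2r}$---so your plan to bound $|\Pi_c^r|$ uniformly in $\theta^{(2)}$ cannot succeed. The paper handles this by a case split: when $\theta^{(2)}$ is far from the support of $\Xi_{k,k,\B}$ it is $\Omega_\B^r$ that supplies rapid decay ($\lesssim\tau^{-2r^2k}$ via \eqref{pro27.5}), overwhelming the crude bound on $\Pi_c^r$. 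Your uniform estimate $\|\Omega_\B^r\|_{L^\infty}\lesssim 1$ (which incidentally does not follow from $\|N_{k,k,\B}\|_{\ell^1}\lesssim 1$, a bound that is false for general $\B$, but rather from $\|F_k\|_{L^1(\T^{d'})}\lesssim 1$) discards exactly this decay.
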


Since $G_{k,\mathcal{R}^{d'}_{s}}^c=G^c_{k,s}$, see \eqref{picu25}, Lemma \ref{MinArc2} follows from Lemma \ref{MinArc2gen1}. 

\begin{proof}[Proof of Lemma \ref{MinArc2gen1}]
As before, we shall proceed in several steps.

{\bf{Step 1.}} We define the operators $\mathcal{G}_{k,\B}^cf:=f\ast G_{k,\B}^c$. Since $G^c_{k,\B}(x)=L^c_k(x^{(1)})N_{k,k,\B}(x^{(2)})$ we have
\begin{equation*}
\{(\mathcal{G}_{k,\B}^c)^\ast \mathcal{G}_{k,\B}^c\}^rf(x)=(f\ast A_{k,\B}^{r})(x),
\end{equation*}
where 
\begin{equation*}
\begin{split}
A_{k,\B}^{r}(y)=\eta_{\leq 3\delta k}(\tau^{-k}\circ y)\int_{\T^d\times\T^{d'}}\ex\big(y.\theta\big)\Pi_{k}^{r}\big(\theta^{(1)},\theta^{(2)}\big)\Omega_{k,\B}^{r}\big(\theta^{(2)}\big)\,d\theta^{(1)}d\theta^{(2)},
\end{split}
\end{equation*}
\begin{equation}\label{hun1}
\begin{split}
\Pi_{k}^{r}&\big(\theta^{(1)},\theta^{(2)}\big):=\sum_{h_j^{(1)},g_j^{(1)}\in\Z^d}\Big\{\prod_{j=1}^r\overline{L_{k}^c(h_j^{(1)})}L_k^c(g_j^{(1)})\Big\}\ex\big(\theta^{(1)}{.}\sum_{1\leq j\leq r}(h_j^{(1)}-g_j^{(1)})\big)\\
&\times\ex\Big(-\theta^{(2)}{.}\big\{\sum_{1\leq j\leq r}R_0(h_j^{(1)},h_j^{(1)}-g_j^{(1)})+\sum_{1\leq l<j\leq r}R_0(-h_l^{(1)}+g_l^{(1)},-h_j^{(1)}+g_j^{(1)})\big\}\Big)
\end{split}
\end{equation}
and, with $F_k$ defined as in \eqref{pro26}, we may write
\begin{equation}\label{hun2}
\Omega_{k,\B}^{r}\big(\theta^{(2)}\big):=\Big|\int_{\T^{d'}}F_k(\theta^{(2)}-\xi^{(2)})\Xi_{k,k, \B}(\xi^{(2)})\,d\xi^{(2)}\Big|^{2r}.
\end{equation}
To prove Lemma \ref{MinArc2gen1} it suffices to show that for any $(\theta^{(1)},\theta^{(2)})\in \T^d\times\T^{d'}$ we have
\begin{equation}\label{hun3}
\Big|\Pi_{k}^{r}\big(\theta^{(1)},\theta^{(2)}\big)\Omega_{k,\B}^{r}\big(\theta^{(2)}\big)\Big|\lesssim \tau^{-k/\delta'}.
\end{equation}

{\bf{Step 2.}} Assume first that $\theta^{(2)}$ is far from the support of $\Xi_{k,k, \B}$, in the sense that 
\begin{equation*}
|\tau^{k}\circ(\theta^{(2)}-a/Q)|\geq \tau^{2\delta k}\,\,\text{ for any }\,\,a/Q\in \B\subseteq \mathcal{R}^{d'}_{\le\delta k}.
\end{equation*}
Using \eqref{pro27.5} it follows that $\big|\Omega_{k,\B}^{r}\big(\theta^{(2)}\big)\big|\lesssim \tau^{-2r^2 k}$. Moreover 
\[
\big|\Pi_{k}^{r}\big(\theta^{(1)},\theta^{(2)}\big)\big|\lesssim \|L_k^c\|_{\ell^1(\mathbb{Z}^d)}^{2r}
\lesssim 
\Big\{ \prod_{1\le l\le d} \tau^{k(l + \delta)} \Big\}^{2r},
\] 
and the desired bounds \eqref{hun3} follow in this case.

{\bf{Step 3.}} On the other hand, assume that 
\begin{equation}\label{hun4}
|\tau^{k}\circ(\theta^{(2)}-a/Q)|\leq \tau^{2\delta k}\,\,\text{ for some irreducible fraction }\,\,a/Q\in \B\subseteq \mathcal{R}^{d'}_{\le\delta k}.
\end{equation}
In this case we prove the stronger bounds
\begin{equation}\label{hun4.5}
\Big|\Pi_{k}^{r}\big(\theta^{(1)},\theta^{(2)}\big)\Big|\lesssim \tau^{-k/\delta'}\qquad\text{ for any }\ \theta^{(1)}\in\T^{d'}.
\end{equation}
We examine the formulas \eqref{hun1} and \eqref{picu24} to rewrite
\begin{equation}\label{hun5}
\begin{split}
\Pi_{k}^{r}\big(\theta^{(1)},\theta^{(2)}\big)&=\int_{(\T^d)^{2r}}\mathcal{V}_k^r(\theta^{(1)},\theta^{(2)};\zeta^{(1)}_1,\xi^{(1)}_1,\ldots,\zeta^{(1)}_r,\xi^{(1)}_r)\\
&\times\prod_{1\leq j\leq r}\big\{\overline{S_k(\zeta^{(1)}_j)}\,\overline{\Psi_k^c(\zeta^{(1)}_j)}S_k(\xi^{(1)}_j)\Psi_k^c(\xi^{(1)}_j)\big\}\,d\zeta^{(1)}_1 d\xi^{(1)}_1\ldots d\zeta^{(1)}_r d\xi^{(1)}_r,
\end{split}
\end{equation}
where $\zeta^{(1)}_1,\xi^{(1)}_1,\ldots,\zeta^{(1)}_r,\xi^{(1)}_r\in\T^d$ and
\begin{equation}\label{hun6}
\begin{split}
\mathcal{V}_k^r&(\theta^{(1)},\theta^{(2)};\zeta^{(1)}_1,\xi^{(1)}_1,\ldots,\zeta^{(1)}_r,\xi^{(1)}_r)\\
&:=\sum_{h_j,g_j\in\Z^d}\prod_{1\leq j\leq r}\Big\{\overline{\phi_k^{(1)}(h_j)}\ex\big((\theta^{(1)}-\zeta_j^{(1)}){.}h_j\big)\phi_k^{(1)}(g_j)\ex\big(-(\theta^{(1)}-\xi_j^{(1)}){.}g_j\big)\Big\}\\
&\qquad\times\ex\Big(-\theta^{(2)}{.}\big\{\sum_{1\leq j\leq r}R_0(h_j,h_j-g_j)+\sum_{1\leq l<j\leq r}R_0(-h_l+g_l,-h_j+g_j)\big\}\Big).
\end{split}
\end{equation}

We will show below that
\begin{equation}\label{hun9}
\big|S_k(\beta^{(1)})\Psi_k^c(\beta^{(1)})\big|\lesssim 
\tau^{-k\delta'/(2d\overline{C})}  
\qquad\text{ for any }\beta^{(1)}\in\T^d,
\end{equation}
where $\overline{C}$ is a constant from Proposition~\ref{minarcscom}.
We will also show that
\begin{equation}\label{hun7}
\begin{split}
&\big|\mathcal{V}_k^r(\theta^{(1)},\theta^{(2)};\zeta^{(1)}_1,\xi^{(1)}_1,\ldots,\zeta^{(1)}_r,\xi^{(1)}_r)\big|\\
&\lesssim\Big\{\prod_{1\leq l\leq d}\tau^{k(l+\delta)}\Big\}^{2r}\min_{\substack{1\leq j\leq r\\1\leq l\leq d}}\big[1+\tau^{k(l-8\delta)}\|\theta^{(1)}_l-\zeta^{(1)}_{j,l}\|_Q+\tau^{k(l-8\delta)}\|\theta^{(1)}_l-\xi^{(1)}_{j,l}\|_Q\big]^{-D^2},
\end{split}
\end{equation}
for any $\theta^{(1)}=(\theta^{(1)}_{l})_{l\in\{1,\ldots,d\}}\in\T^d$, $\zeta^{(1)}_j=(\zeta^{(1)}_{j,l})_{l\in\{1,\ldots,d\}}\in\T^d$, and $\xi^{(1)}_j=(\xi^{(1)}_{j,l})_{l\in\{1,\ldots,d\}}\in\T^d$. Here $Q\leq \tau^{\delta k + 1}$ and $\theta^{(2)}$ are as in \eqref{hun4}, and 
\begin{equation}\label{hun7.5}
\|\mu\|_Q:=\inf_{m\in\Z}|\mu-m/Q|\qquad\text{ for any }\mu\in\mathbb{R}.
\end{equation}
The desired estimates \eqref{hun4.5} would clearly follow from these bounds and the identity in \eqref{hun5}.
Here the assumption $\delta\ll \delta'$ in \eqref{ConstantsStr} plays an  important role.

{\bf{Step 4.}} The bound in \eqref{hun7} follows from the more precise formulas in Lemma \ref{Vrbounds} below, using repeated integration by parts in the variables $x_j,y_j$ to prove bounds on the function $\mathcal{Z}_k^r$ defined in \eqref{Vrbounds4} and using the trivial bound $| \mathcal{W}_Q^r | \le 1$ for the function defined in \eqref{Vrbounds3}. We prove now the bounds \eqref{hun9}. Assume $\beta^{(1)}=(\beta_l^{(1)})_{l\in\{1,\ldots,d\}}$. By the Dirichlet principle for any $l\in\{1,\ldots,d\}$ there is an irreducible fraction $a_l/q_l$ such that
\begin{equation}\label{hun10}
\big|\beta_l^{(1)}-a_l/q_l\big|\leq \frac{1}{q_l\tau^{lk-\delta'k/2}}\qquad\text{ and }\qquad q_l\in[1,\tau^{lk-\delta'k/2}]\cap\Z.
\end{equation}
If $q_l\leq \tau^{\delta'k/(2d)}$ for all $l\in\{1,\ldots,d\}$ then $\Psi_k^c(\beta^{(1)})=0$ due to the definition \eqref{picu21}. On the other hand, if $q_l\in[\tau^{\delta'k/(2d)},\tau^{lk-\delta'k/2}]\cap\Z$ for some $l\in\{1,\ldots,d\}$ then we apply Proposition \ref{minarcscom} with $P\simeq \tau^k$ and $\varepsilon=\delta'/(2d)$. Recalling the definition \eqref{picu18} it follows that $\big|S_k(\beta^{(1)})\big|\lesssim \tau^{-k\delta'/(2d\overline{C})}$, and the desired bound in \eqref{hun9} follow.
\end{proof}

For later use, in Section \ref{sectionM3}, we prove an approximate formula for the multiplier $\mathcal{V}_k^r$. 

\begin{lemma}\label{Vrbounds} 
Assume that $k\geq D/\ln\tau$ and $1\le Q\le \tau^{2\delta k}$. Assume also that
\begin{equation}\label{laj13}
\theta^{(2)}=a^{(2)}/Q+\alpha^{(2)},\qquad a^{(2)}\in\Z^{d'},\qquad |\tau^{k}\circ\alpha^{(2)}|\leq \tau^{4\delta k}
\end{equation}
and
\begin{equation}\label{laj14}
\theta^{(1)}-\xi_j^{(1)}=b_j/Q+\beta_j,\,\,\theta^{(1)}-\zeta_j^{(1)}=c_j/Q+\gamma_j,\qquad b_j,c_j\in\Z^d,\,\,Q\beta_j,Q\gamma_j\in[-1/2,1/2]^d,
\end{equation}
for any $j\in\{1,\ldots,r\}$. Then we have the approximate identity
\begin{equation}\label{Vrbounds2}
\begin{split}
\mathcal{V}_k^r(\theta^{(1)},\theta^{(2)};&\zeta^{(1)}_1,\xi^{(1)}_1,\ldots,\zeta^{(1)}_r,\xi^{(1)}_r)\\
&=\mathcal{W}_Q^r(a^{(2)};b_1,c_1,\ldots,b_r,c_r)\cdot \mathcal{Z}_k^r(\alpha^{(2)};\beta_1,\gamma_1,\ldots,\beta_r,\gamma_r)+O(\tau^{-D^3 k}),
\end{split}
\end{equation}
where
\begin{equation}\label{Vrbounds3}
\begin{split}
\mathcal{W}_Q^r(a^{(2)};&b_1,c_1,\ldots,b_r,c_r):=\Big\{Q^{-2rd}\sum_{\mu_j,\nu_j\in \Z_Q^d}
\Big( \prod_{1\le j\le r} \ex\big(-(b_j/Q){.}\mu_j\big)\ex\big((c_j/Q){.}\nu_j\big) \Big) \\
&\times\ex\Big(-(a^{(2)}/Q){.}\big\{\sum_{1\leq j\leq r}R_0(\nu_j,\nu_j-\mu_j)+\sum_{1\leq l<j\leq r}R_0(-\nu_l+\mu_l,-\nu_j+\mu_j)\big\}\Big)\Big\},
\end{split}
\end{equation}
and
\begin{equation}\label{Vrbounds4}
\begin{split}
\mathcal{Z}_k^r&(\alpha^{(2)};\beta_1,\gamma_1,\ldots,\beta_r,\gamma_r):=\int_{\R^{2rd}}\Big\{\prod_{1\le l\le d}\tau^{kl}\Big\}^{2r}\\
&\times\prod_{1\leq j\leq r}\Big\{\eta_{\leq\delta k}(x_j)\ex\big(-(\tau^k\circ\beta_j){.}x_j\big)\eta_{\leq\delta k}(y_j)\ex\big((\tau^k\circ\gamma_j){.}y_j\big)\Big\}\\
&\times\ex\Big(-(\tau^k\circ\alpha^{(2)}){.}\big\{\sum_{1\leq j\leq r}R_0(y_j,y_j-x_j)+\sum_{1\leq l<j\leq r}R_0(-y_l+x_l,-y_j+x_j)\big\}\Big)\,dx_jdy_j.
\end{split}
\end{equation}
\end{lemma}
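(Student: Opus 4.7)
My plan is to substitute $h_j = \nu_j + Q m_j$ and $g_j = \mu_j + Q n_j$ with $\nu_j, \mu_j \in \Z_Q^d$ and $m_j, n_j \in \Z^d$, apply Poisson summation in the $m_j, n_j$ variables, and rescale the resulting integral via $\tau^k\circ$. Using the hypotheses \eqref{laj13}--\eqref{laj14}, integrality of $a^{(2)}, b_j, c_j$, and bilinearity of $R_0$, each phase in \eqref{hun6} decomposes modulo $\Z$ into a \emph{rational piece} depending only on $(\nu_j, \mu_j)$ and a \emph{continuous piece} depending on $(\gamma_j, \beta_j, \alpha^{(2)})$. Concretely $(c_j/Q).h_j \equiv (c_j/Q).\nu_j$ because $c_j.m_j\in\Z$, and $(a^{(2)}/Q).R_0(h_j, h_j-g_j) \equiv (a^{(2)}/Q).R_0(\nu_j, \nu_j-\mu_j)$ because the three mixed pieces produced by the bilinear expansion each carry a factor $Q$ that pairs with $a^{(2)}/Q$ to yield an integer; the inter-index terms $(a^{(2)}/Q).R_0(-h_l+g_l, -h_j+g_j)$ reduce identically. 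Collecting these rational pieces across all $j$ and $1 \leq l < j \leq r$ reproduces exactly the arithmetic sum $\mathcal{W}_Q^r$ in \eqref{Vrbounds3}.

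For the continuous linear phase $\gamma_j.h_j = \gamma_j.\nu_j + Q\gamma_j.m_j$ (and the analogous $-\beta_j.g_j$ piece), I apply Poisson summation to the inner sum $\sum_{m_j \in \Z^d} \phi_k^{(1)}(\nu_j + Q m_j)\ex(Q\gamma_j.m_j)$. By Lemma \ref{RapDe} together with the hypothesis $Q\gamma_j\in[-1/2,1/2]^d$ (which guarantees $|l/Q - \gamma_j|_\infty \gtrsim Q^{-1}\geq \tau^{-2\delta k}$ for every $0\ne l\in\Z^d$), and the rapid Fourier decay of $\phi_k^{(1)}$, the contribution of nonzero Fourier frequencies is $O(\tau^{-Mk})$ for arbitrary $M$. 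The zero frequency, after the change of variable $y = \nu_j + Q m_j$, equals $Q^{-d}\ex(-\gamma_j.\nu_j)\int_{\R^d}\phi_k^{(1)}(y)\ex(\gamma_j.y)\,dy$, and the factor $\ex(-\gamma_j.\nu_j)$ precisely cancels the $\ex(\gamma_j.\nu_j)$ extracted from the original continuous phase. Rescaling $y = \tau^k\circ z$ then produces the Jacobian $\prod_l \tau^{lk}$ and converts $\gamma_j.y$ into $(\tau^k\circ\gamma_j).z$, reproducing the integrand $\ex((\tau^k\circ\gamma_j).y_j)$ of \eqref{Vrbounds4}; the same manipulation handles the $\beta_j$ factor.

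The bilinear continuous phase $-\alpha^{(2)}.R_0(h_j, h_j-g_j)$ is the crux. Its bilinear expansion yields four subterms; the leading one $-Q^2\alpha^{(2)}.R_0(m_j, m_j-n_j) = -\alpha^{(2)}.R_0(Q m_j, Q m_j - Q n_j)$ converts under the dilation $Q m_j = \tau^k\circ x_j$, $Q n_j = \tau^k\circ y_j$ into $-(\tau^k\circ\alpha^{(2)}).R_0(x_j, x_j-y_j)$, matching \eqref{Vrbounds4}. Each of the three subleading subterms contains at least one factor of $\nu_j$ or $\mu_j$; using $|\tau^k\circ\alpha^{(2)}| \leq \tau^{4\delta k}$, $Q\leq \tau^{2\delta k}$, the support bounds $|(h_j)_{l_1}|,|(g_j)_{l_1}|\lesssim \tau^{(l_1+\delta)k}$, and crucially $l_1 + l_2 \geq 3$ for $(l_1,l_2)\in Y'_d$ (since $l_2\geq 1$ and $l_1\geq 2$), each such phase is bounded by $\tau^{-2k + C\delta k}$ and may be Taylor expanded as $1 + O(\tau^{-2k + C\delta k})$. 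The inter-index term $-\alpha^{(2)}.R_0(-h_l+g_l, -h_j+g_j)$ is treated identically, with its leading piece contributing $-(\tau^k\circ\alpha^{(2)}).R_0(-x_l+y_l, -x_j+y_j)$ to $\mathcal{Z}_k^r$. The main obstacle is the careful bookkeeping of all accumulated errors---the Poisson remainder, the Taylor expansion of every cross phase across the $r$ factors and $\binom{r}{2}$ inter-index pairs, and residual $\phi_k^{(1)}$-variation terms---and absorbing them into the uniform $O(\tau^{-D^3 k})$ bound using the hierarchy $\delta \ll \delta' \ll D^{-1}$ from \eqref{ConstantsStr}.
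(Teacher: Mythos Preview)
Your overall strategy---decompose $h_j=\nu_j+Qm_j$, $g_j=\mu_j+Qn_j$, separate arithmetic from continuous phases, apply Poisson summation, rescale---is exactly the paper's approach, and your treatment of the linear phase in paragraph~2 is correct. The problem lies in paragraph~3.

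You propose to Taylor-expand the mixed bilinear pieces (those involving at least one residue $\nu_j$ or $\mu_j$) as $1+O(\tau^{-2k+C\delta k})$. This pointwise bound on the phase is fine, but the resulting \emph{absolute} error is that quantity multiplied by the trivial size of the sum, namely $\bigl(\prod_{l=1}^d\tau^{k(l+\delta)}\bigr)^{2r}$. The exponent of $\tau$ in this product is $-2+C\delta+2r\sum_l(l+\delta)$, which is \emph{positive} (recall $r\ge\delta^{-4}$), so the error blows up with $k$ and cannot be absorbed into $O(\tau^{-D^3k})$. No amount of bookkeeping or use of the hierarchy $\delta\ll D^{-1}$ rescues this; the gain $\tau^{-2k}$ is simply too weak against the volume factor.

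The fix is already implicit in your paragraph~2: keep the change of variables $y_j=\tau^{-k}\circ h_j=\tau^{-k}\circ(\nu_j+Qm_j)$ and $x_j=\tau^{-k}\circ g_j$ \emph{throughout}, rather than switching to $Qm_j=\tau^k\circ x_j$ for the bilinear piece. Under this substitution, the continuous bilinear phase $-\alpha^{(2)}.R_0(h_j,h_j-g_j)$ becomes \emph{exactly} $-(\tau^k\circ\alpha^{(2)}).R_0(y_j,y_j-x_j)$ by bilinearity and the scaling $R_0(\tau^k\circ y,\tau^k\circ z)=\tau^k\circ R_0(y,z)$; there are no mixed terms at all, and the resulting integral is precisely $\mathcal{Z}_k^r$, manifestly independent of $(\nu_j,\mu_j)$. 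This is what the paper does, and it leaves only the Poisson remainder (which, by taking $M$ large in Lemma~\ref{RapDe}, is genuinely $O(\tau^{-D^3k})$ since each derivative in the $m_j,n_j$ variables gains a full factor $\tau^{(C\delta-1)k}$).
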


\begin{proof}
We decompose $g_j=Qm_j+\mu_j$, $h_j=Qn_j+\nu_j$, $m_j,n_j\in\Z^d$, $\mu_j,\nu_j\in \Z_Q^d$ and then rewrite the formula \eqref{hun6} in the form
\begin{equation*}
\begin{split}
\mathcal{V}_k^r&(\theta^{(1)},\theta^{(2)};\zeta^{(1)}_1,\xi^{(1)}_1,\ldots,\zeta^{(1)}_r,\xi^{(1)}_r)\\
&=\sum_{\mu_j,\nu_j\in \Z_Q^d}\sum_{n_j,m_j\in\Z^d}
\prod_{1\leq j\leq r}\Big\{\eta_{\leq\delta k}(\tau^{-k}\circ(Qn_j+\nu_j))\ex\big(\gamma_j{.}(Qn_j+\nu_j)\big)\ex\big((c_j/Q){.}\nu_j\big)\\
&\qquad\qquad\times\eta_{\leq\delta k}(\tau^{-k}\circ(Qm_j+\mu_j))\ex\big(-\beta_j{.}(Qm_j+\mu_j)\big)\ex\big(-(b_j/Q){.}\mu_j\big)\Big\}\\
&\qquad\qquad \times\ex\Big(-\alpha^{(2)}{.}\big\{\sum_{1\leq j\leq r}R_0(h_j,h_j-g_j)+\sum_{1\leq l<j\leq r}R_0(-h_l+g_l,-h_j+g_j)\big\}\Big)\\
&\qquad\qquad\times\ex\Big(-(a^{(2)}/Q){.}\big\{\sum_{1\leq j\leq r}R_0(\nu_j,\nu_j-\mu_j)+\sum_{1\leq l<j\leq r}R_0(-\nu_l+\mu_l,-\nu_j+\mu_j)\big\}\Big).
\end{split}
\end{equation*}
We fix the variables $\mu_j,\nu_j$ and use the Poisson summation formula to replace the sum over $m_j,n_j$ with integrals. Using \eqref{RapDe2} with $\xi=(-Q\beta,Q\gamma)$ and $M$ large we see that the difference is rapidly decreasing in $\tau^k$, due to the assumptions \eqref{laj13}--\eqref{laj14}. Therefore 
\begin{equation*}
\begin{split}
\mathcal{V}_k^r&(\theta^{(1)},\theta^{(2)};\zeta^{(1)}_1,\xi^{(1)}_1,\ldots,\zeta^{(1)}_r,\xi^{(1)}_r)
=
\sum_{\mu_j,\nu_j\in \Z_Q^d}
\Big\{ \prod_{1\le j\le r} \ex\big(-(b_j/Q){.}\mu_j\big)\ex\big((c_j/Q){.}\nu_j\big) \Big\}
\\
&\times\ex\Big(-(a^{(2)}/Q){.}\big\{\sum_{1\leq j\leq r}R_0(\nu_j,\nu_j-\mu_j)+\sum_{1\leq l<j\leq r}R_0(-\nu_l+\mu_l,-\nu_j+\mu_j)\big\}\Big)\\
&\times\int_{\R^{2rd}}\prod_{1\leq j\leq r}\Big\{\eta_{\leq\delta k}(\tau^{-k}\circ(Qn_j+\nu_j))\ex\big(\gamma_j{.}(Qn_j+\nu_j)\big)\\
&\qquad\qquad\times\eta_{\leq\delta k}(\tau^{-k}\circ(Qm_j+\mu_j))\ex\big(-\beta_j{.}(Qm_j+\mu_j)\big)\Big\}\\
&\times\ex\Big(-\alpha^{(2)}{.}\big\{\sum_{1\leq j\leq r}R_0(h_j,h_j-g_j)+\sum_{1\leq l<j\leq r}R_0(-h_l+g_l,-h_j+g_j)\big\}\Big)\,dm_jdn_j+O(\tau^{-D^2 k}),
\end{split}
\end{equation*}
where $h_j=Qn_j+\nu_j$ and $g_j=Qm_j+\mu_j$ in the last line. We make the changes of variables $x_j=\tau^{-k}\circ (Qm_j+\mu_j)$, $y_j=\tau^{-k}\circ (Qn_j+\nu_j)$, and the desired formulas \eqref{Vrbounds2}--\eqref{Vrbounds4} follow.
\end{proof}

\section{Major arcs contributions: Proof of Lemma \ref{MajArc2}} \label{sec:Maj}

Our primary goal in this section is to prove the bounds \eqref{picu27var}--\eqref{picu27.5}. For later use in the $\ell^p$ theory, we will prove in fact slightly stronger bounds at several stages.

\subsection{Arithmetic decompositions} We will write the kernels $G_{k,s}^{\low}$ and $G_{k,s,t}$ as tensor products plus error terms. For any integer $Q\in\Z_+$ we define the subgroup
\begin{equation}\label{gio1}
\HH_Q:=\{h=(Qh_{l_1l_2})_{(l_1,l_2)\in Y_d}\in\G_0:\,h_{l_1,l_2}\in\Z\}.
\end{equation}
Clearly $\HH_Q\subseteq\G_0$ is a normal subgroup. Let $\JJ_Q$ denote the coset
\begin{equation}\label{gio2}
\JJ_Q:=\{b=(b_{l_1l_2})_{(l_1,l_2)\in Y_d}\in\G_0:\,b_{l_1,l_2}\in\Z\cap[0,Q-1]\},
\end{equation}
with the natural induced group structure. Notice that 
\begin{equation}\label{gio3}
\text{ the map }(b,h)\mapsto b\cdot h\text{ defines a bijection from }\JJ_Q\times\HH_Q\text{ to }\G_0.
\end{equation}

Assume that $Q\in\Z_+$ and $\tau^k\geq Q$. For any $a\in\Z^d$  and $\xi\in\R^d$ let
\begin{equation}\label{gio3.5}
\begin{split}
&J_k(\xi):=\tau^{-k}\int_{\R}\chi(\tau^{-k}x)\ex[-A_0^{(1)}(x){.}\xi]\,dx
=\int_{\R}\chi(y)\ex[-A_0^{(1)}(y){.}(\tau^k\circ\xi)]\,dy,\\
&J_k'(\xi):=\tau^{-k}\int_{\R}\chi'(\tau^{-k}x)\ex[-A_0^{(1)}(x){.}\xi]\,dx
=\int_{\R}\chi'(y)\ex[-A_0^{(1)}(y){.}(\tau^k\circ\xi)]\,dy,\\
&S(a/Q):=Q^{-1}\sum_{n\in \Z_Q}\ex[-A_0^{(1)}(n){.}a/Q],
\end{split}
\end{equation}
where  $\chi'(x):=(1/\tau)\chi(x/\tau)-\chi(x)$. For any $\iota\in\{0, 1\}$ we also let
\begin{align}
\label{eq:18}
S_k^{\iota}:=
\begin{cases}
S_k & \text{ if } \iota=0,\\
\Delta_kS_{k} & \text{ if } \iota=1,
\end{cases}
\qquad
\chi^{\iota}:=
\begin{cases}
\chi & \text{ if } \iota=0,\\
\chi' & \text{ if } \iota=1,
\end{cases}
\qquad
J_k^{\iota}:=
\begin{cases}
J_k & \text{ if } \iota=0,\\
J_k' & \text{ if } \iota=1.
\end{cases}
\end{align}
where $S_k:\R^d\to\R$ are defined as in \eqref{picu18}. We first prove an approximation formula for the functions $S_k^\iota$.

\begin{lemma}\label{Skappr}
If $k\geq D/\ln\tau$, $|\tau^k\circ\xi|\leq \tau^{k/4}$, $1\le Q\leq \tau^{k/4}$, $a \in \Z^d$, and $\iota\in\{0,1 \}$ then
\begin{equation}\label{gio3.6}
|S_k^{\iota}(a/Q+\xi)-S(a/Q)J_k^{\iota}(\xi)|\lesssim \tau^{-Dk}.
\end{equation}
\end{lemma}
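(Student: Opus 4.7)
The plan is to prove the $\iota=0$ case; the $\iota=1$ case is identical, since directly from the definitions one has $\Delta_k S_k(\xi) = \sum_{n\in\Z}\tau^{-k}\chi'(\tau^{-k}n)\ex(-A_0^{(1)}(n).\xi)$ with $\chi'$ smooth and supported in $[-2,2]$, and $J_k'$ is defined analogously with $\chi'$ in place of $\chi$. The approach is classical: decompose the sum defining $S_k(a/Q+\xi)$ into arithmetic progressions modulo $Q$, factor out the phase contribution as the Gauss-type sum $S(a/Q)$, and apply Poisson summation to each inner sum.

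Concretely, write $n=Qm+b$ with $b\in\Z_Q$ and $m\in\Z$. Since $(Qm+b)^l\equiv b^l \pmod Q$ for every $l\geq 1$, the phase satisfies $A_0^{(1)}(Qm+b).(a/Q)\equiv A_0^{(1)}(b).(a/Q)\pmod 1$, giving the exact identity
\begin{align*}
S_k(a/Q+\xi)=\sum_{b\in\Z_Q}\ex\bigl(-A_0^{(1)}(b).(a/Q)\bigr)\,F_{k,b}(\xi),
\end{align*}
where $F_{k,b}(\xi):=\sum_{m\in\Z}\tau^{-k}\chi(\tau^{-k}(Qm+b))\ex(-A_0^{(1)}(Qm+b).\xi)$. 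Next I would apply Lemma \ref{RapDe} (with zero frequency) to each $F_{k,b}(\xi)$, replacing the sum by $\int_\R \tau^{-k}\chi(\tau^{-k}(Qx+b))\ex(-A_0^{(1)}(Qx+b).\xi)\,dx$; the substitution $y=Qx+b$ turns this integral into $Q^{-1}J_k(\xi)$. Summing over $b$ and recalling the definition \eqref{gio3.5} of $S(a/Q)$ then produces the main term $S(a/Q)J_k(\xi)$.

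The crux is to bound the Poisson error. By Lemma \ref{RapDe}, for any fixed integer $M\geq 2$ this error is controlled by $\int_\R|\partial_x^M G_{k,b}(x)|\,dx$, where $G_{k,b}(x):=\tau^{-k}\chi(\tau^{-k}(Qx+b))\ex(-A_0^{(1)}(Qx+b).\xi)$; the $x$-support has length $\simeq \tau^k/Q$. The hypotheses $Q\leq \tau^{k/4}$ and $|\tau^k\circ\xi|\leq\tau^{k/4}$ (equivalently $|\xi_l|\leq\tau^{k/4-kl}$) are the crucial inputs: each derivative on $\chi(\tau^{-k}(Qx+b))$ produces a factor $Q\tau^{-k}\leq\tau^{-3k/4}$, while on the support the $j$-th derivative of the polynomial phase is bounded by $\sum_{l\geq j}Q^j\tau^{k(l-j)}|\xi_l|\lesssim (Q\tau^{-k})^j\tau^{k/4}\leq \tau^{-k/2}$ for $j\geq 1$. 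A routine Leibniz/Fa\`a di Bruno calculation then yields $|\partial_x^M G_{k,b}(x)|\lesssim \tau^{-k}\tau^{-kM/2}$, so $\int|\partial_x^M G_{k,b}|\lesssim Q^{-1}\tau^{-kM/2}$; summing over $b\in\Z_Q$ gives total error $\lesssim \tau^{-kM/2}$, and choosing $M\geq 2D$ delivers the required bound $\tau^{-Dk}$. There is no substantive obstacle beyond this bookkeeping: the size assumptions on $Q$ and $\xi$ make the Poisson approximation converge at a super-polynomial rate in $\tau^k$, easily accommodating any fixed $D$.
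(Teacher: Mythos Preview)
Your proof is correct and follows essentially the same approach as the paper: decompose into residue classes modulo $Q$, factor out the arithmetic phase to produce $S(a/Q)$, and apply the Poisson summation estimate \eqref{RapDe2} at zero frequency to the inner sum, with the change of variables $y=Qx+b$ yielding $Q^{-1}J_k^\iota(\xi)$. You supply more explicit bookkeeping for the derivative bounds than the paper (which simply says the error is ``acceptable'' for $M$ large), but the argument is the same.
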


\begin{proof} We write
\begin{equation*}
\begin{split}
S_k^{\iota}(a/Q+\xi)&=\sum_{n\in\Z,\, m\in \Z_{Q}}\tau^{-k}\chi^{\iota}(\tau^{-k}(Qn+m))\ex[-A_0^{(1)}(Qn+m){.}(a/Q+\xi)]\\
&=\sum_{m\in \Z_Q}\ex[-A_0^{(1)}(m){.}a/Q]\Big\{\sum_{n\in\Z}\tau^{-k}\chi^{\iota}(\tau^{-k}(Qn+m))\ex[-A_0^{(1)}(Qn+m){.}\xi]\Big\}.
\end{split}
\end{equation*} 
For any $m\in \Z_Q$ we apply the estimates \eqref{RapDe2} (with $m=1$, $\xi=0$, and $M$ large) to replace the sum over $n$ with the corresponding integral, at the expense of an acceptable error. The desired approximate identity \eqref{gio3.6} follows by a linear change of variables.
\end{proof}

We now prove  an approximate formula for the kernels $K_{k,w,\A,\B}$ from \eqref{def:Kkw}.

\begin{lemma}\label{kio5}
Assume that $k, w\in\N$, $k\geq D/\ln\tau$, $0\le w\le k$ and let  $1\le Q\le \tau^{\delta k}$. Let
$\A \subseteq \widetilde{\mathcal{R}}^d_{Q}$ and  
$\B \subseteq \widetilde{\mathcal{R}}^{d'}_{Q}$ be $1$-periodic sets of
rationals. If $h\in \HH_{Q}$ and $b_1,b_2\in \G_0$ satisfy $|b_j|\leq Q^4$, $j\in\{1,2\}$, then we can decompose 
\begin{equation}\label{kio6}
K_{k,w,\A,\B}(b_1\cdot h\cdot b_2)=W_{k, w,Q}(h)V_{\A, \B, Q}(b_1\cdot b_2)+E_{k,w, \A, \B}(h,b_1,b_2),
\end{equation}
where, for any $h=(h^{(1)},h^{(2)})\in \HH_{Q}$ and $b=(b^{(1)},b^{(2)})\in \G_0$, one has
\begin{equation}\label{kio7}
W_{k,w,Q}(h):=Q^{d+d'}\phi_{k}(h)\int_{\R^d\times\R^{d'}}\eta_{\leq\delta' w}(\tau^k\circ\xi)\eta_{\leq\delta w}(\tau^k\circ\theta)\ex(h{.}(\xi,\theta))J_k(\xi)\,d\xi d\theta,
\end{equation}
\begin{equation}\label{kio8}
V_{\A, \B,Q}(b)
:=Q^{-d-d'}\Big\{\sum_{\sigma^{(1)}\in\A\cap[0,1)^d}S(\sigma^{(1)})\ex[b^{(1)}{.}(\sigma^{(1)})]\Big\}
\Big\{\sum_{\sigma^{(2)}\in\B\cap[0,1)^{d'}}\ex[b^{(2)}.(\sigma^{(2)})]\Big\}.
\end{equation}
Here $\phi_k(h):=\phi_{k}^{(1)}(h^{(1)})\phi_{k}^{(2)}(h^{(2)})$ and the error terms $E_{k, \A, \B}$ satisfy the bounds
\begin{equation}\label{kio9}
\big|E_{k,w, \A, \B}(h,b_1,b_2)\big|\lesssim \tau^{-k/2}\Big\{\prod_{(l_1,l_2)\in Y_d}\tau^{-(l_1+l_2)k}\Big\}\eta_{\leq 2\delta k}(\tau^{-k}\circ h^{(1)})\eta_{\leq 2\delta k}(\tau^{-k}\circ h^{(2)}).
\end{equation}
\end{lemma}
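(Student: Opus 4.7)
The plan is to reduce the integral representation of $K_{k,w,\A,\B}$ to a product of an arithmetic factor depending only on $b_1\cdot b_2$ and an analytic factor depending only on $h$, up to an acceptable error. First, I would split each frequency variable in the defining formulas \eqref{def:Kkw}--\eqref{eq:3} into its rational and continuous parts. Because the cutoff scale $\tau^{-k+\delta' w}$ is much smaller than $1/Q$ (using $Q\leq\tau^{\delta k}$ and the calibration $\delta\ll\delta'$ from \eqref{ConstantsStr}), the translates $\eta_{\leq\delta' w}(\tau^k\circ(\xi^{(1)}-\sigma^{(1)}))$ for distinct $\sigma^{(1)}\in\A\cap[0,1)^d$ have pairwise disjoint supports in $\T^d$, and likewise for the $\xi^{(2)}$-variable. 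One may therefore write $\xi^{(1)}=\sigma^{(1)}+\alpha^{(1)}$ and $\xi^{(2)}=\sigma^{(2)}+\alpha^{(2)}$, with $\sigma^{(i)}$ running over a finite set of rationals and $\alpha^{(i)}$ integrated freely over $\R^d$ or $\R^{d'}$.

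Next, substituting $g=b_1\cdot h\cdot b_2$ and applying \eqref{picu4.2} twice yields
\begin{equation*}
g^{(1)}=h^{(1)}+(b_1\cdot b_2)^{(1)},\qquad g^{(2)}=h^{(2)}+(b_1\cdot b_2)^{(2)}+R_0(b_1^{(1)},h^{(1)})+R_0(h^{(1)},b_2^{(1)}).
\end{equation*}
The factor $\ex(g.(\sigma+\alpha))$ then splits accordingly, and the key arithmetic observation is that every $h$-dependent term pairs trivially with the rational frequencies: since $h^{(1)}\in Q\Z^d$ and $\sigma^{(1)}\in Q^{-1}\Z^d$ one has $\ex(h^{(1)}.\sigma^{(1)})=1$; similarly $\ex(h^{(2)}.\sigma^{(2)})=1$; and because $b_1^{(1)},b_2^{(1)}\in\Z^d$ together with $h^{(1)}\in Q\Z^d$ force $R_0(b_j^{(1)},h^{(1)}),R_0(h^{(1)},b_j^{(1)})\in Q\Z^{d'}$, these mixed terms also annihilate $\sigma^{(2)}$. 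The only surviving $\sigma$-dependent factor outside the $\alpha$-integration is $\ex((b_1\cdot b_2)^{(1)}.\sigma^{(1)})\ex((b_1\cdot b_2)^{(2)}.\sigma^{(2)})$, which will combine with the Gauss sum factors $S(\sigma^{(1)})$ produced in the next step to form $V_{\A,\B,Q}(b_1\cdot b_2)$.

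For the analytic ingredients, I would apply Lemma \ref{Skappr} to replace $S_k(\sigma^{(1)}+\alpha^{(1)})$ by $S(\sigma^{(1)})J_k(\alpha^{(1)})$ up to an $O(\tau^{-Dk})$ error, thereby extracting both the Gauss sum and the oscillatory integral $J_k$ appearing in \eqref{kio7}. It then remains to replace $\phi_k^{(1)}(g^{(1)})\phi_k^{(2)}(g^{(2)})$ by $\phi_k(h)$ and to replace each of $\ex((b_1\cdot b_2)^{(1)}.\alpha^{(1)})$, $\ex((b_1\cdot b_2)^{(2)}.\alpha^{(2)})$, $\ex(R_0(b_1^{(1)},h^{(1)}).\alpha^{(2)})$, $\ex(R_0(h^{(1)},b_2^{(1)}).\alpha^{(2)})$ by $1$. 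Each such replacement produces an error of relative size $O(\tau^{-k/2})$: for the pure $b$-terms this follows from $|b_j|\leq Q^4\leq\tau^{4\delta k}$ together with $|\alpha^{(1)}_{l_10}|\lesssim\tau^{-l_1k+\delta' k}$ and $|\alpha^{(2)}_{l_1l_2}|\lesssim\tau^{-(l_1+l_2)k+\delta k}$; and for the mixed $R_0(b_j,h)$ terms it uses the crucial fact that these take values only in the components indexed by $Y_d'$, so that the non-isotropic rescaling $\tau^{-k}\circ$ produces a decisive factor $\tau^{-l_1k}$ with $l_1\geq 1$, which easily dominates the $Q^4\leq\tau^{4\delta k}$ growth coming from $b_j$ and the $\delta k$ loss coming from $\alpha^{(2)}$.

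Assembling these estimates yields exactly the desired factorization $K_{k,w,\A,\B}(b_1\cdot h\cdot b_2)=W_{k,w,Q}(h)V_{\A,\B,Q}(b_1\cdot b_2)+E_{k,w,\A,\B}(h,b_1,b_2)$, with $E_{k,w,\A,\B}$ satisfying \eqref{kio9}; the cutoff $\eta_{\leq 2\delta k}(\tau^{-k}\circ h^{(i)})$ on the right-hand side of \eqref{kio9} simply reflects the enlarged range in which $g^{(1)},g^{(2)}$ can lie inside the support of $\phi_k$, outside of which both sides vanish trivially. The principal technical difficulty is the bookkeeping of the cross terms $R_0(b_j^{(1)},h^{(1)})$, since $h$ is not small; the rescue is the structural observation that $R_0$ outputs only into $Y_d'$-components, so the non-isotropic dilation supplies the required decay, and the calibration $\delta\ll\delta'\ll 1$ from \eqref{ConstantsStr} is precisely what ensures all errors fit within the bound \eqref{kio9}.
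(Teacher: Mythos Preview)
Your proposal is correct and follows essentially the same approach as the paper's proof: expand the frequency integrals over $\A,\B$ into rational-plus-continuous pieces, use the group law for $b_1\cdot h\cdot b_2$, exploit the arithmetic cancellations $h^{(i)}.\sigma^{(i)}\in\Z$ and $R_0(b_j^{(1)},h^{(1)}).\sigma^{(2)}\in\Z$, apply Lemma~\ref{Skappr}, and estimate the remaining replacements via size bounds. The only cosmetic difference is that you organize $g^{(2)}$ in terms of $(b_1\cdot b_2)^{(2)}$ from the outset, whereas the paper keeps $b_1,b_2$ separate and only identifies $(b_1\cdot b_2)^{(2)}=b_1^{(2)}+b_2^{(2)}+R_0(b_1^{(1)},b_2^{(1)})$ at the end; also, the disjointness of the frequency bumps only needs $\delta,\delta'$ small (not $\delta\ll\delta'$ specifically).
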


\begin{proof} We start from the formula
$K_{k,w,\A,\B}(g) = L_{k,w,\A} (g^{(1)}) N_{k,w,\B} (g^{(2)}), $ and
recall the definitions \eqref{def:progen} and \eqref{eq:3}. Letting
$b_1=(b_1^{(1)},b_1^{(2)})$, $b_2=(b_2^{(1)},b_2^{(2)})$,
$h=(h^{(1)},h^{(2)})$ we have
\begin{equation}\label{gio10}
\begin{split}
&b_1\cdot h\cdot b_2=(g^{(1)},g^{(2)}),\\
&g^{(1)}:=h^{(1)}+b_1^{(1)}+b_2^{(1)},\\
&g^{(2)}:=h^{(2)}+b_1^{(2)}+b_2^{(2)}+R_0(b_1^{(1)},h^{(1)})+R_0(h^{(1)}+b_1^{(1)},b_2^{(1)}).
\end{split}
\end{equation}
Using \eqref{def:progen} and \eqref{eq:3} we have
\begin{equation*}
\begin{split}
L_{k,w,\A} (g^{(1)})&=\phi_k^{(1)}(g^{(1)})\int_{\T^d}\ex(g^{(1)}{.}\xi^{(1)})S_k(\xi^{(1)})\Psi_{k,w, \A}(\xi^{(1)})\,d\xi^{(1)}\\
&=\eta_{\leq\delta k}(\tau^{-k}\circ g^{(1)})\sum_{\sigma^{(1)}\in\A\cap[0,1)^d}\int_{\R^d}\eta_{\leq\delta' w}(\tau^k\circ\xi)S_k(\sigma^{(1)}+\xi)\\
&\qquad\qquad\times\ex[(h^{(1)}+b_1^{(1)}+b_2^{(1)}){.}(\sigma^{(1)}+\xi)]\,d\xi,
\end{split}
\end{equation*}
and
\begin{equation*}
\begin{split}
N_{k,w,\B} (g^{(2)})&=\phi_k^{(2)}(g^{(2)})\int_{\T^{d'}}\ex(g^{(2)}.\xi^{(2)})\Xi_{k,w, \B}(\xi^{(2)})\, d\xi^{(2)}\\
&=\eta_{\leq\delta k}(\tau^{-k}\circ g^{(2)})\sum_{\sigma^{(2)}\in\B\cap[0,1)^{d'}}\int_{\R^{d'}}\eta_{\leq\delta w}(\tau^k\circ\theta)\\
&\qquad\times\ex\big\{[h^{(2)}+b_1^{(2)}+b_2^{(2)}+R_0(b_1^{(1)},h^{(1)})+R_0(h^{(1)}+b_1^{(1)},b_2^{(1)})].(\sigma^{(2)}+\theta)\big\}\,d\theta.
\end{split}
\end{equation*}

We notice that if $h^{(1)}\in (Q\Z)^d$, $b_1^{(1)}, b_2^{(1)}\in\Z^d$, $\sigma^{(1)}\in\A\subseteq \widetilde{\mathcal{R}}^d_{Q}$, $|\tau^{-k}\circ h^{(1)}|\lesssim \tau^{\delta k}$, $|b_1^{(1)}|+|b_2^{(1)}|\lesssim Q^4$, $Q\leq \tau^{\delta k}$, $\xi\in\R^d$, and $|\tau^{k}\circ\xi|\lesssim \tau^{\delta' k}$ then
\begin{equation}\label{gio14}
\eta_{\leq\delta k}(\tau^{-k}\circ g^{(1)})=\eta_{\leq\delta k}(\tau^{-k}\circ h^{(1)})+O(\tau^{-3k/4}),
\end{equation}
\begin{equation}\label{gio15}
\begin{split}
\ex[(h^{(1)}+b_1^{(1)}+b_2^{(1)}){.}(\sigma^{(1)}+\xi)]&=\ex[(b_1^{(1)}+b_2^{(1)}){.}(\sigma^{(1)})]\ex[(h^{(1)}+b_1^{(1)}+b_2^{(1)}){.}\xi]\\
&=\ex[(b_1^{(1)}+b_2^{(1)}){.}(\sigma^{(1)})]\ex(h^{(1)}{.}\xi)+O(\tau^{-3k/4}).
\end{split}
\end{equation}
Using also Lemma \ref{Skappr} we have
\begin{equation}\label{gio17}
\begin{split}
\Big|L_{k,w,\A} (g^{(1)})&-\eta_{\leq\delta k}(\tau^{-k}\circ h^{(1)})\sum_{\sigma^{(1)}\in\A\cap[0,1)^d}S(\sigma^{(1)})\ex[(b_1^{(1)}+b_2^{(1)}){.}(\sigma^{(1)})]\\
&\times\int_{\R^d}\eta_{\leq\delta' w}(\tau^k\circ\xi)J_k(\xi)\ex(h^{(1)}{.}\xi)\,d\xi\Big|\lesssim \tau^{-2k/3}\prod_{1\le l_1\le d}\tau^{-l_1k}.
\end{split}
\end{equation}

Moreover, assuming also that $h^{(2)}\in (Q\Z)^{d'}$, $b_1^{(2)}, b_2^{(2)}\in\Z^{d'}$, $\sigma^{(2)}\in\B\subseteq \widetilde{\mathcal{R}}^{d'}_{Q}$, $|\tau^{-k}\circ h^{(2)}|\lesssim \tau^{\delta k}$, $|b_1^{(2)}|+|b_2^{(2)}|\lesssim Q^4$, $\theta\in\R^{d'}$, and $|\tau^{k}\circ\theta|\lesssim \tau^{\delta k}$, we have
\begin{equation}\label{gio18}
\eta_{\leq\delta k}(\tau^{-k}\circ g^{(2)})=\eta_{\leq\delta k}(\tau^{-k}\circ h^{(2)})+O(\tau^{-3k/4}),
\end{equation}
\begin{equation}\label{gio19}
\begin{split}
\ex\big\{[h^{(2)}&+b_1^{(2)}+b_2^{(2)}+R_0(b_1^{(1)},h^{(1)})+R_0(h^{(1)}+b_1^{(1)},b_2^{(1)})].(\sigma^{(2)}+\theta)\big\}\\
&=\ex\big\{[b_1^{(2)}+b_2^{(2)}+R_0(b_1^{(1)},b_2^{(1)})].(\sigma^{(2)})\big\}\ex(h^{(2)}.\theta)+O(\tau^{-3k/4}).
\end{split}
\end{equation}
Therefore
\begin{equation}\label{gio20}
\begin{split}
\Big|N_{k,w,\B}(g^{(2)})&-\eta_{\leq\delta k}(\tau^{-k}\circ h^{(2)})\sum_{\sigma^{(2)}\in\B\cap[0,1)^{d'}}\ex\big\{[b_1^{(2)}+b_2^{(2)}+R_0(b_1^{(1)},b_2^{(1)})].(\sigma^{(2)})\big\}\\
&\times\int_{\R^{d'}}\eta_{\leq\delta w}(\tau^k\circ\theta)\ex(h^{(2)}.\theta)\,d\theta\Big|\lesssim \tau^{-2k/3}\prod_{(l_1,l_2)\in Y'_d}\tau^{-(l_1+l_2)k}.
\end{split}
\end{equation}
The conclusion of the lemma follows from \eqref{gio17} and \eqref{gio20}.
\end{proof}

\subsection{Gauss sums operators} We consider now the convolution operators defined by the kernels $V_{\A, \B,Q}$ on the quotient groups $\JJ_{Q}$  (see \eqref{gio2}). The convolution of two functions on the group $\JJ_Q$ is defined by a formula similar to \eqref{convoDef}, namely
\begin{equation}\label{convoDef2}
(f\ast_{\JJ_Q} g) (x):=\sum_{y\in \JJ_Q}g(y)f(y^{-1}\cdot x)=\sum_{y\in \JJ_Q}g(x\cdot y^{-1})f(y).
\end{equation}

\begin{lemma}\label{periodM}
Assume that $Q\in\Z_+$ and $\A \subseteq \widetilde{\mathcal{R}}^d_{Q}$ and
$\B \subseteq \widetilde{\mathcal{R}}^{d'}_{Q}$ are $1$-periodic sets
of rationals and let $V_{\A, \B,Q}$ be the kernels defined in \eqref{kio8}.

(i) Let $q_{\B}:=\min\{q\in\Z_+: a/q\in\B \text{ and } \gcd(a_1,\ldots, a_{d'}, q)=1\}$, then for
$f\in\ell^2(\JJ_{Q})$ we have
\begin{align}
\label{eq:5}
\big\|f\ast_{\JJ_{Q}}V_{\widetilde{\mathcal R}_{Q}^d, \B, Q}\big\|_{\ell^2(\JJ_{Q})}\lesssim q_{\B}^{-1/D}\|f\|_{\ell^2(\JJ_{Q})}.
\end{align}

In particular, if $s\geq 0$ then for $V_s^{\low}:=V_{\widetilde{\mathcal R}_{Q_s}^d, \mathcal R_s^{d'}, Q_s}$ inequality \eqref{eq:5} ensures
\begin{equation}\label{gio30}
\big\|f\ast_{\JJ_{Q_s}}V_s^{\low}\big\|_{\ell^2(\JJ_{Q_s})}\lesssim \tau^{-s/D}\|f\|_{\ell^2(\JJ_{Q_s})}.
\end{equation}

(ii) Let $q_{\A}:=\min\{q\in\Z_+: a/q\in\A \text{ and } \gcd(a_1,\ldots, a_{d}, q)=1\}$.  If $ q_1\simeq q_{\A}$ for every $a_1/q_1\in\A$, and $1\le q_2\lesssim q_{\A}^{1/D}$ for every $a_2/q_2\in\B$,
then for $f\in\ell^2(\JJ_{Q})$ we have
\begin{equation}
\label{eq:23}
\big\|f\ast_{\JJ_{Q}}V_{\A, \B,Q}\big\|_{\ell^2(\JJ_{Q})}\lesssim q_{\A}^{-1/D}\|f\|_{\ell^2(\JJ_{Q})}. 
\end{equation}

In particular, if $s\geq 0$, $t\geq D(s+1)$, then for
$V_{s,t}:=V_{\mathcal{R}^d_t\setminus \widetilde{\mathcal{R}}^d_{Q_s}, \mathcal R_{s}^{d'}, Q_t}$ we have
\begin{equation}\label{gio31}
\big\|f\ast_{\JJ_{Q_t}}V_{s,t}\big\|_{\ell^2(\JJ_{Q_t})}\lesssim \tau^{-t/D}\|f\|_{\ell^2(\JJ_{Q_t})}.
\end{equation}
\end{lemma}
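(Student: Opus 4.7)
The plan is a high-order $T^\ast T$ argument on the finite step-two nilpotent group $\JJ_Q$, coupled with the Gauss-sum estimates in Propositions \ref{minarcs}(ii) and \ref{minarcscom}(ii). Write $\mathcal{V}f:=f\ast_{\JJ_Q}V_{\A,\B,Q}$ and fix an integer $r=r(\delta)$ large enough that both estimates apply with $\varepsilon=\delta^4$, so that $1/\varepsilon\ge D$. Since $\|\mathcal V\|_{\ell^2\to\ell^2}^{2r}=\|(\mathcal V^\ast\mathcal V)^r\|_{\ell^2\to\ell^2}$, it suffices to establish $\|(\mathcal V^\ast\mathcal V)^r\|_{\ell^2\to\ell^2}\lesssim q^{-2r/D}$ with $q=q_\B$ in part (i) and $q=q_\A$ in part (ii). The specialisations \eqref{gio30} and \eqref{gio31} then follow from $q_\B\simeq\tau^s$ (for $\B=\mathcal R_s^{d'}$) and $q_\A\simeq\tau^t$ (for $\A\subseteq\mathcal R_t^d\setminus\widetilde{\mathcal R}_{Q_s}^d$).

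First I would compute the symbol of $(\mathcal V^\ast\mathcal V)^r$ by imitating \eqref{pro11.1}--\eqref{pro15.11} on $\JJ_Q$, where Fourier inversion becomes a sum over $\Z_Q^{d+d'}$ with normalisation $Q^{-(d+d')}$. The product structure $V_{\A,\B,Q}(b)=V^{(1)}_{\A,Q}(b^{(1)})V^{(2)}_{\B,Q}(b^{(2)})$ yields a factorisation $\Sigma^r(\theta^{(1)},\theta^{(2)})=\Pi^r(\theta^{(1)},\theta^{(2)})\,\Omega^r(\theta^{(2)})$, and a direct Fourier computation on $(\Z/Q)^{d'}$ gives
\[
\widehat{V^{(2)}_{\B,Q}}(\theta^{(2)})=\ind{\theta^{(2)}\in\B\cap[0,1)^{d'}},
\]
so $\Omega^r=\ind{\theta^{(2)}\in\B}$. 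Decomposing $\ell^2(\JJ_Q)=\bigoplus_c H_c$ into central-isotypic subspaces, $\mathcal V$ then annihilates every $H_c$ with $c\notin\B$. For $c\in\B$, the parametrisation $f(x^{(1)},x^{(2)})=\ex[x^{(2)}{.}c/Q]\tilde f(x^{(1)})$ identifies $\mathcal V|_{H_c}$ with a twisted convolution on $(\Z/Q)^d$ whose quadratic phase is induced by $c/Q$ through $R_0$; by Weyl-type calculus on step-two nilpotent groups, its operator norm is controlled by the $\ell^\infty$-norm of the associated twisted symbol.

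For part (i), where $\A=\widetilde{\mathcal R}_Q^d$ imposes no restriction on the non-central Fourier variable, the iterated twisted convolution kernel of $(\mathcal V^\ast\mathcal V)^r|_{H_c}$ matches, up to bookkeeping, the non-commutative Gauss sum $G(\sigma/Q)$ from \eqref{pro0.6} with $\sigma^{(2)}=c$. Since $c\in\B$ has reduced denominator $\ge q_\B$, some coordinate of $\sigma/Q$ is an irreducible fraction with denominator $\ge q_\B$, and Proposition \ref{minarcs}(ii) gives $|G(\sigma/Q)|\lesssim q_\B^{-1/\varepsilon}\le q_\B^{-2r/D}$ uniformly in $\theta^{(1)}$ and $c\in\B$, yielding \eqref{eq:5}. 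For part (ii) the central denominators satisfy $q_2\lesssim q_\A^{1/D}$, which is too small for Proposition \ref{minarcs}(ii) to supply the gain; instead I would use $\widehat{V^{(1)}_{\A,Q}}(\sigma^{(1)})=S(\sigma^{(1)})\ind{\sigma^{(1)}\in\A}$ and expand $\Pi^r$. Each of its $2r$ $V^{(1)}$-factors contributes a classical Gauss sum $S(\sigma^{(1)})$ with $\sigma^{(1)}\in\A$, bounded by $q_\A^{-1/\overline C}$ via Proposition \ref{minarcscom}(ii). The combinatorial overhead from summing over $\sigma^{(1)}\in\A\cap[0,1)^d$ (of size $O(q_\A^{d+1})$ since $q_1\simeq q_\A$) is absorbed by the normalisation $Q^{-d}$ in $V^{(1)}_{\A,Q}$, because $Q=Q_t$ is vastly larger than $q_\A$; taking $2r$-th roots produces \eqref{eq:23}.

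The hardest step will be the non-commutative spectral reduction: unlike Lemmas \ref{MinArc1} and \ref{MinArc2gen1}, where a smooth cutoff $\eta_{\le 3\delta k}$ confined the $T^\ast T$-kernel to a ball and an $\ell^\infty$-symbol bound automatically implied an $\ell^1$-kernel bound, on the finite group $\JJ_Q$ the kernel of $(\mathcal V^\ast\mathcal V)^r$ may spread over all of $\JJ_Q$ and no such shortcut is available. One must carry out Plancherel on $\JJ_Q$ through the central-isotypic decomposition and verify carefully that the resulting twisted convolution on $(\Z/Q)^d$ is controlled precisely by the non-commutative Gauss sum $G(\sigma/Q)$ in part (i) and by the product of $2r$ classical Gauss sums $S(\sigma^{(1)})$ in part (ii), so that our Weyl-type estimates apply with the exponent $1/\varepsilon$ being exchanged, to the gain $q^{-1/D}$ we seek.
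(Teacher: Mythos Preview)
Your overall strategy---high-order $T^\ast T$ combined with the Gauss-sum bounds of Propositions~\ref{minarcs}(ii) and~\ref{minarcscom}(ii)---is exactly what the paper does. For part~(i) the identification of the symbol $\Upsilon^r(\theta)$ with $\ind{\B}(\theta^{(2)})\,G(\theta)$ (after inserting $S(\gamma)=Q^{-1}\sum_{n\in\Z_Q}\ex[-A_0^{(1)}(n){.}\gamma]$ and collapsing the $\alpha_j^{(1)},\beta_j^{(1)}$ sums to delta functions) is precisely Step~3 of the paper's proof. Your concern in the last paragraph is unfounded: on the finite group one still has $\|V^r_{\A,\B,Q}\|_{\ell^1(\JJ_Q)}\le\sum_{\theta\in(\Z_Q/Q)^{d+d'}}|\Upsilon^r(\theta)|$, and since $|G(\theta)|\lesssim q^{-1/\delta^4}$ with $q$ the reduced denominator of the \emph{full} vector $\theta$ (not just $\theta^{(2)}$), this sum converges. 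No central-isotypic decomposition or ``twisted symbol'' is needed; twisted convolutions are not diagonalized by the Fourier transform, so the $\ell^\infty$-symbol claim you make would in any case require justification.

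The genuine gap is in part~(ii). Your assertion that the overhead from summing over $\sigma^{(1)}\in\A\cap[0,1)^d$ is ``absorbed by the normalisation $Q^{-d}$ \ldots\ because $Q=Q_t$ is vastly larger than $q_\A$'' is incorrect on two counts: the statement is for general $Q$, and more importantly the factor $Q^{-2rd}$ in the symbol formula \eqref{gio40} cancels exactly against the $Q^{2rd}$ from the sums over the physical variables $h_j^{(1)},g_j^{(1)}\in\Z_Q^d$. After that cancellation a naive absolute-value bound leaves $\big(|\A|\cdot q_\A^{-1/\overline C}\big)^{2r}\simeq q_\A^{2r(d+1-1/\overline C)}$, which is large. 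The paper's key step (Step~4) is to observe that the phase $\ex[-\theta^{(2)}{.}R_0(\cdot,\cdot)]$ in \eqref{gio40} depends on $h_j^{(1)},g_j^{(1)}$ only modulo $q_2$, the reduced denominator of $\theta^{(2)}$; decomposing $h_j^{(1)}=q_2y_j+y_j'$ and summing over $y_j\in\Z_{Q/q_2}^d$ therefore yields genuine delta functions $\ind{\Z^d}[q_2(\theta^{(1)}-\beta_j^{(1)})]$, forcing each $\alpha_j^{(1)},\beta_j^{(1)}$ into the coset $\theta^{(1)}+(\Z/q_2)^d$. This cuts the count of contributing $\alpha_j^{(1)},\beta_j^{(1)}$ from $|\A|\simeq q_\A^{d+1}$ down to $q_2^d$, and since $q_2\lesssim q_\A^{1/D}$ the resulting loss $q_2^{2rd}$ is dominated by the gain $q_\A^{-2r/\overline C}$ from the $2r$ Gauss sums. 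This mod-$q_2$ reduction is the mechanism your sketch is missing.
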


\begin{proof}  As in Section \ref{MinorArcs} we will use a high order $T^\ast T$ argument.

{\bf{Step 1.}} Define the operator  $\mathcal{V}_{\A, \B, Q}f:=f\ast_{\JJ_{Q}}V_{\A, \B, Q}$. For the integer $r=r(d)$ as before we have
\begin{equation*}
\{(\mathcal{V}_{\A, \B, Q})^\ast \mathcal{V}_{\A, \B, Q}\}^rf(x)=(f\ast_{\JJ_{Q}}V_{\A, \B, Q}^r)(x),
\end{equation*}
where, as in Section \ref{MinorArcs}, we have
\begin{equation}\label{gio32}
V_{\A, \B, Q}^r(y):=\sum_{h_1,g_1,\ldots,h_r,g_r\in \JJ_{Q}}
\Big\{ \prod_{j=1}^r \overline{V_{\A, \B, Q}(h_j)}V_{\A, \B, Q}(g_j) \Big\}
\ind{\{0\}}(g_r^{-1}\cdot h_r\cdot\ldots\cdot g_1^{-1}\cdot h_1\cdot y).
\end{equation}
Using the formula
\begin{equation*}
\ind{\{0\}}(x^{-1}\cdot y)=Q^{-(d+d')}\sum_{a\in \Z_{Q}^d\times \Z_{Q}^{d'}}\ex\big[(y^{(1)}-x^{(1)}){.}(a^{(1)}/Q)\big]\ex\big[(y^{(2)}-x^{(2)}){.}(a^{(2)}/Q)\big]
\end{equation*}
and the definition \eqref{gio32} we obtain
\begin{equation*}
V_{\A, \B, Q}^r(y)=Q^{-(d+d')}\sum_{a\in \Z_{Q}^d\times \Z_{Q}^{d'}}\ex\big[y^{(1)}{.}(a^{(1)}/Q)\big]\ex\big[y^{(2)}{.}(a^{(2)}/Q)\big]\Upsilon_{\A, \B, Q}^{r}\big(a^{(1)}/Q,a^{(2)}/Q\big),
\end{equation*}
where
\begin{equation}\label{gio36}
\begin{split}
\Upsilon_{\A, \B, Q}^r&\big(\theta^{(1)},\theta^{(2)}\big):=\sum_{h_1,g_1,\ldots,h_r,g_r\in \JJ_{Q}}
\Big\{ \prod_{j=1}^r \overline{V_{\A, \B, Q}(h_j)}V_{\A, \B, Q}(g_j) \Big\} \\
&\times\ex\big(-[h_1^{-1}\cdot g_1\cdot\ldots\cdot h_r^{-1}\cdot g_r]^{(1)}{.}\theta^{(1)}\big)\ex\big(-[h_1^{-1}\cdot g_1\cdot\ldots\cdot h_r^{-1}\cdot g_r]^{(2)}{.}\theta^{(2)}\big).
\end{split}
\end{equation}

{\bf{Step 2.}} Taking into account \eqref{kio8} we may write
\begin{equation*}
V_{\A, \B, Q}(y^{(1)},y^{(2)}):= 
Q^{-(d+d')}\sum_{\alpha^{(1)}\in (\Z_Q/Q)^d,\,\alpha^{(2)}\in (\Z_Q/Q)^{d'}}m_{\A}(\alpha^{(1)})m_{\B}(\alpha^{(2)})\ex[y^{(1)}{.}\alpha^{(1)}]\ex[y^{(2)}.\alpha^{(2)}],
\end{equation*}
where $m_{\A}(\alpha^{(1)}):=S(\alpha^{(1)})\ind{\A\cap[0, 1)^d}(\alpha^{(1)})$ and
$m_{\B}(\alpha^{(2)}):=\ind{\B\cap[0, 1)^{d'}}(\alpha^{(2)})$.
Using formulas \eqref{pro15}--\eqref{pro15.5} we may simplify
\eqref{gio36}. We notice that the sum over the variables
$h_j^{(2)},g_j^{(2)}$, $j\in\{1,\ldots,r\}$ leads to
$\delta$-functions in the variables $\theta^{(2)}-\beta_j^{(2)}$ and
$\theta^{(2)}-\alpha_j^{(2)}$. Thus
\begin{equation}\label{gio40}
\begin{split}
&\Upsilon_{\A, \B, Q}^r\big(\theta^{(1)},\theta^{(2)}\big)=\big|m_{\B}(\theta^{(2)})\big|^{2r}Q^{-2rd}\Big\{\sum_{\beta_1^{(1)},\alpha_1^{(1)},\ldots, \beta_r^{(1)},\alpha_r^{(1)}\in (\Z_Q/Q)^d}\sum_{h_1^{(1)},g_1^{(1)},\ldots, h_r^{(1)},g_r^{(1)}\in \Z_Q^d}\\
&\quad\times \prod_{j=1}^r\big\{\overline{m_{\A}(\beta_j^{(1)})}\ex\big[h_j^{(1)}{.}(\theta^{(1)}-\beta_j^{(1)})\big]\cdot m_{\A}(\alpha_j^{(1)})\ex\big[-g_j^{(1)}{.}(\theta^{(1)}-\alpha_j^{(1)})\big]\big\}\\
&\quad\times\ex\Big[-\theta^{(2)}{.}\Big(\sum_{1\leq j\leq r}R_0(h_j^{(1)},h_j^{(1)}-g_j^{(1)})+\sum_{1\leq l<j\leq r}R_0(-h_l^{(1)}+g_l^{(1)},-h_j^{(1)}+g_j^{(1)})\Big)\Big]\Big\}.
\end{split}
\end{equation}

{\bf{Step 3.}} Our aim now is to show that
\begin{align}
\label{eq:6}
\big\|V^r_{\widetilde{\mathcal R}_{Q}^d, \B, Q}\big\|_{\ell^1(\JJ_{Q})}\lesssim q_{\B}^{-1}.
\end{align}
This will establish \eqref{eq:5} and \eqref{gio30}, by taking $Q=Q_s$ and $\B=\mathcal{R}_s^{d'}$. To prove \eqref{eq:6} it suffices to show
\begin{align}
\label{eq:24}
\big|\Upsilon_{\widetilde{\mathcal R}_{Q}^d, \B, Q}^r\big(\theta^{(1)},\theta^{(2)}\big)\big|
\lesssim (q_1 + q_2)^{-1/\delta^{4}}\ind{\B\cap[0,1)^{d'}}(\theta^{(2)}),
\end{align}
where $q_1|Q$, $q_2|Q$ are the denominators of the irreducible representation of the fractions $\theta^{(1)}$ and $\theta^{(2)}$ respectively.

Inserting the formula $S(\gamma^{(1)})=Q^{-1}\sum_{n\in \Z_{Q}}\ex[-A_0^{(1)}(n){.}\gamma^{(1)}]$, see \eqref{gio3.5}, into the identity \eqref{gio40} with $\A=\widetilde{\mathcal R}_{Q}^d$, we notice that the sums over the variables $\alpha_j^{(1)}$ and $\beta_j^{(1)}$ lead to $\delta$-functions. More precisely,
\begin{equation*}
\begin{split}
\Upsilon_{\widetilde{\mathcal R}_{Q}^d, \B, Q}^r\big(\theta^{(1)},\theta^{(2)}\big)&=\ind{\B\cap[0,1)^{d'}}(\theta^{(2)})Q^{-2r}\Big\{\sum_{n_j,m_j\in \Z_{Q}}\ex\Big[\theta^{(1)}{.}\Big(\sum_{1\leq j\leq r}A^{(1)}_0(n_j)-A^{(1)}_0(m_j)\Big)\Big]\\
&\times\ex\Big[-\theta^{(2)}{.}\Big(\sum_{1\leq j\leq r}R_0\big(A_0^{(1)}(n_j),A_0^{(1)}(n_j)-A_0^{(1)}(m_j)\big)\\
&\qquad\qquad+\sum_{1\leq l<j\leq r}R_0\big(A_0^{(1)}(m_l)-A_0^{(1)}(n_l),A_0^{(1)}(m_j)-A_0^{(1)}(n_j)\big)\Big)\Big]\Big\}\\
&=\ind{\B\cap[0,1)^{d'}}(\theta^{(2)})Q^{-2r}\sum_{n,m\in \Z_{Q}^r}\ex\big[-D(n,m){.}(\theta^{(1)},\theta^{(2)})\big],
\end{split}
\end{equation*}
where $D(n,m)$ is defined in \eqref{pro0.4}. Using Proposition \ref{minarcs} (ii) we obtain \eqref{eq:24} as desired.

{\bf{Step 4.}} To prove \eqref{eq:23} as well as \eqref{gio31} with $Q=Q_t$ and $\A=\mathcal{R}^d_t\setminus \widetilde{\mathcal{R}}^d_{Q_s}$, $\B= \mathcal R_{s}^{d'}$   we show 
\begin{align}
\label{eq:7}
\big\|V^r_{\A, \B, Q}\big\|_{\ell^1(\JJ_{Q})}\lesssim q_{\A}^{-1}.
\end{align}
We still use the formula \eqref{gio40}, with 
 $\A\subseteq \widetilde{\mathcal{R}}^d_Q$ and $\B\subseteq \widetilde{\mathcal{R}}_{Q}^{d'}$
satisfying $ q_1\simeq q_{\A}$ for every $a_1/q_1\in\A$, and $1\le q_2\lesssim q_{\A}^{1/D}$ for every $a_2/q_2\in\B$.
 We would like to first evaluate the sums over the variables $h_j^{(1)}$ and $g_j^{(1)}$; these sums would lead to $\delta$-functions if $\theta^{(2)}=0$, but there is an obstruction for other values of $\theta^{(2)}$. However, we can exploit the fact that the denominators of fractions $\theta^{(2)}$ are small. Indeed, assume that $a^{(2)}/q_2=\theta^{(2)}$ is the irreducible representation of the fraction $\theta^{(2)}$, where $1\le q_2\lesssim q_{\A}^{1/D}$ and $q_2$ divides $Q$. For $j\in\{1,\ldots,r\}$ we decompose $h_j^{(1)}=q_2y_j+y'_j$, $g_j^{(1)}=q_2x_j+x'_j$, $y'_j,x'_j\in \Z_{q_2}^d$, $y_j,x_j\in \Z_{Q/q_2}^d$. Then we notice that
\begin{equation*}
\begin{split}
(Q/q_2)^{-2rd}&\sum_{y_1,x_1,\ldots, y_r,x_r\in \Z_{Q/q_2}^d}\prod_{j=1}^r\ex\big[q_2y_j{.}(\theta^{(1)}-\beta_j^{(1)})\big]\ex\big[-q_2x_j{.}(\theta^{(1)}-\alpha_j^{(1)})\big]\\
&\qquad=\prod_{j=1}^r\ind{\Z^d}\big[q_2(\theta^{(1)}-\beta_j^{(1)})\big]\ind{\Z^d}\big[q_2(\theta^{(1)}-\alpha_j^{(1)})\big].
\end{split}
\end{equation*}
Therefore, using  formula \eqref{gio40}, one sees
\begin{equation*}
\begin{split}
\big|\Upsilon_{\A, \B, Q}^{r}\big(\theta^{(1)},\theta^{(2)}\big)\big|\leq& \ind{\B\cap[0,1)^{d'}}(\theta^{(2)})
\Big\{\sum_{\beta_1^{(1)},\alpha_1^{(1)},\ldots, \beta_r^{(1)},\alpha_r^{(1)}\in (\Z_{Q}/Q)^d}\\
&\prod_{j=1}^r\ind{\Z^d}\big[q_2(\theta^{(1)}-\beta_j^{(1)})\big]\ind{\Z^d}\big[q_2(\theta^{(1)}-\alpha_j^{(1)})\big]
|m_{\A}(\beta_j^{(1)})||m_{\A}(\alpha_j^{(1)})|\Big\}.
\end{split}
\end{equation*}
Recall that $m_{\A}(\gamma)=S(\gamma)\ind{\A\cap[0,1)^d}(\gamma)$. It follows from Proposition \ref{minarcscom} (ii) that for any $\gamma\in\A$ we have $|m_{\A}(\gamma)|\lesssim q_{\A}^{-1/\overline{C}}$, since $ q_1\simeq q_{\A}$ for every $a_1/q_1\in\A$. Therefore
\begin{equation*}
\big|\Upsilon_{\A, \B, Q}^{r}\big(\theta^{(1)},\theta^{(2)}\big)\big|\lesssim \ind{\B\cap[0,1)^{d'}}(\theta^{(2)})\ind{(\A+(\Z/q_2)^d)\cap[0,1)^{d}}(\theta^{(1)})q_{\A}^{-2r/\overline{C}}q_2^{2dr},
\end{equation*}
where $\A+(\Z/q_2)^d:=\{a/q_2+\theta:\,\theta\in\A,\,a\in\Z^d\}$. The desired bound \eqref{eq:7} follows since $1\le q_2\lesssim q_{\A}^{1/D}$ for every $a_2/q_2\in\B$, and $r\in\Z_+$ is sufficiently large.
\end{proof}

\subsection{Maximal and variational operators on the group $\HH_Q$} 
The main result of this subsection is the following lemma:

\begin{lemma}\label{gio50}

Assume that $2<\rho<\infty$, $\tau\in(1,2]$, and $k, k_0, w\in\N$ satisfy
$0\le w\le k$ and $k\ge k_0\ge D/\ln\tau$. Assume that $1\le Q\le \tau^{\delta k}$ and let
$W_{k,w, Q}:\HH_Q\to\mathbb{C}$ be defined as in
\eqref{kio7}. Then, for any $f\in\ell^2(\HH_Q)$ and $\mathbb D\subseteq \N$ one has
\begin{equation}\label{gio51}
\big\|V^{\rho}(f\ast_{\HH_Q} W_{k,k, Q}: k\in\mathbb D_{k_0, Q})\big\|_{\ell^2(\HH_Q)}\lesssim \|f\|_{\ell^2(\HH_Q)},
\end{equation}
uniformly in $Q$, where $\mathbb D_{k_0, Q}:=\{k\in\mathbb D: k\ge k_0,\ \tau^{\delta k}\ge Q\}$.

Moreover, for every $w\in\N$ and every
sequence $\{\varkappa_k\}_{k\in\N}\subseteq \C$ satisfying
$\sup_{k\in\N}|\varkappa_k|\le1$,
\begin{align}
\label{eq:12}
\Big\|\sum_{k\in\mathbb D_{k_0, Q},\,k>w}\varkappa_{k}f\ast_{\HH_Q} (W_{k,w+1, Q}- W_{k,w, Q})\Big\|_{\ell^2(\HH_Q)}\lesssim \tau^{-w/D^2}\|f\|_{\ell^2(\HH_Q)},
\end{align}
for any $f\in\ell^2(\HH_Q)$, uniformly in $Q$.
\end{lemma}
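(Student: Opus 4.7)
The plan is to derive the $\rho$-variational estimate \eqref{gio51} from the almost-orthogonality estimate \eqref{eq:12} by standard Littlewood--Paley decomposition. Writing telescopically
$f \ast_{\HH_Q} W_{k,k,Q}=f\ast_{\HH_Q} W_{k,0,Q}+\sum_{w=0}^{k-1} f\ast_{\HH_Q}(W_{k,w+1,Q}-W_{k,w,Q})$,
the Rademacher--Menshov inequality \eqref{maj1} bounds the $\rho$-variation by a square sum of $\ell^2$ norms of sums of the type controlled by \eqref{eq:12}. Since the bound in \eqref{eq:12} carries the exponential gain $\tau^{-w/D^2}$, summing in $w$ yields \eqref{gio51}.

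So the main task reduces to proving \eqref{eq:12}, which I would approach by a high-order $T^\ast T$ argument following the template of Section \ref{MinorArcs}. The starting observation is that the kernel factorizes as a tensor product in the non-central and central variables,
\begin{equation*}
W_{k,w,Q}(h)=W^{(1)}_{k,w,Q}(h^{(1)})\,W^{(2)}_{k,w,Q}(h^{(2)}),
\end{equation*}
where $W^{(2)}_{k,w,Q}$ is essentially a Littlewood--Paley cutoff on $(Q\Z)^{d'}$ with Fourier support $|\tau^k\circ\theta|\lesssim\tau^{\delta w}$, and $W^{(1)}_{k,w,Q}$ is the non-central factor modulated by $J_k(\xi)$. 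Writing $\mathcal{H}_{k,w}f:=f\ast_{\HH_Q}(W_{k,w+1,Q}-W_{k,w,Q})$, I would Cotlar--Stein the double sum, compute the kernels of $\mathcal{H}_{k',w}^\ast\mathcal{H}_{k,w}$ and $\mathcal{H}_{k,w}\mathcal{H}_{k',w}^\ast$ via Fourier inversion (as in Section \ref{HighProducts}), and read off two separate sources of smallness. First, the difference $W^{(2)}_{k,w+1,Q}-W^{(2)}_{k,w,Q}$ is Fourier-supported on the annulus $|\tau^k\circ\theta|\sim\tau^{\delta w}$; these annuli are disjoint for different $k$ (once $|k-k'|$ exceeds a constant depending on $\delta$), which produces orthogonality from commutative Plancherel in the central variable. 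Second, on the same annulus for the non-central factor one has the van der Corput--type bound $|J_k(\xi)|\lesssim (1+|\tau^k\circ\xi|)^{-1/d}\lesssim \tau^{-\delta' w/d}$; choosing the parameter $r$ in the $T^\ast T$ iteration sufficiently large then yields the exponential gain $\tau^{-w/D^2}$.

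The principal obstacle will be the non-commutativity of the convolution on $\HH_Q$: the group law contributes a bilinear term $R_0(x^{(1)}-y^{(1)},y^{(1)})$ that entangles central and non-central variables, so the Fourier analysis in the central variable is not directly commutative. To handle this I would exploit the support restrictions built into $\phi_k^{(1)}$ and $\phi_k^{(2)}$ and the hypothesis $\tau^{\delta k}\geq Q$. Concretely, the relevant products lie in $|\tau^{-k}\circ h|\lesssim \tau^{\delta k}$, and Taylor-expanding the bilinear terms in the $\ex(\cdot)$-factors of the Fourier representation yields, up to errors of order $O(\tau^{-Dk})$, an approximately tensorized expression — mirroring the approximation formulas of Lemma \ref{kio5} and Lemma \ref{Vrbounds}. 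With this reduction in hand, the kernel bound becomes a product of a commutative Littlewood--Paley estimate in $h^{(2)}$ and an oscillatory-integral estimate in $h^{(1)}$, both of which yield the desired decay rates uniformly in $Q$.
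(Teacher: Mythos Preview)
Your reduction of \eqref{gio51} to \eqref{eq:12} has a genuine gap. Telescoping in $w$ gives
\[
f\ast_{\HH_Q}W_{k,k,Q}=f\ast_{\HH_Q}W_{k,0,Q}+\sum_{w\geq 0}\ind{k>w}\,f\ast_{\HH_Q}(W_{k,w+1,Q}-W_{k,w,Q}),
\]
and for each fixed $w$ the inequality \eqref{eq:38} together with Khintchine and \eqref{eq:12} does control the $\rho$-variation in $k$ of the difference piece with gain $\tau^{-w/D^2}$. But you are then left with the base term
\[
\big\|V^\rho(f\ast_{\HH_Q}W_{k,0,Q}:k\in\mathbb{D}_{k_0,Q})\big\|_{\ell^2(\HH_Q)},
\]
and nothing you wrote addresses it. This is a $\rho$-variational inequality for a (lacunary) averaging operator on the discrete non-commutative group $\HH_Q$; it does not follow from \eqref{eq:12}, and the Rademacher--Menshov inequality \eqref{maj1} is of no help here, since applying it to the sequence $k\mapsto f\ast W_{k,k,Q}$ produces differences $W_{k',k',Q}-W_{k,k,Q}$ in \emph{both} indices simultaneously, not the $w$-differences of \eqref{eq:12}. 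The paper resolves this by transferring the whole problem to the continuous group $\G_0^\#$: it extends $f$ to $f^\#\in L^2(\G_0^\#)$, compares $f\ast_{\HH_Q}W_{k,w,Q}$ with $f^\#\ast_{\G_0^\#}\widetilde{W}_{k,w}$ up to errors summable in $k$, and then invokes the continuous variational bound \eqref{gio51con}. The base case on $\G_0^\#$ (Lemma~\ref{lem:var}) is proved by comparison with martingale averages $\mathbb{E}_k$ associated to Christ's dyadic cubes and L\'epingle's inequality \eqref{eq:33}; this ingredient, or a discrete analogue of it, is missing from your sketch.

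Your outline for \eqref{eq:12} is closer in spirit to what the paper does (again after transference to $\G_0^\#$), but some details are loose. The difference $W_{k,w+1,Q}-W_{k,w,Q}$ is a difference of tensor products, so it does not sit on a single annulus in either variable; one must decompose according to which factor carries the difference (the $\I$-decomposition in \eqref{def:Ups}). Also, the Cotlar--Stein argument in the paper splits into two regimes: when $|k-j|\geq w$ one uses the vanishing integral of the difference kernel and a mean-value estimate, while when $|k-j|\leq w$ one uses the high-order $T^\ast T$ bound \eqref{CC6.2} to extract the $\tau^{-w}$ gain. Your single mechanism (annular disjointness plus $|J_k|$-decay) does not by itself cover both regimes.
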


The main idea to prove \eqref{gio51}--\eqref{eq:12} is to compare our operators with  suitable  operators on the Lie group $\G_0^\#$. More precisely for $0\le w\le k$ we define the kernels $\widetilde{W}_{k, w}:\G_0^\#\to\mathbb{C}$ by
\begin{equation}\label{gio52}
\widetilde{W}_{k, w}(x):=\phi_{k}(x)\int_{\R^d\times\R^{d'}}\eta_{\leq\delta' w}(\tau^k\circ\xi)\eta_{\leq\delta w}(\tau^k\circ\theta)\ex(x{.}(\xi,\theta))J_k(\xi)\,d\xi d\theta,
\end{equation}
where $x=(x^{(1)},x^{(2)})\in\R^d\times\R^{d'}=\G_0^\#$ and $\phi_k(x)=\phi_{k}^{(1)}(x^{(1)})\phi_{k}^{(2)}(x^{(2)})$.

Then we have a continuous version of Lemma \ref{gio50}:

\begin{proposition}\label{gio55} Assume that $2<\rho<\infty$, $\tau\in(1,2]$, and $k, w\in\N$ satisfy
$0\le w\le k$.  With 
$\widetilde{W}_{k,w}:\G_0^{\#}\to\mathbb{C}$ defined as in
\eqref{gio52}, for any $f\in L^2(\G_0^{\#})$ one has
\begin{equation}\label{gio51con}
\big\|V^{\rho}(f\ast_{\G_0^{\#}} \widetilde{W}_{k,k}: k\ge 0)\big\|_{L^2(\G_0^{\#})}\lesssim \|f\|_{L^2(\G_0^{\#})}.
\end{equation}
In particular, one has 
\begin{equation}\label{gio51max}
\big\|\sup_{k\ge 0}|f\ast_{\G_0^{\#}} \widetilde{W}_{k,k}|\big\|_{L^2(\G_0^{\#})}\lesssim \|f\|_{L^2(\G_0^{\#})}.
\end{equation}

Moreover, for any $w\in\N$, any
sequence $\{\varkappa_k\}_{k\in\N}\subseteq \C$ satisfying
$\sup_{k\in\N}|\varkappa_k|\le 1$, and any $f\in L^2(\G_0^{\#})$ one has
\begin{align}
\label{eq:12con}
\Big\|\sum_{k>w}\varkappa_{k}f\ast_{\G_0^{\#}} (\widetilde{W}_{k,w+1}- \widetilde{W}_{k,w})\Big\|_{L^2(\G_0^{\#})}
\lesssim \tau^{-w/D}\|f\|_{L^2(\G_0^{\#})}.
\end{align}
\end{proposition}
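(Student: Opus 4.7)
My plan is to identify $\widetilde{W}_{k,k}$ with a slightly smoothed continuous average along the moment curve on the Lie group $\G_0^\#$, and then invoke known continuous-setting results together with a Fourier-shell bound. The key structural fact is that the dilations $\tau^k\circ$ are group automorphisms of $\G_0^\#$ compatible with the moment curve, $A_0(\tau^k y)=\tau^k\circ A_0(y)$, so the family $\{f*_{\G_0^\#}\widetilde{W}_{k,k}\}_k$ is essentially a single operator conjugated by group-dilations.

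Let $\nu_k$ be the probability measure on $\G_0^\#$ defined by $\int f\,d\nu_k:=\int_{\R}\chi(y)f(A_0(\tau^k y))\,dy$. A direct computation of its Euclidean Fourier transform on $\G_0^\#\simeq\R^{d+d'}$ gives $\widehat{\nu_k}(\xi,\theta)=J_k(\xi)$, independent of $\theta$. Hence, by \eqref{gio52} and Fourier inversion, $\widetilde{W}_{k,w}$ differs from $\phi_k\cdot\nu_k$ only by Euclidean-Fourier truncation at scales $\tau^{\delta' w}$ in $\xi$ and $\tau^{\delta w}$ in $\theta$. For $w=k$, repeated integration by parts using the support of $\chi$ shows $\widetilde{W}_{k,k}=\nu_k+E_k$, where $f*_{\G_0^\#} E_k$ is pointwise dominated by a non-isotropic Hardy--Littlewood maximal function on $\G_0^\#$ with $L^2$-operator norm decaying rapidly in $k$.

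For \eqref{gio51max}, the maximal operator $\sup_k|f*_{\G_0^\#}\nu_k|$ is bounded on $L^p(\G_0^\#)$, $1<p\le\infty$, by Christ's theorem \cite{Ch} on continuous polynomial averages along curves on nilpotent Lie groups, applied to the dilation family $\{\nu_k\}$. For \eqref{gio51con}, the Rademacher--Menshov inequality \eqref{maj1} reduces the $\rho$-variation (with $\rho>2$) to an $L^2$ square function bound on telescoping differences $\widetilde{W}_{k+1,k+1}-\widetilde{W}_{k,k}$. These differences split as $\nu_{k+1}-\nu_k$ plus cutoff-smoothing remainders, and their Euclidean-Fourier multipliers are Littlewood--Paley-localized in $\xi$ at scale $\tau^{-k}\circ$. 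A Plancherel/almost-orthogonality argument on $\G_0^\#$, together with the observation that the non-commutative part of the convolution only twists $\theta$ by bilinear terms in $x^{(1)}$ that leave the scale-in-$\xi$ invariant, yields the required square function bound.

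The main obstacle is the smoothing estimate \eqref{eq:12con}. Let $T_{k,w}f:=f*_{\G_0^\#}(\widetilde{W}_{k,w+1}-\widetilde{W}_{k,w})$; the kernel has Fourier representation supported on the shell $\{|\tau^k\circ\xi|\sim\tau^{\delta' w}\}\cup\{|\tau^k\circ\theta|\sim\tau^{\delta w}\}$. On the $\xi$-shell the phase $A_0^{(1)}(y){.}(\tau^k\circ\xi)$ of $J_k(\xi)$ has frequency $\gtrsim\tau^{\delta' w}$, so Proposition~\ref{minarcscon} applied with $r=1$ (or direct van der Corput) yields $|J_k(\xi)|\lesssim \tau^{-w/D}$. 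For fixed $w$ and varying $k>w$ these shells are almost disjoint in $(\xi,\theta)$-frequency after scaling, so a $TT^*$ argument on $\G_0^\#$ — of the same form used in Section~\ref{MinorArcs} but in the cleaner continuous setting — combined with the pointwise bound on $J_k$ yields the desired $\tau^{-w/D}$ decay. The careful bookkeeping to ensure that almost-orthogonality in $(\xi,\theta)$ survives the non-commutative $\G_0^\#$-convolution (where products of shell-multipliers acquire central-twist phases) is the central technical point.
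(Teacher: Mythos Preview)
There is a real gap in your argument for \eqref{eq:12con}, and it propagates backward to your treatment of \eqref{gio51con} and \eqref{gio51max}. The kernel of $\widetilde{W}_{k,w+1}-\widetilde{W}_{k,w}$ splits into pieces indexed by $\emptyset\neq\I\subseteq\{1,2\}$ according to which cutoff is differenced. Your decay $|J_k(\xi)|\lesssim\tau^{-w/D}$ is valid only on the $\xi$-shell piece ($1\in\I$). On the piece $\I=\{2\}$, where $|\tau^k\circ\theta|\sim\tau^{\delta w}$ but $|\tau^k\circ\xi|$ may be small, $J_k(\xi)$ gives no decay at all, and almost-orthogonality of the $\theta$-shells in $k$ only yields summability, not the factor $\tau^{-w/D}$. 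The paper obtains this gain by a genuinely nilpotent mechanism: a high-order $(T^\ast T)^r$ expansion produces the phase $D(\underline{v},\underline{u})$ of \eqref{pro0.4}, which \emph{does} see $\theta^{(2)}$, and then Proposition~\ref{minarcscon} gives $|I^r_{k,w,\I}(\theta)|\lesssim|\tau^k\circ\theta|^{-1/\delta^4}$ (see \eqref{CC6.21}--\eqref{CC6.22}). Your proposal never invokes this step; the remark that ``non-commutative bookkeeping'' is needed is accurate but does not locate where the $\tau^{-w/D}$ actually comes from.

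The same oversight invalidates your claim that $\widetilde{W}_{k,k}=\nu_k+E_k$ with $E_k$ small: the Euclidean multiplier of $\nu_k-\widetilde{W}_{k,k}$ equals $J_k(\xi)\bigl[1-\eta_{\le\delta'k}(\tau^k\circ\xi)\eta_{\le\delta k}(\tau^k\circ\theta)\bigr]$, which is of size $\sim1$ where $\xi$ is small and $|\tau^k\circ\theta|\gtrsim\tau^{\delta k}$, so $E_k$ is \emph{not} small in $L^2$ operator norm. The paper instead compares $\widetilde{W}_{k,k}$ to an auxiliary kernel $W_k$ with \emph{fixed} cutoffs, reduces the variation of $f\ast W_k$ to L\'epingle's martingale inequality \eqref{eq:33} via the Jones--Seeger--Wright scheme, and then telescopes $\widetilde{W}_{k,k}-W_k=\sum_{w<k}(\widetilde{W}_{k,w+1}-\widetilde{W}_{k,w})$ using \eqref{eq:12con}. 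Your alternative route through Rademacher--Menshov does not close either: that inequality controls $V^\rho$ only over a finite range with a logarithmic loss, so without an independent source of geometric decay (which is precisely what \eqref{eq:12con} provides) it cannot yield the full variational bound over $k\ge0$.
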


Continuous maximal operators such as \eqref{gio51max} have been extensively
studied, see for example the conclusive work of
Christ--Nagel--Stein--Wainger \cite{ChNaStWa}. However, the variational
estimates in the nilpotent setting in the spirit of \cite{ChNaStWa} appear to be new.  For the convenience
of the reader we provide a self-contained proof of Proposition \ref{gio55}
in Appendix \ref{sec:app}. Assuming that Proposition \ref{gio55} holds, we show  how to use it to deduce Lemma \ref{gio50}.

\begin{proof} [Proof of Lemma \ref{gio50}] We define the $Q$-cubes 
\begin{equation}\label{gio57}
\mathcal{C}_Q:=[0,Q)^d\times [0,Q)^{d'}\subseteq\G_0^\#,
\end{equation}
and notice that the map $(\mu,h)\mapsto\mu\cdot  h$ defines a measure-preserving bijection from $\mathcal{C}_Q\times\HH_Q\text{ to }\G_0^\#$.
Let $1\le p<\infty$. Given $f\in\ell^p(\HH_Q)$ we define 
\begin{equation}\label{gio59}
\begin{split}
&f^\#(\mu\cdot h):=f(h)\text{ for any }(\mu,h)\in\mathcal{C}_Q\times\HH_Q,\\
&f^\#\in L^p(\G_0^\#),\qquad \|f^\#\|_{L^p(\G_0^\#)}=Q^{(d+d')/p}\|f\|_{\ell^p(\HH_Q)}.
\end{split}
\end{equation}
We now prove the following bounds: for any $1\le p<\infty$ and $2<\rho<\infty$ we have
\begin{align}
\label{eq:13}
\begin{split}
\big\|V^{\rho}(f\ast_{\HH_Q} W_{k,k, Q}: k&\in\mathbb D_{k_0, Q})\big\|_{\ell^p(\HH_Q)}\\
&\lesssim 
Q^{-(d+d')/p}\big\|V^{\rho}(f^{\#}\ast_{\G_0^{\#}} \widetilde{W}_{k,k}: k\ge 0)\big\|_{L^p(\G_0^{\#})}+\|f\|_{\ell^p(\HH_Q)},
\end{split}
\end{align}
and
\begin{align}
\label{eq:14}
\begin{split}
& \Big\|\sum_{k\in\mathbb D_{k_0, Q},\,k> w}\varkappa_{k}f\ast_{\HH_Q} (W_{k,w+1, Q}- W_{k,w, Q})\Big\|_{\ell^p(\HH_Q)}\\
& \qquad \lesssim
Q^{-(d+d')/p}\Big\|\sum_{k\in\mathbb D_{k_0, Q},\,k> w}\varkappa_{k}f^{\#}\ast_{\G_0^{\#}} (\widetilde{W}_{k,w+1}- \widetilde{W}_{k,w})\Big\|_{L^p(\G_0^{\#})}
+\tau^{-w/8}\|f\|_{\ell^p(\HH_Q)}.
\end{split}
\end{align}

It is easy to see that the inequalities \eqref{eq:13}--\eqref{eq:14} with $p=2$ can be combined with \eqref{gio51con}, \eqref{eq:12con}, and \eqref{gio59} to complete the proof of Lemma \ref{gio50}.

It remains to prove the bounds \eqref{eq:13}--\eqref{eq:14}. For this  we compare the functions $f\ast_{\HH_Q} W_{k,w, Q}:\HH_Q\to\C$ and $f^\#\ast_{\G_0^\#} \widetilde{W}_{k, w}:\G_0^\#\to\C$.
By \eqref{kio7} and \eqref{gio52}, we have $Q^{d+d'}\widetilde{W}_{k, w}(h)=W_{k,w, Q}(h)$ for any $h\in \HH_Q$.
Moreover, by \eqref{gio52}  notice that
\begin{equation}\label{gio60}
\begin{split}
\big|\widetilde{W}_{k, w}(\mu_1\cdot h\cdot \mu_2)-\widetilde{W}_{k, w}(h)\big|\lesssim E_k(h),
\end{split}
\end{equation}
where 
\[
E_k(h):=\tau^{-k/2}\Big\{\prod_{(l_1,l_2)\in Y_d}\tau^{-(l_1+l_2)k}\Big\}\eta_{\leq 2\delta k}(\tau^{-k}\circ h^{(1)})\eta_{\leq 2\delta k}(\tau^{-k}\circ h^{(2)}),
\]
for any $h,\mu_1,\mu_2\in \G_0^\#$ with $|\mu_1|+|\mu_2|\lesssim Q^4$, provided that $k\geq D/\ln\tau$, $0 \le w \le k$ and $1\le Q\leq \tau^{\delta k}$. Thus
\begin{equation*}
\begin{split}
(f^\#\ast_{\G_0^\#} \widetilde{W}_{k, w})(\mu\cdot h)&=\sum_{h_1\in\HH_Q}\int_{\mathcal{C}_Q}f^\#(\mu_1\cdot h_1)\widetilde{W}_{k, w}(\mu\cdot h\cdot h_1^{-1}\cdot\mu_1^{-1})\,d\mu_1\\
&=\sum_{h_1\in\HH_Q}f(h_1)\int_{\mathcal{C}_Q}\widetilde{W}_{k, w}(\mu\cdot h\cdot h_1^{-1}\cdot\mu_1^{-1})\,d\mu_1,
\end{split}
\end{equation*}
for any $(\mu,h)\in\mathcal{C}_Q\times\HH_Q$. Using  \eqref{gio60} we have
\begin{equation*}
\Big|\int_{\mathcal{C}_Q}\widetilde{W}_{k, w}(\mu\cdot h\cdot h_1^{-1}\cdot\mu_1^{-1})\,d\mu_1-W_{k,w, Q}(h\cdot h_1^{-1})\Big|\lesssim E_k(h\cdot h_1^{-1}) Q^{d + d'}.
\end{equation*}
Therefore, for any $f\in\ell^p(\HH_Q)$, $h\in\HH_Q$ and $\mu\in\mathcal{C}_Q$, one has
\begin{equation*}
(f\ast_{\HH_Q} W_{k,w, Q})(h)=(f^\#\ast_{\G_0^\#} \widetilde{W}_{k, w})(\mu\cdot h)+O\big(\tau^{k/4}(|f|\ast_{\HH_Q} E_k)(h)\big),
\end{equation*}
provided that $k\geq D/\ln\tau$, $0 \le w \le k$ and $1\le Q\leq \tau^{\delta k}$. The desired bounds \eqref{eq:13} and \eqref{eq:14} follow from the last identity and the observation that $\sum_{k\geq w}\tau^{k/4} \|E_k\|_{\ell^1(\HH_Q)}\lesssim \tau^{-w/8}$ for any $w\in\N$. 
\end{proof}

\subsection{Proof of Lemma \ref{MajArc2}}
We begin with a transference lemma which will be used repeatedly.

\begin{lemma}
\label{lem:1}
As in Lemma \ref{gio50}, assume that $2<\rho<\infty$, $\tau\in(1,2]$, $k\ge k_0$, and $1\le Q\le \tau^{\delta k}$. Assume that $K_{k}^{\G_0}:\G_0\to\C$ are given kernels such that 
\begin{equation}
\label{eq:8}
K_{k}^{\G_0}(b_1\cdot h\cdot b_2):=W_{k}^{\HH_Q}(h)V^{\JJ_Q}(b_1\cdot b_2)+E_{k}(h,b_1,b_2),
\end{equation}
for any $h\in \HH_{Q}$ and $b_1,b_2\in \G_0$ satisfying $|b_j|\leq Q^4$, $j\in\{1,2\}$, for some kernels $W_{k}^{\HH_Q}:\HH_Q\to\C$ and $V^{\JJ_Q}:\JJ_Q\to\C$, where the error terms satisfy the estimates
\begin{align}
\label{eq:9}
\sup_{|b_1|, |b_2|\le Q^4}\|E_k(\cdot,b_1,b_2)\|_{\ell^1(\HH_Q)}\lesssim \tau^{-k/3}.
\end{align}
Let $\mathbb D\subseteq \N$ and
$\mathbb D_{k_0, Q}=\{k\in\mathbb D: k\ge k_0,\ \tau^{\delta k}\ge Q\}$ as in Lemma \ref{gio50}.
Let also  $\mathcal K_{k}^{\G_0}f:=f\ast_{\G_0}K_{k}^{\G_0}$, and
$\mathcal W_{k}^{\HH_Q}g:=g\ast_{\HH_Q}W_{k}^{\HH_Q}$, and
$\mathcal V^{\JJ_Q}h:=h\ast_{\JJ_Q}V^{\JJ_Q}$ denote the convolution
operators corresponding to the kernels $K_{k}^{\G_0}$, $W_{k}^{\HH_Q}$, and $V^{\JJ_Q}$. 

Then for any
$1\le p<\infty$ and either $B=V^{\rho}$ or
$B=\ell^{\infty}$
\begin{align}
\label{eq:10}
\begin{split}
\big\|\big(\mathcal K_{k}^{\G_0}\big)_{k\in\mathbb D_{k_0, Q}}\big\|_{\ell^p(\G_0)\to\ell^p(\G_0; B)}
\lesssim& \big\|\big(\mathcal W_{k}^{\HH_Q}\big)_{k\in\mathbb D_{k_0, Q}}\big\|_{\ell^p(\HH_Q)\to\ell^p(\HH_Q; B)}
\big\|\mathcal V^{\JJ_Q}\big\|_{\ell^p(\JJ_Q)\to\ell^p(\JJ_Q)}\\
&+\tau^{-k_0/8}Q^{-1/(8\delta)}.
\end{split}
\end{align}
Moreover, for any sequence $\{\varkappa_k\}_{k\in\N}\subseteq \C$ satisfying
$\sup_{k\in\N}|\varkappa_k|\le 1$
\begin{align}
\label{eq:10.1}
\begin{split}
\big\|\sum_{k\in\mathbb{D}_{k_0,Q}}\varkappa_k\mathcal K_{k}^{\G_0}\big\|_{\ell^p(\G_0)\to\ell^p(\G_0)}
\lesssim& \big\|\sum_{k\in\mathbb{D}_{k_0,Q}}\varkappa_k\mathcal W_{k}^{\HH_Q}\big\|_{\ell^p(\HH_Q)\to\ell^p(\HH_Q)}
\big\|\mathcal V^{\JJ_Q}\big\|_{\ell^p(\JJ_Q)\to\ell^p(\JJ_Q)}\\
&+\tau^{-k_0/8}Q^{-1/(8\delta)}.
\end{split}
\end{align}
\end{lemma}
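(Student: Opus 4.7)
The plan is to exploit the product decomposition $\G_0 \cong \JJ_Q \cdot \HH_Q$ from \eqref{gio3} together with the factored form \eqref{eq:8} of $K_k^{\G_0}$ to write $\mathcal K_k^{\G_0}$ essentially as the composition of $\mathcal V^{\JJ_Q}$ (acting on the quotient $\JJ_Q$, $k$-independently) and $\mathcal W_k^{\HH_Q}$ (acting on the normal subgroup $\HH_Q$, carrying all the $k$-dependence), up to an acceptable error. For $f \in \ell^p(\G_0)$, define the slices $f_{b'}(h') := f(b' \cdot h')$, so that $\|f\|_{\ell^p(\G_0)}^p = \sum_{b' \in \JJ_Q} \|f_{b'}\|_{\ell^p(\HH_Q)}^p$. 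Writing $x = b \cdot h$ and $u = b' \cdot h'$ with $b, b' \in \JJ_Q$ and $h, h' \in \HH_Q$, and using the normality of $\HH_Q$, we have $x \cdot u^{-1} = b \cdot (h (h')^{-1}) \cdot (b')^{-1}$ where $h (h')^{-1} \in \HH_Q$ and the outer factors satisfy $|b| \le Q$ and $|(b')^{-1}| \lesssim Q^2 \le Q^4$ from the inverse formula in $\G_0$. Since $V^{\JJ_Q}$ as defined in \eqref{kio8} naturally extends to a left-and-right $\HH_Q$-invariant function on $\G_0$ (the characters $\sigma^{(1)}, \sigma^{(2)}$ have denominator dividing $Q$), hypothesis \eqref{eq:8} gives
\begin{equation*}
K_k^{\G_0}(x \cdot u^{-1}) = W_k^{\HH_Q}(h (h')^{-1}) \, V^{\JJ_Q}(b (b')^{-1}) + E_k(h (h')^{-1}, b, (b')^{-1}).
\end{equation*}

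Inserting this into $(\mathcal K_k^{\G_0} f)(bh) = \sum_{u \in \G_0} f(u) K_k^{\G_0}(x u^{-1})$ and summing over $h' \in \HH_Q$ before $b' \in \JJ_Q$ yields the main identity
\begin{equation*}
(\mathcal K_k^{\G_0} f)(b \cdot h) = \sum_{b' \in \JJ_Q} V^{\JJ_Q}(b (b')^{-1}) \, (\mathcal W_k^{\HH_Q} f_{b'})(h) \;+\; \mathcal R_k f(b \cdot h),
\end{equation*}
where the error operator $\mathcal R_k$ collects the contribution from $E_k$. The main term, as a function of $b$ for fixed $h$ and $k$, is precisely $\mathcal V^{\JJ_Q}$ applied to the function $b' \mapsto (\mathcal W_k^{\HH_Q} f_{b'})(h)$. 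To prove \eqref{eq:10}, take the $B$-norm in $k$ and the $\ell^p$-norm in $(b,h) \in \JJ_Q \times \HH_Q$. Since $\mathcal V^{\JJ_Q}$ is $k$-independent, it commutes with the $B$-norm modulo a vector-valued extension step; for $B = \ell^\infty$ this follows from standard pointwise sup-domination, and for $B = V^\rho$ one invokes the Rademacher--Menshov inequality \eqref{maj1} to reduce the $V^\rho$-seminorm to an $\ell^2$-indexed sum of dyadic differences, each a scalar convolution by $V^{\JJ_Q}$ and hence controlled by $\|\mathcal V^{\JJ_Q}\|_{\ell^p(\JJ_Q)\to\ell^p(\JJ_Q)}$. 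Pulling out $\mathcal V^{\JJ_Q}$ and applying Minkowski in $\JJ_Q$ together with the hypothesized $\ell^p(\HH_Q) \to \ell^p(\HH_Q; B)$ bound on $(\mathcal W_k^{\HH_Q})_k$ slice-by-slice yields the main-term contribution in \eqref{eq:10}. The proof of \eqref{eq:10.1} is analogous: the decomposition directly produces $\sum_k \varkappa_k \mathcal K_k^{\G_0} = \mathcal V^{\JJ_Q} \circ \big(\sum_k \varkappa_k \mathcal W_k^{\HH_Q}\big) + (\text{error})$, with no vector-valued extension needed.

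Finally, it remains to control $\mathcal R_k$. By \eqref{eq:9}, $\mathcal R_k$ is a convolution operator whose effective kernel is bounded in $\ell^1(\G_0)$ uniformly by $\tau^{-k/3}$ (after absorbing the $Q^{O(1)}$ from enumerating translates of size $\le Q^4$), so that $\|\mathcal R_k\|_{\ell^p(\G_0) \to \ell^p(\G_0)} \lesssim \tau^{-k/3}$, with the analogous bound for the $\ell^p \to \ell^p(B)$ norm via $B \hookleftarrow \ell^\infty$ domination. Summing over $k \in \mathbb D_{k_0, Q} = \{k \in \mathbb D : k \ge k_0,\ \tau^{\delta k} \ge Q\}$, we have $k \ge \max(k_0, \delta^{-1}\log_\tau Q)$, so
\begin{equation*}
\sum_{k \in \mathbb D_{k_0, Q}} \tau^{-k/3} \lesssim \min\bigl(\tau^{-k_0/3},\, Q^{-1/(3\delta)}\bigr) \le \bigl(\tau^{-k_0/3}\, Q^{-1/(3\delta)}\bigr)^{1/2} \le \tau^{-k_0/8} Q^{-1/(8\delta)},
\end{equation*}
matching the error term in \eqref{eq:10} and \eqref{eq:10.1}. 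The main technical obstacle I expect is the vector-valued extension of $\mathcal V^{\JJ_Q}$ to $\ell^p(\JJ_Q; V^\rho)$ without losing the (potentially small) gain in the scalar operator norm $\|\mathcal V^{\JJ_Q}\|_{\ell^p \to \ell^p}$; this requires threading Rademacher--Menshov carefully through the slice-in-$b'$/Fubini-in-$h$ argument, rather than crudely dominating by the $\ell^1$ norm of $V^{\JJ_Q}$ which would be far too lossy for the later applications of the lemma.
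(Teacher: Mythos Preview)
Your decomposition is exactly right and matches the paper's: the main term is
\[
F_k(h,b) := \sum_{b'\in\JJ_Q} V^{\JJ_Q}(b(b')^{-1})\,(\mathcal W_k^{\HH_Q} f_{b'})(h),
\]
and the error is handled essentially as you describe. The difficulty you flag at the end, however, is self-inflicted and is the one real gap. You read $F_k(h,b)$ as $\mathcal V^{\JJ_Q}$ applied (in $b$) to the $k$-dependent family $b'\mapsto (\mathcal W_k^{\HH_Q} f_{b'})(h)$, which then forces you to push the $B$-norm through $\mathcal V^{\JJ_Q}$. For $B=\ell^\infty$ your ``pointwise sup-domination'' would replace $\|\mathcal V^{\JJ_Q}\|_{\ell^p\to\ell^p}$ by $\|V^{\JJ_Q}\|_{\ell^1}$, which is fatal for the applications (the Gauss-sum gain of Lemma~\ref{periodM} lives in the former, not the latter); for $B=V^\rho$ the Rademacher--Menshov route costs a logarithm in the size of $\mathbb D_{k_0,Q}$, which is unbounded.

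The paper sidesteps all of this by reading the same double sum in the other order: for \emph{fixed} $b$,
\[
F_k(h,b) = \big(\mathcal W_k^{\HH_Q}\, F_{\JJ_Q}(b,\cdot)\big)(h),
\qquad
F_{\JJ_Q}(b,h') := \sum_{b'\in\JJ_Q} V^{\JJ_Q}(b(b')^{-1}) f(b'h').
\]
Now $F_{\JJ_Q}(b,\cdot)$ is $k$-independent, so the $B$-norm in $k$ is absorbed directly by $\|(\mathcal W_k^{\HH_Q})_k\|_{\ell^p(\HH_Q)\to\ell^p(\HH_Q;B)}$; a Fubini in $(b,h')$ together with the \emph{scalar} bound $\|\mathcal V^{\JJ_Q}\|_{\ell^p(\JJ_Q)\to\ell^p(\JJ_Q)}$ (applied for each fixed $h'$ to $b'\mapsto f(b'h')$) then yields exactly the product of norms in \eqref{eq:10}, with no vector-valued extension needed. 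The same reordering handles \eqref{eq:10.1}. Your error-term estimate is fine; the paper similarly pulls the sum over $(b,b_1)\in\JJ_Q^2$ outside, the resulting $Q^{O(d+d')}$ being absorbed by $Q^{-1/(6\delta)}$ just as you anticipated.
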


\begin{proof}
Using \eqref{eq:8} for $b\in \JJ_{Q}$ and $h\in\HH_{Q}$ we may write
\begin{equation}\label{gio63}
\begin{split}
(f\ast &K_{k}^{\G_0})(b\cdot h)=\sum_{h_1\in\HH_{Q},\,b_1\in \JJ_{Q}}f(b_1\cdot h_1)K_{k}^{\G_0}(b\cdot h\cdot h_1^{-1}\cdot b_1^{-1})\\
&=\sum_{h_1\in\HH_{Q},\,b_1\in \JJ_{Q}}f(b_1\cdot h_1)\big\{W_{k}^{\HH_Q}(h\cdot h_1^{-1})V^{\JJ_Q}(b \cdot b_1^{-1})+E_{k}(h\cdot h_1^{-1},b,b_1^{-1})\big\}.
\end{split}
\end{equation}
For any $h'\in\HH_{Q}$ and $b\in \JJ_{Q}$ let $F_{\JJ_Q}(b,h'):=\sum_{b_1\in \JJ_{Q}}f(b_1\cdot h')V^{\JJ_Q}(b \cdot b_1^{-1})$.
We also take
\begin{align*}
F_k(h, b):=&\sum_{h_1\in\HH_{Q}}F_{\JJ_Q}(b, h_1) W_{k}^{\HH_Q}(h\cdot h_1^{-1})\\
G_k(h, b, b_1):=&\sum_{h_1\in\HH_{Q}}|f(b_1\cdot h_1)E_{k}(h\cdot h_1^{-1},b,b_1^{-1})|.
\end{align*}
Then by \eqref{gio63}  we have
\begin{align}
\label{eq:17}
\begin{split}
\big\|V^{\rho}(f\ast K_{k}^{\G_0}: k\in \mathbb D_{k_0, Q})\big\|_{\ell^p(\G_0)}
\le &\Big(\sum_{h\in\HH_{Q},\,b\in \JJ_{Q}}V^{\rho}\big(F_k(h, b): k\in \mathbb D_{k_0, Q}\big)^p\Big)^{1/p}\\
+2 \sum_{b, b_1\in \JJ_{Q}}&\Big(\sum_{h\in\HH_{Q}}\big(\sum_{k\in\mathbb D_{k_0, Q}}|G_k(h, b, b_1)|^{\rho}\big)^{p/\rho}\Big)^{1/p}=: I_1+I_2.
\end{split}
\end{align}
For the first sum in \eqref{eq:17} we now see that
\begin{align*}
I_1\le \big\|\big(\mathcal W_{k}^{\HH_Q}\big)_{k\in\mathbb D_{k_0, Q}}\big\|_{\ell^p(\HH_Q)\to\ell^p(\HH_Q; V^{\rho})}\big\|\mathcal V^{\JJ_Q}\big\|_{\ell^p(\JJ_Q)\to\ell^p(\JJ_Q)}\|f\|_{\ell^p(\G_0)},
\end{align*}
whereas for the second one we use \eqref{eq:9} to conclude that
$I_2\lesssim \tau^{-k_0/8}Q^{-1/(8\delta)}\|f\|_{\ell^p(\G_0)}$.
This proves \eqref{eq:10} when $B=V^\rho$. The remaining conclusions of the lemma follow in a similar way.
\end{proof}

We now establish a slightly more general result for the kernels
$K_{k,w,\A,\B}:\G_0\to\mathbb{C}$ as in \eqref{kio6}. Let
$\mathcal V_{\A,\B, Q}f:=f\ast_{\JJ_Q}V_{\A,\B, Q}$ denote the convolution operator corresponding
to the kernel $V_{\A,\B, Q}:\JJ_Q\to\mathbb{C}$ from \eqref{kio8}.

\begin{lemma}\label{MajArc2gen1}
As in Lemma \ref{gio50}, assume that $\rho\in (2,\infty)$, $\tau\in(1,2]$, and $k, k_0, w, Q\in\N$ satisfy
$0\le w\le k$, $k\ge k_0\ge D/\ln\tau$, and $1\le Q\le \tau^{\delta k}$. Assume that
$\A \subseteq \widetilde{\mathcal{R}}^d_{Q}$ and
$\B \subseteq \widetilde{\mathcal{R}}^{d'}_{Q}$ are $1$-periodic sets
of rationals. Then, for any $f\in\ell^2(\G_0)$ and $\mathbb D\subseteq \N$ we have
\begin{equation}
\label{eq:15}
\begin{split}
\big\|V^{\rho}(f\ast K_{k,k,\A,\B}: k&\in\mathbb D_{k_0, Q})\big\|_{\ell^2(\G_0)}\lesssim \big(\|\mathcal V_{\A,\B, Q}\|_{\ell^2(\JJ_Q)\to\ell^2(\JJ_Q)}+\tau^{-k_0/8}Q^{-1/(8\delta)}\big)\|f\|_{\ell^2(\G_0)},
\end{split}
\end{equation}
uniformly in $Q$ and $k_0\ge D/\ln\tau$, where as before $\mathbb D_{k_0, Q}=\{k\in\mathbb D: k\ge k_0,\ \tau^{\delta k}\ge Q\}$. Moreover  for any sequence $\{\varkappa_k\}_{k\in\N}\subseteq \C$ satisfying
$\sup_{k\in\N}|\varkappa_k|\le1$, any $f\in\ell^2(\G_0)$, and any $Q\in\Z_+$, $w \in \N$ we have
\begin{align}
\label{eq:16}
\begin{split}
\Big\|\sum_{k\in\mathbb D_{k_0, Q},\,k>w}&\varkappa_{k}f\ast (K_{k,w+1,\A,\B}- K_{k,w,\A,\B})\Big\|_{\ell^2(\G_0)}\\
&\lesssim \big(\tau^{-w/D^2}\|\mathcal V_{\A,\B, Q}\|_{\ell^2(\JJ_Q)\to\ell^2(\JJ_Q)}+\tau^{-\max(k_0,w)/8}Q^{-1/(8\delta)}\big)\|f\|_{\ell^2(\G_0)}.
\end{split}
\end{align}
\end{lemma}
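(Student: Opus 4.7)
The plan is to combine the structural decomposition of Lemma \ref{kio5} with the transference machinery of Lemma \ref{lem:1} and the $\HH_Q$-side estimates of Lemma \ref{gio50}. For part \eqref{eq:15}, I would first apply Lemma \ref{kio5} with $w=k$ to write, for $h\in\HH_Q$ and $b_1,b_2\in\G_0$ with $|b_j|\le Q^4$,
\begin{equation*}
K_{k,k,\A,\B}(b_1\cdot h\cdot b_2) = W_{k,k,Q}(h)\,V_{\A,\B,Q}(b_1\cdot b_2) + E_{k,k,\A,\B}(h,b_1,b_2).
\end{equation*}
The next task is to verify that the error term satisfies the $\ell^1(\HH_Q)$ bound required by hypothesis \eqref{eq:9} of Lemma \ref{lem:1}. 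This follows from the pointwise bound in \eqref{kio9} by a straightforward volume count: the $\HH_Q$-coordinates $h_{l_1l_2}$ range over $Q\Z$ with $|h_{l_1l_2}|\lesssim \tau^{(l_1+l_2)k+2\delta k}$, so the cardinality of the support is $\lesssim Q^{-|Y_d|}\prod_{(l_1,l_2)\in Y_d}\tau^{(l_1+l_2)k+2\delta k}$, and combining with the prefactor $\tau^{-k/2}\prod \tau^{-(l_1+l_2)k}$ yields $\|E_{k,k,\A,\B}(\cdot,b_1,b_2)\|_{\ell^1(\HH_Q)}\lesssim \tau^{-k/2+2\delta k|Y_d|}\lesssim \tau^{-k/3}$, since $\delta$ is chosen small as in \eqref{ConstantsStr}.

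With the decomposition and error bound in place, Lemma \ref{lem:1} (with $B=V^\rho$ and $p=2$) applies directly and reduces the left-hand side of \eqref{eq:15} to
\begin{equation*}
\big\|\big(\mathcal W_{k,k,Q}\big)_{k\in\mathbb D_{k_0,Q}}\big\|_{\ell^2(\HH_Q)\to \ell^2(\HH_Q;V^\rho)}\,\|\mathcal V_{\A,\B,Q}\|_{\ell^2(\JJ_Q)\to \ell^2(\JJ_Q)}+\tau^{-k_0/8}Q^{-1/(8\delta)}.
\end{equation*}
The first factor is bounded by an absolute constant uniformly in $Q$ by Lemma \ref{gio50}, estimate \eqref{gio51}, and \eqref{eq:15} follows.

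For part \eqref{eq:16}, I would apply Lemma \ref{kio5} twice (at parameters $w+1$ and $w$) and subtract. Using the product structure of the main term and the fact that $V_{\A,\B,Q}$ is independent of $w$, this gives
\begin{equation*}
K_{k,w+1,\A,\B}(b_1\cdot h\cdot b_2) - K_{k,w,\A,\B}(b_1\cdot h\cdot b_2) = \bigl(W_{k,w+1,Q}(h)-W_{k,w,Q}(h)\bigr)\,V_{\A,\B,Q}(b_1\cdot b_2) + \widetilde E_{k,w}(h,b_1,b_2),
\end{equation*}
where $\widetilde E_{k,w} = E_{k,w+1,\A,\B}-E_{k,w,\A,\B}$ still satisfies the $\ell^1(\HH_Q)$ bound $\lesssim \tau^{-k/3}$ by the same volume count. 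Since the sum in \eqref{eq:16} is restricted to $k>w$, I would apply the summation variant \eqref{eq:10.1} of Lemma \ref{lem:1} with $\mathbb D_{k_0,Q}$ replaced by $\mathbb D_{\max(k_0,w),Q}$, so that the error term becomes $\tau^{-\max(k_0,w)/8}Q^{-1/(8\delta)}$. This reduces the left-hand side of \eqref{eq:16} to
\begin{equation*}
\Big\|\sum_{k\in\mathbb D_{k_0,Q},\,k>w}\varkappa_k g\ast_{\HH_Q}(W_{k,w+1,Q}-W_{k,w,Q})\Big\|_{\ell^2(\HH_Q)\to\ell^2(\HH_Q)}\,\|\mathcal V_{\A,\B,Q}\|_{\ell^2(\JJ_Q)\to\ell^2(\JJ_Q)} + \tau^{-\max(k_0,w)/8}Q^{-1/(8\delta)}.
\end{equation*}
Finally, Lemma \ref{gio50}, estimate \eqref{eq:12}, controls the first operator norm by $\tau^{-w/D^2}$, which yields exactly \eqref{eq:16}.

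The conceptually straightforward but notationally delicate point will be the bookkeeping of the error term in part (ii): one must ensure, by tracking the proof of Lemma \ref{lem:1} with $k_0$ replaced by $\max(k_0,w)$, that the decay $\tau^{-k/3}$ in the error kernels really summed over $k>w$ produces the sharper $\tau^{-\max(k_0,w)/8}$ rather than just $\tau^{-k_0/8}$. Beyond this, the whole argument is a clean three-layer factorization: the arithmetic $\JJ_Q$-part carries the Gauss-sum decay from Lemma \ref{periodM}, the $\HH_Q$-part carries the nilpotent continuous-type variational bounds from Lemma \ref{gio50}, and the error is absorbed by the transference Lemma \ref{lem:1}.
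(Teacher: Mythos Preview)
Your proposal is correct and follows essentially the same route as the paper: apply Lemma \ref{kio5} to obtain the tensor decomposition \eqref{eq:8} with error \eqref{eq:9}, feed this into the transference Lemma \ref{lem:1} (the $V^\rho$ variant \eqref{eq:10} for \eqref{eq:15} and the summation variant \eqref{eq:10.1} for \eqref{eq:16}), and close with the $\HH_Q$-side bounds \eqref{gio51} and \eqref{eq:12} from Lemma \ref{gio50}. Your additional explicit volume count for the error and the remark on replacing $k_0$ by $\max(k_0,w)$ in part (ii) are exactly the details the paper leaves implicit.
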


\begin{proof}
To prove \eqref{eq:15} we use Lemma \ref{lem:1} with $K_k^{\G_0}= K_{k,k,\A,\B}$, $W_k^{\HH_Q}=W_{k, k, Q}$ and $V^{\JJ_Q}=V_{\A,\B, Q}$ as in Lemma \ref{kio5}. The assumptions \eqref{eq:8}--\eqref{eq:9} in Lemma \ref{lem:1} follow from \eqref{kio6} and \eqref{kio9}. The bounds \eqref{eq:15} follow from \eqref{eq:10} with $p=2$ and \eqref{gio51}. 

On the other hand, taking $K_k^{\G_0}= K_{k,w+1,\A,\B}-K_{k,w,\A,\B}$, $W_k^{\HH_Q}=W_{k,w+1,Q}-W_{k,w,Q}$, and $V^{\JJ_Q}=V_{\A,\B, Q}$, the bounds \eqref{eq:16} follow from \eqref{eq:10.1} and \eqref{eq:12}.
\end{proof}

We are now finally ready to complete the proof of Lemma \ref{MajArc2}.

\begin{proof}[Proof of Lemma \ref{MajArc2}] Notice that  $G_{k,s}^{\low}=
K_{k,k,\widetilde{\mathcal{R}}^d_{Q_s},\mathcal{R}_s^{d'}}$. We use \eqref{eq:15} with $Q=Q_s$ and $k_0=\kappa_s$; 
in view of \eqref{gio30} we have $\|V_{\widetilde{\mathcal R}_{Q_s}^d, \mathcal R_{s}^{d'}, Q_s}\|_{\ell^2(\JJ_{Q_s})\to\ell^2(\JJ_{Q_s})}\lesssim \tau^{-s/D}$, and the bounds \eqref{picu27var}--\eqref{picu27} follow from \eqref{eq:15}.

Assuming that $s\geq 0$, $t\geq D(s+1)$ and taking $\A\subseteq\mathcal{R}^d_t\setminus \widetilde{\mathcal{R}}^d_{Q_s}$ and $\B\subseteq\mathcal{R}_{\le s}^{d'}$ we conclude, using \eqref{eq:23} and \eqref{eq:15} with $Q=Q_t$ and $k_0=\kappa_t$, that
\begin{align}
\label{eq:25}
\big\|V^{\rho}(f\ast K_{k,k,\A,\B}: k\ge\kappa_t) \big\|_{\ell^2(\G_0)}\lesssim \tau^{-t/D}\|f\|_{\ell^2(\G_0)}
\end{align}
for any $2<\rho<\infty$, 
as well as
\begin{align}
\label{eq:26}
\big\|\sup_{k\ge \kappa_t}|f\ast K_{k,k,\A,\B}|\big\|_{\ell^2(\G_0)}\lesssim \tau^{-t/D}\|f\|_{\ell^2(\G_0)},
\end{align}
The desired bounds \eqref{picu27.5var}--\eqref{picu27.5} follow since $G_{k,s,t}=
K_{k,k,\mathcal{R}^d_t\setminus \widetilde{\mathcal{R}}^d_{Q_s},\mathcal{R}_s^{d'}}$.
\end{proof}

\section{Transition estimates I: Proof of Lemma \ref{MajArc1}}\label{MajArc1Pr}

In this section we prove the bounds \eqref{picu13var}--\eqref{picu13}. Let $H_{k,s}:=K_{k+1,s}-K_{k,s}$ for $k\geq j_0:=\max((D/\ln\tau)^2,s/\delta)$ and apply the Rademacher--Menshov inequality \eqref{maj1} with $m=\lfloor (D/\ln\tau)(s+1)^2\rfloor+4$. For \eqref{picu13var} it suffices to prove for any fixed $i\in[0,m]$ that
\begin{equation*}
\bigg\|\Big(\sum_{j\in[j_02^{-i},2^{m-i}-1]}\big|\sum_{k\in [j2^i,(j+1)2^i-1]}f\ast H_{k,s}\big|^2\Big)^{1/2}\bigg\|_{\ell^2(\G_0)}\lesssim \tau^{-2s/D^2}\big\|f\big\|_{\ell^2(\G_0)}.
\end{equation*}
Using Khintchine's inequality and dividing again dyadically, for \eqref{picu13var} it suffices to prove that
\begin{equation}\label{maj2}
\Big\|\sum_{k\in[J,2J]}\varkappa_k(f\ast H_{k,s})\Big\|_{\ell^2(\G_0)}\lesssim \tau^{-4s/D^2}\big\|f\big\|_{\ell^2(\G_0)}
\end{equation}
for any $J \ge \max((D/\ln\tau)^2,s/\delta)$ and any coefficients $\varkappa_k\in[-1,1]$. 

To prove \eqref{maj2} we examine the definition \eqref{picu10} and the further decompose 
\begin{equation}\label{maj3}
\begin{split}
&H_{k,s}=H_{k,s}^1+H_{k,s}^2+H_{k,s}^3,\\
&H_{k,s}^1(g):=[\Delta_kL_k](g^{(1)})\phi_k^{(2)}(g^{(2)})\int_{\T^{d'}}\ex(g^{(2)}.\xi^{(2)})\Xi_{k,s}(\xi^{(2)})\, d\xi^{(2)},\\
&H_{k,s}^2(g):=L_{k+1}(g^{(1)})\Delta_k[\phi_k^{(2)}](g^{(2)})\big\}\int_{\T^{d'}}\ex(g^{(2)}.\xi^{(2)})\Xi_{k,s}(\xi^{(2)})\, d\xi^{(2)},\\
&H_{k,s}^3(g):=L_{k+1}(g^{(1)})\phi_{k+1}^{(2)}(g^{(2)})\int_{\T^{d'}}\ex(g^{(2)}.\xi^{(2)})[\Delta_k\Xi_{k,s}](\xi^{(2)})\, d\xi^{(2)}.
\end{split}
\end{equation}
We will prove that, for any $k\ge \max((D/\ln\tau)^2,s/\delta)$ and $\iota\in\{2,3\}$,
\begin{equation}\label{maj4}
\big\|f\ast H^\iota_{k,s}\big\|_{\ell^2(\G_0)}\lesssim \tau^{-k/D}\big\|f\big\|_{\ell^2(\G_0)}.
\end{equation}
We will also prove that
\begin{equation}\label{maj5}
\Big\|\sum_{k\in[J,2J]}\varkappa_k(f\ast H^1_{k,s})\Big\|_{\ell^2(\G_0)}\lesssim \tau^{-s/D}\big\|f\big\|_{\ell^2(\G_0)}
\end{equation}
for any $J\ge \max((D/\ln\tau)^2,s/\delta)$ and any coefficients $\varkappa_k\in[-1,1]$. These two bounds would clearly imply the bounds \eqref{maj2}.

\subsection{Proof of \eqref{maj4}} {\bf{Step 1.}} Assume first that $\iota=2$ and recall the definition of the functions $\phi_k^{(2)}$ in \eqref{picu6.6}. Notice that if $g=(g^{(1)},g^{(2)})$ is in the support of the kernel $H^2_{k,s}$ then there is $(l_1,l_2)\in Y'_d$ such that $|g^{(2)}_{l_1l_2}|\gtrsim \tau^{k(l_1+l_2)}$. Therefore we can integrate by parts many times in the variable $\xi^{(2)}_{l_1l_2}$ (recall the definition \eqref{picu7}) to prove that the kernels $H_{k,s}^2$ decay rapidly in $k$, i.e. $|H_{k,s}^2(g)|\lesssim \tau^{-k/\delta}$ for any $g\in\G_0$. The desired bounds \eqref{maj4} follow.

{\bf{Step 2.}} Assume now that $\iota=3$. In this case we use a high order $T^\ast T$ argument as in Section \ref{MinorArcs}.  Notice that the kernels $H_{k,s}^3$ have product structure, so we can apply the identities \eqref{pro15.7}--\eqref{pro15.11}. With $r$ being a sufficiently large integer such that the bounds in Propositions \ref{minarcs} and \ref{minarcscon} hold with $\varep=\delta^4$, it suffices to prove that
\begin{equation}\label{maj8}
\big|\Pi_{k+1}^{c,r}\big(\theta^{(1)},\theta^{(2)}\big)\Gamma_{k,s}^r\big(\theta^{(2)}\big)\big|\lesssim \tau^{-k/\delta}\qquad \text{ for any } \ (\theta^{(1)}, \theta^{(2)})  \in\T^d\times\T^{d'},\,k\geq (D/\ln\tau)^2,
\end{equation}
where $\Pi_k^{c,r}$ is as in \eqref{pro25} and
\begin{equation}\label{maj9}
\Gamma_{k,s}^r(\theta^{(2)})=\Big|\int_{\T^{d'}}F_{k+1}(\theta^{(2)}-\xi^{(2)})\{\Xi_{k+1,s}(\xi^{(2)})-\Xi_{k,s}(\xi^{(2)})\}\,d\xi^{(2)}\Big|^{2r}.
\end{equation}
The functions $F_k:\T^{d'}\to\mathbb{C}$ are defined in \eqref{pro26} and satisfy the bounds \eqref{pro27.5}.

The proof of \eqref{maj8} is similar to the proof of \eqref{pro28}. Indeed, if $\theta^{(2)}$ is close to a fraction with small denominator, in the sense of \eqref{pro29}, then $|F_{k+1}(\theta^{(2)}-\xi^{(2)})|\lesssim \tau^{-2k/\delta}$ if $\xi^{(2)}$ is in the support of $\Xi_{k+1,s}-\Xi_{k,s}$, due to \eqref{pro27.5}. The bounds \eqref{maj8} follow in this case. Otherwise, if $\theta^{(2)}$ does not satisfy \eqref{pro29}, then there is $(l_1,l_2)\in Y'_d$ and an irreducible fraction $a_{l_1l_2}/q_{l_1l_2}$ such that 
\begin{equation*}
\Big|\theta_{l_1l_2}^{(2)}-\frac{a_{l_1l_2}}{q_{l_1l_2}}\Big|\leq \frac{1}{q_{l_1l_2}\tau^{k(l_1+l_2)-\delta^2k}}\quad\text{ and }\quad q_{l_1l_2}\in[\tau^{\delta^2k},\tau^{k(l_1+l_2)-\delta^2k}]\cap\Z.
\end{equation*}
Using Proposition \ref{minarcs} with $P\simeq \tau^k$ we conclude that $\big|\Pi_{k+1}^{c,r}(\theta^{(1)},\theta^{(2)})\big|\lesssim \tau^{-2k/\delta}$. The desired bounds \eqref{maj8} follow in this case as well.

\subsection{Proof of \eqref{maj5}} To prove the more difficult bounds \eqref{maj5} we will use a high order almost orthogonality argument. For this we need a good description of the operators $\{(\mathcal{H}_{k,s}^1)^\ast\mathcal{H}_{k,s}^1\}^r$ and $\{\mathcal{H}_{k,s}^1(\mathcal{H}_{k,s}^1)^\ast\}^r$, where $\mathcal{H}_{k,s}^1f:=f\ast H_{k,s}^1$ and, as before, $r\in\Z_+$ is a sufficiently large integer such that the bounds in Propositions \ref{minarcs} and \ref{minarcscon} hold with $\varep=\delta^4$. More precisely:

\begin{lemma}\label{maj10} For any $k\ge \max((D/\ln\tau)^2,s/\delta)$  and $f\in\ell^2(\G_0)$ we have
\begin{equation}\label{maj11}
\{(\mathcal{H}_{k,s}^1)^\ast\mathcal{H}_{k,s}^1\}^r f=f\ast \{B_k^r+E_k^r\},\qquad \{\mathcal{H}_{k,s}^1(\mathcal{H}_{k,s}^1)^\ast\}^rf=f\ast \{\widetilde{B}_k^r+\widetilde{E}_k^r\},
\end{equation}
where
\begin{equation}\label{maj13}
\begin{split}
B_k^r(h):=\Big\{&\prod_{(l_1,l_2)\in Y_d}\tau^{-k(l_1+l_2)}\Big\}\Big\{\sum_{a/Q=(a^{(1)}/q_1,a^{(2)}/q_2)\in\mathcal{R}^d_{\leq \delta k}\times\mathcal{R}^{d'}_{s} \cap [0,1)^{d+d'}}\ex(h{.}a/Q)G(a/Q)\Big\}\\
&\times\eta_{\leq 3\delta k}(\tau^{-k}\circ h)\int_{\R^d\times\R^{d'}}\eta_{\leq\delta k/2}(\zeta^{(1)})\eta_{\leq\delta k/2}(\zeta^{(2)})P'(\zeta)\ex[(\tau^{-k}\circ h){.}\zeta]\,d\zeta,
\end{split}
\end{equation}
\begin{equation}\label{maj14}
\begin{split}
\widetilde{B}_k^r(h):=\Big\{&\prod_{(l_1,l_2)\in Y_d}\tau^{-k(l_1+l_2)}\Big\}\Big\{\sum_{a/Q=(a^{(1)}/q_1,a^{(2)}/q_2)\in\mathcal{R}^d_{\leq \delta k}\times\mathcal{R}^{d'}_{s} \cap [0,1)^{d+d'}}\ex(h{.}a/Q)\widetilde{G}(a/Q)\Big\}\\
&\times\eta_{\leq 3\delta k}(\tau^{-k}\circ h)\int_{\R^d\times\R^{d'}}\eta_{\leq\delta k/2}(\zeta^{(1)})\eta_{\leq\delta k/2}(\zeta^{(2)})\widetilde{P}'(\zeta)\ex[(\tau^{-k}\circ h){.}\zeta]\,d\zeta,
\end{split}
\end{equation}
and
\begin{equation}\label{maj12}
\|E_k^r\|_{\ell^1(\G_0)}+\|\widetilde{E}_k^r\|_{\ell^1(\G_0)}\lesssim \tau^{-k/4}.
\end{equation}
Here $G(a/Q)$ and $\widetilde{G}(a/Q)$ are as in \eqref{pro0.6}, $\chi'(x)=(1/\tau)\chi(x/\tau)-\chi(x)$, and
\begin{equation}\label{maj14.5}
\begin{split}
P'(\zeta):=\int_{\R^r\times \R^r}\Big\{\prod_{1\le j\le r}\chi'(w_j)\chi'(y_j)\Big\}\ex[-\zeta.D(w,y)]\,dwdy,\\
\widetilde{P}'(\zeta):=\int_{\R^r\times \R^r}\Big\{\prod_{1\le j\le r}\chi'(w_j)\chi'(y_j)\Big\}\ex[-\zeta.\widetilde{D}(w,y)]\,dwdy.
\end{split}
\end{equation}
\end{lemma}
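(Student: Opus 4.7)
The proof is a high-order $T^\ast T$ computation in the spirit of Section~\ref{MinorArcs}, followed by a major-arcs/minor-arcs separation in both the central and noncentral frequency variables. Since $H_{k,s}^1$ has the product structure $H_{k,s}^1(g) = (\Delta_kL_k)(g^{(1)})\,N_{k,s}(g^{(2)})$ with
$(\Delta_kL_k)(g^{(1)}) = \sum_n \tau^{-k}\chi'(\tau^{-k}n)\,\ind{\{0\}}(g^{(1)} - A_0^{(1)}(n))$, the formulas \eqref{pro15.7}--\eqref{pro15.11} give $\{(\mathcal{H}_{k,s}^1)^\ast\mathcal{H}_{k,s}^1\}^r f = f\ast A_k^r$ where
\begin{equation*}
A_k^r(y) = \eta_{\leq 3\delta k}(\tau^{-k}\circ y)\int_{\T^d\times\T^{d'}} \ex(y.\theta)\,\Pi_k^r(\theta)\,\Omega_{k,s}^r(\theta^{(2)})\,d\theta,
\end{equation*}
with $\Pi_k^r(\theta) = \tau^{-2kr}\sum_{n,m\in\Z^r}\prod_{j=1}^r\chi'(\tau^{-k}n_j)\chi'(\tau^{-k}m_j)\ex(-\theta.D(n,m))$, exactly as in the derivation of \eqref{pro25} but with $\chi$ replaced by $\chi'$, and $\Omega_{k,s}^r(\theta^{(2)}) = \bigl|\int_{\T^{d'}}F_k(\theta^{(2)}-\xi^{(2)})\Xi_{k,s}(\xi^{(2)})\,d\xi^{(2)}\bigr|^{2r}$ in analogy with \eqref{pro27}.

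To extract the main term $B_k^r$, I would decompose the frequency integral along the major arcs $\theta = a/Q+\zeta$ with $a/Q = (a^{(1)}/q_1,a^{(2)}/q_2)\in\mathcal{R}^d_{\leq\delta k}\times\mathcal{R}^{d'}_s\cap[0,1)^{d+d'}$ and $|\tau^k\circ\zeta|\leq \tau^{\delta k/2}$. On each such arc, writing $n_j = Qn'_j+\nu_j$, $m_j = Qm'_j+\mu_j$ with $\nu_j,\mu_j\in\Z_Q$ and using that each entry of $D$ is a polynomial in $(n,m)$ with integer coefficients, one has $D(Qn'+\nu,Qm'+\mu)\equiv D(\nu,\mu)\pmod Q$ componentwise. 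This factors out the arithmetic piece $G(a/Q)$ of \eqref{pro0.6}, after which Poisson summation in $n',m'$ (as in Lemma~\ref{Vrbounds}) together with the change of variables $x_j = \tau^{-k}(Qn'_j+\nu_j)$, $y_j = \tau^{-k}(Qm'_j+\mu_j)$ converts each discrete sum into a continuous integral, contributing a factor $(\tau^k/Q)^{2r}$ that cancels the $\tau^{-2kr}Q^{-2r}$ prefactor and produces precisely $P'(\tau^k\circ\zeta)$ as in \eqref{maj14.5}, via the homogeneity $D(\tau^kx,\tau^ky).\zeta = D(x,y).(\tau^k\circ\zeta)$. A parallel analysis of $\Omega_{k,s}^r$ near $a^{(2)}/q_2$ (using that $\Xi_{k,s}$ localizes $\xi^{(2)}$ within $\tau^{\delta k-k(l_1+l_2)}$ of $\mathcal{R}^{d'}_s$, and that $F_k$ is well-approximated by its continuous analogue) yields the cutoff $\eta_{\leq\delta k/2}(\zeta^{(2)})$ and the normalization $\prod_{(l_1,l_2)\in Y_d}\tau^{-(l_1+l_2)k}$ appearing in \eqref{maj13}.

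The error $E_k^r$ is controlled by bounding $\Pi_k^r\Omega_{k,s}^r$ pointwise on the minor arcs and multiplying by the $y$-support cardinality $\prod \tau^{3\delta(l_1+l_2)k}$. On the minor arcs, either $\theta^{(1)}$ fails to admit a coordinatewise Dirichlet approximant with denominator $\leq \tau^{\delta k}$, in which case Proposition~\ref{minarcs}(i) with $P\simeq\tau^k$ and $\varepsilon=\delta^4$ (applied with the same $r$ fixed in \eqref{ConstantsStr}) yields $|\Pi_k^r(\theta)|\lesssim \tau^{-k/\delta^2}$; or $\theta^{(2)}$ lies at distance $\gg \tau^{-k(l_1+l_2)+\delta k/2}$ from every $a^{(2)}/q_2\in\mathcal{R}^{d'}_s$, in which case the rapid decay of $F_k$ from \eqref{pro27.5} forces $|\Omega_{k,s}^r(\theta^{(2)})|\lesssim \tau^{-k/\delta}$. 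Either alternative, combined with the support cardinality and the fact that $\delta\ll 1$, yields $\|E_k^r\|_{\ell^1(\G_0)}\lesssim \tau^{-k/4}$.

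The companion formula \eqref{maj14} for $\{\mathcal{H}_{k,s}^1(\mathcal{H}_{k,s}^1)^\ast\}^r$ follows by the same argument, noting that the adjoint reordering in \eqref{pro11.1} replaces $D$ by $\tilde D$ (via \eqref{pro0.5}) and $G$ by $\tilde G$, producing $\tilde B_k^r$ and $\tilde P'$ in place of $B_k^r$ and $P'$. The main obstacle is the careful bookkeeping on the major arcs: one must verify that the $O(\tau^{-Dk})$ error from Poisson summation in $4r$ variables, once summed over the $\lesssim \tau^{C\delta k}$ major-arc centers $a/Q$, still produces an $\ell^1$ remainder of size $\tau^{-k/4}$. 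This is precisely why $r$ and $D$ are chosen large relative to $\delta^{-1}$ as in \eqref{ConstantsStr}, ensuring that each individual Poisson error beats the combinatorial growth of the arc count by an ample margin.
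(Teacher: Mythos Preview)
Your proposal is correct and follows essentially the same approach as the paper: express the iterated $T^\ast T$ kernel via \eqref{pro15.7}--\eqref{pro15.11} as an integral of $\Pi_k^r(\theta)\Omega_{k,s}^r(\theta^{(2)})$, then perform a major/minor arcs decomposition in both $\theta^{(1)}$ and $\theta^{(2)}$, using the arithmetic factorization $n=Qn'+\nu$ and Poisson summation on the major arcs to produce $G(a/Q)P'(\tau^k\circ\zeta)$, and handling the minor arcs by Proposition~\ref{minarcs}(i) (for $\theta^{(1)}$) and the rapid decay of $F_k$ (for $\theta^{(2)}$). The paper organizes the same argument slightly differently---first replacing $\Omega_{k,s}^{r,2}$ by $\Xi_{k,s}$ (the bound \eqref{maj18}), then inserting the noncentral major-arc cutoff $\Psi_{k,\leq\delta k}$, and finally doing the factorization---but the substance is identical; note also that the final passage from $\eta_{\leq\delta k}$ to $\eta_{\leq\delta k/2}$ uses the decay \eqref{maj34} of $P'$, and the dominant major-arc error is $O(\tau^{-k+O(\delta k)})$ from the phase linearization rather than the Poisson step itself.
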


For later use we also define the functions $P(\zeta)$ and $\widetilde{P}(\zeta)$ as in \eqref{maj14.5}, using however the cutoff function $\chi(w_j)\chi(y_j)$ instead of $\chi'(w_j)\chi'(y_j)$. For $\iota\in\{0, 1\}$ we also let 
\begin{align}
\label{eq:19}
\begin{split}
P^{\iota}:=
\begin{cases}
P & \text{ if } \iota=0,\\
P' & \text{ if } \iota=1,
\end{cases}
\qquad
\widetilde{P}^{\iota}:=
\begin{cases}
\widetilde{P} & \text{ if } \iota=0,\\
\widetilde{P}' & \text{ if } \iota=1.
\end{cases}
\end{split}
\end{align}
Using Proposition \ref{minarcscon} we may estimate 
\begin{align}\label{maj34}
\begin{split}
\big|D^\alpha_\zeta P^{\iota}(\zeta)\big|&+\big|D^\alpha_\zeta \widetilde{P}^{\iota}(\zeta)\big|\lesssim_{|\alpha|}\langle\zeta\rangle^{-1/\delta^2}
\end{split}
\end{align}
for any $\zeta\in\R^d\times\R^{d'}$, any multi-index $\alpha\in\NN^{d+d'}$, and any $\iota\in\{0, 1\}$.

\begin{proof}[Proof of Lemma \ref{maj10}] We only prove in detail the claims for the operators $\{(\mathcal{H}_{k,s}^1)^\ast\mathcal{H}_{k,s}^1\}^r$, since the claims for the operators $\{\mathcal{H}_{k,s}^1(\mathcal{H}_{k,s}^1)^\ast\}^r$ follow by analogous arguments. In view of \eqref{pro15.7}--\eqref{pro15.11} we have
\begin{equation*}
\{(\mathcal{H}_{k,s}^1)^\ast\mathcal{H}_{k,s}^1\}^r=f\ast H_{k,s}^r
\end{equation*}
where
\begin{equation}\label{maj15}
H_{k,s}^r(y):=\eta_{\leq 3\delta k}(\tau^{-k}\circ y)\int_{\T^d\times\T^{d'}}\ex\big(y.\theta\big)\Pi_{k}^{r,1}\big(\theta^{(1)},\theta^{(2)}\big)\Omega_{k,s}^{r,2}\big(\theta^{(2)}\big)\,d\theta^{(1)}d\theta^{(2)}.
\end{equation}
The multipliers $\Pi_{k}^{r,1}$ and $\Omega_{k,s}^{r,2}$ can be calculated as in the proof of Lemma \ref{MinArc1}. Namely,
\begin{equation}\label{maj16}
\Pi_{k}^{r,1}\big(\theta\big)=\tau^{-2kr}\sum_{n,m\in\Z^r}\Big\{\prod_{1\le j\le r}\chi'(\tau^{-k}n_j)\chi'(\tau^{-k}m_j)\Big\}\ex\big(-\theta{.}D(n,m)\big),
\end{equation}
and, with $F_k$ defined as in \eqref{pro26}, one has
\begin{equation}\label{maj17}
\Omega_{k,s}^{r,2}(\theta^{(2)})=\Big|\int_{\T^{d'}}F_k(\theta^{(2)}-\xi^{(2)})\Xi_{k,s}(\xi^{(2)})\,d\xi^{(2)}\Big|^{2r}.
\end{equation}

We now show that the kernels $H_{k,s}^r$ are equivalent to the kernels $B_k^r$ defined in \eqref{maj13} up to acceptable $\ell^1$ errors satisfying \eqref{maj12}. We accomplish this in several steps:

{\bf{Step 1.}} We first replace the multiplier $\Omega_{k,s}^{r,2}(\theta^{(2)})$ with $\Xi_{k,s}(\theta^{(2)})$, at the expense of acceptable $\ell^1$ errors. For this we show that
\begin{equation}\label{maj18}
\begin{split}
\big|\Omega_{k,s}^{r,2}&(\theta^{(2)})-\Xi_{k,s}(\theta^{(2)})\big|\\
&\lesssim 
\begin{cases}
1\,\,\,\,&\text{ if there is }a/q\in\mathcal{R}_s^{d'}\text{ such that }|\tau^{k}\circ(\theta^{(2)}-a/q)|\in[\tau^{\delta k/2},\tau^{2\delta k}],\\
\tau^{-k/\delta}\,\,& \text{ otherwise}.
\end{cases}
\end{split}
\end{equation}
Indeed, since the functions $F_k$ satisfy the bounds \eqref{pro27.5}, we have $\|F_k\|_{L^1(\T^{d'})}\lesssim 1$, so $\big|\Omega_{k,s}^{r,2}(\theta^{(2)})\big|+\big|\Xi_{k,s}(\theta^{(2)})\big|\lesssim 1$ for any $\theta^{(2)}\in\T^{d'}$. On the other hand, if $|\tau^{k}\circ(\theta^{(2)}-a/q)|\leq \tau^{\delta k/2}$ for some $a/q\in\mathcal{R}_s^{d'}$ then $\Xi_{k,s}(\theta^{(2)})=1$ and, in fact, $\Xi_{k,s}(\xi^{(2)})=1$ for all $\xi^{(2)}\in\T^{d'}$ with $|\tau^{k}\circ(\theta^{(2)}-\xi^{(2)})|\leq \tau^{\delta k/2}$. Therefore, using \eqref{pro27.5} with $M$ large enough and the definition \eqref{pro26} we have
\begin{equation*}
\Big|\int_{\T^{d'}}F_k(\theta^{(2)}-\xi^{(2)})\Xi_{k,s}(\xi^{(2)})\,d\xi^{(2)}-1\Big|\lesssim \tau^{-k/\delta}+\Big|\int_{\T^{d'}}F_k(\theta^{(2)}-\xi^{(2)})\,d\xi^{(2)}-1\Big| = \tau^{-k/\delta}.
\end{equation*}
Thus $\big|\Omega_{k,s}^{r,2}(\theta^{(2)})-\Xi_{k,s}(\theta^{(2)})\big|\lesssim \tau^{-k/\delta}$, as claimed in  \eqref{maj18}.

Finally, if $|\tau^{k}\circ(\theta^{(2)}-a/q)|\gtrsim \tau^{2\delta k}$ for all $a/q\in\mathcal{R}_s^{d'}$ then $\Xi_{k,s}(\theta^{(2)})=0$ and, in fact, $\Xi_{k,s}(\xi^{(2)})=0$ for all $\xi^{(2)}\in\T^{d'}$ with $|\tau^{k}\circ(\theta^{(2)}-\xi^{(2)})|\leq \tau^{\delta k/2}$. The desired bounds \eqref{maj18} follow as before in this case.

Given \eqref{maj18} we can define
\begin{equation}\label{maj20}
H_{k,s}^{r,1}(y):=\eta_{\leq 3\delta k}(\tau^{-k}\circ y)\int_{\T^d\times\T^{d'}}\ex\big(y.\theta\big)\Pi_{k}^{r,1}\big(\theta^{(1)},\theta^{(2)}\big)\Xi_{k,s}\big(\theta^{(2)}\big)\,d\theta^{(1)}d\theta^{(2)},
\end{equation}
and the difference $H_{k,s}^r-H_{k,s}^{r,1}$ is an acceptable $\ell^1$ error.

{\bf{Step 2.}} We now restrict  to major arcs in the variable $\theta^{(1)}$, so we define
\begin{equation}\label{maj21}
H_{k,s}^{r,2}(y):=\eta_{\leq 3\delta k}(\tau^{-k}\circ y)\int_{\T^d\times\T^{d'}}\ex\big(y.\theta\big)\Pi_{k}^{r,1}\big(\theta^{(1)},\theta^{(2)}\big)\Psi_{k,\leq\delta k}\big(\theta^{(1)}\big)\Xi_{k,s}\big(\theta^{(2)}\big)\,d\theta^{(1)}d\theta^{(2)},
\end{equation}
where
\begin{equation}\label{maj22}
\Psi_{k,\leq\delta k}\big(\theta^{(1)}\big):=\sum_{a/q\in\mathcal{R}^d_{\leq\delta k}}\eta_{\leq\delta k}(\tau^k\circ(\theta^{(1)}-a/q)).
\end{equation}
We will show that $\|H_{k,s}^{r,1}-H_{k,s}^{r,2}\|_{\ell^1(\G_0)}\lesssim \tau^{-k}$. Indeed, if $\theta^{(1)}$ is in the support of $1-\Psi_{k,\leq\delta k}$ then we apply Dirichlet's principle to find an irreducible fraction $(a_{l0}/q_{l0})_{l\in\{1,\ldots,d\}}$ such that 
\begin{equation*}
\Big|\theta_{l0}^{(1)}-\frac{a_{l0}}{q_{l0}}\Big|\leq \frac{1}{q_{l0}\tau^{kl-\delta^2k}}\quad\text{ and }\quad q_{l0}\in[1,\tau^{kl-\delta^2k}]\cap\Z,
\end{equation*}
and at least one of the denominators $q_{l0}$ is larger than $\tau^{\delta^2k}$. But then we examine the definition \eqref{maj16} and apply Proposition \ref{minarcs} (i) to conclude that $\big|\Pi_{k}^{r,1}\big(\theta^{(1)},\theta^{(2)}\big)\big|\lesssim \tau^{-k/\delta}$. The desired error bounds follow.

{\bf{Step 3.}} We now  approximate the sum in the definition of $\Pi_{k}^{r,1}$. Assume that $\theta=\big(\theta^{(1)},\theta^{(2)}\big)$ is a point in $\R^{|Y_d|}$ and $a/Q\in\mathbb{Q}^{|Y_d|}$ is an irreducible fraction such that
\begin{equation}\label{maj25}
\big|\tau^k\circ\big(\theta-a/Q\big)\big|\leq 2 \tau^{\delta k+4},\qquad Q\leq \tau^{2\delta k + 2}.
\end{equation}
We examine the sum in the formula \eqref{maj16}. For any $j\in\{1,\ldots,r\}$ we decompose $n_j=Qw_j+x_j$, $m_j=Qy_j+z_j$, $x_j,z_j\in\{0,\ldots,Q-1\}$, $w_j,y_j\in\Z$. Letting $\beta=\theta-a/Q$ we notice that
\begin{equation*}
\ex\big(-\theta{.}D(n,m)\big)=\ex\big(-\beta{.}D(Qw+x,Qy+z)\big)\ex\big(-(a/Q){.}D(x,z)\big).
\end{equation*}
Moreover, if $\big|\tau^k\circ\beta\big|\lesssim \tau^{\delta k}$ and $|Qw|+|Qy|\lesssim \tau^k$ then
\begin{equation*}
\beta{.}D(Qw+x,Qy+z)=\beta.D(Qw,Qy)+O(Q\tau^{-k+\delta k})=(Q\circ\beta).D(w,y)+O(Q\tau^{-k+\delta k}),
\end{equation*}
as one can see easily from the formula \eqref{pro0.4}. In addition
\begin{equation*}
\prod_{1\le j\le r}\chi'(\tau^{-k}n_j)\chi'(\tau^{-k}m_j)=\prod_{1\le j\le r}\chi'(\tau^{-k}Qw_j)\chi'(\tau^{-k}Qy_j)+O(Q\tau^{-k}).
\end{equation*}
Therefore
\begin{equation}\label{maj28}
\begin{split}
\Pi_k^{r,1}(\theta)&=\tau^{-2kr}\Big\{\sum_{|w|,|y|\lesssim \tau^k/Q}\Big\{\prod_{1\le j\le r}\chi'(\tau^{-k}Qw_j)\chi'(\tau^{-k}Qy_j)\Big\}\ex\big[-(Q\circ\beta).D(w,y)\big]\Big\}\\
&\times\Big\{\sum_{x,z\in \Z_Q^r}\ex\big(-(a/Q){.}D(x,z)\big)\Big\}+O(Q\tau^{-k+\delta k}).
\end{split}
\end{equation}

Recall the definition \eqref{pro0.6}. Using the Poisson summation formula we may replace the sum over $w,y\in\Z^r$ with the corresponding integral, at the expense of $O(\tau^{-2k})$ errors, and then change variables to reach the formula \eqref{maj14.5}. Therefore
\begin{equation}\label{maj29}
\Pi_k^{r,1}(\theta)=P'(\tau^k\circ\beta)G(a/Q)+O(\tau^{-k+8\delta k}).
\end{equation}
The contribution of the error term can be incorporated into the kernel $E_k^r$, while the main term can be substituted into the formula \eqref{maj21}, leading to the desired formula \eqref{maj13} after changes of variables. 
We have established \eqref{maj13} and \eqref{maj14} with 
$\eta_{\leq\delta k}(\zeta^{(1)})\eta_{\leq\delta k}(\zeta^{(2)})$ in place of 
$\eta_{\leq\delta k/2}(\zeta^{(1)})\eta_{\leq\delta k/2}(\zeta^{(2)})$. 
Finally we can use \eqref{maj34} to replace cutoff functions $\eta_{\leq\delta k}(\zeta^{(1)})\eta_{\leq\delta k}(\zeta^{(2)})$  with $\eta_{\leq\delta k/2}(\zeta^{(1)})\eta_{\leq\delta k/2}(\zeta^{(2)})$. This completes the proof of the lemma.
\end{proof}

We return now to the proof of the main bounds \eqref{maj5}. In view of the Cotlar--Stein lemma it suffices to prove the following:

\begin{lemma}\label{almostOrg} If $k,j \ge \max((D/\ln\tau)^2,s/\delta)$ and $j\in[k/2,k]$ then
\begin{equation}\label{maj30}
\|\mathcal{H}_{j,s}^1(\mathcal{H}_{k,s}^1)^\ast\|_{\ell^2(\G_0)\to\ell^2(\G_0)}+\|(\mathcal{H}_{j,s}^1)^\ast\mathcal{H}_{k,s}^1\|_{\ell^2(\G_0)\to\ell^2(\G_0)}\lesssim \tau^{-2s/D}\tau^{-2|j-k|/D}.
\end{equation}
\end{lemma}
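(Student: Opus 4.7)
The strategy is a high-order $TT^*$ estimate analogous to that used in Lemma \ref{maj10}, now adapted to mixed-scale operators. Setting $\mathcal{A}_{j,k}:=\mathcal{H}_{j,s}^1(\mathcal{H}_{k,s}^1)^\ast$ and using Young's inequality on $\G_0$,
\[
\|\mathcal{A}_{j,k}\|_{\ell^2\to\ell^2}^{2r} = \|(\mathcal{A}_{j,k}\mathcal{A}_{j,k}^\ast)^r\|_{\ell^2\to\ell^2} \le \|A_{j,k,s}^r\|_{\ell^1(\G_0)},
\]
where $A_{j,k,s}^r$ denotes the convolution kernel of $(\mathcal{H}_{j,s}^1(\mathcal{H}_{k,s}^1)^\ast\mathcal{H}_{k,s}^1(\mathcal{H}_{j,s}^1)^\ast)^r$; an identical reduction handles $(\mathcal{H}_{j,s}^1)^\ast\mathcal{H}_{k,s}^1$. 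So it suffices to prove that $\|A_{j,k,s}^r\|_{\ell^1(\G_0)}\lesssim \tau^{-4rs/D}\tau^{-4r|j-k|/D}$ for a sufficiently large $r=r(d)$.

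Using the product structure $H_{k,s}^1(g)=[\Delta_kL_k](g^{(1)})\cdot N_{k,s}(g^{(2)})$ (see \eqref{maj3} and \eqref{picu10.2}) and applying the identities \eqref{pro15.7}--\eqref{pro15.11} to the $4r$-fold product at alternating scales $j$ and $k$, we arrive at a Fourier representation
\[
A_{j,k,s}^r(y) = \eta_{\leq C\delta k}(\tau^{-k}\circ y)\int_{\T^d\times\T^{d'}}\ex(y.\theta)\,\Pi_{j,k}^r(\theta^{(1)},\theta^{(2)})\,\Omega_{j,k,s}^r(\theta^{(2)})\,d\theta^{(1)}d\theta^{(2)},
\]
where $\Pi_{j,k}^r$ is a $4r$-fold exponential sum of the type \eqref{maj16} involving $2r$ copies each of $\chi'(\tau^{-j}\,\cdot)$ and $\chi'(\tau^{-k}\,\cdot)$, and $\Omega_{j,k,s}^r$ is an alternating product of analogues of \eqref{maj17} at scales $j$ and $k$. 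The lemma reduces to the uniform pointwise estimate
\[
|\Pi_{j,k}^r(\theta^{(1)},\theta^{(2)})\cdot\Omega_{j,k,s}^r(\theta^{(2)})|\lesssim \tau^{-4rs/D}\tau^{-4r|j-k|/D}, \qquad (\theta^{(1)},\theta^{(2)})\in\T^d\times\T^{d'}.
\]

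The factor $\tau^{-4rs/D}$ is produced as in the proofs of Lemma \ref{MinArc1} and Lemma \ref{maj10}: one splits $\theta^{(2)}$ into the regime where it is far from every $a/Q\in\mathcal{R}^{d'}_s$, in which case the rapid decay of $F_j, F_k$ from \eqref{pro27.5} forces $|\Omega_{j,k,s}^r|$ to be small, and the regime where $\theta^{(2)}$ is close to some such $a/Q$, in which case a Dirichlet-principle argument in $\theta^{(1)}$ followed by Proposition \ref{minarcs} extracts a Gauss-sum factor of size $\tau^{-s/\delta^4}$ from $\Pi_{j,k}^r$. The additional factor $\tau^{-4r|j-k|/D}$ is the new ingredient and exploits the mean-zero property $\int\chi'=0$. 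On major arcs, Lemma \ref{Skappr} gives $\Delta_kS_k(a/Q+\zeta)\approx S(a/Q)J'_k(\zeta)$ with $|J'_k(\zeta)|\lesssim \min(1,|\tau^k\circ\zeta|)$ near $\zeta=0$. For $j\le k$, the bound $|\tau^j\circ\zeta|\leq \tau^{j-k}|\tau^k\circ\zeta|$ on $|\tau^k\circ\zeta|\leq 1$ yields $|\overline{J'_j(\zeta)}J'_k(\zeta)|\lesssim \tau^{-|j-k|}$ per mixed scale-$j$/scale-$k$ pair; iterating over the $2r$ such pairs present in $\Pi_{j,k}^r$ gives the desired decay.

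The principal obstacle is the technical bookkeeping for the alternating-scale kernel and the rigorous extraction of the $\tau^{-|j-k|/D}$ decay from the mean-zero property of $\chi'$ across the high-order product. In particular, the minor-arc/major-arc dissection of $\theta^{(1)}$ must be carefully coordinated with that of $\theta^{(2)}$, so that the Weyl-type decay from Proposition \ref{minarcscom} on minor arcs of $\theta^{(1)}$ and the scale-mismatch gain from the $J'$-factor analysis on major arcs combine multiplicatively with the Gauss-sum gain on the central variable, producing the final $\tau^{-4rs/D}\tau^{-4r|j-k|/D}$ bound.
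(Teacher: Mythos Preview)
Your direct analysis of the symmetric $4r$-fold kernel has a genuine gap in the extraction of the $\tau^{-|j-k|}$ decay. The heuristic that the mixed-scale multiplier $\Pi_{j,k}^r$ reduces on major arcs to products of pairs $\overline{J'_j(\zeta)}J'_k(\zeta)$ is not correct: the phase $D(n,m)$ in \eqref{pro0.4} contains bilinear cross terms (the $R_0$ contributions for $l_2\ge 1$) that couple variables at the two different scales, so the continuous integral obtained after the major-arc approximation does \emph{not} decouple into per-pair $J'$ factors. A related issue arises on the minor arcs: applying Proposition~\ref{minarcs} at the larger scale $P\simeq\tau^k$ to the mixed sum returns $P^{2\cdot 2r}=\tau^{4kr}$, whereas the natural normalization of $\Pi_{j,k}^r$ is $\tau^{-2r(j+k)}$; the resulting loss $\tau^{2r(k-j)}$ can compete with the Weyl gain $\tau^{-k/\delta^4}$ once $r\ge\delta^{-4}$ and $k-j$ is near $k/2$.

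The paper avoids both problems by an \emph{asymmetric} bootstrapping (see \eqref{maj34.5}): one iterates $\|\mathcal{H}_{j,s}^1(\mathcal{H}_{k,s}^1)^\ast\|\lesssim\|\mathcal{H}_{j,s}^1[(\mathcal{H}_{k,s}^1)^\ast\mathcal{H}_{k,s}^1]^{2^a}\|^{1/2^{a+1}}$, so that only a \emph{single} copy of $H_{j,s}^1$ is convolved against the pure-scale-$k$ kernel $B_k^r$ already computed in Lemma~\ref{maj10}. The $\tau^{-s}$ gain then comes cleanly from the Gauss-sum coefficients $G(a/Q)$ in the expansion \eqref{maj36.4} of $B_k^r$ (Proposition~\ref{minarcs}(ii)), while the $\tau^{-|j-k|}$ gain is obtained by a \emph{physical-space} argument: the smooth piece $X_k^r$ of $B_k^r$ varies slowly at scale $\tau^k$ (estimates \eqref{maj44}--\eqref{maj45}), so summing it against the mean-zero scale-$j$ weight $\chi'(\tau^{-j}\,\cdot)$ produces a factor $\tau^{j-k}$ as in \eqref{maj42}. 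No mixed-scale multiplier analysis is required.
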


\begin{proof} {\bf{Step 1.}} We prove these bounds first when $j=k$, so we prove that the operators $\mathcal{H}^1_{k,s}$ are suitably bounded on $\ell^2(\G_0)$. In view of Lemma \ref{maj10}, it suffices to prove that
\begin{equation}\label{maj31}
\|B_k^r\|_{\ell^1(\G_0)}\lesssim \tau^{-2rs/D}.
\end{equation}
We notice that
\begin{equation*}
\Big|\sum_{a/Q=(a^{(1)}/q_1,a^{(2)}/q_2)\in\mathcal{R}^d_{\leq \delta k}\times\mathcal{R}^{d'}_{s} \cap [0,1)^{d+d'}}\ex(h{.}a/Q)G(a/Q)\Big|\lesssim \tau^{-s}
\end{equation*}
for any $h\in\G_0$, as a consequence of Proposition \ref{minarcs} (ii). For $\iota\in\{0, 1\}$ we let
\begin{equation}\label{maj32}
\begin{split}
X_k^{\iota, r}(h):=&\Big\{\prod_{(l_1,l_2)\in Y_d}\tau^{-k(l_1+l_2)}\Big\}\eta_{\leq 3\delta k}(\tau^{-k}\circ h)\\
&\times\int_{\R^d\times\R^{d'}}\eta_{\leq\delta k/2}(\zeta^{(1)})\eta_{\leq\delta k/2}(\zeta^{(2)})P^{\iota}(\zeta)\ex[(\tau^{-k}\circ h){.}\zeta]\,d\zeta.
\end{split}
\end{equation}
Notice that
\begin{equation}\label{maj33}
\|X_k^{\iota, r}\|_{\ell^1(\G_0)}\lesssim 1\qquad\text{ for any }k\in \N. 
\end{equation}
Indeed, invoking \eqref{maj34} and integrating by parts in \eqref{maj32} we conclude that
\begin{equation*}
\big|X_k^{\iota, r}(h)\big|\lesssim\Big\{\prod_{(l_1,l_2)\in Y_d}\tau^{-k(l_1+l_2)}\Big\}(1+|\tau^{-k}\circ h|)^{-1/\delta}
\end{equation*}
for any $h\in\G_0$. Now we see that inequality \eqref{maj31} follows from  \eqref{maj33} with $\iota=1$.

{\bf{Step 2.}} Since we have already proved that $\|(\mathcal{H}_{j,s}^1)^\ast\|_{\ell^2\to\ell^2}\lesssim \tau^{-s/D}\lesssim 1$, we can estimate
\begin{equation}\label{maj34.5}
\begin{split}
\|\mathcal{H}_{j,s}^1&(\mathcal{H}_{k,s}^1)^\ast\|_{\ell^2\to\ell^2}=\|\mathcal{H}_{j,s}^1(\mathcal{H}_{k,s}^1)^\ast\mathcal{H}_{k,s}^1(\mathcal{H}_{j,s}^1)^\ast\|_{\ell^2\to\ell^2}^{1/2}\lesssim \|\mathcal{H}_{j,s}^1[(\mathcal{H}_{k,s}^1)^\ast\mathcal{H}_{k,s}^1]\|_{\ell^2\to\ell^2}^{1/2}\\
&\lesssim\|\mathcal{H}_{j,s}^1[(\mathcal{H}_{k,s}^1)^\ast\mathcal{H}_{k,s}^1]^2\|_{\ell^2\to\ell^2}^{1/4}\lesssim \ldots\lesssim \|\mathcal{H}_{j,s}^1[(\mathcal{H}_{k,s}^1)^\ast\mathcal{H}_{k,s}^1]^{2^a}\|_{\ell^2\to\ell^2}^{1/2^{a+1}},
\end{split}
\end{equation}
for any $j\leq k$, where $2^a$ is the smallest dyadic number $\geq r$. The norm $\|(\mathcal{H}_{j,s}^1)^\ast\mathcal{H}_{k,s}^1\|_{\ell^2\to\ell^2}$ can be estimated in the same way, so it suffices to prove that for any $j\in[k/2,k]$ such that $k, j \ge \max((D/\ln\tau)^2,s/\delta)$ we have
\begin{equation}\label{maj35}
\|\mathcal{H}_{j,s}^1[(\mathcal{H}_{k,s}^1)^\ast\mathcal{H}_{k,s}^1]^r\|_{\ell^2\to\ell^2}+\|(\mathcal{H}_{j,s}^1)^\ast[\mathcal{H}_{k,s}^1(\mathcal{H}_{k,s}^1)^\ast]^r\|_{\ell^2\to\ell^2}\lesssim \tau^{-8rs/D}\tau^{-8r|j-k|/D}.
\end{equation}

The bounds on the two terms in the left-hand side of \eqref{maj35} are similar, and we only provide the proof for the first term. We use Lemma \ref{maj10}. The contribution of the error kernel $E_k^r$ is bounded by $C\tau^{-k/4}$, due to \eqref{maj12}, which is better than needed. It remains to prove that
\begin{equation}\label{maj36.3}
\big\|B_k^r\ast H_{j,s}^1\big\|_{\ell^1(\G_0)}\lesssim \tau^{-8rs/D}\tau^{-8r|j-k|/D}.
\end{equation}
We examine the formula \eqref{maj13} and decompose the kernel $B_k^r$
\begin{equation}\label{maj36.4}
\begin{split}
&B_k^r=\sum_{a/Q=(a^{(1)}/q_1,a^{(2)}/q_2)\in\mathcal{R}^d_{\leq \delta k}\times\mathcal{R}^{d'}_{s} \cap [0,1)^{d+d'}}G(a/Q)X_{k,a/Q}^r,\\
&X_{k,a/Q}^r(h):=X_{k}^r(h)\ex(h{.}a/Q),
\end{split}
\end{equation}
where the kernels $X_k^r:=X_k^{1, r}$ have been defined in \eqref{maj32}. In view of the rapid decay of the coefficients $G(a/Q)$ (see \eqref{pro0.7}), for \eqref{maj36.3} it suffices to prove that
\begin{equation}\label{maj36}
\big\|X_{k,a/Q}^r\ast H_{j,s}^1\big\|_{\ell^1(\G_0)}\lesssim Q^{8/\delta}\tau^{-8r|j-k|/D}
\end{equation}
for any irreducible fraction $a/Q\in\mathbb{Q}^{|Y_d|}$ with denominator $Q\in[\tau^s,\tau^{2\delta k + 2}]$.

We examine now the definition \eqref{maj3} and decompose
\begin{equation}\label{maj38}
\begin{split}
H_{j,s}^1(g)&=\sum_{b^{(2)}/q_2\in \mathcal{R}^{d'}_s \cap [0,1)^{d'}}H_{j}^{1,b^{(2)}/q_2}(g)
=\sum_{b^{(2)}/q_2\in \mathcal{R}^{d'}_s \cap [0,1)^{d'}} [\Delta_jL_{j}](g^{(1)})\ex(g^{(2)}.b^{(2)}/q_2)Y_{j}(g^{(2)}),\\
Y_{j}(g^{(2)})&:=\phi_j^{(2)}(g^{(2)})\int_{\R^{d'}}\ex(g^{(2)}.\beta^{(2)})\eta_{\leq\delta j}(\tau^{j}\circ\beta^{(2)})\, d\beta^{(2)},\\
\end{split}
\end{equation}
 For \eqref{maj36} it suffices to prove that
\begin{equation}\label{maj39}
\big\|X_{k,a/Q}^r\ast H_{j}^{1,b^{(2)}/q_2}\big\|_{\ell^1(\G_0)}\lesssim Q^{4/\delta}\tau^{-8r|j-k|/D}
\end{equation}
for any $b^{(2)}/q_2\in \mathcal{R}^{d'}_s$, as the sum over $b^{(2)}/q_2$ contains at most $\tau^{s/\delta}$ terms and $Q\geq \tau^s$. 

{\bf{Step 3.}} Using the definitions we estimate
\begin{equation}\label{maj40}
\begin{split}
\big\|&X_{k,a/Q}^r\ast H_{j}^{1,b^{(2)}/q_2}\big\|_{\ell^1(\G_0)}=\sum_{h=(h^{(1)},h^{(2)})\in \G_0}\Big|\sum_{g=(g^{(1)},g^{(2)})\in\G_0}H_{j}^{1,b^{(2)}/q_2}(g)X_{k,a/Q}^r(g^{-1}\cdot h)\Big|\\
&\leq\sum_{h=(h^{(1)},h^{(2)})\in \G_0,\,g^{(2)}\in\Z^{d'}}|Y_{j}(g^{(2)})|\Big|\sum_{g^{(1)}\in\Z^d}[\Delta_jL_j](g^{(1)})X_{k}^r(g^{-1}\cdot h)\ex\big[(g^{-1}\cdot h){.}(a/Q)\big]\Big|.
\end{split}
\end{equation}
To get decay in $|k-j|$ the main point is to bound efficiently the sum over $g^{(1)}$ in the expression above, using the cancellation of the kernel $\Delta_j L_j$. We rewrite this sum in the form
\begin{equation*}
\begin{split}
\Big|\sum_{n\in\Z}\tau^{-j}&\chi'(\tau^{-j}n)X_{k}^r\big((A_0^{(1)}(n),g^{(2)})^{-1}\cdot h\big)\\
&\times\ex\big[-A_0^{(1)}(n){.}(a^{(1)}/Q)+R_0(A_0^{(1)}(n),A_0^{(1)}(n)-h^{(1)}){.}(a^{(2)}/Q)\big]\Big|,
\end{split}
\end{equation*}
for any $h=(h^{(1)},h^{(2)})\in \G_0$ and $g^{(2)}\in\Z^{d'}$, where $\chi'(x)=(1/\tau)\chi(x/\tau)-\chi(x)$ as before. It follows easily from the definition \eqref{maj38} that $\|Y_{j}\|_{\ell^1(\Z^{d'})}\lesssim 1$ uniformly in $j\in\mathbb{Z}$. Therefore, for \eqref{maj39} it suffices to prove that
\begin{equation}\label{maj41}
\begin{split}
\sum_{h=(h^{(1)},h^{(2)})\in \G_0}&\Big|\sum_{n\in\Z}\tau^{-j}\chi'(\tau^{-j}n)X_{k}^r\big(h^{(1)}-A_0^{(1)}(n),h^{(2)}+R_0(A_0^{(1)}(n),A_0^{(1)}(n)-h^{(1)})\big)\\
\times\ex\big[-A_0^{(1)}&(n){.}(a^{(1)}/Q)+R_0(A_0^{(1)}(n),A_0^{(1)}(n)-h^{(1)}){.}(a^{(2)}/Q)\big]\Big|\lesssim Q^{4/\delta}\tau^{-8r|j-k|/D}.
\end{split}
\end{equation}

We further decompose $n=mQ+\rho$, $m\in\Z$, $\rho\in[0,Q-1]\cap\mathbb{Z}$, and notice that the oscillatory factor in the sum above does not depend on $m$. For \eqref{maj41} it suffices to prove that
\begin{equation}\label{maj42}
\sum_{h\in \G_0}\Big|\sum_{m\in\Z}\tau^{-j}\chi'(\tau^{-j}(mQ+\rho))X_{k}^r\big(A_0(mQ+\rho)^{-1}\cdot h\big)\Big|\lesssim \tau^{-8r|j-k|/D}
\end{equation}
for any $Q\in[1,\tau^{2\delta k + 2}]$ and $\rho\in[0,Q-1]\cap\mathbb{Z}$, as the sum over $\rho$ contains only $Q$ terms. 

Finally, we examine the kernels $X_{k}^r$. These kernels can be extended to the continuous Lie group $\G_0^\#\simeq\R^{|Y_d|}$, according to the defining formula \eqref{maj32}. Using \eqref{maj34} and integration by parts it follows that
\begin{equation}\label{maj44}
|X_k^r(h)|+\sum_{(l_1,l_2)\in Y_d}\tau^{k(l_1+l_2)}\big|(\partial_{h_{l_1l_2}}X_k^r)(h)\big|\lesssim\Big\{\prod_{(l_1,l_2)\in Y_d}\tau^{-k(l_1+l_2)}\Big\}(1+|\tau^{-k}\circ h|)^{-2/\delta}
\end{equation}
for any $h\in\R^{|Y_d|}$. Therefore, for any $g\in\G_0^\#$ with $|\tau^{-j}\circ g|\lesssim 1$, we have
\begin{equation}\label{maj45}
|X_k^r(h)-X_k^r(g^{-1}\cdot h)|\lesssim \tau^{j-k}\Big\{\prod_{(l_1,l_2)\in Y_d}\tau^{-k(l_1+l_2)}\Big\}(1+|\tau^{-k}\circ h|)^{-1/\delta}.
\end{equation}
Therefore
\begin{equation*}
\sum_{h\in \G_0}\Big|\sum_{m\in\Z}\tau^{-j}\chi'(\tau^{-j}(mQ+\rho))\big[X_{k}^r\big(A_0(mQ+\rho)^{-1}\cdot h\big)-X_{k}^r\big(h\big)\big]\Big|\lesssim \tau^{j-k}.
\end{equation*}
Moreover, since $\int_{\R}\chi'(x)\,dx=0$, we have
\begin{equation*}
\sum_{h\in \G_0}\Big|\sum_{m\in\Z}\tau^{-j}\chi'(\tau^{-j}(mQ+\rho))X_{k}^r\big(h\big)\Big|=\Big(\sum_{h\in \G_0}\big|X_{k}^r\big(h\big)\big|\Big)\Big|\sum_{m\in\Z}\tau^{-j}\chi'(\tau^{-j}(mQ+\rho))\Big|\lesssim Q\tau^{-j}.
\end{equation*}
The desired bounds \eqref{maj42} follow since $j\in[k/2,k]$ and $Q\leq \tau^{2\delta k + 2}$. This completes the proof of the lemma.
\end{proof}

\subsection{Proof of \eqref{picu13}} Given that we already proved the variational inequality \eqref{picu13var}, in view of \eqref{eq:36} it suffices to prove that
\begin{equation}\label{easbou}
\|f\ast K_{k_0,s}\|_{\ell^2(\G_0)}\lesssim \tau^{-s/D^2}\|f\|_{\ell^2(\G_0)},
\end{equation}
where $k_0$ in an integer satisfying $|k_0-3\kappa_s/2|\leq 1$. We decompose $K_{k_0,s}=G_{k_0,s}^{\low}+\sum_{t\leq\delta' k_0}G_{k_0,s,t}+G_{k_0,s}^c$ as in \eqref{picu25}. The contributions of the operators defined by the kernels $G_{k_0,s}^{\low}$ and $G_{k_0,s}^c$ are suitably bounded due to Lemma \ref{MinArc2} and Lemma \ref{MajArc2} (i) proved in the previous sections. The contributions of the operators defined by the kernels $G_{k_0,s,t}$ are bounded due to Lemma \ref{MajArc2} (ii) and Lemma \ref{MajArc3} proved in Section \ref{sectionM3} below. The bounds \eqref{easbou} follow.

\section{Transition estimates II: Proof of Lemma \ref{MajArc3}}\label{sectionM3}

In this section we prove bounds \eqref{picu28var} and \eqref{picu28}.
In fact we establish a stronger result which will be used in $\ell^p(\G_0)$ theory in Section \ref{lptheory}.

\begin{lemma}\label{MajArc3K}
Assume that $s\ge 0$, and
$t\geq D(s+1)$, and let
$\A\subseteq \mathcal{R}_{t}^{d}\setminus\widetilde{\mathcal{R}}_{Q_s}^{d}$,
$\B\subseteq \mathcal R_{\le s}^{d'}$ be $1$-periodic sets of rationals. Then for any $2<\rho<\infty$ and for any $f\in\ell^2(\G_0)$ we have
\begin{equation}\label{picu28varK}
\big\|V^{\rho}(f\ast K_{k,k, \A, \B}: \max(\kappa_s,t/\delta')\leq k< 2\kappa_t)\big\|_{\ell^2(\G_0)}\lesssim \tau^{-t/D^2}\|f\|_{\ell^2(\G_0)},
\end{equation}
where $K_{k,w, \A, \B}$ is the kernel defined in
\eqref{def:Kkw}.
In particular, we have
\begin{equation}\label{picu28K}
\big\|\sup_{\max(\kappa_s,t/\delta')\leq k< 2\kappa_t}|f\ast K_{k,k, \A, \B}|\big\|_{\ell^2(\G_0)}\lesssim \tau^{-t/D^2}\|f\|_{\ell^2(\G_0)}.
\end{equation}
\end{lemma}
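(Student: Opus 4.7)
The proof will closely parallel the argument for Lemma \ref{MajArc1} given in Section \ref{MajArc1Pr}. First, the maximal estimate \eqref{picu28K} will follow from the variational estimate \eqref{picu28varK} and the telescoping bound \eqref{eq:36}, provided that we establish a single-scale bound $\|f\ast K_{k_0,k_0,\A,\B}\|_{\ell^2(\G_0)}\lesssim \tau^{-t/D^2}\|f\|_{\ell^2(\G_0)}$ for some $k_0$ in the range. This single-scale bound in turn comes from Lemma \ref{MajArc2gen1} applied to the singleton $\mathbb{D}=\{k_0\}$ with $Q=Q_t$, together with the Gauss sums estimate \eqref{eq:23}: the fact that every denominator in $\A$ is $\simeq \tau^t$ and every denominator in $\B$ is $\le\tau^s\le \tau^{t/D}$ gives $\|\mathcal V_{\A,\B,Q_t}\|_{\ell^2(\JJ_{Q_t})\to\ell^2(\JJ_{Q_t})}\lesssim \tau^{-t/D}$, which beats $\tau^{-t/D^2}$.

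For the variational estimate, the plan is to apply the Rademacher--Menshov inequality \eqref{maj1} with $m:=\lfloor (D/\ln\tau)(t+1)^2\rfloor+4$ (so that $2^m\ge 2\kappa_t$), then use Khintchine's inequality and a dyadic sub-decomposition in the standard fashion, thereby reducing the claim to proving
\[
\Big\|\sum_{k\in[J,2J]}\varkappa_k(f\ast H_{k,s,t,\A,\B})\Big\|_{\ell^2(\G_0)}\lesssim \tau^{-4t/D^2}\|f\|_{\ell^2(\G_0)}
\]
for any $J\ge \max(\kappa_s,t/\delta')$ and any $\varkappa_k\in[-1,1]$, where $H_{k,s,t,\A,\B}:=K_{k+1,k+1,\A,\B}-K_{k,k,\A,\B}$. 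I then split $H_{k,s,t,\A,\B}$ into several pieces exactly as in \eqref{maj3}, collecting the differences of the cutoffs $\phi_k^{(1)}$, $\phi_k^{(2)}$, the multipliers $\Psi_{k,k,\A}$, $\Xi_{k,k,\B}$, and the polynomial-sum factor $S_k$. All the error pieces (everything except the one involving $\Delta_k S_k$) are handled by a high-order $T^\ast T$ argument as in the proof of \eqref{maj4}: each such piece is supported where some frequency variable lies either in a minor-arcs region for Proposition \ref{minarcs}(i), or in a support-moving annulus on which repeated integration by parts yields decay $\tau^{-k/\delta}$; since $k\ge t/\delta'$, this is stronger than required.

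The main piece $H^1_{k,s,t,\A,\B}$, involving $\Delta_k S_k$, will be treated by an almost-orthogonality argument. I will prove an analogue of Lemma \ref{maj10}: the kernels $\{(\mathcal{H}^1_{k,s,t})^\ast\mathcal{H}^1_{k,s,t}\}^r$ and $\{\mathcal{H}^1_{k,s,t}(\mathcal{H}^1_{k,s,t})^\ast\}^r$ equal, up to $\ell^1(\G_0)$ errors of size $\tau^{-k/4}$, kernels of the form
\[
B^r_k(h)=\Big\{\prod_{(l_1,l_2)\in Y_d}\tau^{-k(l_1+l_2)}\Big\}\Big\{\sum_{a/Q\in (\A+\Z^d)\cap[0,1)^d\,\times\,(\B+\Z^{d'})\cap[0,1)^{d'}}\ex(h{.}a/Q)\,G(a/Q)\Big\}\cdot X^r_k(h),
\]
where $X^r_k$ is the smooth envelope from \eqref{maj32} satisfying \eqref{maj34}--\eqref{maj45}. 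The decisive arithmetic gain is then
\[
\Big|\sum_{a/Q\in(\A+\Z^d)\cap[0,1)^d\,\times\,(\B+\Z^{d'})\cap[0,1)^{d'}}\ex(h{.}a/Q)G(a/Q)\Big|\lesssim \tau^{-t},
\]
obtained from Proposition \ref{minarcs}(ii): since every $a_1/q_1\in\A$ has $q_1\simeq\tau^t$, while any $a_2/q_2\in\B$ has $q_2\le\tau^s\le q_1^{1/D}$, the reduction argument in Step~4 of the proof of Lemma \ref{periodM} gives the required bound on $|G(a/Q)|$ by $q_{\A}^{-2r/\overline C}$, which controls the whole sum by $\tau^{-2rt}$ before we recover the $2r$-th root.

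Finally, the Cotlar--Stein pairing $\mathcal H^1_{j,s,t}(\mathcal H^1_{k,s,t})^\ast$ and $(\mathcal H^1_{j,s,t})^\ast \mathcal H^1_{k,s,t}$ for $j\in[k/2,k]$ will be estimated exactly as in Lemma \ref{almostOrg}, via the iterated inequality \eqref{maj34.5} and the decay $\|X^r_{k,a/Q}\ast H^{1,b^{(2)}/q_2}_{j}\|_{\ell^1(\G_0)}\lesssim Q^{C}\tau^{-8r|j-k|/D}$, which uses the mean-zero property of $\chi'$ and the smoothness estimate \eqref{maj45} for $X^r_k$. Combining these ingredients with the Cotlar--Stein lemma then gives the desired bound, and the gain of $\tau^{-t/D}$ at $j=k$ together with the exponential decay in $|j-k|$ yields the required $\tau^{-t/D^2}$ after summing. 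The main obstacle will be the bookkeeping in the $T^\ast T$ computation when both $\A$ and $\B$ appear simultaneously with very different denominator scales; verifying that, after $r$-fold composition, the arithmetic factor reduces cleanly to the Gauss sum $G(a/Q)$ with $Q$ essentially equal to the denominators in $\A$ (with the $\B$-part carried by the central frequencies and fully absorbed by the Plancherel $\delta$-function step used in \eqref{gio40}) is the delicate technical point.
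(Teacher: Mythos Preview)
Your overall strategy (Rademacher--Menshov, Khintchine, decomposition of the difference kernel, Cotlar--Stein on the main piece) matches the paper's, but there is a genuine gap in the $T^\ast T$ analysis of the main piece $H^1=K'_{k,k,\A,\B}$. The formula you propose for $B^r_k$, with the nilpotent Gauss sum $G(a/Q)$ summed over $\A\times\B$, is not what one obtains. The key difference from Lemma \ref{maj10} is that there the first-variable kernel is the \emph{unprojected} $[\Delta_k L_k]$, so $\Pi^{r,1}_k(\theta)$ has the clean form \eqref{maj16} to which the decomposition $n_j=Qw_j+x_j$ of Step~3 applies directly, producing $G(a/Q)$. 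Here the first-variable kernel $L'_{k,k,\A}$ carries the projection $\Psi_{k,k,\A}$; expanding it gives the representation \eqref{laj11}, with the variables $\xi^{(1)}_j,\zeta^{(1)}_j$ confined to neighborhoods of points of $\A$. After applying Lemma \ref{Vrbounds}, the arithmetic factor that emerges is \emph{not} $G(a/Q)$ but the coefficient $\mathcal C(a^{(2)}/Q,\sigma)$ of \eqref{laj27.5}: a sum over $(\underline b,\underline c)\in(\Z_Q^d)^{2r}$ of $\mathcal W^r_Q(a^{(2)};\underline b,\underline c)\prod_jS(\sigma-b_j/Q)\overline{S(\sigma-c_j/Q)}$, where $Q$ is the \emph{central} denominator (so $Q\le\tau^{s+1}\le q_\A^{1/D}$) and $\sigma$ ranges over $[\A+(\Z/Q)^d]$, not over $\A$. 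The decay $|\mathcal C|\lesssim q_\A^{-1/\delta}$ then comes from the \emph{classical} bound $|S(\varrho)|\lesssim q_\A^{-1/\overline C}$ of Proposition \ref{minarcscom}(ii) applied $2r$ times, together with $|\mathcal W^r_Q|\le 1$; Proposition \ref{minarcs}(ii) plays no role here. Your reference to Step~4 of Lemma \ref{periodM} is in fact closer to the truth than your stated conclusion: that step bounds precisely a sum of products of classical Gauss sums $S(\gamma)$, not the nilpotent $G$.

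The same issue affects the error pieces $G^3_{k,k,\A,\B}$ and $G^4_{k,k,\A,\B}$: they cannot be handled ``as in the proof of \eqref{maj4}'' via Proposition \ref{minarcs}(i), since their first-variable kernels also carry $\Psi_{k,k,\A}$ (or its difference). The paper treats them through the same Lemma \ref{laj10} framework, analyzing the resulting integrals $I^3_k,I^4_k$ via Proposition \ref{minarcscon} and the vanishing moments of $\widehat{\eta'_{\le\delta'k}}$. The correct execution is Lemma \ref{laj10} (built on Lemma \ref{Vrbounds}) followed by Lemma \ref{almostOrg3}; your shortcut through $G(a/Q)$ does not close.
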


The estimates \eqref{picu28varK}--\eqref{picu28K} imply \eqref{picu28var}--\eqref{picu28}, since $G_{k,s,t}=K_{k,k, \mathcal{R}_{t}^{d}\setminus\widetilde{\mathcal{R}}_{Q_s}^{d}, \mathcal R_s^{d'}}$. Moreover, the bounds \eqref{picu28K} follow from \eqref{picu28} and \eqref{eq:26}. Thus our main goal is to prove the bounds \eqref{picu28varK}.

As in Section \ref{MajArc1Pr} we let $G_{k,k, \A, \B}:=\Delta_kK_{k,k, \A, \B}=K_{k+1,k+1, \A, \B}-K_{k,k, \A, \B}$ for $k\geq \max(\kappa_s,t/\delta')$, apply the Rademacher--Menshov inequality \eqref{maj1} and then Khintchine's inequality. As in Section \ref{MajArc1Pr}, for \eqref{picu28varK} it suffices to prove that
\begin{equation}\label{trp1}
\Big\|\sum_{k\in[J,2J]}\varkappa_k(f\ast G_{k,k, \A, \B})\Big\|_{\ell^2(\G_0)}\lesssim \tau^{-4t/D^2}\big\|f\big\|_{\ell^2(\G_0)}
\end{equation}
for any $J\ge \max(\kappa_s,t/\delta')$ and any coefficients $\varkappa_k\in[-1,1]$. 

We examine the definitions \eqref{def:Kkw}  and \eqref{eq:3}, and further decompose 
\begin{equation}\label{trp2}
\begin{split}
&G_{k,k, \A, \B}=G^{1}_{k,k, \A, \B}+G^{2}_{k,k, \A, \B}+G^3_{k,k, \A, \B}+G^4_{k,k, \A, \B},\\
&G^1_{k,k, \A, \B}(g):=\phi_k(g)\int_{\T^d\times\T^{d'}}\ex(g.\xi)\Psi_{k,k, \A}(\xi^{(1)})\Xi_{k,k,\B}(\xi^{(2)})[\Delta_kS_k](\xi^{(1)})\, d\xi^{(1)}d\xi^{(2)},\\
&G_{k,k, \A, \B}^2(g):=[\Delta_k\phi_k](g)\int_{\T^d\times\T^{d'}}\ex(g.\xi)\Psi_{k,k, \A}(\xi^{(1)})\Xi_{k,k, \B}(\xi^{(2)})S_{k+1}(\xi^{(1)})\, d\xi^{(1)}d\xi^{(2)},\\
&G_{k,k, \A, \B}^3(g):=\phi_{k+1}(g)\int_{\T^d\times\T^{d'}}\ex(g.\xi)\Psi_{k,k, \A}(\xi^{(1)})[\Delta_k\Xi_{k,k, \B}](\xi^{(2)})]S_{k+1}(\xi^{(1)})\, d\xi^{(1)}d\xi^{(2)},\\
&G_{k,k, \A, \B}^4(g):=\phi_{k+1}(g)\int_{\T^d\times\T^{d'}}\ex(g.\xi)[\Delta_k\Psi_{k,k, \A}](\xi^{(1)})\Xi_{k+1,k+1, \B}(\xi^{(2)})S_{k+1}(\xi^{(1)})\, d\xi^{(1)}d\xi^{(2)},
\end{split}
\end{equation}
where $\phi_k(g)=\phi_k^{(1)}(g^{(1)})\phi_k^{(2)}(g^{(2)})$ as before.  As in Section \ref{MajArc1Pr} we will prove that
\begin{equation}\label{trp4}
\Big\|\sum_{k\in[J,2J]}\varkappa_k(f\ast G^1_{k, k, \A, \B})\Big\|_{\ell^2(\G_0)}\lesssim \tau^{-t/D}\big\|f\big\|_{\ell^2(\G_0)},
\end{equation}
for any $J\ge \max(\kappa_s,t/\delta') $ and any coefficients $\varkappa_k\in[-1,1]$. We will also prove that
\begin{equation}\label{trp3}
\big\|f\ast G^\iota_{k, k, \A, \B}\big\|_{\ell^2(\G_0)}\lesssim \tau^{-k/D}\big\|f\big\|_{\ell^2(\G_0)},
\end{equation}
for any $k\ge \max(\kappa_s,t/\delta')$ and $\iota\in\{2,3,4\}$. These two estimates would clearly imply the bounds \eqref{trp1}, thus completing the proof of Lemma \ref{MajArc3K}.

\subsection{Proof of the bounds \eqref{trp4}} As in Section \ref{MajArc1Pr}, we will use a high order almost orthogonality argument. For this purpose we need a good description of the operators $\big\{(\mathcal{G}^1_{k,k, \A, \B})^\ast\mathcal{G}^1_{k,k, \A, \B}\big\}^r$ and $\big\{\mathcal{G}^1_{k,k, \A, \B}(\mathcal{G}^1_{k,k, \A, \B})^\ast\big\}^r$, where $\mathcal{G}^1_{k,k, \A, \B}f:=f\ast G^1_{k,k, \A, \B}$. We note that $G_{k,k, \A, \B}^1=K_{k,k, \A, \B}'$, see the definitions in \eqref{def:Kkw}  and \eqref{eq:3}. For $\iota\in\{0, 1\}$ let  
\begin{align}
\label{eq:20}
\begin{split}
K_{k,w, \A, \B}^{\iota}:=
\begin{cases}
K_{k,w, \A, \B} & \text{ if } \iota=0,\\
K_{k,w, \A, \B}' & \text{ if } \iota=1,
\end{cases}
\qquad
L_{k,w, \A}^{\iota}:=
\begin{cases}
L_{k,w, \A} & \text{ if } \iota=0,\\
L_{k,w, \A}' & \text{ if } \iota=1.
\end{cases}
\end{split}
\end{align}
For later use we consider both operators $\mathcal K_{k,k, \A, \B}^{\iota}f:=f\ast K_{k,k, \A, \B}^{\iota}$, $\iota\in\{0,1\}$.

\begin{lemma}\label{laj10}
Assume that $\A\subseteq \mathbb Q^{d}$, $\B \subseteq \mathbb Q^{d'}$ are $1$-periodic sets and assume that
 $\{q\in\Z_+: a/q\in\A \text{ and } \gcd(a_1,\ldots, a_{d}, q)=1\}\subseteq [q_\mathcal{A},4q_{\mathcal{A}}]$ for some $q_\mathcal{A}\in\Z_+$. Assume that $q_{\A}\ge Q^D$ for any irreducible fraction $a/Q\in\B$, and $k\geq (D/\ln\tau)^2$ satisfies $\tau^{\delta' k}\geq q_{\mathcal{A}}$. If $r\in\Z_+$ is sufficiently large then for every $f\in\ell^2(\G_0)$ we have
\begin{align}\label{laj11t}
\begin{split}
\big\{(\mathcal K_{k,k, \A, \B}^{\iota})^\ast \mathcal K_{k,k, \A, \B}^{\iota}\big\}^rf&=f\ast \{F_k^{\iota, r}+O_k^{\iota, r}\},\\
\big\{\mathcal K_{k,k, \A, \B}^{\iota}(\mathcal K_{k,k, \A, \B}^{\iota})^\ast\big\}^rf&=f\ast \{\widetilde{F}_k^{\iota, r}+\widetilde{O}_k^{\iota, r}\},
\end{split}
\end{align}
where
\begin{equation}\label{laj11x}
\begin{split}
&F_k^{\iota, r}(h):=\Big\{\sum_{a^{(2)}/Q\in\B\cap[0,1)^{d'}}\sum_{\sigma\in[\A+(\Z/Q)^d]\cap[0,1)^d}\mathcal{C}(a^{(2)}/Q,\sigma)\ex(h^{(1)}{.}\sigma)\ex\big(h^{(2)}{.}(a^{(2)}/Q)\big)\Big\}\\
&\times\Big\{\prod_{(l_1,l_2)\in Y_d}\tau^{-k(l_1+l_2)}\Big\}\eta_{\leq 3\delta k}(\tau^{-k}\circ h)\int_{\R^d\times\R^{d'}}\Big\{\prod_{i=1}^2\eta_{\leq\delta k/2}(\zeta^{(i)})\Big\}P^{\iota}(\zeta)\ex[(\tau^{-k}\circ h){.}\zeta]\,d\zeta,
\end{split}
\end{equation}
\begin{equation}\label{laj11y}
\begin{split}
&\widetilde{F}_k^{\iota, r}(h):=\Big\{\sum_{a^{(2)}/Q\in\B\cap[0,1)^{d'}}\sum_{\sigma\in[\A+(\Z/Q)^d]\cap[0,1)^d}\widetilde{\mathcal{C}}(a^{(2)}/Q,\sigma)\ex(h^{(1)}{.}\sigma)\ex\big(h^{(2)}{.}(a^{(2)}/Q)\big)\Big\}\\
&\times\Big\{\prod_{(l_1,l_2)\in Y_d}\tau^{-k(l_1+l_2)}\Big\}\eta_{\leq 3\delta k}(\tau^{-k}\circ h)\int_{\R^d\times\R^{d'}}\Big\{\prod_{i=1}^2\eta_{\leq\delta k/2}(\zeta^{(i)})\Big\}\widetilde{P}^{\iota}(\zeta)\ex[(\tau^{-k}\circ h){.}\zeta]\,d\zeta,
\end{split}
\end{equation}
and
\begin{equation}\label{laj11z}
\|O_k^{\iota, r}\|_{\ell^1(\G_0)}+\|\widetilde{O}_k^{\iota, r}\|_{\ell^1(\G_0)}\lesssim \tau^{-k}.
\end{equation}
The functions $P^{\iota}$ and $\widetilde{P}^{\iota}$ are as in \eqref{eq:19} (see also \eqref{maj14.5}), and the coefficients $\mathcal{C}$ and $\widetilde{\mathcal{C}}$ satisfy 
\begin{equation}\label{laj11w}
|\mathcal{C}(a^{(2)}/Q,\sigma)|+|\widetilde{\mathcal{C}}(a^{(2)}/Q,\sigma)|\lesssim q_{\A}^{-1/\delta}
\end{equation}
for any $a^{(2)}/Q\in\B\cap[0,1)^{d'}$ and $\sigma\in[\A+(\Z/Q)^d]\cap[0,1)^d$.
\end{lemma}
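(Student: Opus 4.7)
The plan is to mirror the strategy of Lemma \ref{maj10}, adapted to the more general multipliers $\Psi_{k,k,\A}$ and $\Xi_{k,k,\B}$. Since $K_{k,k,\A,\B}^\iota(g)=L_{k,k,\A}^\iota(g^{(1)})N_{k,k,\B}(g^{(2)})$ has product structure in the non-central/central variables, the high-order $T^\ast T$ identities \eqref{pro15.7}--\eqref{pro15.11} yield
\begin{equation*}
\{(\mathcal K_{k,k,\A,\B}^\iota)^\ast \mathcal K_{k,k,\A,\B}^\iota\}^r f = f\ast A_k^{\iota,r},
\end{equation*}
with $A_k^{\iota,r}(y)=\eta_{\leq 3\delta k}(\tau^{-k}\circ y)\int_{\T^d\times\T^{d'}}\ex(y.\theta)\Pi_k^{\iota,r}(\theta^{(1)},\theta^{(2)})\Omega_k^r(\theta^{(2)})\,d\theta$, where $\Omega_k^r(\theta^{(2)})=\bigl|\int F_k(\theta^{(2)}-\xi^{(2)})\Xi_{k,k,\B}(\xi^{(2)})\,d\xi^{(2)}\bigr|^{2r}$ with $F_k$ as in \eqref{pro26}, and $\Pi_k^{\iota,r}$ is defined as in \eqref{hun5}--\eqref{hun6} using $S_k^\iota$ and $\Psi_{k,k,\A}$. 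The analogous formula for $\{\mathcal K_{k,k,\A,\B}^\iota(\mathcal K_{k,k,\A,\B}^\iota)^\ast\}^r$ uses $\widetilde D$ in place of $D$ and produces $\widetilde F_k^{\iota,r}$ with $\widetilde P^\iota$ in place of $P^\iota$.

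The first reduction simplifies $\Omega_k^r$. As in Step 1 of the proof of Lemma \ref{maj10}, the rapid decay \eqref{pro27.5} of $F_k$ gives $|\Omega_k^r(\theta^{(2)})-\Xi_{k,k,\B}(\theta^{(2)})|\lesssim \tau^{-k/\delta}$, except on transition annuli $|\tau^k\circ(\theta^{(2)}-a^{(2)}/Q)|\in[\tau^{\delta k/2},\tau^{2\delta k}]$ around fractions $a^{(2)}/Q\in\B$; the transition annuli occupy negligible measure and contribute only to the error kernel. After this reduction $\theta^{(2)}$ is effectively restricted to $\theta^{(2)}=a^{(2)}/Q+\alpha^{(2)}$ with $a^{(2)}/Q\in\B\cap[0,1)^{d'}$ and $|\tau^k\circ\alpha^{(2)}|\leq \tau^{2\delta k}$.

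Next we analyze $\Pi_k^{\iota,r}$. Unpacking $L_{k,k,\A}^\iota$ via \eqref{eq:3}, the multiplier $\Psi_{k,k,\A}$ localizes each frequency variable $\xi_j^{(1)},\zeta_j^{(1)}$ to a $\tau^{-k+\delta' k}$-neighborhood of some fraction $\sigma_j^\pm\in\A\cap[0,1)^d$. A common denominator $Q'$ for $Q$ and all $\sigma_j^\pm$ is bounded by $Q\cdot q_\A^{O(r)}\leq \tau^{2\delta k}$ since $q_\A\leq \tau^{\delta' k}$, so the approximate factorization \eqref{Vrbounds2} of Lemma \ref{Vrbounds} applies and splits $\mathcal V_k^r=\mathcal W_{Q'}^r\cdot\mathcal Z_k^r+O(\tau^{-Dk})$. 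Simultaneously Lemma \ref{Skappr} lets us replace $S_k^\iota(\xi_j^{(1)})$ by $S(\sigma_j^+)J_k^\iota(\xi_j^{(1)}-\sigma_j^+)$, and the conjugate factors analogously. Changing variables $x_j=\tau^k\circ(\xi_j^{(1)}-\sigma_j^+)$ and $y_j=\tau^k\circ(\zeta_j^{(1)}-\sigma_j^-)$ reduces the continuous part to the integral appearing in \eqref{laj11x}--\eqref{laj11y} with kernel $P^\iota$ (respectively $\widetilde P^\iota$) from \eqref{eq:19}. Grouping the arithmetic data by the net frequency $\sigma:=\sum_j(\sigma_j^+-\sigma_j^-)\in[\A+(\Z/Q)^d]\cap[0,1)^d$ produces the outer arithmetic sum with coefficients $\mathcal C(a^{(2)}/Q,\sigma)$ (and $\widetilde{\mathcal C}$ for the conjugate version).

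The main obstacle is establishing the arithmetic decay \eqref{laj11w}. By construction $\mathcal C(a^{(2)}/Q,\sigma)$ is a sum over tuples $(\sigma_j^+,\sigma_j^-)\in(\A\cap[0,1)^d)^{2r}$ with prescribed net sum $\sigma$, each tuple weighted by $\prod_{j=1}^r S(\sigma_j^+)\overline{S(\sigma_j^-)}$ times a factor of modulus $\leq 1$ coming from $\mathcal W_{Q'}^r$. The hypothesis that every irreducible denominator in $\A$ lies in $[q_\A,4q_\A]$, combined with Proposition \ref{minarcscom}(ii), gives $|S(\sigma_j^\pm)|\lesssim q_\A^{-1/\overline C}$, so each tuple contributes at most $q_\A^{-2r/\overline C}$. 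Since the number of admissible tuples is polynomially bounded in $q_\A$, taking $r$ sufficiently large relative to $\overline C$ and $\delta$ yields \eqref{laj11w}. Finally, the $\ell^1$ error bound \eqref{laj11z} is a bookkeeping step: each approximation (Lemma \ref{Skappr}, Lemma \ref{Vrbounds}, the reduction of $\Omega_k^r$, and the tightening of the outer cutoff $\eta_{\leq\delta k}$ to $\eta_{\leq\delta k/2}$ via \eqref{maj34}) produces a pointwise error of size $\tau^{-Dk}$ multiplied by a kernel of $\ell^1$-mass $O(1)$, and summing these yields the claimed $\tau^{-k}$.
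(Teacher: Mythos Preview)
Your proposal has a genuine gap at the step where you invoke Lemma \ref{Vrbounds}. You propose applying it with a common denominator $Q'$ for $Q$ and all the fractions $\sigma_j^\pm\in\A$, and you claim $Q'\le Q\cdot q_{\A}^{O(r)}\le\tau^{2\delta k}$. But the hypothesis of the lemma only gives $q_{\A}\le\tau^{\delta' k}$, and since $\delta\le(\delta')^4\ll\delta'$ while $r\ge\delta^{-4}$, already a single factor of $q_{\A}$ can exceed $\tau^{2\delta k}$; the product $q_{\A}^{O(r)}$ is hopelessly large. Lemma \ref{Vrbounds} therefore cannot be applied with this $Q'$. The paper avoids this by applying Lemma \ref{Vrbounds} with the \emph{small} denominator $Q$ coming from $\B$ alone (recall $Q\le q_{\A}^{1/D}$), and then using the rapid decay \eqref{hun7} of $\mathcal V_k^r$ in the quantities $\|\theta^{(1)}_l-\xi^{(1)}_{j,l}\|_Q$ to force $\xi_j^{(1)}$ to lie near $\sigma-b_j/Q$ for some $b_j\in\Z_Q^d$, where $\sigma\in\A+(\Z/Q)^d$ is the location of $\theta^{(1)}$. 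This is why $\sigma$ indexes the position of $\theta^{(1)}$, not a ``net frequency'' $\sum_j(\sigma_j^+-\sigma_j^-)$ as you write.

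This structural point also breaks your coefficient bound. In your setup the sum defining $\mathcal C$ runs over tuples in $(\A\cap[0,1)^d)^{2r}$, which has $\gtrsim q_{\A}^{2r(d+1)}$ elements; even with the Gauss-sum decay $|S(\sigma_j^\pm)|\lesssim q_{\A}^{-1/\overline C}$ the product $q_{\A}^{2r(d+1)-2r/\overline C}$ diverges. In the paper's formula \eqref{laj27.5} the sum is instead over $(\underline b,\underline c)\in(\Z_Q^d)^{2r}$, contributing only $Q^{2rd}\le q_{\A}^{2rd/D}$, which is negligible since $D\gg\delta^{-8}$. Finally, you skip the nontrivial reduction from the intermediate integral $I_k^\iota$ (which still carries the function $T(\underline x',\underline y',\underline u,\underline v)$ of \eqref{laj26}) to the clean $P^\iota$ with the tight cutoffs $\eta_{\le\delta k/2}$; this requires both Proposition \ref{minarcscon} for the regime $|\tau^k\circ\alpha|\ge\tau^{\delta k/2}$ and the vanishing-moment identity \eqref{laj28.5} for the complementary regime, as carried out in Step~4 of the paper's proof.
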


\begin{proof} We only prove in detail the claims for the operators $\big\{(\mathcal K_{k,k, \A, \B}^{\iota})^\ast \mathcal K_{k,k, \A, \B}^{\iota}\big\}^r$,
since the claims for the operators $\big\{\mathcal K_{k,k, \A, \B}^{\iota}(\mathcal K_{k,k, \A, \B}^{\iota})^\ast\big\}^r$, follow by analogous arguments.

{\bf{Step 1.}} By \eqref{def:Kkw}  notice that the kernels $ K_{k,k, \A, \B}^{\iota}$ have product structure. 
Thus in view of \eqref{pro15.7}--\eqref{pro15.11} we have
\begin{equation*}
\big\{(\mathcal K_{k,k, \A, \B}^{\iota})^\ast \mathcal K_{k,k, \A, \B}^{\iota}\big\}^rf=f\ast K_{k,k, \A, \B}^{\iota, r},
\end{equation*}
where
\begin{equation}\label{laj15}
K_{k,k, \A, \B}^{\iota, r}(y):=\eta_{\leq 3\delta k}(\tau^{-k}\circ y)\int_{\T^d\times\T^{d'}}\ex\big(y.\theta\big)\Pi_{k, k, \A}^{\iota,r}\big(\theta^{(1)},\theta^{(2)}\big)\Omega_{k,k,\B}^{r}\big(\theta^{(2)}\big)\,d\theta^{(1)}d\theta^{(2)},
\end{equation}
and the multipliers $\Pi_{k,k,\A}^{\iota,r}$ and $\Omega_{k,k,\B}^{r}$ are given by
\begin{equation}\label{laj8}
\begin{split}
&\Pi^{\iota,r}_{k,k, \A}\big(\theta^{(1)},\theta^{(2)}\big):=\sum_{h_j^{(1)},g_j^{(1)}\in\Z^d}\Big\{\prod_{j=1}^r\overline{L^{\iota}_{k,k, \A}(h_j^{(1)})}L^{\iota}_{k,k, \A}(g_j^{(1)})\Big\}\ex\big(\theta^{(1)}{.}\sum_{1\leq j\leq r}(h_j^{(1)}-g_j^{(1)})\big)\\
&\quad\times\ex\Big(-\theta^{(2)}{.}\big\{\sum_{1\leq j\leq r}R_0(h_j^{(1)},h_j^{(1)}-g_j^{(1)})+\sum_{1\leq l<j\leq r}R_0(-h_l^{(1)}+g_l^{(1)},-h_j^{(1)}+g_j^{(1)})\big\}\Big)
\end{split}
\end{equation}
and, with $F_{k}$ defined as in \eqref{pro26},
\begin{equation}\label{laj9}
\Omega_{k,k, \B}^{r}\big(\theta^{(2)}\big):=\Big|\int_{\T^{d'}}F_{k}(\theta^{(2)}-\xi^{(2)})\Xi_{k,k, \B}(\xi^{(2)})\,d\xi^{(2)}\Big|^{2r}.
\end{equation}

As in the proof of Lemma \ref{maj10}, our goal is to show that the kernels $K_{k,k, \A, \B}^{\iota, r}$ are equivalent to the kernels $F_k^{\iota, r}$ in \eqref{laj11x}, up to acceptable $\ell^1$ errors. For this we need to replace the multipliers $\Pi^{\iota,r}_{k,k,\A}\big(\theta^{(1)},\theta^{(2)}\big)\Omega_{k,k,\B}^{r}\big(\theta^{(2)}\big)$ with more explicit multipliers, at the expense of acceptable errors.

{\bf{Step 2.}} We will follow the ides from Sections \ref{MinorArcs}-\ref{MajArc1Pr}. As in \eqref{hun5}
we may write
\begin{equation}\label{laj11}
\begin{split}
\Pi_{k,k, \A}^{\iota,r}&\big(\theta^{(1)},\theta^{(2)}\big)=\int_{(\T^d)^{2r}}\mathcal{V}_k^r(\theta^{(1)},\theta^{(2)};\zeta^{(1)}_1,\xi^{(1)}_1,\ldots,\zeta^{(1)}_r,\xi^{(1)}_r)\\
&\times\prod_{1\leq j\leq r}\big\{\overline{ S_k^{\iota}(\zeta^{(1)}_j)}\,\overline{\Psi_{k,k, \A}(\zeta^{(1)}_j)}S^{\iota}_k(\xi^{(1)}_j)\Psi_{k,k, \A}(\xi^{(1)}_j)\big\}\,d\xi^{(1)}_1d\zeta^{(1)}_1\ldots d\xi^{(1)}_rd\zeta^{(1)}_r,
\end{split}
\end{equation}
where (see also in \eqref{hun6}) we have
\begin{align*}
\mathcal{V}_k^r&(\theta^{(1)},\theta^{(2)};\zeta^{(1)}_1,\xi^{(1)}_1,\ldots,\zeta^{(1)}_r,\xi^{(1)}_r)\\
&=\sum_{h_j,g_j\in\Z^d}\prod_{1\leq j\leq r}\Big\{\overline{\phi_k^{(1)}(h_j)}\ex\big((\theta^{(1)}-\zeta_j^{(1)}){.}h_j\big)\phi_k^{(1)}(g_j)\ex\big(-(\theta^{(1)}-\xi_j^{(1)}){.}g_j\big)\Big\}\\
&\qquad\times\ex\Big(-\theta^{(2)}{.}\big\{\sum_{1\leq j\leq r}R_0(h_j,h_j-g_j)+\sum_{1\leq l<j\leq r}R_0(-h_l+g_l,-h_j+g_j)\big\}\Big).
\end{align*}

In view of \eqref{pro27.5} we have a rapid decay $|\Omega_{k,k, \B}^{r}(\theta^{(2)})|\lesssim \tau^{-Dk}$ unless $|\tau^{k}\circ(\theta^{(2)}-a^{(2)}/Q) |\leq \tau^{2\delta k}$ for some $a^{(2)}/Q\in\B$. Hence, we may assume that $\theta^{(2)}=\alpha^{(2)}+a^{(2)}/Q$ for some $a^{(2)}/Q\in\B$ and $|\tau^{k}\circ\alpha^{(2)} |\leq \tau^{2\delta k}$. The condition \eqref{laj13} is then verified so we can use Lemma \ref{Vrbounds}.

We now define new projections
\begin{align*}
\Phi_{k, \A+(\Z/Q)^d}(\theta^{(1)}):=\sum_{\sigma\in \A+(\Z/Q)^d}\eta_{\le 2\delta'k}(\tau^k\circ(\theta^{(1)}-\sigma)),
\end{align*}
where $\A+(\Z/Q)^d:=\{\sigma+a/Q:\,\sigma\in\A,\,a\in\Z^d\}$. Examining  \eqref{hun7} we conclude that $\mathcal{V}_k^r$  decays rapidly unless $\tau^{kl}\|\theta_l^{(1)}-\xi_{j,l}^{(1)}\|_{Q}\leq \tau^{10\delta k}$ and $\tau^{kl}\|\theta_l^{(1)}-\zeta_{j,l}^{(1)}\|_{Q}\leq \tau^{10\delta k}$ for all $j\in\{1,\ldots,r\}$ and $l\in\{1,\ldots,d\}$, thus we may replace $\Pi_{k,k, \A}^{\iota,r}\big(\theta^{(1)},\theta^{(2)}\big)$ with $\Pi_{k,k, \A}^{\iota,r}\big(\theta^{(1)},\theta^{(2)}\big)\Phi_{k, \A+(\Z/Q)^d}\big(\theta^{(1)}\big)$ at the expense of $O(\tau^{-Dk})$ error term. 

Expanding the cutoff functions $\Psi_{k,k, \A}$,  invoking rapid decay from \eqref{hun7} of $\mathcal{V}_k^r$ as above and using Lemma \ref{Vrbounds} we may replace $\Pi_{k,k, \A}^{\iota,r}\big(\theta^{(1)},\theta^{(2)}\big)\Phi_{k, \A+(\Z/Q)^d}\big(\theta^{(1)}\big)$ with
\begin{multline*}
\sum_{\sigma\in\A+(\Z/Q)^d}\sum_{\underline{b},\underline{c}\in(\Z_Q^d)^r}\iota_Q(\sigma;\underline{b},\underline{c})\mathcal{W}^r_Q(a^{(2)},b_1,c_1,\ldots,b_r,c_r)\eta_{\leq 2\delta'k}(\tau^k\circ(\theta^{(1)}-\sigma))\\
\times\int_{(\R^d)^{2r}}\Big\{\prod_{j=1}^r\eta_{\leq \delta'k}(\tau^k\circ(\xi_j^{(1)}-\sigma+b_j/Q))\eta_{\leq \delta'k}(\tau^k\circ(\zeta_j^{(1)}-\sigma+c_j/Q))\Big\}\\
\times\mathcal{Z}_k^r(\alpha^{(2)};\theta^{(1)}-\xi_1^{(1)}-b_1/Q,\theta^{(1)}-\zeta_1^{(1)}-c_1/Q,\ldots,\theta^{(1)}-\xi_r^{(1)}-b_r/Q,\theta^{(1)}-\zeta_r^{(1)}-c_r/Q)\\
\times\prod_{1\leq j\leq r}\big\{\overline{ S_k^{\iota}(\zeta^{(1)}_j)} S^{\iota}_k(\xi^{(1)}_j)\big\}
d\xi^{(1)}_1d\zeta^{(1)}_1\ldots d\xi^{(1)}_rd\zeta^{(1)}_r
\end{multline*}
at the expenses of $O(\tau^{-Dk/2})$ errors, where  $\mathcal{Z}_k^r$ and $\mathcal{W}_Q^r$ have been defined  in \eqref{Vrbounds3}--\eqref{Vrbounds4}, $\underline{b}=(b_1,\ldots,b_r)\in(\Z_Q^d)^r$, $\underline{c}=(c_1,\ldots,c_r)\in(\Z_Q^d)^r$, and the coefficients $\iota_Q$ are defined by
\begin{equation}\label{laj20}
\iota_Q(\sigma;\underline{b},\underline{c}):=
\begin{cases}
1\qquad&\text{ if }\,\,\sigma-b_j/Q,\sigma-c_j/Q\in\A \text{ for any }j\in\{1,\ldots,r\};\\
0\qquad&\text{ otherwise}.
\end{cases}
\end{equation}

We make the changes of variables $\xi_j^{(1)}=\beta_j+\sigma-b_j/Q$ and $\zeta_j^{(1)}=\gamma_j+\sigma-c_j/Q$ in the latter integral. In view of Lemma \ref{Skappr} we can also replace $S^{\iota}_k(\xi_j^{(1)})$ and  $S^{\iota}_k(\zeta_j^{(1)})$ with $S(\sigma-b_j/Q)J^{\iota}_k(\beta_j)$ and $S(\sigma-c_j/Q)J^{\iota}_k(\gamma_j)$, at the expense of acceptable errors.
Therefore, the integral formula above shows that if $\theta^{(2)}=\alpha^{(2)}+a^{(2)}/Q$ for some $a^{(2)}/Q\in\B$ and $|\tau^{k}\circ\alpha^{(2)} |\leq 2 \tau^{2\delta k}$, then
\begin{align}\label{laj22}
\nonumber&\Pi_{k,k, \A}^{\iota,r}\big(\theta^{(1)},\theta^{(2)}\big)=\sum_{\sigma\in\A+(\Z/Q)^d}\sum_{\underline{b},\underline{c}\in(\Z_Q^d)^r}\iota_Q(\sigma;\underline{b},\underline{c})\mathcal{W}^r_Q(a^{(2)},\underline{b},\underline{c})\eta_{\leq 2\delta'k}(\tau^k\circ(\theta^{(1)}-\sigma))\\
\nonumber&\times\prod_{1\leq j\leq r}\big\{S(\sigma-b_j/Q)\overline{S(\sigma-c_j/Q)}\big\}\int_{\R^{2rd}}\Big\{\prod_{1\leq j\leq r}\eta_{\leq \delta'k}(\tau^k\circ\beta_j)\eta_{\leq \delta'k}(\tau^k\circ\gamma_j)J^{\iota}_k(\beta_j)\overline{J^{\iota}_k(\gamma_j)}\Big\}\\
\nonumber&\times\mathcal{Z}_k^r(\alpha^{(2)};\theta^{(1)}-\sigma-\beta_1,\theta^{(1)}-\sigma-\gamma_1,\ldots,\theta^{(1)}-\sigma-\beta_r,\theta^{(1)}-\sigma-\gamma_r)\,d\beta_1d\gamma_1\ldots d\beta_rd\gamma_r\\
& \quad +O(\tau^{-Dk/3}),
\end{align}
where $\mathcal{W}^r_Q(a^{(2)},\underline{b},\underline{c})=\mathcal{W}^r_Q(a^{(2)},b_1,c_1,\ldots,b_r,c_r)$.

{\bf{Step 3.}} Using the definitions \eqref{gio3.5}--\eqref{eq:18} and \eqref{Vrbounds4}, the integral over $\beta_j,\gamma_j$ in \eqref{laj22} is equal to
\begin{equation}\label{laj23}
\begin{split}
\int_{\R^{2rd}}&\int_{\R^{2r}}\ex\Big(-(\tau^k\circ\alpha^{(2)}){.}\big\{\sum_{1\leq j\leq r}R_0(y_j,y_j-x_j)+\sum_{1\leq l<j\leq r}R_0(-y_l+x_l,-y_j+x_j)\big\}\Big)\\
&\times\prod_{1\leq j\leq r}\Big\{\eta_{\leq\delta k}(x_j)\ex\big(-(\tau^k\circ\alpha^{(1)}){.}x_j\big)\eta_{\leq\delta k}(y_j)\ex\big((\tau^k\circ\alpha^{(1)}){.}y_j\big)\\
&\qquad\qquad\times\chi^{\iota}(u_j)\chi^{\iota}(v_j)\widehat{\eta_{\leq \delta'k}}(A^{(1)}_0(u_j)-x_j)\widehat{\eta_{\leq \delta'k}}(-A^{(1)}_0(v_j)+y_j)\Big\}\,du_jdv_jdx_jdy_j,
\end{split}
\end{equation}
where $\widehat{\eta_{\leq \delta'k}}$ denotes the Euclidean Fourier transform of $\eta_{\leq \delta'k}$ and $\alpha^{(1)}:=\theta^{(1)}-\sigma$. 

We notice that we may replace the factors $\eta_{\leq\delta k}(x_j)$ and $\eta_{\leq\delta k}(y_j)$ with $1$ in the formula \eqref{laj23}, at the expense of $O(\tau^{-Dk})$ errors, due to the stronger localizations induced by the factors in the last line. Then we make the changes of variables $x_j=A^{(1)}_0(u_j)+x'_j$, $y_j=A^{(1)}_0(v_j)+y'_j$ to rewrite the remaining integral in the form
\begin{align}\label{laj25}
&I^{\iota}_k(\alpha^{(1)},\alpha^{(2)}):=\int_{\R^{2rd}}\prod_{1\leq j\leq r}\big\{\widehat{\eta_{\leq \delta'k}}(-x'_j)\widehat{\eta_{\leq \delta'k}}(y'_j)\ex\big(-(\tau^k\circ\alpha^{(1)}){.}(x'_j-y'_j)\big)\big\}\\
\nonumber&\times\Big\{\int_{\R^{2r}}\ex\big(-(\tau^k\circ\alpha^{(2)}){.}T(\underline{x}',\underline{y}',\underline{u},\underline{v})\big)\prod_{1\leq j\leq r}\big\{\chi^{\iota}(u_j)\chi^{\iota}(v_j)\big\}\ex\big(-(\tau^k\circ\alpha){.}D(\underline{v},\underline{u})\big)\,d\underline{u}d\underline{v}\Big\}d\underline{x}'d\underline{y}',
\end{align}
where $\alpha=(\alpha^{(1)},\alpha^{(2)})$, the function $D:\R^r\times\R^r\to\R$ is defined as in \eqref{pro0.4}, and
\begin{equation}\label{laj26}
\begin{split}
&T(\underline{x}',\underline{y}',\underline{u},\underline{v}):=T_1(\underline{x}',\underline{y}',\underline{u},\underline{v})+T_2(\underline{x}',\underline{y}'),\\
&T_1(\underline{x}',\underline{y}',\underline{u},\underline{v}):=\sum_{1\leq j\leq r}\big[R_0(A_0^{(1)}(v_j),y'_j-x'_j)+R_0(y'_j,A_0^{(1)}(v_j)-A_0^{(1)}(u_j))\big]\\
&\qquad+\sum_{1\leq l<j\leq r}\big[R_0(A_0^{(1)}(u_l)-A_0^{(1)}(v_l),x'_j-y'_j)+R_0(x'_l-y'_l,A_0^{(1)}(u_j)-A_0^{(1)}(v_j))\big],\\
&T_2(\underline{x}',\underline{y}'):=\sum_{1\leq j\leq r}R_0(y'_j,y'_j-x'_j)+\sum_{1\leq l<j\leq r}R_0(x'_l-y'_l,x'_j-y'_j).
\end{split}
\end{equation}

To summarize, we have proved that if $\theta^{(2)}=\alpha^{(2)}+a^{(2)}/Q$ for some $a^{(2)}/Q\in\B$ and $|\tau^{k}\circ\alpha^{(2)} |\leq 2 \tau^{2\delta k}$, then
\begin{equation}\label{laj27}
\begin{split}
\Big|\Pi_{k,k, \A}^{\iota,r}\big(\theta^{(1)},\theta^{(2)}\big)-\sum_{\sigma\in\A+(\Z/Q)^d}
&\mathcal{C}(a^{(2)}/Q,\sigma)\eta_{\leq 2\delta'k}(\tau^k\circ(\theta^{(1)}-\sigma))\\
&\times I_k^{\iota}(\theta^{(1)}-\sigma,\alpha^{(2)})\Big|\lesssim \tau^{-Dk/3},
\end{split}
\end{equation}
where the multipliers $I_k^{\iota}$ are defined as in \eqref{laj25}, and
\begin{equation}\label{laj27.5}
\mathcal{C}(a^{(2)}/Q,\sigma):=\sum_{\underline{b},\underline{c}\in(\Z_Q^d)^r}\iota_Q(\sigma;\underline{b},\underline{c})\mathcal{W}^r_Q(a^{(2)},\underline{b},\underline{c})\prod_{1\leq j\leq r}\big\{S(\sigma-b_j/Q)\overline{S(\sigma-c_j/Q)}\big\}.
\end{equation}
Notice that the coefficients $\mathcal{C}(a^{(2)}/Q,\sigma)$ satisfy the desired bounds \eqref{laj11w} because  $Q^D\leq q_{\A}$,  and $\big|S(\varrho)\big|\lesssim q_{\A}^{-\delta }$ for any $\varrho\in\A$,  as a consequence of \eqref{comm4.5}.

{\bf{Step 4.}} We now show that if $|\tau^k\circ\alpha^{(1)}|+|\tau^k\circ\alpha^{(2)}|\geq \tau^{\delta k/2}$ then
\begin{equation}\label{laj28}
|I_k^{\iota}(\alpha^{(1)},\alpha^{(2)})|\lesssim \tau^{-k/\delta}.
\end{equation}
We shall apply Proposition \ref{minarcscon}. For this we rewrite
\begin{equation*}
\ex\big(-(\tau^k\circ\alpha^{(2)}){.}T_1(\underline{x}',\underline{y}',\underline{u},\underline{v})\big)\prod_{1\leq j\leq r}\big\{\chi^{\iota}(u_j)\chi^{\iota}(v_j)\big\}=\prod_{1\leq j\leq r}\big\{\psi_j(u_j)\phi_j(v_j)\big\},
\end{equation*}
where, using the formulas \eqref{laj26}, we obtain
\begin{equation*}
\begin{split}
\psi_j(u_j):=\chi^{\iota}(u_j)\ex\Big\{&-(\tau^k\circ\alpha^{(2)}){.}\big[-R_0(y'_j,A_0^{(1)}(u_j))\\
&+\sum_{j<l\leq r}R_0(A_0^{(1)}(u_j),x'_l-y'_l)+\sum_{1\leq l<j}R_0(x'_l-y'_l,A_0^{(1)}(u_j))\big]\Big\},
\end{split}
\end{equation*}
\begin{equation*}
\begin{split}
\phi_j(v_j):=\chi^{\iota}(v_j)\ex\Big\{&-(\tau^k\circ\alpha^{(2)}){.}\big[R_0(A_0^{(1)}(v_j),y'_j-x'_j)+R_0(y'_j,A_0^{(1)}(v_j))\\
&-\sum_{j<l\leq r}R_0(A_0^{(1)}(v_j),x'_l-y'_l)-\sum_{1\leq l<j}R_0(x'_l-y'_l,A_0^{(1)}(v_j))\big]\Big\}.
\end{split}
\end{equation*}
Then we notice that the contribution to the integral in \eqref{laj25} coming from the points $(\underline{x}',\underline{y}')$ outside the ball $B_r:=\{(\underline{x}',\underline{y}')\in\R^{dr}\times\R^{dr}:|\underline{x}'|+|\underline{y}'|\leq \tau^{-\delta'k/2}\}$ is negligible, due to the rapid decay of the function $\widehat{\eta_{\leq \delta'k}}$. On the other hand, if $|\underline{x}'|+|\underline{y}'|\leq \tau^{-\delta'k/2}$ and $|\tau^k\circ\alpha^{(2)}|\leq 2 \tau^{2\delta k}$, then the functions $\psi_j$ and $\phi_j$ defined above have bounded $C^1(\R)$ norms, $\|\psi_j\|_{C^1}+\|\phi_j\|_{C^1}\lesssim 1$, so we can apply Proposition \ref{minarcscon} for any $(x',y')\in B_r$. The desired bounds \eqref{laj28} follow.

On the other hand, if $|\tau^k\circ\alpha^{(1)}|+|\tau^k\circ\alpha^{(2)}|\lesssim \tau^{\delta k/2}$ then we observe that
\begin{equation}\label{laj28.5}
\int_{\R^d}\widehat{\eta_{\leq \delta'k}}(z)z^\beta\, dz=0,
\end{equation}
for any multi-index $\beta=(\beta_1,\ldots,\beta_d)\in \N^d \setminus\{0\}$. Since $T(\underline{x}',\underline{y}',\underline{u},\underline{v})$ is a polynomial in the variables $x_j,y_j$, we can use a Taylor expansion to see that
\begin{equation*}
\begin{split}
\Big|\int_{\R^{2rd}}\prod_{1\leq j\leq r}\big\{\widehat{\eta_{\leq \delta'k}}(-x'_j)&\widehat{\eta_{\leq \delta'k}}(y'_j)\big\}\Big[\ex\big(-(\tau^k\circ\alpha^{(1)}){.}\sum_{1\leq j\leq r}(x'_j-y'_j)\big)\\
&\times\ex\big(-(\tau^k\circ\alpha^{(2)}){.}T(\underline{x}',\underline{y}',\underline{u},\underline{v})\big)-1\Big]\,d\underline{x}'d\underline{y}'\Big|\lesssim \tau^{-Dk},
\end{split}
\end{equation*}
provided that $|\tau^k\circ\alpha^{(1)}|+|\tau^k\circ\alpha^{(2)}|\lesssim \tau^{\delta k/2}$ and $|\underline{u}|+|\underline{v}|\lesssim 1$. Recalling also the definition \eqref{maj14.5}, we have the approximate identity
\begin{equation}\label{laj28.6}
I_k^{\iota}(\alpha)=P^{\iota}(\tau^k\circ\alpha)\eta_{\leq \delta k/2}(\tau^k\circ\alpha^{(1)})\eta_{\leq \delta k/2}(\tau^k\circ\alpha^{(2)})+O(\tau^{-k/\delta}).
\end{equation}

{\bf{Step 5.}} We examine the functions $\Omega_{k,k, \B}^{r}$ defined in \eqref{laj9}. Using \eqref{pro27.5} it is easy to see that 
\begin{equation}\label{laj29}
\big|\Omega_{k,k, \B}^{r}(a^{(2)}/Q+\alpha^{(2)})-1\big|\lesssim \tau^{-Dk}\qquad\text{ if }|\tau^k\circ \alpha^{(2)}|\leq 2\tau^{\delta k/2}\text{ and }a^{(2)}/Q\in\B.
\end{equation}
Compare \eqref{laj29} with the bounds from \eqref{maj18}. Combining this with \eqref{laj27}, \eqref{laj28}, and \eqref{laj28.6} we derive our main approximate identity for multipliers,
\begin{align}\label{laj30}
&\Big|\Pi_{k,k, \A}^{\iota,r}\big(\theta^{(1)},\theta^{(2)}\big)\Omega_{k,k, B}^{r}(\theta^{(2)})
-\sum_{a^{(2)}/Q\in\B }\sum_{\sigma\in\A+(\Z/Q)^d }\mathcal{C}(a^{(2)}/Q,\sigma)\\
\nonumber&\times\eta_{\leq \delta k/2}(\tau^k\circ(\theta^{(1)}-\sigma))\eta_{\leq \delta k/2}(\tau^k\circ(\theta^{(2)}-a^{(2)}/Q))P^{\iota}(\tau^k\circ(\theta^{(1)}-\sigma,\theta^{(2)}-a^{(2)}/Q))\Big|\lesssim \tau^{-k/\delta}.
\end{align}
The desired conclusions \eqref{laj11t}--\eqref{laj11x} follow using the identity \eqref{laj15}.
\end{proof}

We now return  to the proof of the bounds \eqref{trp4}. In view of the Cotlar--Stein lemma it suffices to prove the following:

\begin{lemma}\label{almostOrg3}
Assume that $s\ge 0$, $t\geq D(s+1)$, and let
$\A\subseteq \mathcal{R}_{t}^{d}\setminus\widetilde{\mathcal{R}}_{Q_s}^{d}$,
$\B\subseteq \mathcal R_{\le s}^{d'}$ be $1$-periodic sets of rationals.
If $k, j \ge \max(\kappa_s,t/\delta')$ and $j\in[k/2,k]$, then
\begin{equation}\label{laj40}
\|\mathcal{G}_{j,j, \A, \B}^1(\mathcal{G}_{k,k, \A, \B}^1)^\ast\|_{\ell^2(\G_0)\to\ell^2(\G_0)}+\|(\mathcal{G}_{j,j, \A, \B}^1)^\ast\mathcal{G}_{k,k, \A, \B}^1\|_{\ell^2(\G_0)\to\ell^2(\G_0)}\lesssim  \tau^{-2t/D}\tau^{-2|j-k|/D}.
\end{equation}
\end{lemma}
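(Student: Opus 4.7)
The plan is to follow the template of Lemma \ref{almostOrg}, now using the second-stage analysis of Lemma \ref{laj10}. Observe that $G^1_{k,k,\A,\B} = K'_{k,k,\A,\B}$, so Lemma \ref{laj10} applies with $\iota = 1$: its hypotheses are satisfied because $q_\A \simeq \tau^t$ satisfies $q_\A \geq Q^D$ for every denominator $Q$ arising in $\B \subseteq \mathcal{R}^{d'}_{\leq s}$ (using $t \geq D(s+1)$), and $\tau^{\delta' k} \geq q_\A$ since $k \geq t/\delta'$.

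First I treat the diagonal case $j = k$. Lemma \ref{laj10} gives $\{(\mathcal{G}^1_{k,k,\A,\B})^\ast \mathcal{G}^1_{k,k,\A,\B}\}^r f = f \ast (F_k^{1,r} + O_k^{1,r})$, where $O_k^{1,r}$ is controlled by \eqref{laj11z}. For the main term, the coefficients satisfy $|\mathcal{C}(a^{(2)}/Q, \sigma)| \lesssim q_\A^{-1/\delta} \lesssim \tau^{-t/\delta}$ by \eqref{laj11w}. The smooth integral factor in \eqref{laj11x} has bounded $\ell^1(\G_0)$ norm by \eqref{maj34} exactly as in \eqref{maj33}. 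Summing over the at most $\tau^{Ct}$ arithmetic pairs $(a^{(2)}/Q,\sigma)$ yields $\|F_k^{1,r}\|_{\ell^1(\G_0)} \lesssim \tau^{-2rt/D}$ (after absorbing $Q \lesssim \tau^{s+1} \leq \tau^{t/D}$), hence $\|\mathcal{G}^1_{k,k,\A,\B}\|_{\ell^2 \to \ell^2} \lesssim \tau^{-t/D}$.

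For $j \in [k/2, k]$ with $j < k$, the iteration \eqref{maj34.5} reduces \eqref{laj40} to proving
\[
\|\mathcal{G}^1_{j,j,\A,\B}\bigl[(\mathcal{G}^1_{k,k,\A,\B})^\ast\mathcal{G}^1_{k,k,\A,\B}\bigr]^r\|_{\ell^2 \to \ell^2} \lesssim \tau^{-8rt/D}\tau^{-8r|j-k|/D}
\]
and its adjoint analogue. Invoking Lemma \ref{laj10} and using \eqref{laj11z} combined with the diagonal bound already obtained for the error, it suffices to show $\|F_k^{1,r} \ast G^1_{j,j,\A,\B}\|_{\ell^1(\G_0)}$ is bounded by the same quantity. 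Decomposing $F_k^{1,r}$ via \eqref{laj11x} and peeling off the coefficients $\mathcal{C}(a^{(2)}/Q,\sigma)$, this reduces to showing, for each fixed $(a^{(2)}/Q,\sigma)$,
\[
\bigl\|Y_{k,\sigma,a^{(2)}/Q}^r \ast G^1_{j,j,\A,\B}\bigr\|_{\ell^1(\G_0)} \lesssim Q^{C/\delta}\tau^{-8r|j-k|/D},
\]
where $Y_{k,\sigma,a^{(2)}/Q}^r(h) := X_k^r(h)\, \ex\bigl(h^{(1)}.\sigma + h^{(2)}.(a^{(2)}/Q)\bigr)$ is a modulation of the smooth kernel $X_k^r$ from \eqref{maj32}. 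Expanding $G^1_{j,j,\A,\B}$ in the central variable via \eqref{maj38} and grouping the sum in $n$ into residue classes modulo a common denominator $\widetilde{Q} \lesssim \tau^{C(s+t)}$ reduces matters, for each residue $\rho$, to an analogue of \eqref{maj42}:
\[
\sum_{h \in \G_0}\Bigl|\sum_{m \in \Z}\tau^{-j}\chi'\bigl(\tau^{-j}(m\widetilde{Q}+\rho)\bigr) Y_{k,\sigma,a^{(2)}/Q}^r\bigl(A_0(m\widetilde{Q}+\rho)^{-1}\cdot h\bigr)\Bigr| \lesssim \widetilde{Q}^{C/\delta}\tau^{-|j-k|}.
\]
This is obtained by adding and subtracting $Y_{k,\sigma,a^{(2)}/Q}^r(h)$: the difference is controlled by the $C^1$-type bound \eqref{maj44}--\eqref{maj45} applied to $Y_{k,\sigma,a^{(2)}/Q}^r$ (the modulation costs only a polynomial factor in $Q$, since $|\sigma| \leq 1$ and $|a^{(2)}/Q| \leq 1$), giving the factor $\tau^{j-k}$, while the $h$-independent term integrates to zero against $\chi'$ on the scale $\tau^j$ up to boundary errors of size $\widetilde{Q}\tau^{-j}$.

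The main obstacle is the non-commutative correction appearing when writing $A_0(n)^{-1}\cdot h$ in coordinates: the central component acquires a bilinear term $R_0(A_0^{(1)}(n), h^{(1)})$ of size $\lesssim \tau^{j+\delta k}$, which interacts with both the modulation $\ex(h^{(2)}.(a^{(2)}/Q))$ and the support cutoff $\eta_{\leq 3\delta k}(\tau^{-k}\circ h)$. This is handled exactly as in Section 7 by Taylor expanding in the non-central coordinates and exploiting the rapid decay \eqref{maj44} of $X_k^r$, at an acceptable polynomial loss in $Q$ and $\widetilde{Q}$ that is absorbed by the $\tau^{-t/\delta}$ gain from $\mathcal{C}(a^{(2)}/Q,\sigma)$ together with the large power $8r$. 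Summing over the $\tau^{O(t)}$ arithmetic pieces completes the proof of \eqref{laj40}.
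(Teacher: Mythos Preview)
Your diagonal estimate and the high-order $T^\ast T$ reduction to $\|F_k^{1,r}\ast G^1_{j,j,\A,\B}\|_{\ell^1(\G_0)}$ are correct and match the paper. The gap is in the off-diagonal step. You claim to reduce matters to an analogue of \eqref{maj42} by ``expanding $G^1_{j,j,\A,\B}$ in the central variable via \eqref{maj38} and grouping the sum in $n$ into residue classes.'' But \eqref{maj38} is tailored to $H^1_{j,s}$, whose non-central factor is $[\Delta_jL_j](g^{(1)})=\sum_n\tau^{-j}\chi'(\tau^{-j}n)\ind{\{0\}}(g^{(1)}-A_0^{(1)}(n))$, a genuine sum of point masses at $A_0^{(1)}(n)$. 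The non-central factor of $G^1_{j,j,\A,\B}$ is $L'_{j,j,\A}$, which carries the projection $\Psi_{j,j,\A}$: after inverse Fourier it is a sum over $n$ of bumps $\widehat{\eta_{\leq\delta'j}}(\,\cdot-A_0^{(1)}(n))$ modulated by fractions $a^{(1)}/q_1\in\A$. Your displayed analogue of \eqref{maj42} simply drops both the bump and the $\A$-modulations, and nothing in your argument absorbs them; in particular the ``Taylor expansion in the non-central coordinates'' you invoke does not address the oscillation $\ex((g^{(1)}-A_0^{(1)}(n)).(a^{(1)}/q_1))$ with $q_1\simeq\tau^t$.

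The paper repairs this as follows. First use Lemma~\ref{Skappr} to rewrite $G^1_{j,j,\A,\B}$, up to rapidly decreasing errors, as a finite sum of modulated smooth kernels
\[
\sum_{b^{(1)}/q_1\in\A,\ b^{(2)}/q_2\in\B} S(b^{(1)}/q_1)\,\ex\bigl(g.(b^{(1)}/q_1,b^{(2)}/q_2)\bigr)\,Y_j(g),
\]
with $Y_j$ as in \eqref{laj45}; the discrete sum in $n$ is now entirely absorbed into the continuous factor $J'_j$ inside $Y_j$. This reduces to bounding $\|X^r_{k,a/q}\ast Y_{j,a'/q'}\|_{\ell^1(\G_0)}$ for individual fractions with denominators $q,q'\leq\tau^{2t+2}$ (see \eqref{laj46}). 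At that point the paper does \emph{not} work modulo residues in $n$; instead it sets $Q=qq'$, decomposes $\G_0=\JJ_Q\cdot\HH_Q$, and observes that both modulations are invariant under $\HH_Q$-translates, so one is left with the unmodulated convolution estimate \eqref{laj50}. The off-diagonal gain $\tau^{-8r|j-k|/D}$ then comes from the $C^1$ bound \eqref{maj44} on $X_k^r$ together with the cancellation $J'_j(0)=0$, which yields $\bigl|\sum_{h_1\in\HH_Q}Y_j(h_1\cdot\mu_1)\bigr|\lesssim\tau^{-\delta j}$; this replaces the role that $\int\chi'=0$ plays in Lemma~\ref{almostOrg}.
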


\begin{proof}
We will use Lemma \ref{laj10} with $\iota=1$, since
$G_{k,k, \A, \B}^1=K_{k,k, \A, \B}'$.  The proof will proceed in
several steps as the proof of Lemma \ref{almostOrg}.

{\bf{Step 1.}}  We will abbreviate $F_k^{1, r}(h)$ to $F_k^r(h)$, where
\begin{equation*}
F_k^r(h):=\Big\{\sum_{a^{(2)}/Q\in\B\cap[0,1)^{d'}}\sum_{\sigma\in[\A+(\Z/Q)^d]\cap[0,1)^d}\mathcal{C}(a^{(2)}/Q,\sigma)\ex(h^{(1)}{.}\sigma)\ex\big(h^{(2)}{.}(a^{(2)}/Q)\big)\Big\}X_k^r(h),
\end{equation*}
where $X_k^r:=X_k^{1, r}$ are the kernels defined in \eqref{maj32}. In view of \eqref{maj33} and \eqref{laj11w} we have
\begin{equation*}
\|F_k^r\|_{\ell^1(\G_0)}\lesssim \tau^{-t/(2\delta)}.
\end{equation*}
This shows that $\|\mathcal{G}_{k,k, \A, \B}^1\|_{\ell^2(\G_0)\to\ell^2(\G_0)}\lesssim \tau^{-t/r}$, and bound \eqref{laj40} follows if $j=k$.

To prove the bounds \eqref{laj40} in the general case $j\leq k$ we use first a high order $T^\ast T$ argument, as in \eqref{maj34.5}, so it suffices to prove that
\begin{align}\label{laj41}
\begin{split}
\|\mathcal{G}_{j,j, \A, \B}^1[(\mathcal{G}_{k,k, \A, \B}^1)^\ast\mathcal{G}_{k,k, \A, \B}^1]^r\|_{\ell^2\to\ell^2}&+\|(\mathcal{G}_{j,j, \A, \B}^1)^\ast[\mathcal{G}_{k,k, \A, \B}^1(\mathcal{G}_{k,k, \A, \B}^1)^\ast]^r\|_{\ell^2\to\ell^2}\\
&\lesssim \tau^{-8rt/D}\tau^{-8r|j-k|/D},
\end{split}
\end{align}
for any $j\in[k/2,k]$ such that $k, j \ge \max(\kappa_s,t/\delta')$. The two bounds are similar, so we will focus on bounding  the first term. We use Lemma \ref{laj10}, and notice that the contribution of the error kernel $O_k^{1, r}$ is controlled by $O(\tau^{- k})$, which is better than needed. It remains to prove that
\begin{equation}\label{laj42}
\big\|F_k^r\ast G_{j,j, \A, \B}^1\big\|_{\ell^1(\G_0)}\lesssim \tau^{-8rt/D}\tau^{-8r|j-k|/D}.
\end{equation}

{\bf{Step 2.}} Using Lemma \ref{Skappr} the kernels  $G_{j,j, \A, \B}^1=K_{j,j, \A, \B}'=L_{j,j, \A}'N_{j,j, \B}$, can be  rewritten as
\begin{equation}\label{laj45}
\begin{split}
&\sum_{b^{(1)}/q_1\in\A\cap[0,1)^d,\,b^{(2)}/q_2\in\B\cap[0,1)^{d'}}S(b^{(1)}/q_1)\ex(g^{(1)}{.}(b^{(1)}/q_1))\ex(g^{(2)}{.}(b^{(2)}/q_2))Y_j(g),\\
&Y_j(g):=\phi_j(g)\int_{\R^d\times\R^{d'}}\ex(g.\zeta)J'_j(\zeta^{(1)})\eta_{\leq\delta' j}(\tau^j\circ\zeta^{(1)})\eta_{\leq\delta j}(\tau^j\circ\zeta^{(2)})\, d\zeta^{(1)}d\zeta^{(2)},
\end{split}
\end{equation}
up to rapidly decreasing errors. Here $\phi_j(g)=\phi_j^{(1)}(g^{(1)})\phi_j^{(2)}(g^{(2)})$ as before, and the functions $J'_j$ are defined as in \eqref{gio3.5}. 

As in \eqref{maj36.4}, we define $X_{k,a/q}^r(h)=X_{k}^r(h)\ex(h{.}a/q)$. We define also $Y_{j,a/q}(g)=Y_j(g)\ex(g{.}a/q)$, with $Y_j$ as in \eqref{laj45}. By the definition of $F_k^r$ and the rapid exponential decay  $|\mathcal{C}(a^{(2)}/Q,\sigma)|\lesssim \tau^{-t/\delta}$ (see \eqref{laj11w} with $\A\subseteq \mathcal{R}_{t}^{d}\setminus\widetilde{\mathcal{R}}_{Q_s}^{d}$ and 
$\B\subseteq \mathcal R_{\le s}^{d'}$), for \eqref{laj42} it suffices to prove that
\begin{equation}\label{laj46}
\big\|X_{k,a/q}^r\ast Y_{j,a'/q'}\big\|_{\ell^1(\G_0)}
\lesssim \tau^{-8r|j-k|/D}
\end{equation}
for any irreducible fractions $a/q,a'/q'\in\mathbb{Q}^{d+d'}$ with denominators $q,q'\leq \tau^{2t+2}$.

{\bf{Step 3.}} Let $Q=qq'\in[1,\tau^{4t+4}]$ and recall the definitions \eqref{gio1}--\eqref{gio2}. Since $\ex((g\cdot h\cdot g'){.}a/q)=\ex((g\cdot g'){.}a/q)$ and $\ex((g\cdot h\cdot g'){.}a'/q')=\ex((g\cdot g'){.}a'/q')$ if $h\in\HH_Q$ and $g,g'\in\G_0$, we have
\begin{equation*}
\begin{split}
\big\|X_{k,a/q}^r\ast Y_{j,a'/q'}\big\|_{\ell^1(\G_0)}&=\sum_{\mu\in \JJ_Q,\,h\in\HH_Q}\Big|\sum_{\mu_1\in \JJ_Q,\,h_1\in\HH_Q}X_{k,a/q}^r(\mu_1^{-1}\cdot h_1^{-1}\cdot h\cdot\mu)Y_{j,a'/q'}(h_1\cdot\mu_1)\Big|\\
&\leq\sum_{\mu, \mu_1\in \JJ_Q,\,h\in\HH_Q}\Big|\sum_{h_1\in\HH_Q}X_{k}^r(\mu_1^{-1}\cdot h_1^{-1}\cdot h\cdot\mu)Y_{j}(h_1\cdot\mu_1)\Big|.
\end{split}
\end{equation*}
Therefore
\begin{equation}\label{laj50}
\begin{split}
\big\|X_{k,a/q}^r\ast Y_{j,a'/q'}\big\|_{\ell^1(\G_0)}&\lesssim\sum_{\mu, \mu_1\in \JJ_Q,\,h, h_1\in\HH_Q}\big|X_{k}^r(\mu_1^{-1}\cdot h_1^{-1}\cdot h\cdot\mu)-X_k^r(h\cdot\mu)\big|\big|Y_{j}(h_1\cdot\mu_1)\big|\\
&+\sum_{\mu, \mu_1\in \JJ_Q,\,h\in\HH_Q}|X_k^r(h\cdot\mu)|\Big|\sum_{h_1\in\HH_Q}Y_{j}(h_1\cdot\mu_1)\Big|.
\end{split}
\end{equation}

Using \eqref{maj44}, for any $g,g_1\in\G_0$ we have 
\begin{equation*}
|X_{k}^r(g_1^{-1}\cdot g)-X_k^r(g)\big|\lesssim \tau^{j-k}(1+|\tau^{-j}\circ g_1|)^{2/\delta + 2}\Big\{\prod_{(l_1,l_2)\in Y_d}\tau^{-k(l_1+l_2)}\Big\}(1+|\tau^{-k}\circ g|)^{-1/\delta + 1},
\end{equation*}
which is a stronger version of \eqref{maj45}. Moreover, using the definition of the kernel $Y_j$ in \eqref{laj45}, 
\begin{align*}
|Y_j(g_1)| 
\lesssim
\Big\{\prod_{(l_1,l_2)\in Y_d}\tau^{-j(l_1+l_2 - \delta_{l_1 l_2})}\Big\}
\int_{\R} \abs{\chi' (u)} \Big( 
1 + \abs[\big]{\tau^{\widetilde{\delta}j} \big( A_0 (u) - \tau^{-j} \circ g_1 \big)}
\Big)^{-4/\delta} \, du,
\end{align*}
uniformly in $g_1 \in \G_0$.
Here $\widetilde{\delta} = (\delta_{l_1 l_2})_{(l_1, l_2) \in Y_d}$ and 
$\delta_{l_1 l_2} = \delta$ if $(l_1, l_2) \in Y'_d$ and $\delta_{l_1 l_2} = \delta'$ otherwise. 
Since 
\[
1+|\tau^{-j}\circ g_1|
\lesssim
1 + \abs[\big]{\tau^{\widetilde{\delta}j} \big( A_0 (u) - \tau^{-j} \circ g_1 \big)},
\]
we obtain the desired bound for the first term in the right-hand side of \eqref{laj50}.

Next, we focus on the second term in the right-hand side of \eqref{laj50}.
Notice that using \eqref{RapDe2} we are able to prove that
\begin{align*}
\Big|\sum_{h_1\in\HH_Q} \phi_j(h_1\cdot\mu_1) \ex\big((h_1\cdot\mu_1).\zeta\big) \Big|
\lesssim
Q^{-d-d'} \Big\{\prod_{(l_1,l_2)\in Y_d}\tau^{j(l_1+l_2 + \delta)}\Big\}
\Big(  1 + \tau^{\delta j} \abs{ \tau^{j} \circ \zeta } \Big)^{-D},
\end{align*}
uniformly in $\abs{ \tau^{j} \circ \zeta } \lesssim \tau^{j/4}$, $Q \lesssim \tau^{j/8}$ and $\mu_1\in \JJ_Q$. 
Further, since $J'_j(0)=0$, it follows from the definition of $J_j'$ (see \eqref{gio3.5}) that $|J'_j(\zeta^{(1)})|\lesssim\min(1,|\tau^j\circ\zeta^{(1)}|)$ for any $\zeta^{(1)}\in\R^d$.
Combining the above with \eqref{maj44} we bound the second term in the right-hand side of \eqref{laj50} by 
\begin{equation*}
\begin{split}
\sum_{\mu_1\in \JJ_Q} \Big|\sum_{h_1\in\HH_Q}Y_{j}(h_1\cdot\mu_1)\Big|
& \leq
\sum_{\mu_1\in \JJ_Q} 
\int_{\R^d\times\R^{d'}}\Big|\sum_{h_1\in\HH_Q} \phi_j(h_1\cdot\mu_1) \ex\big((h_1\cdot\mu_1).\zeta\big) \Big| \big|J'_j(\zeta^{(1)}) \big|  \\
& \quad \times
\big|\eta_{\leq\delta' j}(\tau^j\circ\zeta^{(1)})\eta_{\leq\delta j}(\tau^j\circ\zeta^{(2)})\big|\, d\zeta^{(1)}d\zeta^{(2)}\\
&\lesssim \int_{\R^d\times\R^{d'}}\Big\{\prod_{(l_1,l_2)\in Y_d}\tau^{j(l_1+l_2 + \delta)}\Big\}
\Big(  1 + \tau^{\delta j} \abs{ \tau^{j} \circ \zeta } \Big)^{-D}
\big|\tau^j\circ\zeta^{(1)}\big|\, d\zeta^{(1)}d\zeta^{(2)}\\
&\lesssim \tau^{-\delta j}.
\end{split}
\end{equation*}
Recalling that $j\in[k/2,k]$ we see that  the desired estimates \eqref{laj46} follow.
\end{proof}

\subsection{Proof of the bounds \eqref{trp3} for $\iota=2$} Notice that if $g$ is in the support of the kernel $G^2_{k,k,\A,\B}$ then there is $(l_1,l_2)\in Y_d$ such that $|g_{l_1l_2}|\gtrsim \tau^{\delta k}\tau^{k(l_1+l_2)}$. Therefore we can integrate by parts many times in the variable $\xi_{l_1l_2}$ to prove that the kernels $G_{k,k,\A,\B}^2$ have rapid decay, i.e. $|G_{k,k,\A,\B}^2(g)|\lesssim \tau^{-k/\delta}$ for any $g\in\G_0$. The desired bounds \eqref{trp3} follow.\qed

\subsection{Proofs of the bounds \eqref{trp3} for $\iota=3$ and $\iota=4$} As before, we use a high order $T^\ast T$ argument. Notice that the kernels $G^3_{k,k,\A,\B}$ and $G^4_{k,k,\A,\B}$ defined in \eqref{trp2} have product structure
\begin{equation}\label{trp6}
\begin{split}
&G^3_{k,k,\A,\B}(g)=I^3_{k,k,\A}(g^{(1)})J^3_{k,k, \B}(g^{(2)}),\\
&I^3_{k,k,\A}(g^{(1)}):=\phi_{k+1}^{(1)}(g^{(1)})\int_{\T^d}\ex(g^{(1)}.\xi^{(1)})\Psi_{k,k,\A}(\xi^{(1)})S_{k+1}(\xi^{(1)})\, d\xi^{(1)},\\
&J_{k,k, \B}^3(g^{(2)}):=\phi_{k+1}^{(2)}(g^{(2)})\int_{\T^{d'}}\ex(g^{(2)}.\xi^{(2)})[\Delta_k\Xi_{k, k, \B}](\xi^{(2)})\,d\xi^{(2)},
\end{split}
\end{equation}
and
\begin{equation}\label{trp26}
\begin{split}
&G^4_{k,k,\A,\B}(g)=I^4_{k,k,\A}(g^{(1)})J^4_{k,k, \B}(g^{(2)}),\\
&I^4_{k,k,\A}(g^{(1)}):=\phi_{k+1}^{(1)}(g^{(1)})\int_{\T^d}\ex(g^{(1)}.\xi^{(1)})[\Delta_k\Psi_{k,k,\A}](\xi^{(1)}) S_{k+1}(\xi^{(1)})\, d\xi^{(1)},\\
&J_{k,k, \B}^4(g^{(2)}):=\phi_{k+1}^{(2)}(g^{(2)})\int_{\T^{d'}}\ex(g^{(2)}.\xi^{(2)})\Xi_{k+1,k+1, \B}(\xi^{(2)})\,d\xi^{(2)}.
\end{split}
\end{equation}
We define the operators $\mathcal{G}^\iota_{k,k,\A,\B}$ by $\mathcal{G}^\iota_{k,k,\A,\B}f:=f\ast G^\iota_{k,k,\A,\B}$, $\iota\in\{3,4\}$.  Using \eqref{pro15.7}--\eqref{pro15.11} we have
\begin{equation*}
\big\{(\mathcal{G}^\iota_{k,k,\A,\B})^\ast\mathcal{G}^\iota_{k,k,\A,\B}\big\}^rf=f\ast G^{\iota,r}_{k,k,\A,\B},
\end{equation*}
for a sufficiently large integer $r\in\Z_+$ and $\iota\in\{3,4\}$, where the kernels $G^{\iota,r}_{k,k,\A,\B}$ are given by
\begin{equation*}
\begin{split}
G^{\iota,r}_{k,k,\A,\B}(y)&:=\eta_{\leq 3\delta k}(\tau^{-k}\circ y)\int_{\T^d\times\T^{d'}}\ex\big(y.\theta\big)\Pi^{\iota,r}_{k,k, \A}\big(\theta^{(1)},\theta^{(2)}\big)\Omega^{\iota,r}_{k,k, \B}\big(\theta^{(2)}\big)\,d\theta^{(1)}d\theta^{(2)}.
\end{split}
\end{equation*}
The multipliers $\Pi^{\iota,r}_{k,k,\A}$ are given by
\begin{align}\label{trp8}
\Pi^{\iota,r}_{k,k,\A}&\big(\theta^{(1)},\theta^{(2)}\big):=\sum_{h_j^{(1)},g_j^{(1)}\in\Z^d}\Big\{\prod_{j=1}^r\overline{I^\iota_{k,k,\A}(h_j^{(1)})}I^\iota_{k,k,\A}(g_j^{(1)})\Big\}\ex\big(\theta^{(1)}{.}\sum_{1\leq j\leq r}(h_j^{(1)}-g_j^{(1)})\big)\\
\nonumber&\times\ex\Big(-\theta^{(2)}{.}\big\{\sum_{1\leq j\leq r}R_0(h_j^{(1)},h_j^{(1)}-g_j^{(1)})+\sum_{1\leq l<j\leq r}R_0(-h_l^{(1)}+g_l^{(1)},-h_j^{(1)}+g_j^{(1)})\big\}\Big).
\end{align}
Moreover, with $F_{k+1}$ defined as in \eqref{pro26}, the multipliers $\Omega^{\iota,r}_{k,k, \B}$ are given by
\begin{align}\label{trp9}
\begin{split}
&\Omega_{k,k, \B}^{3,r}\big(\theta^{(2)}\big):=\Big|\int_{\T^{d'}}F_{k+1}(\theta^{(2)}-\xi^{(2)})[\Delta_k\Xi_{k,k,\B}](\xi^{(2)})\,d\xi^{(2)}\Big|^{2r},\\
&\Omega_{k,k,\B}^{4,r}\big(\theta^{(2)}\big):=\Big|\int_{\T^{d'}}F_{k+1}(\theta^{(2)}-\xi^{(2)})\Xi_{k+1,k+1,\B}(\xi^{(2)})\,d\xi^{(2)}\Big|^{2r}.
\end{split}
\end{align}
For \eqref{trp3} it suffices to prove that for $\iota\in\{3,4\}$ we have the multiplier bounds
\begin{equation}\label{trp10}
\Big|\Pi^{\iota,r}_{k,k,\A}\big(\theta^{(1)},\theta^{(2)}\big)\Omega^{\iota,r}_{k,k,\B}\big(\theta^{(2)}\big)\Big|\lesssim \tau^{-k/\delta}\qquad\text{ for any }(\theta^{(1)},\theta^{(2)})\in \T^d\times\T^{d'}.
\end{equation}
The proof of \eqref{trp10} follows by similar arguments as in Lemma \ref{laj10}. We consider two cases:

{\bf{Case 1.}} Assume first that $\iota=3$. Notice that we have rapid decay $|\Omega_{k,k,\B}^{3,r}(\theta^{(2)})|\lesssim \tau^{-Dk}$ unless $|\tau^{k}\circ(\theta^{(2)}-a^{(2)}/Q) |\leq \tau^{2\delta k}$ for some $a^{(2)}/Q\in\B$. In this case the symbols $\Pi^{3,r}_{k,k,\A}$ satisfy similar bounds as the symbols $\Pi^{0,r}_{k,k,\A}$ analyzed in the proof of Lemma \ref{laj10}. 
In particular, we have
\begin{equation*}
\begin{split}
\Big|\Pi_{k,k,\A}^{3,r}\big(\theta^{(1)},\theta^{(2)}\big)
-\sum_{\sigma\in\A+(\Z/Q)^d}&\mathcal{C}(a^{(2)}/Q,\sigma)\eta_{\leq 2\delta'k}(\tau^k\circ(\theta^{(1)}-\sigma))\\
&\times I_k^3(\theta^{(1)}-\sigma,\alpha^{(2)})\Big|\lesssim \tau^{-Dk/3},
\end{split}
\end{equation*}
which is analogous to the approximate identity \eqref{laj27}. The coefficients $\mathcal{C}(a^{(2)}/Q,\sigma)$ are as in \eqref{laj27.5}, while the functions $I^3_k$ are similar to the functions $I^{\iota}_k$ defined in \eqref{laj25} (with the factor $\chi^{\iota}(u_j) \chi^{\iota}(v_j)$ replaced by $\chi(u_j/2)\chi(v_j/2)/4$). We still have the key bounds
\begin{equation*}
|I_k^3(\alpha^{(1)},\alpha^{(2)})|\lesssim \tau^{-k/\delta}\qquad\text{ if }|\tau^k\circ\alpha^{(1)}|+|\tau^k\circ\alpha^{(2)}|\geq \tau^{\delta k/2},
\end{equation*}
which are similar to \eqref{laj28}. The main difference is that the bounds \eqref{laj29} are replaced by
\begin{equation*}
\big|\Omega_{k,k, \B}^{3,r}(a^{(2)}/Q+\alpha^{(2)})\big|\lesssim \tau^{-Dk}\qquad\text{ if }|\tau^k \circ \alpha^{(2)}|\leq 2 \tau^{\delta k/2}\text{ and }a^{(2)}/Q\in\B,
\end{equation*}
due to the presence of the difference factor $[\Delta_k\Xi_{k,k,\B}](\xi^{(2)})$ in the definition \eqref{trp9} of the multipliers $\Omega_{k,k,\B}^{3,r}$. The desired estimate \eqref{trp10} for $\iota=3$ follows from the last three bounds.

{\bf{Case 2.}} Assume now that $\iota=4$. As in \eqref{laj11} we rewrite
\begin{align}\label{bew1}
\Pi_{k,k,\A}^{4,r}&\big(\theta^{(1)},\theta^{(2)}\big)=\int_{(\T^d)^{2r}}\mathcal{V}_{k+1}^r(\theta^{(1)},\theta^{(2)};\zeta^{(1)}_1,\xi^{(1)}_1,\ldots,\zeta^{(1)}_r,\xi^{(1)}_r)\\
\nonumber&\times\prod_{1\leq j\leq r}\big\{\overline{S_{k+1}(\zeta^{(1)}_j)}\,\overline{[\Delta_k\Psi_{k,k,\A}](\zeta^{(1)}_j)}S_{k+1}(\xi^{(1)}_j)[\Delta_k\Psi_{k,k,\A}](\xi^{(1)}_j)\big\}\,d\xi^{(1)}_1d\zeta^{(1)}_1\ldots d\xi^{(1)}_rd\zeta^{(1)}_r,
\end{align}
where
$\mathcal{V}_{k+1}^r(\theta^{(1)},\theta^{(2)};\zeta^{(1)}_1,\xi^{(1)}_1,\ldots,\zeta^{(1)}_r,\xi^{(1)}_r)$
is as in \eqref{hun6}.

In view of \eqref{pro27.5} we have a rapid decay $|\Omega_{k,k, \B}^{4,r}(\theta^{(2)})|\lesssim \tau^{-Dk}$ unless $|\tau^{k}\circ(\theta^{(2)}-a^{(2)}/Q) |\leq \tau^{2\delta k}$ for some $a^{(2)}/Q\in\B$. On the other hand, in this case we can use similar arguments as in the proof of Lemma \ref{laj10} to simplify the multipliers $\Pi_{k,k,\A}^{4,r}$, at the expense of acceptable errors. After several reductions we derive an approximate formula similar to \eqref{laj27}, namely
\begin{align}\label{bew2}
\begin{split}
\Big|\Pi_{k, k, \A}^{4,r}\big(\theta^{(1)},\theta^{(2)}\big)
-\sum_{\sigma\in\A+(\Z/Q)^d}&\mathcal{C}(a^{(2)}/Q,\sigma)\eta_{\leq 2\delta'k}(\tau^k\circ(\theta^{(1)}-\sigma))\\
&\times I_k^4(\theta^{(1)}-\sigma,\alpha^{(2)})\Big|\lesssim \tau^{-Dk/3},
\end{split}
\end{align}
provided that $\theta^{(2)}=\alpha^{(2)}+a^{(2)}/Q$ for some $a^{(2)}/Q\in\B$  and $|\tau^{k}\circ\alpha^{(2)} |\leq \tau^{2\delta k}$. The coefficients $\mathcal{C}(a^{(2)}/Q,\sigma)$ are the same as in \eqref{laj27.5}, and  $I_k^4$ is defined as in \eqref{laj25}, namely
\begin{align}\label{bew3}
&I^4_k(\alpha^{(1)},\alpha^{(2)}):=\int_{\R^{2rd}}\prod_{1\leq j\leq r}\big\{\widehat{\eta'_{\leq \delta'k}}(-x'_j)\widehat{\eta'_{\leq \delta'k}}(y'_j)\ex\big(-(\tau^k\circ\alpha^{(1)}){.}(x'_j-y'_j)\big)\big\}\\
\nonumber&\Big\{\int_{\R^{2r}}\ex\big(-(\tau^k\circ\alpha^{(2)}){.}T(\underline{x}',\underline{y}',\underline{u},\underline{v})\big)\prod_{1\leq j\leq r}\big\{\chi(u_j)\chi(v_j)\big\}\ex\big(-(\tau^k\circ\alpha){.}D(2\underline{v},2\underline{u})\big)\,d\underline{u}d\underline{v}\Big\}d\underline{x}'d\underline{y}',
\end{align}
where $\eta'_{\leq\delta'k}(z):=\eta_{\leq\delta'(k+1)}(\tau\circ z)-\eta_{\leq\delta'k}(z)$, and $\widehat{\eta'_{\leq\delta'k}}$ denotes the Fourier transform of the function $\eta'_{\leq\delta'k}$, and the function $T$ is defined as in \eqref{laj26}.

The functions $I^4_k$ still satisfy the bounds $|I_k^4(\alpha^{(1)},\alpha^{(2)})|\lesssim \tau^{-k/\delta}$ if $|\tau^k\circ\alpha^{(1)}|+|\tau^k\circ\alpha^{(2)}|\geq \tau^{\delta k/2}$, which are similar to \eqref{laj28}. The main difference is that the identities \eqref{laj28.5} are replaced by the stronger identities
\begin{equation*}
\int_{\R^d}\widehat{\eta'_{\leq \delta'k}}(z)z^\beta\, dz=0,
\end{equation*}
for any multi-index $\beta$, including $\beta=0$. Therefore we can use a Taylor expansion (as in the proof of \eqref{laj28.6}) to see that $|I_k^4(\alpha^{(1)},\alpha^{(2)})|\lesssim \tau^{-Dk}$ if $|\tau^k\circ\alpha^{(1)}|+|\tau^k\circ\alpha^{(2)}|\lesssim \tau^{\delta k/2}$. The desired bound in \eqref{trp10} follows for $\iota=4$. \qed

\section{Maximal  estimates on $\ell^p(\G_0)$: Proof of Theorem \ref{thm:main1}} \label{lptheory}

In this section we complete the proof of the $\ell^p$ theory in Theorem \ref{thm:main1}.

\begin{theorem}\label{picu2p}
With $\M_k$ defined as in \eqref{picu1} for $\tau=2$, and for any $p\in (1,\infty]$we have
\begin{equation}\label{W1.1}
\big\|\sup_{k\geq 0}|\M_kf|\big\|_{\ell^p(\G_0)}\lesssim_p \|f\|_{\ell^p(\G_0)},\qquad f\in\ell^p(\G_0).
\end{equation}
\end{theorem}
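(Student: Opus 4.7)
The cases $p = \infty$ and $p = 2$ are immediate: the former follows from the uniform bound $\|K_k\|_{\ell^1(\G_0)} \lesssim 1$, and the latter is contained in Theorem \ref{picu2} via \eqref{eq:36}. For $p \in (2, \infty)$, since $f \mapsto \sup_{k\geq 0}|\M_k f|$ is a positive sublinear operator, Marcinkiewicz interpolation between the $\ell^2$ and $\ell^\infty$ endpoints yields the claim at once.

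The substantive case is $p \in (1, 2)$. Here the plan is to re-use the nilpotent circle method decomposition developed in Sections 4--8: $K_k = K_k^c + \sum_{s \in [0,\delta k]} K_{k,s}$, and for $k \geq \kappa_s$ further $K_{k,s} = G_{k,s}^c + G_{k,s}^{\rm low} + \sum_{t \in [0,\delta' k]} G_{k,s,t}$. The minor-arc pieces $K_k^c$ and $G_{k,s}^c$, together with the transitional pieces governed by Lemmas \ref{MajArc1} and \ref{MajArc3}, all enjoy exponential $\ell^2 \to \ell^2$ decay, while each carries a trivial $\ell^1\to\ell^1$ bound of size $\lesssim 1$ coming from the $\ell^1(\G_0)$ norm of the kernel. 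Riesz--Thorin interpolation between these two endpoints upgrades the $\ell^2$ decay to an exponential $\ell^p \to \ell^p$ decay at a rate $2^{-c(p)k/D^2}$ with $c(p)>0$, and summing the resulting bounds over $k$, $s$, $t$ --- using \eqref{eq:36} to pass from individual operator bounds to maximal bounds through summable geometric series --- disposes of every error and transitional contribution on $\ell^p$.

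What remains are the major-arc main terms $G^{\rm low}_{k,s}$ for $k \geq \kappa_s$ and $G_{k,s,t}$ for $k \geq \kappa_t$. By Lemma \ref{kio5} each factorizes, modulo admissible errors, as the product of a Gauss-sum kernel $V_{\A,\B,Q}$ on the finite coset $\JJ_Q$ (with $Q = Q_s$ or $Q_t$) and a continuous-like averaging kernel $W_{k,k,Q}$ on the normal subgroup $\HH_Q$. The factor $V_{\A,\B,Q}$ contributes an exponential $\ell^p(\JJ_Q)$ gain in $s$ (or $t$) uniformly in $Q$, obtained by interpolating the $\ell^2$ decay of Lemma \ref{periodM} against the trivial $\ell^\infty$ bound. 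The factor $W_{k,k,Q}$ gives a $Q$-uniform maximal $\ell^p(\HH_Q)$ estimate, via a transference argument analogous to Lemma \ref{gio50} from a strengthening of Proposition \ref{gio55} to $\ell^p(\G_0^\#)$ for the continuous operator $\widetilde W_{k,k}$, which is of Christ--Nagel--Stein--Wainger type and hence bounded on $L^p(\G_0^\#)$ for every $p \in (1,\infty]$.

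The hard part, and the reason a genuinely new tool is required, is to couple these two $\ell^p$ estimates on $\ell^p(\G_0)$ when $p \neq 2$, in the absence of a multiplicative $\delta$-function on $\G_0$ respecting the noncommutative law \eqref{picu4.2}. This is precisely the role of the shifted maximal inequality on $\HH_Q$ (Proposition \ref{lem:max_sh}): it provides $Q$-uniform $\ell^p(\HH_Q)$ bounds for the maximal operator $\sup_k|\cdot *_{\HH_Q} W_{k,k,Q}|$ that are robust under arbitrary translates by elements of $\JJ_Q$, and when transferred to $\G_0$ along the measure-preserving bijection $\JJ_Q \times \HH_Q \to \G_0$ of \eqref{gio3} (in the spirit of Proposition \ref{prop:cald}), the $\ell^p(\JJ_Q)$ bound for $V_{\A,\B,Q}$ and the $\ell^p(\HH_Q)$ maximal bound for $W_{k,k,Q}$ combine cleanly into an $\ell^p(\G_0)$ maximal bound for the main term, preserving the exponential gain in $s$ or $t$. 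Summing these gains over $s$ and $t$ completes the proof.
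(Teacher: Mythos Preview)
Your interpolation strategy for $p\in(1,2)$ has a genuine gap at the transitional pieces. The maximal estimate in Lemma~\ref{MajArc1} gives a gain of $\tau^{-s/D^2}$ in $s$, \emph{not} in $k$; the range of scales involved is $k\in[\max((D/\ln\tau)^2,s/\delta),\,2\kappa_s)$, which contains roughly $\kappa_s=2^{(D/\ln\tau)(s+1)^2}$ values of $k$. The only $\ell^1\to\ell^1$ bound available for this maximal operator is the trivial $\lesssim\kappa_s$ coming from summing the individual kernel norms. Interpolating between an $\ell^2$ bound of $2^{-s/D^2}$ and an $\ell^1$ bound of $\kappa_s\approx 2^{Ds^2}$ yields, for $1/p=\theta+(1-\theta)/2$, a bound $\kappa_s^{\theta}\cdot 2^{-(1-\theta)s/D^2}$, which blows up in~$s$ for every $\theta>0$. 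The same obstruction applies to Lemma~\ref{MajArc3}. There is no per-$k$ exponential $\ell^2$ decay for $K_{k,s}$ (only the differences $H_{k,s}$ decay, and only in an almost-orthogonal sense that does not survive a triangle-inequality sum over $\kappa_s$ many terms). A secondary issue: for the major-arc Gauss factor, interpolating the $\ell^2(\JJ_Q)$ decay of Lemma~\ref{periodM} against the trivial $\ell^\infty$ bound reaches $p>2$, not $p<2$; and the $\ell^1(\JJ_Q)$ norm of $V_{\A,\B,Q}$ is not small for the sets $\A,\B$ relevant to $G_{k,s}^{\low}$ or $G_{k,s,t}$.

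The paper's proof takes a different route that sidesteps these losses. It uses a Bourgain-style approximation scheme (Lemma~\ref{lem:12}): one constructs linear operators $A_k^{\lambda,\rho}$ so that $\sup_k|A_k^{\lambda,\rho}f|$ is bounded on every $\ell^u$ with a \emph{polynomial} loss $\lambda^\rho$, while $\sup_k|\M_kf-A_k^{\lambda,\rho}f|$ gains $\lambda^{-\gamma}$ on $\ell^2$. The $\ell^2$ gain is supplied by the circle-method estimates of Sections~5--8 exactly as you suggest. The $\ell^u$ bound, however, requires two new ingredients that your outline omits: a \emph{logarithmic} restricted $\ell^p$ maximal inequality (Proposition~\ref{prop:log}) showing $\|\sup_{J<k\le 2J}|\M_kf|\|_p\lesssim\log(J+2)\|f\|_p$, proved by exploiting positivity and a duality/induction argument in the style of~\cite{Bo1}; and the shifted maximal inequality (Proposition~\ref{lem:max_sh}), applied not with $W_{k,k,Q}$ but with the \emph{fixed-bandwidth} kernels $W_{k,S,Q_S}$, paired with the single Gauss kernel $V_{\widetilde{\mathcal{R}}^d_{Q_S},\widetilde{\mathcal{R}}^{d'}_{Q_S},Q_S}$ for which the direct computation \eqref{id:186} gives $\|V\|_{\ell^1(\JJ_{Q_S})}\lesssim 1$ with no decay needed. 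The logarithmic bound is precisely what absorbs the $\kappa_s$-many scales in the transitional regime; without it (or an equivalent device), the $\ell^p$ theory for $p<2$ does not close.
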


Notice that the maximal inequality \eqref{W1.1} for $\tau=2$ implies the full maximal inequality for any $\tau>1$. By interpolation with the variational $\ell^2$ estimates in Theorem \ref{picu2}, this completes the proof of  the main Theorem \ref{thm:main1}. 

To prove Theorem \ref{picu2p} we will use Lemma \ref{lem:12} and Propositions \ref{prop:log} and \ref{lem:max_sh} below.

\begin{lemma}  \label{lem:12} Assume that there is a constant $\gamma>0$ such that for every $u\in(1,2]$, $\rho \in (0,1)$, and $\lambda > 0$ there is a sequence of linear operators $(A_{k}^{\lambda, \rho})_{k\geq 0}$ such that
\begin{align} \label{W1.r}
\norm[\big]{\sup_{k\geq 0} \abs{A_{k}^{\lambda, \rho} f} }_{\ell^u(\G_0)}
\lesssim_{\rho,u}\lambda^{\rho} \norm{f}_{\ell^u(\G_0)}, \qquad\text{ for any } f \in \ell^u(\G_0),
\end{align}
and 
\begin{align} \label{W1.2}
\norm[\big]{\sup_{k\geq 0} \abs{\M_k f - A_{k}^{\lambda, \rho} f} }_{\ell^2(\G_0)}
\lesssim_{\rho}\lambda^{-\gamma} \norm{f}_{\ell^2(\G_0)}, \qquad \text{ for any }f \in \ell^2(\G_0).
\end{align}
Then the estimate \eqref{W1.1} holds true for every $p>1$.
\end{lemma}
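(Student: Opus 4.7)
The plan is to treat $p \in [2, \infty]$ and $p \in (1, 2)$ separately, with the approximation hypothesis doing essential work only in the lower range. Each convolution kernel $K_k$ is non-negative with $\sup_k \|K_k\|_{\ell^1(\G_0)} \lesssim 1$, so
\begin{equation*}
\bigl\|\sup\nolimits_{k \ge 0} |\M_k f|\bigr\|_{\ell^\infty(\G_0)} \lesssim \|f\|_{\ell^\infty(\G_0)}.
\end{equation*}
Combined with the $\ell^2(\G_0)$ maximal bound \eqref{eq:61} furnished by Theorem \ref{picu2}, sublinear Marcinkiewicz interpolation between these two endpoints delivers \eqref{W1.1} on the whole range $p \in [2, \infty]$.

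For $p \in (1, 2)$, I plan to establish the restricted weak-type $(p, p)$ bound on indicators and then upgrade to strong type by real interpolation. Fix $p \in (1, 2)$, choose $u \in (1, p)$, and pick $\rho \in (0, 1)$ small enough that $\rho u (2 - p) < 2 \gamma (p - u)$; since $p - u > 0$, $2 - p > 0$, and $\gamma > 0$, this is possible. For any $E \subseteq \G_0$ with $|E| < \infty$ and any $\alpha, \lambda > 0$, the pointwise decomposition
\begin{equation*}
\sup\nolimits_{k\ge 0} |\M_k \mathbf{1}_E| \le \sup\nolimits_{k\ge 0} |A_k^{\lambda, \rho} \mathbf{1}_E| + \sup\nolimits_{k\ge 0} |\M_k \mathbf{1}_E - A_k^{\lambda, \rho} \mathbf{1}_E|
\end{equation*}
combined with Chebyshev's inequality and the hypotheses \eqref{W1.r} (at exponent $u$) and \eqref{W1.2} gives
\begin{equation*}
\bigl|\bigl\{\sup\nolimits_{k \ge 0} |\M_k \mathbf{1}_E| > 2 \alpha\bigr\}\bigr| \lesssim \alpha^{-u} \lambda^{\rho u} |E| + \alpha^{-2} \lambda^{-2 \gamma} |E|.
\end{equation*}
Balancing the two terms via the choice $\lambda = \alpha^{(u - 2)/(\rho u + 2 \gamma)}$ and multiplying through by $\alpha^p$ produces
\begin{equation*}
\alpha^p \bigl|\bigl\{\sup\nolimits_{k \ge 0} |\M_k \mathbf{1}_E| > 2 \alpha\bigr\}\bigr| \lesssim \alpha^\sigma |E|, \qquad \sigma = \frac{\rho u (p - 2) + 2 \gamma (p - u)}{\rho u + 2 \gamma},
\end{equation*}
and the choice of $\rho$ is precisely what ensures $\sigma > 0$. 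Since $\sup_k |\M_k \mathbf{1}_E| \lesssim 1$ pointwise by the $\ell^\infty$ bound, the distribution function vanishes for $\alpha$ above an absolute constant, hence the right-hand side is bounded by a constant multiple of $|E|$ uniformly in $\alpha > 0$. This yields restricted weak-type $(p, p)$ at every $p \in (1, 2)$.

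The strong estimate \eqref{W1.1} for $p \in (1, 2)$ is then obtained by Marcinkiewicz real interpolation for the sublinear operator $\sup_{k \ge 0} |\M_k \cdot|$, applied to two restricted weak-type bounds at neighbouring exponents $p_1 < p < p_2$ chosen in $(1, 2)$. The hard part is manifestly the lower range $p \in (1, 2)$, as no weak-type $(1, 1)$ endpoint is available for these polynomial averages; the decisive structural feature of the hypothesis is the flexibility to take $u$ strictly smaller than $p$ together with $\rho$ arbitrarily small, which makes the exponent $\sigma$ strictly positive and, combined with the trivial $\ell^\infty$ cutoff of the distribution function for large $\alpha$, closes the restricted weak-type estimate at every $p \in (1, 2)$.
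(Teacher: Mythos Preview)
Your proof is correct and follows the standard route for this type of interpolation lemma. The paper does not give its own argument here but only cites \cite[Lemma 7.1]{IW} and \cite[Lemma 4.4]{IMSW}; your proof via restricted weak-type on indicators (optimising in $\lambda$ after Chebyshev, then using the $\ell^\infty$ cutoff to bound $\alpha^\sigma$) followed by Marcinkiewicz interpolation between nearby exponents is precisely the mechanism those references implement, so there is nothing to compare.
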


\begin{proof}
This is a general interpolation result. See for example \cite[Lemma 7.1]{IW} or {\cite[Lemma 4.4]{IMSW}} for proofs of such results.
\end{proof}

We will need the following logarithmic maximal estimates.

\begin{proposition}\label{prop:log}
For every $p\in(1,\infty)$, $f \in \ell^p(\G_0)$, and $J \in \NN$ we have
\begin{align*}
\norm[\big]{\sup_{j\in[J+1,2J]} \abs{\M_j f} }_{\ell^p(\G_0)}
\lesssim_p\log( J + 2 ) \norm{f}_{\ell^p(\G_0)}.
\end{align*}
\end{proposition}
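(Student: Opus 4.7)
The starting observation is that each $\M_j$ is convolution with a positive kernel $K_j$ satisfying $\|K_j\|_{\ell^1(\G_0)}\lesssim 1$ uniformly in $j$, so by Young's inequality $\|\M_j f\|_{\ell^p(\G_0)}\lesssim \|f\|_{\ell^p(\G_0)}$ for every $p\in[1,\infty]$. The range $p\in[2,\infty]$ can be dispensed with immediately: interpolating the sublinear maximal operator $Tf:=\sup_{J+1\le j\le 2J}|\M_j f|$ between the uniform $\ell^2(\G_0)$ bound supplied by Theorem~\ref{picu2} and the trivial $\ell^\infty$ bound gives the desired estimate for $p\in[2,\infty]$, in fact without any logarithmic factor. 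The genuinely new content is therefore the range $p\in(1,2)$.

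For $p\in(1,2)$ the plan is to apply the Rademacher--Menshov inequality \eqref{maj1} (with $\rho=2$) to the finite sequence $(\M_j f)_{J+1\le j\le 2J}$ at depth $m:=\lceil\log_2 J\rceil+O(1)$, producing the pointwise bound
\[
\sup_{J+1\le j\le 2J}|\M_j f|\ \lesssim\ |\M_{J+1}f|\,+\,\sum_{i=0}^{m}Q_i f,\qquad Q_i f:=\Big(\sum_{l}\big|(\M_{(l+1)2^i+J+1}-\M_{l\,2^i+J+1})f\big|^2\Big)^{1/2}.
\]
The first term is controlled by the uniform $\ell^p$ bound, and it then suffices to prove the uniform square function estimate $\|Q_i f\|_{\ell^p(\G_0)}\lesssim \|f\|_{\ell^p(\G_0)}$ for each $i\in\{0,\ldots,m\}$, independently of both the partition scale $2^i$ and the number of terms in the inner sum; summing over the $m+1\lesssim\log(J+2)$ dyadic levels then produces the claimed logarithmic factor.

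To establish this uniform dyadic square function estimate the strategy is to decompose each difference $\M_{(l+1)2^i+J+1}-\M_{l\,2^i+J+1}$ using the minor/major arcs machinery from Sections~\ref{MinorArcs}--\ref{sectionM3}. The minor arcs contributions carry the exponential $\ell^2$ gain of Lemmas~\ref{MinArc1}--\ref{MinArc2}, which interpolated against the uniform $\ell^p$ bound gives $\ell^p$ estimates that are summable in the scale and collapse the corresponding square function to a constant. The major arcs contributions are, via Lemma~\ref{gio50} and the transference to the discrete subgroups $\HH_Q$, reduced to the shifted maximal inequality on $\HH_Q$ from Proposition~\ref{lem:max_sh}, which supplies exactly the uniform $\ell^p(\HH_Q)$ square function bound required. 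The main obstacle is thus the $\ell^p$ square function estimate for $p\ne 2$: because there is no useful Fourier transform on $\G_0$, no direct Littlewood--Paley argument is available, and the detour through the arithmetic subgroups $\HH_Q$ combined with the shifted maximal inequality is precisely what allows us to circumvent this difficulty.
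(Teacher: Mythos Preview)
Your treatment of $p\in[2,\infty]$ is correct and matches the paper's implicit handling (interpolating Theorem~\ref{picu2} against the trivial $\ell^\infty$ bound). The difficulty is your plan for $p\in(1,2)$.

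The central gap is the claim that Proposition~\ref{lem:max_sh} ``supplies exactly the uniform $\ell^p(\HH_Q)$ square function bound required.'' It does not: Proposition~\ref{lem:max_sh} is a \emph{maximal} inequality for the kernels $W_{k,w,Q}$ with $w$ fixed, not a square function inequality for differences $W_{k+1,k+1,Q}-W_{k,k,Q}$. Lemma~\ref{gio50} does give a square-function type estimate (via \eqref{eq:12}), but only in $\ell^2(\HH_Q)$. So after the Rademacher--Menshov reduction you are left needing a uniform $\ell^p$ bound for the square function $Q_i f$ on the major-arcs pieces, and none of the cited inputs provides this. Obtaining such an $\ell^p$ Littlewood--Paley estimate on $\G_0$ (or on $\HH_Q$) for $p\neq 2$ is essentially the same difficulty that Proposition~\ref{prop:log} is meant to circumvent in the first place, so the argument is close to circular. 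The minor-arcs portion of your plan (interpolating the exponential $\ell^2$ gain against a crude $\ell^1$ kernel bound) is plausible, but it is the major-arcs square function that carries no decay in~$k$ and cannot be collapsed this way.

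The paper takes a completely different route, following Bourgain~\cite[Lemma~7.32]{Bo1}. One exploits positivity to linearize the supremum, dualizes, and reduces to a restricted weak-type $(R,R)$ estimate for $\sum_{j\in\mathcal S}h_j\ast\widetilde K_j$ with $R\geq 2$ an even integer and $\mathcal S\subseteq(J,2J]$ a $(D\mu\log_2 J)$-separated set; the $\log(J+2)$ factor comes from counting such sparse subsets. The restricted weak-type bound is proved by expanding the $\ell^R$ norm and establishing the multilinear cancellation estimate \eqref{R7.1},
\[
\Big\|\Big(\prod_{n=2}^R h_{j_n}\ast\widetilde K_{j_n}\Big)\ast(K_{j_1}-K_{j_0})\Big\|_{\ell^2(\G_0)}\lesssim_R J^{-R}|F|^{1/2},
\]
for separated scales $j_0<j_1<\cdots<j_R$. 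This is where the genuine work lies: one replaces each $\widetilde K_{j_n}$ by the approximate kernel $\widetilde U_{n,J,\mu}$ of Lemma~\ref{lem:approx}, paying an error controlled by the $\ell^2$ theory, and then exploits the smoothness of $U$ together with the cancellation $\int\chi'=0$ to gain the factor $J^{-R}$. The point is that this duality/multilinear scheme never requires an $\ell^p$ square function estimate; it uses only $\ell^2$ bounds and crude $\ell^1/\ell^\infty$ kernel estimates, which is why it succeeds where your Rademacher--Menshov approach stalls.
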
 

Proposition~\ref{prop:log} will be proved in Subsection~\ref{ssec:plog}.
The idea of using restricted $\ell^p(\G_0)$ estimates as in Proposition~\ref{prop:log}  together with $\ell^2(\G_0)$ bounds to prove
the full $\ell^p(\G_0)$ estimates \eqref{W1.1} originates in Bourgain's paper \cite{Bo1}. 

Finally, we will also need the following shifted maximal inequality for the kernels $W_{k,w, Q}$ with $0\le w\le k$ defined in \eqref{kio7}.

\begin{proposition} \label{lem:max_sh}
For any $p\in(1,\infty)$, $Q \ge 1$, and $w \in \mathbb{N}$ we have
\begin{align*} 
\norm[\big]{\sup_{2^{k/4} \ge Q,\,k \ge w}\abs[\big]{  f \ast_{\HH_Q} W_{k,w, Q} } }_{\ell^{p}(\HH_Q) }
\lesssim_p (w+1)\norm{ f }_{\ell^{p}(\HH_Q) }, \qquad f \in \ell^{p}(\HH_Q).
\end{align*}
\end{proposition}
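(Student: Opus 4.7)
The proof will proceed by transference to the continuous Lie group $\G_0^{\#}$ followed by establishing a continuous shifted maximal inequality with a linear-in-$w$ growth factor. The structure mirrors the proof of Lemma~\ref{gio50}, but requires a sharper analysis of the kernels $\widetilde{W}_{k,w}$ to track the dependence on the shift parameter $w$.

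First I would replicate the transference argument carried out at the end of the proof of Lemma~\ref{gio50}. Lifting $f \in \ell^p(\HH_Q)$ to $f^{\#} \in L^p(\G_0^{\#})$ via $f^{\#}(\mu\cdot h) := f(h)$ for $(\mu,h)\in\mathcal{C}_Q\times\HH_Q$, and using the pointwise comparison \eqref{gio60} together with the identity $Q^{d+d'}\widetilde{W}_{k,w}(h) = W_{k,w,Q}(h)$ for $h\in\HH_Q$, one obtains
\begin{equation*}
(f\ast_{\HH_Q} W_{k,w,Q})(h) = (f^{\#}\ast_{\G_0^{\#}} \widetilde{W}_{k,w})(\mu\cdot h) + O\big(2^{k/4}(|f|\ast_{\HH_Q} E_k)(h)\big)
\end{equation*}
for any $(\mu,h)\in\mathcal{C}_Q\times\HH_Q$, where $E_k$ is as in the proof of Lemma~\ref{gio50} and the constraint $2^{k/4}\geq Q$ guarantees $Q\leq 2^{\delta k}$, as required. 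Since $\sum_{k\,:\,2^{k/4}\geq Q,\,k\geq w} 2^{k/4}\|E_k\|_{\ell^1(\HH_Q)} \lesssim 1$, the error contributes at most $\|f\|_{\ell^p(\HH_Q)}$ after taking the supremum. It therefore suffices to establish the continuous shifted maximal inequality
\begin{equation*}
\Big\|\sup_{k\geq w}|f\ast_{\G_0^{\#}} \widetilde{W}_{k,w}|\Big\|_{L^p(\G_0^{\#})} \lesssim_p (w+1)\|f\|_{L^p(\G_0^{\#})}
\end{equation*}
for all $p\in(1,\infty)$ and $w\in\N$.

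For the continuous estimate I would decompose the frequency cutoffs in \eqref{gio52} dyadically. Writing
\begin{equation*}
\eta_{\leq\delta' w}(\tau^k\circ\xi)\,\eta_{\leq\delta w}(\tau^k\circ\theta) = \sum_{0\leq j\leq \delta' w + 1}\Phi_j\big(\tau^k\circ(\xi,\theta)\big)
\end{equation*}
via a Littlewood--Paley partition built from the bumps in \eqref{cutR}, we split $\widetilde{W}_{k,w} = \sum_{0\leq j \leq \delta'w+1} \widetilde{W}^{(j)}_{k,w}$ into $O(w+1)$ pieces. For each fixed $j$, the kernel $\widetilde{W}^{(j)}_{k,w}$ is a smooth bump concentrated near the dilated moment curve at a specific frequency shell, and the associated maximal operator $\sup_{k\geq w}|f\ast_{\G_0^{\#}}\widetilde{W}^{(j)}_{k,w}|$ admits an $L^p(\G_0^{\#})$ bound uniformly in $j$. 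This uniform estimate follows from the continuous maximal function theory of Christ--Nagel--Stein--Wainger \cite{ChNaStWa} on nilpotent groups, applied to averages along the moment curve, or alternatively from vector-valued interpolation between the $L^2$ bound implicit in Proposition~\ref{gio55} and a weak-type $(1,1)$ estimate obtained by domination by the non-isotropic Hardy--Littlewood maximal function on $\G_0^{\#}$. Summing trivially over the $\lesssim w+1$ pieces yields the desired factor $(w+1)$ in the final estimate.

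The main technical obstacle lies in verifying the uniform-in-$j$ $L^p$ maximal bound for the pieces $\widetilde{W}^{(j)}_{k,w}$. Because these kernels are neither positive nor standard averaging kernels, they cannot be dominated pointwise by a nilpotent Hardy--Littlewood maximal function, and a direct $L^p$ argument requires some care. The cleanest approach is likely to combine an $L^2$ bound via the frequency-shell structure with a Calderón--Zygmund-type estimate adapted to the non-isotropic balls on $\G_0^{\#}$, following the framework of \cite[Section~4.2.4]{MS} transplanted to the non-commutative setting. It is precisely here that the non-commutative geometry of $\G_0$ and the absence of a useful Fourier transform make the argument delicate, which is why the statement of Proposition~\ref{lem:max_sh} is flagged as a new ingredient of independent interest, whereas in the commutative theory the analogous bound is essentially automatic from the existence of the delta function \eqref{eq:62}.
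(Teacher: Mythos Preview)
Your approach is genuinely different from the paper's, and the gap you yourself flag is real and not resolved by the references you invoke.

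The paper does \emph{not} transfer to $\G_0^{\#}$ at all. Instead it works directly on $\HH_Q$: it bounds $|W_{k,w,Q}(h)|$ pointwise (see \eqref{aloh1}) by an integral in $u$ of kernels concentrated on the \emph{shifted} quasi-balls
\[
\{y\in\HH_Q:\ \mathfrak q_\beta(h\cdot y^{-1}-A_0(2^k u))<2^k\},
\]
where $\mathfrak q_\beta$ is a suitable anisotropic quasi-norm depending on $w$. This reduces matters to a weak-type $(1,1)$ bound for a shifted Hardy--Littlewood-type maximal function $M_{Q,w,u}$ (Theorem~\ref{thm:II.2}). The proof of that weak-type bound is geometric: a Vitali covering argument together with the key Lemma~\ref{Alne1}, which says that for a fixed top-scale ball $B_{\beta,\HH_Q}(x,2^n)$ one can choose $w+O(1)$ anchor points $x_0,\dots,x_{w+10}$ so that whenever a shifted ball at some smaller scale $k\le n$ lands inside $B_{\beta,\HH_Q}(x,2^n)$, the corresponding \emph{un}shifted ball $B_{\beta,\HH_Q}(z,2^k)$ lies in $\bigcup_j B_{\beta,\HH_Q}(x_j,C_12^n)$. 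The factor $(w+1)$ comes precisely from the number of anchor points.

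Your route instead splits the frequency cutoff $\eta_{\le\delta'w}\eta_{\le\delta w}$ into $O(w+1)$ Littlewood--Paley shells and asserts a uniform-in-$j$ $L^p$ maximal bound for each shell. The problem is that neither \cite{ChNaStWa} nor Proposition~\ref{gio55} supplies that uniform bound. Christ--Nagel--Stein--Wainger treats maximal \emph{averages} along the curve; your pieces $\widetilde W^{(j)}_{k,w}$ for $j\ge 1$ are signed, mean-zero Littlewood--Paley differences, not averages. Proposition~\ref{gio55} gives the $L^2$ bound for the full operator with $w=k$, and \eqref{eq:12con} gives an $L^2$ bound for a \emph{linear combination} (hence a square function) of a single difference $\widetilde W_{k,w+1}-\widetilde W_{k,w}$; neither yields an $L^p$ bound for $\sup_k|f\ast\widetilde W^{(j)}_{k,w}|$ uniform in $j$ for $p<2$. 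If you try to recover a weak $(1,1)$ bound by pointwise domination, the natural majorant of $|\widetilde W^{(j)}_{k,w}|$ is again a shifted average (the curve measure thickened to scale $\tau^{-\delta j}$), so you are led straight back to a shifted maximal inequality of exactly the type you are trying to prove, and the argument becomes circular. The substance of the proposition is precisely the geometric covering argument the paper carries out; your decomposition relocates, but does not remove, that difficulty.
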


We prove Proposition~\ref{lem:max_sh} in Appendix~\ref{ssec:max_sh}. For now we show how to use the conclusions of Propositions \ref{prop:log} and \ref{lem:max_sh} to complete the proof of Theorem \ref{picu2p}.

\subsection{Proof of Theorem~\ref{picu2p}} We divide the proof in several steps:

{\bf{Step 1.}} In view Lemma~\ref{lem:12}, in order to prove \eqref{W1.1} it suffices to find a sequence of linear operators 
$(A_{k}^{\lambda, \rho})_{k \in \NN}$,  $\rho \in (0,1)$ and $\lambda > 0$ satisfying \eqref{W1.r} and \eqref{W1.2}. For $\lambda \le e^D$ we can just set $A_{k}^{\lambda, \rho} \equiv 0$ and the bounds \eqref{W1.2} follow from the already established $\ell^2(\G_0)$ theory for the maximal operator $\sup_{k\ge0}|\M_kf|$ . 

Therefore from now on we may focus only on $\lambda\geq e^D$. Let us define
\begin{align}
\label{eq:21}
S := \lfloor \ln \lambda \rfloor\geq D.
\end{align}
Recall from \eqref{eq:1} and \eqref{eq:2} that for $S$ as in \eqref{eq:21}  we have respectively
\begin{align*}
\kappa_S =2^{(D/\ln 2)(S+1)^2}. \qquad \text{ and } \qquad Q_S=(2^{D(S+1)})!.
\end{align*}
If $\lambda\geq e^D$ and $k\leq \kappa_S$ then we just define $A_{k}^{\lambda, \rho} = \M_k$. The bounds \eqref{W1.2} are trivial, whereas the bounds \eqref{W1.r} follow from Proposition~\ref{prop:log}. Indeed, since $S^4\simeq (\ln \lambda)^4$ we have
\begin{align*}
\norm[\big]{\sup_{ 1 \le k\leq \kappa_S } \abs{\M_k f } }_{\ell^u(\G_0)}
& \le 
\sum_{j=1}^{2D(S+1)^2}
\norm[\big]{\sup_{ 2^{j-1} \le k\leq 2^{j} } \abs{\M_{k} f } }_{\ell^u(\G_0)}\\
&\lesssim 
\sum_{j=0}^{2D(S+1)^2} (j + 1) \norm{f }_{\ell^u(\G_0)}\lesssim
(\log \lambda)^4 \norm{f }_{\ell^u(\G_0)}.
\end{align*}

{\textbf{Step 2.}} Assume now that $\lambda\geq e^D$ and $k \ge \kappa_S$. We set $A_{k}^{\lambda, \rho} f = f \ast K_{k,S,\widetilde{\mathcal{R}}^d_{Q_S},\widetilde{\mathcal{R}}^{d'}_{Q_S}}$, where the kernels $K_{k,w,\A,\B}$ are defined as in \eqref{def:Kkw}. In view of Lemma \ref{lem:12} it suffices to show that 
\begin{align} \label{W2.r}
\norm[\big]{\sup_{ k \ge \kappa_S } \abs{f \ast K_{k,S,\widetilde{\mathcal{R}}^d_{Q_S},\widetilde{\mathcal{R}}^{d'}_{Q_S}} } }_{\ell^p(\G_0)}
& \lesssim_p (\ln\lambda)\norm{f }_{\ell^p(\G_0)}, 
\qquad f \in \ell^p(\G_0) \\ \label{W2.2}
\norm[\big]{\sup_{ k \ge \kappa_S } 
\abs{\M_{k} f - f \ast K_{k,S,\widetilde{\mathcal{R}}^d_{Q_S},\widetilde{\mathcal{R}}^{d'}_{Q_S}} } }_{\ell^2(\G_0)}
& \lesssim
\lambda^{-\delta/D^3} \norm{f }_{\ell^2(\G_0)}, 
\qquad f \in \ell^2(\G_0).
\end{align}
for every $p\in(1,2]$.

Let $\mathcal K_{k,w,\A,\B}$, $\mathcal W_{k, w, Q}$ and $\mathcal V_{\A,\B,Q}$ denote the convolution operators corresponding respectively to the kernels $K_{k,w,\A,\B}$, $W_{k, w, Q}$ and $V_{\A,\B, Q}$ defined in Lemma \ref{kio5}. Let $Q = Q_S$, $\A=\widetilde{\mathcal{R}}^d_{Q_S}$, $\B=\widetilde{\mathcal{R}}^{d'}_{Q_S}$, $k_0 =\lfloor\kappa_S\rfloor$, and $w = S$. Notice that $1 \le Q_S \le 2^{\delta k_0}$, so the decomposition \eqref{kio6} and the error term estimate \eqref{kio9} of Lemma \ref{kio5} hold.

We prove first the bounds \eqref{W2.r}. We apply Lemma \ref{lem:1} with 
$\mathcal K_k^{\G_0}=\mathcal K_{k,S,\widetilde{\mathcal{R}}^d_{Q_S},\widetilde{\mathcal{R}}^{d'}_{Q_S}}$, $\mathcal W_k^{\HH_{Q_S}}=\mathcal W_{k, S, Q_S}$ and $\mathcal V^{\JJ_{Q_S}}=\mathcal V_{\widetilde{\mathcal{R}}^d_{Q_S},\widetilde{\mathcal{R}}^{d'}_{Q_S}, Q_S}$ and conclude from \eqref{eq:10} (with $B=\ell^{\infty}$) that
\begin{align}
\label{eq:11}
\begin{split}
\norm[\big]{\sup_{ k \ge \kappa_S } &\abs{f \ast K_{k,S,\widetilde{\mathcal{R}}^d_{Q_S},\widetilde{\mathcal{R}}^{d'}_{Q_S}} } }_{\ell^p(\G_0)}\lesssim \|(\mathcal W_{k, S, Q_S})_{k\ge \kappa_S}\|_{\ell^{p}(\HH_{Q_S}) \to \ell^{p}(\HH_{Q_S}; \ell^{\infty})}\\
&\times\|\mathcal V_{\widetilde{\mathcal{R}}^d_{Q_S},\widetilde{\mathcal{R}}^{d'}_{Q_S}, Q_S}\|_{\ell^{p}(\JJ_{Q_S}) \to \ell^{p}(\JJ_{Q_S})}\|f\|_{\ell^p(\G_0)}+2^{-\kappa_S/8}\|f\|_{\ell^p(\G_0)}.
\end{split}
\end{align}
From Proposition \ref{lem:max_sh} we know that
\begin{align}
\label{eq:22}
\|(\mathcal W_{k, S, Q_S})_{k\ge \kappa_S}\|_{\ell^{p}(\HH_{Q_S}) \to \ell^{p}(\HH_{Q_S}; \ell^{\infty})}\lesssim S.
\end{align}
We also know that
\begin{align} \label{id:186}
\|\mathcal V_{\widetilde{\mathcal{R}}^d_{Q_S},\widetilde{\mathcal{R}}^{d'}_{Q_S}, Q_S}\|_{\ell^{p}(\JJ_{Q_S}) \to \ell^{p}(\JJ_{Q_S})}\le \norm[\big]{ V_{\widetilde{\mathcal{R}}^d_{Q_S},\widetilde{\mathcal{R}}^{d'}_{Q_S}, Q_S} }_{\ell^{1}(\JJ_{Q_S}) }
\lesssim  1,
\end{align}
which follows from the direct computation
\begin{align*}
V_{\widetilde{\mathcal{R}}^d_{Q_S},\widetilde{\mathcal{R}}^{d'}_{Q_S}, Q_S} (b)
& =
Q_S^{-d-d'} \Big\{ \sum_{a \in \Z_{Q_S}^{d}} S (a/Q_S) \ex(b^{(1)}.a/Q_S) \Big\}
\Big\{ \sum_{c \in \Z_{Q_S}^{d'}}\ex(b^{(2)}.c/Q_S) \Big\} \\
& =
Q_S^{-1} \sum_{n \in \Z_{Q_S}} \ind{\{A_0 (n)\}}(b).
\end{align*}
The bounds \eqref{W2.r} follow from \eqref{eq:11}--\eqref{id:186}.

{\textbf{Step 3.}}  Finally, we prove the bounds \eqref{W2.2}. Observe that for $k \ge \kappa_S$ we have the following decomposition, with the notation in Section \ref{sec:maxout},
\begin{align*}
\M_{k} f 
- f \ast K_{k,S,\widetilde{\mathcal{R}}^d_{Q_S},\widetilde{\mathcal{R}}^{d'}_{Q_S}}
& =
\M_{k} f 
- f \ast \Big[ \sum_{s\in[0,\delta k]} K_{k,s} \Big]
+ f \ast \Big[ \sum_{s\in(\delta S,\delta k]} K_{k,s} \Big]
+ f \ast \Big[ \sum_{s\in[0,\delta S]}G_{k,s}^c\Big]\\
& \quad 
+ f \ast K_{k,k,\mathcal{R}^{d}_{\leq \delta' k} \setminus \widetilde{\mathcal{R}}^d_{Q_S},
\mathcal{R}^{d'}_{\leq \delta S}}
- f \ast K_{k,k,\widetilde{\mathcal{R}}^d_{Q_S},
\widetilde{\mathcal{R}}^{d'}_{Q_S} \setminus \mathcal{R}^{d'}_{\leq \delta S}} \\
& \quad 
+ f \ast \Big[ K_{k,k,\widetilde{\mathcal{R}}^d_{Q_S},
\widetilde{\mathcal{R}}^{d'}_{Q_S} } 
- K_{k,S,\widetilde{\mathcal{R}}^d_{Q_S},
\widetilde{\mathcal{R}}^{d'}_{Q_S} }\Big].
\end{align*}
Therefore, to prove \eqref{W2.2} it is enough to show that for every $\lambda\geq e^D$ and $f \in \ell^2(\G_0)$
\begin{align} \label{W2.1}
\norm[\Big]{\sup_{ k \ge \kappa_S } 
\abs[\Big]{\M_{k} f 
- f \ast \Big[ \sum_{s\in[0,\delta k]} K_{k,s} \Big] } }_{\ell^2(\G_0)}
& \lesssim
\lambda^{-1} \norm{f }_{\ell^2(\G_0)}, \\ \label{W3.1}
\norm[\Big]{\sup_{ k \ge \kappa_S } 
\abs[\Big]{f \ast \Big[ \sum_{s\in(\delta S,\delta k]} K_{k,s} \Big] } }_{\ell^2(\G_0)}
& \lesssim
\lambda^{-\delta/D^3} \norm{f }_{\ell^2(\G_0)},  \\ \label{W4.1}
\norm[\Big]{\sup_{ k \ge \kappa_S } 
\abs[\Big]{f \ast \Big[ \sum_{s\in[0,\delta S]} G_{k,s}^c\Big] }}_{\ell^2(\G_0)}
& \lesssim
\lambda^{-1} \norm{f }_{\ell^2(\G_0)}, \\ \label{W18.2}
\norm[\big]{\sup_{ k \ge \kappa_S } 
\abs{ f \ast K_{k,k,\mathcal{R}^{d}_{\leq \delta' k} \setminus \widetilde{\mathcal{R}}^d_{Q_S},
\mathcal{R}^{d'}_{\leq \delta S}} } }_{\ell^2(\G_0)}
& \lesssim
\lambda^{-1/D^2} \norm{f }_{\ell^2(\G_0)}, \\ \label{W6.2}
\norm[\big]{\sup_{ k \ge \kappa_S } 
\abs{ f \ast K_{k,k,\widetilde{\mathcal{R}}^d_{Q_S},
\widetilde{\mathcal{R}}^{d'}_{Q_S} \setminus \mathcal{R}^{d'}_{\leq \delta S}} } }_{\ell^2(\G_0)}
& \lesssim
\lambda^{-\delta/D^2} \norm{f }_{\ell^2(\G_0)}, \\ \label{W10.2}
\norm[\Big]{\sup_{ k \ge \kappa_S } 
\abs[\Big]{ f \ast \Big[ K_{k,k,\widetilde{\mathcal{R}}^d_{Q_S},
\widetilde{\mathcal{R}}^{d'}_{Q_S} } 
- K_{k,S,\widetilde{\mathcal{R}}^d_{Q_S},
\widetilde{\mathcal{R}}^{d'}_{Q_S} }\Big] } }_{\ell^2(\G_0)}
& \lesssim
\lambda^{-\delta/D^2} \norm{f }_{\ell^2(\G_0)}.
\end{align}

\textbf{Step 4.} We now establish inequalities \eqref{W2.1}--\eqref{W10.2}. Notice that  
$\M_{k} f 
- f \ast \Big[ \sum_{s\in[0,\delta k]} K_{k,s} \Big] = f \ast K_{k}^c$, and the bounds \eqref{W2.1} follow from Lemma~\ref{MinArc1}. Similarly, the bounds \eqref{W4.1} follow from Lemma~\ref{MinArc2gen1} with $\B = \mathcal{R}^{d'}_{\leq \delta S}$. In addition, combining \eqref{picu13} with \eqref{picu16} we obtain
\begin{align*}
\norm[\Big]{\sup_{ k \ge \kappa_S } 
\abs[\Big]{f \ast \Big[ \sum_{s\in(\delta S,\delta k]} K_{k,s} \Big] } }_{\ell^2(\G_0)}
& \le 
\sum_{ s > \delta S}
\norm[\big]{\sup_{ k \ge \max(\kappa_S,s/\delta) } 
\abs{f \ast K_{k,s} } }_{\ell^2(\G_0)}\\
&\lesssim
\sum_{ s > \delta S}
2^{-s/D^2}\|f\|_{\ell^2(\G_0)}\lesssim
\lambda^{-\delta/D^3}\|f\|_{\ell^2(\G_0)}.
\end{align*}
This proves \eqref{W3.1}. Moreover, using \eqref{eq:26} and \eqref{picu28K} with $\A=\mathcal{R}^{d}_{t} \setminus \widetilde{\mathcal{R}}^d_{Q_S}$ and $\B=\mathcal{R}^{d'}_{\leq \delta S}$,
\begin{align*}
\norm[\big]{\sup_{ k \ge \kappa_S } 
\abs{ f \ast K_{k,k,\mathcal{R}^{d}_{\leq \delta' k} \setminus \widetilde{\mathcal{R}}^d_{Q_S},
\mathcal{R}^{d'}_{\leq \delta S}} } }_{\ell^2(\G_0)}
& \le 
\sum_{t \ge D(S+1) } 
\norm[\big]{\sup_{ k \ge \max(\kappa_S,t/\delta') } 
\abs{ f \ast K_{k,k,\mathcal{R}^{d}_{t} \setminus \widetilde{\mathcal{R}}^d_{Q_S},
\mathcal{R}^{d'}_{\leq \delta S}} } }_{\ell^2(\G_0)} \\
& \lesssim 
\sum_{t \ge D(S+1) }
2^{-t/D^2} \|f\|_{\ell^2(\G_0)}
\lesssim
\lambda^{-1/D^2}  \|f\|_{\ell^2(\G_0)}.
\end{align*}
This completes the proof of \eqref{W18.2}.

We prove now the bounds \eqref{W6.2}. We apply Lemma \ref{kio5} with $Q = Q_S$, $\A=\widetilde{\mathcal{R}}^d_{Q_S}$, $\B=\widetilde{\mathcal{R}}^{d'}_{Q_S} \setminus \mathcal{R}^{d'}_{\leq \delta S}$, $k_0 = \lfloor \kappa_S \rfloor$ and $w = k$. Then we apply Lemma \ref{lem:1} and conclude from \eqref{eq:10} that
\begin{align*}
\norm[\big]{\sup_{ k \ge \kappa_S } 
\abs{ f \ast &K_{k,k,\widetilde{\mathcal{R}}^d_{Q_S},
\widetilde{\mathcal{R}}^{d'}_{Q_S} \setminus \mathcal{R}^{d'}_{\leq \delta S}} } }_{\ell^2(\G_0)} \lesssim
\|(\mathcal W_{k, k, Q_S})_{k\ge \kappa_S}\|_{\ell^{2}(\HH_{Q_S}) \to \ell^{2}(\HH_{Q_S}; \ell^{\infty})}\\
&\times\norm[\big]{ \mathcal V_{\widetilde{\mathcal{R}}^d_{Q_S},
\widetilde{\mathcal{R}}^{d'}_{Q_S} \setminus \mathcal{R}^{d'}_{\leq \delta S}, Q_S} }_{\ell^{2}(\JJ_{Q_S}) \to \ell^{2}(\JJ_{Q_S}) }   \norm{f}_{\ell^2(\G_0)}+2^{-\lambda/8} \norm{f}_{\ell^2(\G_0)}.
\end{align*}
By \eqref{eq:5} we may conclude that
\begin{align*} 
\norm[\big]{ \mathcal V_{\widetilde{\mathcal{R}}^d_{Q_S},
\widetilde{\mathcal{R}}^{d'}_{Q_S} \setminus \mathcal{R}^{d'}_{\leq \delta S}, Q_S} }_{\ell^{2}(\JJ_{Q_S}) \to \ell^{2}(\JJ_{Q_S}) } 
\lesssim
2^{-\delta S/D} \norm{f }_{\ell^{2}(\JJ_{Q_S}) }.
\end{align*}
The bounds \eqref{W6.2} follow using also Lemma \ref{gio50}.

Finally, we prove the bounds \eqref{W10.2}. By a simple square function argument and  Khinchine's inequality it suffices to prove that for every $w\ge S$, every sequence $(\varkappa_k)_{k\in\N}\subseteq [-1, 1]$ and any $f\in\ell^2(\G_0)$ we have
\begin{align}
\label{eq:27}
\norm[\Big]{\sum_{ k \ge \max\{\kappa_S,w+1\}  } 
\varkappa_kf \ast \big[ K_{k,w+1,\widetilde{\mathcal{R}}^d_{Q_S},
\widetilde{\mathcal{R}}^{d'}_{Q_S} } 
- K_{k,w,\widetilde{\mathcal{R}}^d_{Q_S},
\widetilde{\mathcal{R}}^{d'}_{Q_S} }\big]}_{\ell^2(\G_0)}\lesssim 2^{-w/D^2}\|f\|_{\ell^2(\G_0)}.
\end{align}
We apply again Lemma \ref{kio5} with $Q = Q_S$, $\A=\widetilde{\mathcal{R}}^d_{Q_S}$, $\B=\widetilde{\mathcal{R}}^{d'}_{Q_S}$ and $w \ge S$. Then we apply Lemma \ref{lem:1} with 
$k_0 = \max\{\kappa_S,w+1\}$, 
$K_k^{\G_0}= K_{k,w+1,\widetilde{\mathcal{R}}^d_{Q_S},
\widetilde{\mathcal{R}}^{d'}_{Q_S} } 
- K_{k,w,\widetilde{\mathcal{R}}^d_{Q_S},
\widetilde{\mathcal{R}}^{d'}_{Q_S} }$, $W_k^{\HH_{Q_S}}=W_{k, w+1, Q_S}-W_{k, w, Q_S}$ and $V^{\JJ_{Q_S}}=V_{\widetilde{\mathcal{R}}^d_{Q_S},
\widetilde{\mathcal{R}}^{d'}_{Q_S}, Q_S}$ and conclude from \eqref{eq:10.1} that the left-hand side of \eqref{eq:27} is controlled by
\begin{align*}
\norm[\Big]{\sum_{ k \ge \max\{\kappa_S,w+1\}  } 
\varkappa_k\big[ \mathcal W_{k, w+1, Q_S}-\mathcal W_{k, w, Q_S}\big]}_{\ell^2(\HH_{Q_S})\to \ell^2(\HH_{Q_S})}\norm{f}_{\ell^2(\G_0)}
+
2^{-w/8} \norm{f}_{\ell^2(\G_0)},
\end{align*}
since $\|\mathcal V_{\widetilde{\mathcal{R}}^d_{Q_S},\widetilde{\mathcal{R}}^{d'}_{Q_S}, Q_S}\|_{\ell^{2}(\JJ_{Q_S}) \to \ell^{2}(\JJ_{Q_S})}\lesssim
1$ by \eqref{id:186}. Finally, using \eqref{eq:12} we obtain 
\begin{align*}
\norm[\Big]{\sum_{ k \ge \max\{\kappa_S,w+1\} } 
\varkappa_k\big[ \mathcal W_{k, w+1, Q_S}-\mathcal W_{k, w, Q_S}\big]}_{\ell^2(\HH_{Q_S})\to \ell^2(\HH_{Q_S})}\lesssim 2^{-w/D^2}
\end{align*}
as desired and the proof of \eqref{W10.2} is finished. This also completes the proof of Theorem~\ref{picu2p}. 

\subsection{Proof of Proposition~\ref{prop:log}} \label{ssec:plog} To prove Proposition~\ref{prop:log} we exploit the positivity of the operator $\M_k f$, i.e., $\M_k f\ge0$ whenever $f\ge0$. We will extend the ideas of Bourgain \cite[Lemma 7.32]{Bo1} (see also \cite[Lemmas 4.2 and 4.3]{IMSW}) to the nilpotent setting. We will need the following technical result, to approximate the original operator. 

\begin{lemma} \label{lem:approx}
For every $\mu \in \mathbb{Z}_{+}$ there is a constant $C_{\mu} > 0$ such that for every $f \in \ell^2(\G_0)$ the following inequality
\begin{align*}
\norm[\big]{ \M_k f - f \ast U_{k,J,S,\mu} }_{\ell^2(\G_0)}
\le 
C_{\mu} S^{-1/D^2} \norm{f}_{\ell^2(\G_0)},
\end{align*}
holds
uniformly in  $1 \le J \le k \le 2J$, $1 \le S \le 2^{\delta k}$ satisfying $S^D \le 2^{\delta' k}$ and $S \le J^{\mu}$. Here 
\begin{equation} \label{T26.2}
\begin{split}
& U_{k,J,S,\mu} (g):=
\phi_{k}(g) 
\sum_{\sigma^{(1)} \in \mathcal{R}^d_{\leq D \log_2 S} \cap [0,1)^{d}} \sum_{\sigma^{(2)} \in \mathcal{R}^{d'}_{\leq \log_2 S} \cap [0,1)^{d'}} \ex\big(g.(\sigma^{(1)}, \sigma^{(2)})\big) S(\sigma^{(1)}) \\
& \times\!\Big\{\prod_{(l_1,l_2)\in Y_d}2^{-k(l_1+l_2)}\Big\}
\!\int_{\R^{d+d'}} \!\eta_{\leq \delta' D \mu \lfloor \log_2 J \rfloor }(\xi^{(1)}) 
\eta_{\leq \delta D \mu \lfloor \log_2 J \rfloor }(\xi^{(2)}) J_k(2^{-k}\circ \xi^{(1)})  \ex[(2^{-k}\circ g){.}\xi]
\, d\xi.
\end{split}
\end{equation}
\end{lemma}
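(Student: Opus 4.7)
The kernel $U_{k,J,S,\mu}$ is a ``fully major arcs'' approximation to $K_k$: the Gauss sum factor $\sum_\sigma S(\sigma^{(1)})\ex(g.\sigma)$ encodes the arithmetic content at denominators $\lesssim S$ (central) and $\lesssim S^D$ (non-central), while the integral factor, after the substitution $\xi'=2^{-k}\circ\xi$, matches the continuous kernel $J_k$ truncated in frequency at scales $J^{\delta' D\mu}$ and $J^{\delta D\mu}$. The plan is to reduce $K_k$ to its ``fully major arcs'' part via the nilpotent circle method of Section~\ref{sec:maxout}, and then perform a local replacement of $S_k$ together with an adjustment of the frequency cutoffs to match $U_{k,J,S,\mu}$.

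\textbf{Reduction to fully major arcs.} Decompose $K_k = K_k^c + \sum_{0\le s\le\delta k}K_{k,s}$. Lemma~\ref{MinArc1} gives $\|f\ast K_k^c\|_{\ell^2(\G_0)}\lesssim 2^{-k/D^2}\|f\|_{\ell^2(\G_0)} \lesssim S^{-1/D^2}\|f\|_{\ell^2(\G_0)}$ since $S\le 2^{\delta k}$ and $\delta\ll 1$. For each integer $s>\log_2 S$, combining Lemma~\ref{MajArc1} (for $k<2\kappa_s$) with the second-stage decomposition $K_{k,s}=G_{k,s}^{\low}+\sum_t G_{k,s,t}+G_{k,s}^c$ and Lemmas~\ref{MinArc2}, \ref{MajArc2}(i)--(ii), \ref{MajArc3} (for $k\ge 2\kappa_s$) yields the uniform bound $\|f\ast K_{k,s}\|_{\ell^2(\G_0)}\lesssim 2^{-s/D^2}\|f\|_{\ell^2(\G_0)}$; summing over $s>\log_2 S$ produces $\lesssim S^{-1/D^2}\|f\|_{\ell^2(\G_0)}$. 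For each $s\le\log_2 S$, Lemma~\ref{MinArc2} handles $G_{k,s}^c$, and Lemmas~\ref{MajArc2}(ii), \ref{MajArc3} handle $G_{k,s,t}$ for integers $t>D\log_2 S$ with gain $2^{-t/D^2}$ each; their joint contribution is $\lesssim S^{-1/D^2}\|f\|_{\ell^2(\G_0)}$ once $D$ is chosen sufficiently large.

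\textbf{Identification and local approximation.} What remains is $\sum_{s\le\log_2 S}\big(G_{k,s}^{\low}+\sum_{D(s+1)\le t\le D\log_2 S}G_{k,s,t}\big)$. Since $Q_s$ is divisible by every integer $\le 2^{D(s+1)}$, expanding the multipliers in \eqref{picu21} shows that this expression equals, modulo a contribution supported on fractions with denominators exceeding $S^D$ (controlled as in Lemma~\ref{MinArc2} using the Gauss sum bound $|S(\sigma)|\lesssim S^{-D/\overline{C}}$ of Proposition~\ref{minarcscom}(ii)), the product-structured kernel $K_{k,k,\A_S,\B_S}$ from \eqref{def:Kkw} with $\A_S=\mathcal R^d_{\le D\log_2 S}\cap[0,1)^d$ and $\B_S=\mathcal R^{d'}_{\le\log_2 S}\cap[0,1)^{d'}$. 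Then apply Lemma~\ref{Skappr} on each major arc to replace $S_k(\sigma^{(1)}+\xi^{(1)})$ by $S(\sigma^{(1)})J_k(\xi^{(1)})$ with pointwise error $O(2^{-Dk})$; summing over the $\lesssim S^{D(d+1)+d'+1}$ major arcs and over the physical-side support $|2^{-k}\circ g|\lesssim 2^{\delta k}$ yields an $\ell^1(\G_0)$ error of size $\lesssim S^{-1/D^2}$.

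\textbf{Cutoff adjustment (main obstacle).} It remains to shrink the frequency cutoffs from the natural scales $\delta' k, \delta k$ to the smaller scales $\delta' D\mu\lfloor\log_2 J\rfloor, \delta D\mu\lfloor\log_2 J\rfloor$ appearing in $U_{k,J,S,\mu}$. The hypotheses $S^D\le 2^{\delta' k}$ and $S\le J^\mu$ guarantee that the shrunken cutoff scales still exceed $D\log_2 S$, so the supports around distinct major arcs $\sigma\in\A_S\times\B_S$ remain disjoint after the shrinking. \emph{The principal difficulty} is quantifying the shrinking step: one must show that the difference between the two cutoffs, applied to $J_k$ around each major arc, contributes an acceptably small $\ell^2(\G_0)\to\ell^2(\G_0)$ convolution bound. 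The plan is a high-order $T^\ast T$ argument in the spirit of Lemmas~\ref{MinArc1} and~\ref{MinArc2}, applied to the difference of the two multipliers, exploiting the rapid decay of $J_k$ off the support of the $\chi$-integral, the decay of the Fourier transforms of the smooth cutoff differences, and the physical-side localization by $\phi_k$. This should yield a gain of the form $J^{-c}$ for any prescribed $c>0$ by taking $r$ large in the $T^\ast T$ iteration, which via $S\le J^\mu$ translates into the required $S^{-1/D^2}$ bound provided $c$ is chosen proportionally to $\mu/D^2$.
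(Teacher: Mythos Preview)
Your overall architecture—peel off minor arcs in both variables, reduce to $K_{k,k,\A_S,\B_S}$ with $\A_S=\mathcal R^d_{\le D\log_2 S}$, $\B_S=\mathcal R^{d'}_{\le\log_2 S}$, replace $S_k$ by $S(\sigma^{(1)})J_k$ via Lemma~\ref{Skappr}, and then shrink the frequency cutoffs by a high-order $T^\ast T$ argument—is exactly the paper's route, and your description of the cutoff-adjustment step matches Lemma~\ref{lem:2}.

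There is, however, a genuine gap in your middle steps. For $s\le\log_2 S$ you invoke the second-stage decomposition $K_{k,s}=G_{k,s}^{\low}+\sum_t G_{k,s,t}+G_{k,s}^c$ and then Lemmas~\ref{MajArc2}(ii), \ref{MajArc3}. But that decomposition, and those lemmas, require $k\ge\kappa_s$ so that the cutoffs around the fractions in $\widetilde{\mathcal R}^d_{Q_s}$ are disjoint. Under the stated hypotheses $J\le k\le 2J$, $S\le J^\mu$, one has $\kappa_{\lfloor\log_2 S\rfloor}\sim 2^{D(\log_2 S)^2}$, which in general vastly exceeds $k$. Relatedly, your ``residual contribution supported on fractions with denominators exceeding $S^D$'' consists of fractions in $\widetilde{\mathcal R}^d_{Q_s}\setminus\mathcal R^d_{\le D\log_2 S}$; their reduced denominators range up to $Q_s=\big(\lfloor 2^{D(s+1)}\rfloor\big)!$, so neither the pointwise Gauss sum bound (there are far too many terms) nor Lemma~\ref{laj10} (which needs $q_{\A}\le 2^{\delta'k}$) applies. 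Lemma~\ref{MinArc2} is about $\Psi_k^c$ and does not address these large-denominator major arcs.

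The paper sidesteps this by never introducing $Q_s$ in the non-central variable. Since $L_k^c$ and hence $G_{k,s}^c$ do not depend on $Q_s$, one has directly
\[
\sum_{s\le\log_2 S}\bigl(K_{k,s}-G_{k,s}^c\bigr)=K_{k,k,\mathcal R^d_{\le\delta'k},\mathcal R^{d'}_{\le\log_2 S}},
\]
and then splits $\mathcal R^d_{\le\delta'k}=\mathcal R^d_{\le D\log_2 S}\sqcup\bigl(\mathcal R^d_{\le\delta'k}\setminus\mathcal R^d_{\le D\log_2 S}\bigr)$. The first piece is $K_{k,k,\A_S,\B_S}$; the second is handled level by level in $p>D\log_2 S$ via Lemma~\ref{laj10} (whose hypothesis $q_{\A}\simeq 2^p\le 2^{\delta'k}$ is now satisfied), yielding~\eqref{T8.1}. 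With this routing your remaining steps go through unchanged.
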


We show first how to use Lemma~\ref{lem:approx} to prove Proposition~\ref{prop:log}. We proceed in several steps.

{\textbf{Step 1.}}
Since the result is clear for $p = \infty$ it suffices to consider only $p \in (1,2]$ and nonnegative functions $f:\G_0\to[0,\infty)$. Let $\widetilde{K}_{j} (x) = K_{j} (x^{-1})$. By a general abstract argument, involving duality and a separation in scales $j$ (see \cite{Bo1} and \cite[Lemma 4.2]{IMSW}), it suffices to show that
\begin{align} \label{R5.1}
\norm[\big]{ \sum_{j \in \mathcal{S}} h_j \ast \widetilde{K}_{j} }_{\ell^{R}(\G_0)}
\lesssim_R\abs{F}^{1/R},
\end{align}
for any even integer $R\geq 2$, any subset $F\subseteq\G_0$, any functions $h_j$ satisfying
\begin{align} \label{id:164}
h_j = g_j \ind{F},\qquad g_j:\G_0\to [0,1], \qquad\sum_{j\in\mathcal{S}} g_j(x) \le 1\quad \text{ for any }\quad x \in \G_0,
\end{align}
and any subset $\mathcal{S}\subseteq[J+1,2J]$ satisfying the sparseness property $\abs{l - l'} \ge D \mu \log_2 J$ if $l\neq l' \in \mathcal{S}$. Here $\mu=\mu(R)$ is a sufficiently large constant to be determined later (in \eqref{def:con}).

Indeed, by a duality argument there are functions $0 \le g_j \le 1$ for $J<j \le 2J$, such that $\sum_{J<j \le 2J} g_j(x) = 1$, $x \in \G_0$,	and
	\begin{align*}
		\sup_{J < j \le 2J} \abs{f \ast K_{j} (x)}
		=
		\sum_{J<j \le 2J} f \ast K_{j} (x) g_j(x), \qquad x \in \G_0, \quad J \ge 1.
	\end{align*}
	Then, we have
	\begin{align*}
		\norm[\big]{\sup_{J < j \le 2J} \abs{f \ast K_{j} } }_{\ell^p(\G_0)}
		&=
		\norm[\big]{ \sum_{J<j \le 2J} ( f \ast K_{j} )  g_j }_{\ell^p(\G_0)} \\
		& \le 
		\sup_{\norm{h}_{\ell^{p'}(\G_0)} \le 1}
		\norm[\big]{ \sum_{J<j \le 2J}  (h g_j) \ast \widetilde{K}_{j} }_{\ell^{p'}(\G_0)}
		\norm{f }_{\ell^p(\G_0)}.
	\end{align*}
	Using interpolation it suffices to show that the latter operator is of
	restricted weak type $(R,R)$ for any integer $R \ge 2$, with norm
	$\lesssim_R \log (J+2)$. This means that we need to show that for
	every fixed integer $R \ge 2$, every finite subset $F \subseteq \G_0$
	and every $J \ge 1$ we have
	\begin{align*} 
		\norm[\big]{ \sum_{J<j \le 2J} h_j \ast \widetilde{K}_{j} }_{\ell^{R}(\G_0)}
		\lesssim_R
		\log (J+2) \abs{F}^{1/R},
	\end{align*}
	where $h_j = g_j \ind{F}$ for every $J<j \le 2J$.
	Finally, we partition the set $(J,2J]$ into at most $D \mu \log_2 J + 1$ subsets $\mathcal{S}$ with the sparseness property mentioned above.
	Therefore, we reduced our task to showing \eqref{R5.1}.
We prove \eqref{R5.1} by induction over $R$. The case $R=2$ follows from the $\ell^2(\G_0)$ boundedness of the maximal function $\sup_{j\ge0}|\mathcal{M}_j|$. The case of general $R$ can be reduced to proving that
\begin{align} \label{R7.1}
\norm[\Big]{ 
\Big( \prod_{n=2}^R h_{j_n} \ast \widetilde{K}_{j_n} \Big) \ast ( K_{j_1} - K_{j_0}) }_{\ell^{2}(\G_0)}
\lesssim_R 
J^{-R} \abs{F}^{1/2},
\end{align}
uniformly in $J = j_0 < j_1 < \ldots < j_R \le 2J$ satisfying 
\begin{align} \label{R7.2}
j_{n+1} - j_n \ge D \mu \log_2 J, \qquad 1 \le n \le R-1.
\end{align}
See \cite[Lemma 4.2]{IMSW} for the details of this reduction, which apply in our case as well.

{\textbf{Step 2.}} To prove \eqref{R7.1} we first define some constants
\begin{align} \label{def:con}
A  := D^4 + R, \quad
\mu  := D^2 A^R + R, \quad
S_n  := J^{A^n}, \qquad 1 \le n \le R.
\end{align}
We may assume that $J\gtrsim_{\mu}1$, so $1 \le S_n \le 2^{\delta J/2}$, $S_n^{D} \le 2^{\delta' J/2}$ and $S_n \le J^{\mu}$, $1 \le n \le R$. For simplicity of notation, in the rest of this subsection the implicit constants are allowed to depend on $R$. Using Lemma~\ref{lem:approx} we obtain
for every  $f \in \ell^2(\G_0)$ that
\begin{align} \label{U2.1}
\norm[\big]{ f \ast \widetilde{K}_{j_n} -  f \ast \widetilde{U}_{n,J,\mu} }_{\ell^2(\G_0)}
\lesssim
S_n^{-1/D^2} \norm{f}_{\ell^2(\G_0)}, 
\qquad 1 \le n \le R, \quad J \ge J_0,
\end{align}
where $\widetilde{U}_{n,J,\mu} (x): = U_{j_n,J,S_{n},\mu} (x^{-1})$, see \eqref{T26.2}. Here we use the fact that if $Tf = f \ast K$ and 
$\widetilde{T} f = f \ast \widetilde{K}$, then 
$\norm{T}_{\ell^2(\G_0) \to \ell^2(\G_0)} 
= \norm{\widetilde{T} }_{\ell^2(\G_0) \to \ell^2(\G_0)}$.

We show that
\begin{align} \label{U2.2}
\norm[\Big]{ \prod_{n=2}^R h_{j_n} \ast \widetilde{K}_{j_n}-
\prod_{n=2}^R h_{j_n} \ast \widetilde{U}_{n,J,\mu} }_{\ell^{2}(\G_0)}
\lesssim
J^{-R} \abs{F}^{1/2},
\end{align}
uniformly in $J = j_0 < j_1 < \ldots < j_R \le 2J$ satisfying \eqref{R7.2}.
Indeed, notice that
\begin{align} \label{U2.3}
\norm{ h_{j_n} \ast \widetilde{U}_{n,J,\mu} }_{\ell^{\infty}(\G_0)}
\le 
\norm{ \widetilde{U}_{n,J,\mu} }_{\ell^{1}(\G_0)}
\norm{ h_{j_n} }_{\ell^{\infty}(\G_0)}
\lesssim
S_n^{2D(d+d')}, \qquad 1 \le n \le R.
\end{align}
Since ${U}_{n,J,\mu}  = U_{j_n,J,S_{n},\mu}$, see \eqref{T26.2}, this follows from the identity
\begin{align}
\begin{split}
&U_{n,J,\mu} (g)=
\phi_{j_n}(g) 
\sum_{\sigma^{(1)} \in \mathcal{R}^d_{\leq D \log_2 S_{n}} \cap [0,1)^{d}} \sum_{\sigma^{(2)} \in \mathcal{R}^{d'}_{\leq \log_2 S_{n}} \cap [0,1)^{d'}} \ex\big(g.(\sigma^{(1)}, \sigma^{(2)})\big) S(\sigma^{(1)}) \\ \label{id:167}
& \qquad \times
\int_{\R} \chi(u)
\Big\{\prod_{(l_1,l_2)\in Y_d}2^{-j_n(l_1+l_2)}\Big\}
\reallywidehat{\eta_{\leq \delta' D\mu \lfloor \log_2 J \rfloor}} \big( A_0^{(1)} (u) - 2^{-j_n}\circ g^{(1)} \big)\\
&\qquad\qquad\qquad\times\reallywidehat{\eta_{\leq \delta D\mu \lfloor \log_2 J \rfloor}} \big( - 2^{-j_n}\circ g^{(2)} \big)\, du,
\end{split}
\end{align}
see also \eqref{gio3.5}. Using \eqref{U2.1} and \eqref{U2.3} we can estimate the left-hand side of \eqref{U2.2} by
\begin{align*}
&C\sum_{n=2}^R 
\Big( \prod_{k=2}^{n-1} \norm{ h_{j_k} \ast \widetilde{U}_{k,J,\mu} }_{\ell^{\infty}(\G_0)} \Big)
\Big( \prod_{k=n+1}^{R} \norm{ h_{j_k} \ast \widetilde{K}_{j_k} }_{\ell^{\infty}(\G_0)} \Big)
\norm[\big]{ h_{j_n} \ast \widetilde{K}_{j_n} - h_{j_n} \ast \widetilde{U}_{n,J,\mu} }_{\ell^2(\G_0)} \\
& \qquad\lesssim
\sum_{n=2}^R \Big( \prod_{k=2}^{n-1} S_k^{2D(d+d')} \Big) S_n^{-1/D^2}
\abs{F}^{1/2}\lesssim
\sum_{n=2}^R J^{4D(d+d') A^{n-1} - A^n D^{-2}} \abs{F}^{1/2}\lesssim J^{- R} \abs{F}^{1/2}, 
\end{align*}
since 
$4D(d+d') A^{n-1}  - A^n D^{-2} \le - A^{n-1} \le -R$, see \eqref{def:con}. The bounds \eqref{U2.2} follow.

{\textbf{Step 3.}} In view of \eqref{U2.2}, for \eqref{R7.1} it is enough to prove that
\begin{align} \label{U3.1}
\norm[\Big]{ 
\Big( 
\prod_{n=2}^R h_{j_n} \ast \widetilde{U}_{n,J,\mu} \Big) \ast ( K_{j_1} - K_{j_0}) }_{\ell^{2}(\G_0)}
\lesssim
J^{-R} \abs{F}^{1/2},
\end{align}
uniformly in $J = j_0 < j_1 < \ldots < j_R \le 2J$ satisfying \eqref{R7.2}. Let us define
\begin{align} \label{U4.3}
X_{n,J,\mu} (g)
& :=
\phi_{j_n}(g)
\int_{\R} \chi(u)
\Big\{\prod_{(l_1,l_2)\in Y_d}2^{-j_n(l_1+l_2)}\Big\}
\reallywidehat{\eta_{\leq \delta' D\mu \lfloor \log_2 J \rfloor}} \big( A_0^{(1)} (u) - 2^{-j_n}\circ g^{(1)} \big) \\ \nonumber
& \qquad \times
\reallywidehat{\eta_{\leq \delta D\mu \lfloor \log_2 J \rfloor}} \big( - 2^{-j_n}\circ g^{(2)} \big)\, du, \\ \label{U4.2}
X_{n,J,\mu,\sigma} (g)
& :=
X_{n,J,\mu} (g)
\ex( g.\sigma ).
\end{align}
Using \eqref{id:167} we have 
\begin{align*}
h_{j_n} \ast \widetilde{U}_{n,J,\mu}
 =
\sum_{\sigma_n^{(1)} \in \mathcal{R}^d_{\leq D \log_2 S_{n}} \cap [0,1)^{d},\,\sigma_n^{(2)} \in \mathcal{R}^{d'}_{\leq \log_2 S_{n}} \cap [0,1)^{d'}}S(\sigma_n^{(1)})\cdot
h_{j_n} \ast \widetilde{X}_{n,J,\mu,\sigma_n}.
\end{align*}
In view of \eqref{def:con}, for \eqref{U3.1} it suffices to show that
\begin{align} \label{U4.1}
\norm[\Big]{ 
\Big( \prod_{n=2}^R h_{j_n} \ast \widetilde{X}_{n,J,\mu,\sigma_n} \Big) \ast ( K_{j_1} - K_{j_0}) }_{\ell^{2}(\G_0)}
\lesssim
J^{-2\mu} \abs{F}^{1/2},
\end{align}
for any $\sigma_n^{(1)} \in \mathcal{R}^d_{\leq D \log_2 S_{n}} \cap [0,1)^{d}$,  
$\sigma_n^{(2)} \in \mathcal{R}^{d'}_{\leq \log_2 S_{n}} \cap [0,1)^{d'}$, $2 \le n \le R$.

Observe that 
\begin{align*} 
f \ast ( K_{j_1} - K_{j_0}) (g)
=
\sum_{u \in \Z} \chi_{j_0,j_1} (u) f(A_0(u)^{-1} \cdot g), 
\end{align*}
where $\chi_{j_0,j_1} (v) 
= 2^{-j_1} \chi (2^{-j_1}v) - 2^{-j_0} \chi (2^{-j_0}v)$.
Notice that
\begin{align} \label{U5.2}
\abs[\Big]{ \sum_{v \in \Z} \chi_{j_0,j_1} (Qv + b) }
\lesssim
2^{-j_0}, \qquad Q \in \Z_{+}, \quad b \in \Z_Q.
\end{align}
Therefore we have
\begin{equation}\label{U6.2}
\begin{split}
& \Big( \prod_{n=2}^R h_{j_n} \ast \widetilde{X}_{n,J,\mu,\sigma_n} \Big) \ast ( K_{j_1} - K_{j_0}) (g) \\
& \qquad =
\sum_{v \in \Z} \chi_{j_0,j_1} (v)
\sum_{y_2, \ldots, y_R \in \G_0}  
\Big( \prod_{n=2}^R X_{n,J,\mu,\sigma_n} \big( y_n \cdot g^{-1} \cdot A_0(v) \big) h_{j_n} (y_n) \Big) \\ 
& \qquad =
\sum_{y_2, \ldots, y_R \in \G_0} 
\Big( \prod_{n=2}^R h_{j_n} (y_n) \Big)
H(y_2 \cdot g^{-1}, \ldots, y_R \cdot g^{-1}),
\end{split}
\end{equation}
where
\begin{align} 
\label{id:168}
H(y_2, \ldots, y_R)
:=
\sum_{v \in \Z} \chi_{j_0,j_1} (v)
\Big( \prod_{n=2}^R X_{n,J,\mu,\sigma_n} \big( y_n \cdot A_0(v) \big)  \Big).
\end{align}
For \eqref{U4.1} it suffices to show that there are functions 
$H_n = H_{n,J,\mu} \ge 0$, $2 \le n \le R$, such that
\begin{align} \label{U6.1}
\norm{ H_n }_{\ell^{1}(\G_0)} \lesssim 1\text{ for }2 \le n \le R\quad\text{ and }\quad\abs{H(y_2, \ldots, y_R)} 
\lesssim
J^{-2\mu} \prod_{n=2}^R H_n (y_n).
\end{align}
Indeed, assuming \eqref{U6.1} and using \eqref{U6.2} we can bound the left-hand side of \eqref{U4.1} by
\begin{align*}
C
J^{-2\mu} \norm[\Big]{ \prod_{n=2}^R h_{j_n} \ast \widetilde{H}_n  }_{\ell^{2}(\G_0)} 
\le 
J^{-2\mu} \prod_{n=2}^R \norm{ h_{j_n} \ast \widetilde{H}_n }_{\ell^{2(R-1)}(\G_0)} 
\lesssim
J^{-2\mu} \abs{F}^{1/2}.
\end{align*}

{\textbf{Step 4.}} It remains to prove \eqref{U6.1}. Let $q_n$ be the denominator of $\sigma_n$. By \eqref{def:con} one has
\begin{align} \label{id:177}
Q := \prod_{n=2}^R q_n
\lesssim \prod_{n=2}^R S_n^{2Dd'} 
\le J^{\mu}.
\end{align}
Splitting the summation in $v$ in \eqref{id:168} into classes modulo $Q$ and using \eqref{U4.2} we obtain
\begin{equation} \label{U7.1}
\begin{split}
&\abs{H(y_2, \ldots, y_R) } 
\le
\sum_{b \in \Z_Q}
\abs[\Big]{ \sum_{v \in \Z} \chi_{j_0,j_1} (Qv + b)
\Big( \prod_{n=2}^R 
X_{n,J,\mu} \big( y_n \cdot A_0(Qv + b) \big)  \Big) }\\
&\quad\lesssim
\sum_{b \in \Z_Q}
\abs[\Big]{ \sum_{v \in \Z} \chi_{j_0,j_1} (Qv + b)
\Big( \prod_{n=2}^R 
X_{n,J,\mu} \big( y_n \cdot A_0 (Qv + b) \big) - \prod_{n=2}^R 
X_{n,J,\mu} (y_n) \Big) } \\
&\quad+\sum_{b \in \Z_Q}
\abs[\Big]{ \sum_{v \in \Z} \chi_{j_0,j_1} (Qv + b) 
\Big( \prod_{n=2}^R 
X_{n,J,\mu} (y_n) \Big) }=:I_1+I_2.
\end{split}
\end{equation}
Using the definition \eqref{U4.3} it is easy to see that for every $y \in \G_0^\#$ and $2 \le n \le R$ one has
\begin{align}
\begin{split}
& \abs{ X_{n,J,\mu} (y) }
+ 
\sum_{(l_1,l_2)\in Y_d}2^{j_n(l_1+l_2)}J^{-\delta_{l_1 l_2} D \mu}\big|(\partial_{y_{l_1l_2}} X_{n,J,\mu})(y)\big| 
\\ \label{U7.2}
& \quad \lesssim
\Big\{\prod_{(l_1,l_2)\in Y_d}2^{-j_n(l_1+l_2)} J^{\delta_{l_1 l_2} D \mu} \Big\}
\int_{\R} \chi (u)
\Big\langle 
J^{\widetilde{\delta} D \mu} \big( A_0(u) - 2^{-j_n} \circ y \big)
\Big\rangle^{-2D} \,du,
\end{split}
\end{align}
where $\widetilde{\delta} = (\delta_{l_1 l_2})_{(l_1, l_2) \in Y_d}$ and 
$\delta_{l_1 l_2} = \delta$ if $(l_1, l_2) \in Y'_d$ and $\delta_{l_1 l_2} = \delta'$ otherwise. 
Since $2^{j_1-j_n}\lesssim J^{-D\mu}$ (the separation condition \eqref{R7.2}), for every $y,h \in \G_0$ satisfying $\abs{2^{-j_1} \circ h} \lesssim 1$ we have
\begin{align}
\label{U8.1}
\begin{split}
& \abs{ X_{n,J,\mu} (y \cdot h) 
- X_{n,J,\mu} (y) } \\ 
& \quad  \lesssim
J^{-3\mu}
\Big\{\prod_{(l_1,l_2)\in Y_d}2^{-j_n(l_1+l_2)} J^{\delta_{l_1 l_2} D \mu} \Big\}
\int_{\R} \chi (u)
\Big\langle 
J^{\widetilde{\delta} D \mu} \big( A_0(u) - 2^{-j_n} \circ y \big)
\Big\rangle^{-D} \, du.
\end{split}
\end{align}

Using \eqref{U7.2}--\eqref{U8.1} if $|Qv+b|\lesssim 2^{j_1}$ then we have
\begin{align*}
& \abs[\Big]{ \prod_{n=2}^R 
X_{n,J,\mu} \big( y_n \cdot A_0(Qv + b) \big) 
-
\prod_{n=2}^R 
X_{n,J,\mu}  (y_n) } \\
& \quad \lesssim
J^{-3\mu} \prod_{n=2}^R \bigg( \int_{\R} \chi (u_n)
\Big\{\prod_{(l_1,l_2)\in Y_d}2^{-j_n(l_1+l_2)} J^{\delta_{l_1 l_2} D \mu} \Big\}
\Big\langle 
J^{\widetilde{\delta} D \mu} \big( A_0(u_n) - 2^{-j_n} \circ y_n \big)
\Big\rangle^{-D} \, du_n \bigg).
\end{align*}
Since
$\sum_{b \in \Z_Q} \sum_{v \in \Z} \abs{\chi_{j_0,j_1} (Qv + b)} \lesssim 1$,
we see that the required decomposition \eqref{U6.1} for the first term $I_1$ in \eqref{U7.1} follows. The decomposition for $I_2$ also follows using \eqref{U5.2}, \eqref{id:177} and \eqref{U7.2}. This completes the proof of Proposition~\ref{prop:log}.

\subsection{Proof of Lemma~\ref{lem:approx}} 

Observe that we may assume that $k\ge D^2 \mu$, otherwise the conclusion is trivial. 
Observe that we have a decomposition
\begin{align*}
\M_{k} f 
- f \ast U_{k,J,S,\mu}
& =
\M_{k} f 
- f \ast \Big[ \sum_{s\in[0,\delta k]} K_{k,s} \Big]
+ f \ast \Big[ \sum_{s\in(\log_2 S,\delta k]} K_{k,s} \Big]
+ f \ast \Big[\sum_{s\in[0,\log_2S]}G_{k,s}^c \Big]\\
&+ f \ast K_{k,k,\mathcal{R}^{d}_{\leq \delta' k} \setminus \mathcal{R}^d_{\leq D \log_2 S},
\mathcal{R}^{d'}_{\leq \log_2 S}} + f \ast \Big[ K_{k,k, \mathcal{R}^d_{\leq D \log_2 S},
\mathcal{R}^{d'}_{\leq \log_2 S}}
- U_{k,J,S,\mu} \Big].
\end{align*}
To prove Lemma~\ref{lem:approx} it remains to show that for any $f \in \ell^2(\G_0)$, $k\ge D^2 \mu$, $J \le k \le 2J$, and $S \le J^{\mu}$ we have
the following estimates
\begin{align} \label{T1.1}
\norm[\Big]{ \M_{k} f 
- f \ast \Big[ \sum_{s\in[0,\delta k]} K_{k,s} \Big]  }_{\ell^2(\G_0)}
& \lesssim
2^{-k/D^2} \norm{f }_{\ell^2(\G_0)}, \\ \label{T2.1}
\norm[\Big]{ f \ast \Big[ \sum_{s\in(\log_2 S,\delta k]} K_{k,s} \Big]  }_{\ell^2(\G_0)}
& \lesssim
S^{-1/D^2} \norm{f }_{\ell^2(\G_0)},  \\ \label{T4.1}
\norm[\big]{ f \ast G_{k,\mathcal{R}^{d'}_{\leq \log_2 S}}^c }_{\ell^2(\G_0)}
& \lesssim
2^{-k/D^2} \norm{f }_{\ell^2(\G_0)}, \\ \label{T8.1}
\norm[\big]{ f \ast K_{k,k,\mathcal{R}^{d}_{\leq \delta' k} \setminus \mathcal{R}^d_{\leq D \log_2 S},
\mathcal{R}^{d'}_{\leq \log_2 S}}  }_{\ell^2(\G_0)}
& \lesssim
S^{-1/D} \norm{f }_{\ell^2(\G_0)}, \\ \label{id:800n}
\norm[\Big]{f \ast \Big[ K_{k,k, \mathcal{R}^d_{\leq D \log_2 S},
\mathcal{R}^{d'}_{\leq \log_2 S}}
- U_{k,J,S,\mu} \Big] }_{\ell^2(\G_0)}
& \lesssim
S^{-1} \norm{f }_{\ell^2(\G_0)}.
\end{align}

Here and in the rest of this subsection the implicit constants are allowed to depend on $\mu$. The bounds \eqref{T1.1} follow from Lemma~\ref{MinArc1}. The bounds \eqref{T2.1} follow from \eqref{picu13}--\eqref{picu16}.
The bounds \eqref{T4.1} follow from Lemma~\ref{MinArc2gen1} with $\B = \mathcal{R}^{d'}_{\leq \log_2 S}$.
 
To prove the bounds \eqref{T8.1} we use Lemma~\ref{laj10} with $\iota=0$, so we have the decomposition  
\begin{equation}\label{laj11S}
\{(\mathcal{K}_{k,k,\mathcal{R}^{d}_{p} \setminus \mathcal{R}^d_{\leq D \log_2 S},
\mathcal{R}^{d'}_{\leq \log_2 S}})^\ast\mathcal{K}_{k,k,\mathcal{R}^{d}_{p} \setminus \mathcal{R}^d_{\leq D \log_2 S},
\mathcal{R}^{d'}_{\leq \log_2 S}}\}^r f=f\ast \{F_k^{0, r}+O_k^{0, r}\},
\end{equation}
for any $p\in(D\log_2S,\delta'k]$. Here $\|O_k^{0, r}\|_{\ell^1(\G_0)} \lesssim 2^{-k}$, $\A=\mathcal{R}^{d}_{p} \setminus \mathcal{R}^d_{\leq D \log_2 S}$, $\B=\mathcal{R}^{d'}_{\leq \log_2 S}$, and
\begin{equation*}
\begin{split}
&F_k^{0, r}(h):=\Big\{\sum_{a^{(2)}/Q\in\B\cap[0,1)^{d'}}\sum_{\sigma\in [\A+(\Z_Q/Q)^d]\cap[0,1)^d}\mathcal{C}(a^{(2)}/Q,\sigma)\ex(h^{(1)}{.}\sigma)\ex\big(h^{(2)}{.}(a^{(2)}/Q)\big)\Big\}\\
&\quad\times\Big\{\prod_{(l_1,l_2)\in Y_d}2^{-k(l_1+l_2)}\Big\}\eta_{\leq 3\delta k}(2^{-k}\circ h)\int_{\R^d\times\R^{d'}}\eta_{\leq\delta k/2}(\zeta^{(1)})\eta_{\leq\delta k/2}(\zeta^{(2)})P(\zeta)\ex[(2^{-k}\circ h){.}\zeta]\,d\zeta.
\end{split}
\end{equation*}
The function $P$ was defined in  \eqref{eq:19}, 
and the coefficients $\mathcal{C}$ satisfy the bounds
\begin{equation*}
|\mathcal{C}(a^{(2)}/Q,\sigma)|
\lesssim 2^{-p/\delta},
\end{equation*}
for any $a^{(2)}/Q\in\mathcal{R}^{d'}_{\leq \log_2 S} \cap[0,1)^{d'}$ and 
$\sigma\in [\mathcal{R}^{d}_{p} \setminus \mathcal{R}^d_{\leq D \log_2 S}+(\Z_Q/Q)^d]\cap[0,1)^d$.
Using this estimate and \eqref{maj33} (with $\iota=0$), we see that $\norm{ F_{k}^{0, r} }_{\ell^1(\G_0)}
\lesssim
2^{-p/(2\delta)}$. The desired bounds \eqref{T8.1} follow by summation over $p\geq D\log_2S$.

Finally, to prove the bounds \eqref{id:800n} we use first Lemma~\ref{Skappr} to see that
\begin{align*}
\norm{K_{k,D \mu \lfloor \log_2 J \rfloor, \mathcal{R}^d_{\leq D \log_2 S},
\mathcal{R}^{d'}_{\leq \log_2 S}}
- U_{k,J,S,\mu} }_{\ell^1(\G_0)}
\lesssim
2^{-k}.
\end{align*}
Therefore it remains to establish the following:

\begin{lemma}
\label{lem:2}
Assume $\mu\geq 1$, $k\ge D^2 \mu$, $J \le k \le 2J$, and $S \le J^{\mu}$. Then for any $f\in\ell^2(\G_0)$,
\begin{align}
\label{eq:28}
\norm[\Big]{f \ast \Big[ K_{k,k, \mathcal{R}^d_{\leq D \log_2 S},
\mathcal{R}^{d'}_{\leq \log_2 S}}
- K_{k,D \mu \lfloor \log_2 J \rfloor, \mathcal{R}^d_{\leq D \log_2 S},
\mathcal{R}^{d'}_{\leq \log_2 S}} \Big] }_{\ell^2(\G_0)}
& \lesssim
S^{-1} \norm{f }_{\ell^2(\G_0)}.
\end{align}
\end{lemma}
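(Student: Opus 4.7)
The proof proceeds by telescoping in the parameter $w$ and reducing to the single-step estimates from Section \ref{sec:Maj}. Writing
\begin{equation*}
K_{k,k,\A,\B}-K_{k,w_0,\A,\B}=\sum_{w=w_0}^{k-1}\bigl(K_{k,w+1,\A,\B}-K_{k,w,\A,\B}\bigr),
\end{equation*}
where $\A=\mathcal{R}^d_{\leq D\log_2 S}$, $\B=\mathcal{R}^{d'}_{\leq \log_2 S}$ and $w_0=D\mu\lfloor\log_2 J\rfloor$, the triangle inequality reduces matters to showing that each single-step difference is bounded on $\ell^2(\G_0)$ by $C\,2^{-w/D}\|f\|_{\ell^2(\G_0)}$. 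Summing the resulting geometric series and invoking the hypothesis $S\leq J^\mu$ yields
\begin{equation*}
\sum_{w\geq w_0}2^{-w/D}\lesssim 2^{-w_0/D}=2^{-\mu\lfloor\log_2 J\rfloor}\lesssim_\mu J^{-\mu}\leq S^{-1},
\end{equation*}
which is the desired bound (the $\mu$-dependent constant is permitted throughout the proof of Lemma \ref{lem:approx}).

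\textbf{Single-step bound.} For each fixed $w\in[w_0,k-1]$, I would choose $Q$ so that $\A\subseteq\widetilde{\mathcal R}^d_Q$ and $\B\subseteq\widetilde{\mathcal R}^{d'}_Q$, and apply the transference argument of Lemma \ref{lem:1} with $K_k^{\G_0}:=K_{k,w+1,\A,\B}-K_{k,w,\A,\B}$, $W_k^{\HH_Q}:=W_{k,w+1,Q}-W_{k,w,Q}$, and $V^{\JJ_Q}:=V_{\A,\B,Q}$. The decomposition \eqref{kio6} of Lemma \ref{kio5}, together with the error bound \eqref{kio9}, verifies the hypotheses \eqref{eq:8}--\eqref{eq:9}; hence \eqref{eq:10.1} controls the single-step norm by a product of $\|\mathcal V_{\A,\B,Q}\|_{\ell^2(\JJ_Q)\to\ell^2(\JJ_Q)}$ and the $\HH_Q$-operator norm of $W_{k,w+1,Q}-W_{k,w,Q}$, plus a rapidly decaying transference error. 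The factor $\|\mathcal V_{\A,\B,Q}\|$ is bounded by an absolute constant independently of $Q$ via a direct $\ell^1(\JJ_Q)$ computation analogous to \eqref{id:186}. The $\HH_Q$-norm of $W_{k,w+1,Q}-W_{k,w,Q}$ is controlled via the transference inequality \eqref{eq:14} (taking the single-index sequence $\varkappa_k=1$) by its continuous $L^2(\G_0^{\#})$ analogue, which enjoys decay $2^{-w/D}$ by Proposition \ref{gio55} \eqref{eq:12con}; the transference error $\tau^{-w/8}$ is negligible since $D\gg 8$. The net single-step bound is therefore $2^{-w/D}\|f\|_{\ell^2(\G_0)}$, as required.

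\textbf{Main obstacle.} The genuine difficulty is choosing $Q$ in the transference. The denominators of fractions in $\A$ can be as large as $2\,S^D$, and the Lemma \ref{lem:approx} hypotheses only guarantee $2\,S^D\leq 2^{\delta' k+1}$, which in general exceeds the upper bound $Q\leq 2^{\delta k}$ required by Lemma \ref{kio5}. To handle this I would decompose $\A=\bigcup_{t\leq D\log_2 S}\mathcal R^d_t$ and $\B=\bigcup_{s\leq \log_2 S}\mathcal R^{d'}_s$ into dyadic strata by denominator, and for each pair $(t,s)$ run the argument above with a $Q_{t,s}$ divisible only by the denominators appearing in that stratum; the polynomial-in-$\log S$ number of strata is absorbed in the implicit constant. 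Should this bookkeeping fail for the strata with denominators exceeding $2^{\delta k}$, the fallback is to bypass the transference via a direct high-order $(T^\ast T)^r$ argument on $\G_0$ in the spirit of Sections \ref{MinorArcs} and \ref{sectionM3}, using the Van der Corput-type decay $|J_k(\xi^{(1)}-a/q)|\lesssim 2^{-\delta' w_0/d}\leq S^{-1}$ (valid since $D\geq d/\delta'$) on the annular region where the cutoff differences $\Psi_{k,k,\A}-\Psi_{k,w_0,\A}$ and $\Xi_{k,k,\B}-\Xi_{k,w_0,\B}$ are supported, combined with the Weyl bound on $S(a/q)$ from Proposition \ref{minarcscom}, to extract the gain $S^{-1}$ directly at the multiplier level.
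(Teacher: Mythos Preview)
Your primary transference approach does not work, and the obstacle is worse than the $Q\le 2^{\delta k}$ constraint you flag. Even if you stratify $\A=\mathcal{R}^d_{\le D\log_2 S}$ down to single denominators, those denominators can be as large as $2S^D\sim 2^{\delta' k}$, which still violates $Q\le 2^{\delta k}$ since $\delta<\delta'$. More seriously, your claim that $\|\mathcal V_{\A,\B,Q}\|_{\ell^2\to\ell^2}\lesssim 1$ ``analogous to \eqref{id:186}'' fails: the computation in \eqref{id:186} relies essentially on $\A=\widetilde{\mathcal R}^d_{Q}$ and $\B=\widetilde{\mathcal R}^{d'}_{Q}$ being the \emph{full} sets of $Q$-fractions so that the character sums collapse to delta functions, whereas here $\mathcal{R}^d_{\le D\log_2 S}\subsetneq\widetilde{\mathcal R}^d_Q$ for any admissible $Q$, and no analogous collapse occurs. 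The transference scheme used for \eqref{eq:27} in the proof of \eqref{W10.2} works precisely because there the sets are $\widetilde{\mathcal R}^d_{Q_S},\widetilde{\mathcal R}^{d'}_{Q_S}$ and one has the super-exponential gap $k\ge\kappa_S$; in the present lemma $k$ and $\log S$ are only polynomially related, so that mechanism is unavailable.

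The paper therefore takes your fallback route---a direct high-order $(T^\ast T)^r$ argument on $\G_0$---but the mechanism for the gain is not the one you sketch. After telescoping in $w$ and splitting into pieces $K_{k,w,\A,\B,\I}$ indexed by $\emptyset\ne\I\subseteq\{1,2\}$, the paper first replaces the cutoff $\phi_k=\eta_{\le\delta k}$ by the much tighter $\phi_{k,0}=\eta_{\le D}$; this step is essential because the subsequent kernel analysis (in the style of Lemma \ref{laj10}) only produces rapid decay in $w$, not in $k$, and the wider cutoff would spoil this. The eventual source of the $2^{-w}$ gain is then structural: if $1\in\I$ the reduced kernel carries a factor $\big(\Upsilon^{(1)}_{w,\I}(0)\big)^{2r}=0$, while if $2\in\I$ the factor $\big(\Upsilon^{(2)}_{w,\I}(\theta^{(2)})\big)^{2r}$ forces $|\theta^{(2)}|\gtrsim 2^{\delta w}$ and the decay of the symbol $P(\theta)$ from \eqref{maj34} delivers $2^{-w/(2\delta)}$. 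Your proposed Van der Corput decay of $J_k$ and Weyl bounds on $S(a/q)$ do not enter; the point is the vanishing/support properties of the annular cutoffs $\Upsilon_{w,\I}$, carried through a nilpotent $T^\ast T$ computation that parallels Lemma \ref{laj10} but with all error terms tracked relative to $2^w$ rather than $2^k$.
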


\begin{proof} For $w \in \NN$ and $\I \subseteq \{1,2\}$ we define the auxiliary functions
\begin{align}
\label{def:Ups}
\begin{split}
\Upsilon_{w,\I}^{(1)}=
\begin{cases}
\eta_{\leq\delta' (w+1)} - \eta_{\leq\delta' w} &\text{ if } 1 \in \I,\\
\eta_{\leq\delta' w} &\text{ if } 1 \notin \I,
\end{cases}
\qquad
\Upsilon_{w,\I}^{(2)}=
\begin{cases}
\eta_{\leq\delta (w+1)} - \eta_{\leq\delta w} &\text{ if } 2 \in \I,\\
\eta_{\leq\delta w} &\text{ if } 2 \notin \I.
\end{cases}
\end{split}
\end{align}
Then we define the projections $\Psi_{k,w,\A,\I}$ and $\Xi_{k,w,\B,\I}$ as in \eqref{def:progen},
\begin{equation*}
\begin{split}
\Psi_{k,w,\A,\I}(\xi^{(1)})
&:=\sum_{a/q\in \A}\Upsilon_{w,\I}^{(1)}(\tau^k\circ(\xi^{(1)}-a/q)),\qquad \Xi_{k,w,\B,\I}(\xi^{(2)}) :=\sum_{b/q\in \B} \Upsilon_{w,\I}^{(1)}(\tau^{k}\circ(\xi^{(2)}-b/q)), 
\end{split}
\end{equation*}
where $\A\subseteq\mathbb{Q}^d$ and $\B\subseteq \mathbb{Q}^{d'}$ are $1$-periodic sets. Then we define the associated kernels
\begin{equation*} 
\begin{split}
&K_{k,w,\A,\B, \I}(g) 
= L_{k,w,\A,\I} (g^{(1)}) N_{k,w,\B,\I} (g^{(2)}),\\
&L_{k,w,\A,\I} (g^{(1)}) :=\phi_{k}^{(1)}(g^{(1)})\int_{\T^d}\ex(g^{(1)}.\xi^{(1)})\Psi_{k,w,\A,\I}(\xi^{(1)}) S_{k}(\xi^{(1)})\, d\xi^{(1)},\\
& N_{k,w,\B,\I} (g^{(2)})
:=\phi_{k}^{(2)}(g^{(2)})\int_{\T^{d'}}\ex(g^{(2)}.\xi^{(2)})\Xi_{k,w,\B,\I}(\xi^{(2)})\,d\xi^{(2)}.
\end{split}
\end{equation*}

Let $w_0 := D \mu \lfloor \log_2 J \rfloor$ and observe that 
\begin{align*}
& K_{k,k, \mathcal{R}^d_{\leq D \log_2 S},
\mathcal{R}^{d'}_{\leq \log_2 S}}
- K_{k, w_0, \mathcal{R}^d_{\leq D \log_2 S},
\mathcal{R}^{d'}_{\leq \log_2 S}}\\
&\qquad =
\sum_{w = w_0}^{k-1}
( K_{k,w+1, \mathcal{R}^d_{\leq D \log_2 S}, \mathcal{R}^{d'}_{\leq \log_2 S}} 
- K_{k,w, \mathcal{R}^d_{\leq D \log_2 S}, \mathcal{R}^{d'}_{\leq \log_2 S}} )\\
&\qquad =
\sum_{w = w_0}^{k-1} \sum_{\emptyset \ne \I \subseteq \{1,2\}}
K_{k,w, \mathcal{R}^d_{\leq D \log_2 S}, \mathcal{R}^{d'}_{\leq \log_2 S}, \I}.
\end{align*}
Therefore \eqref{eq:28} is reduced to prove that for any $w\in [w_0,k-1]$ and $\I \ne \emptyset$
\begin{align} \label{id:801n}
\norm{f \ast K_{k,w, \mathcal{R}^d_{\leq D \log_2 S}, \mathcal{R}^{d'}_{\leq \log_2 S}, \I}}_{\ell^2(\G_0)}
\lesssim
2^{-w/D} \|f\|_{\ell^2(\G_0)}.
\end{align}

We examine the definition of the kernels $K_{k,w, \mathcal{R}^d_{\leq D \log_2 S}, \mathcal{R}^{d'}_{\leq \log_2 S}, \I}$ and notice that we can replace the cutoff function $\phi_{k}$ by the cutoff function
\[
\phi_{k,0} (g) = \eta_{\leq D}(2^{-k}\circ g^{(1)}) \eta_{\leq D}(2^{-k}\circ g^{(2)}).
\]
Indeed, letting $K_{k,w,S,\I}$ denote the corresponding kernel we have
\begin{align*} 
& K_{k,w, \mathcal{R}^d_{\leq D \log_2 S}, \mathcal{R}^{d'}_{\leq \log_2 S}, \I} (g)-K_{k,w,S,\I} (g) =
\big( \phi_{k}(g) - \phi_{k,0}(g)  \big)
\sum_{\sigma \in \mathcal{R}^d_{\leq D \log_2 S}\cap [0,1)^{d} \times \mathcal{R}^{d'}_{\leq \log_2 S}\cap [0,1)^{d'} }\\
&\,\,\,\, \times 
\ex(g.\sigma)2^{-k} \sum_{n \in \Z} \ex\big(-A_0 (n).\sigma\big)
\chi(2^{-k} n)
\Big\{\prod_{(l_1,l_2)\in Y_d}2^{-k(l_1+l_2)}\Big\} \\
&\,\,\,\, \times 
\widehat{\Upsilon_{w,\I}^{(1)}} (A_0^{(1)} (2^{-k} n) - 2^{-k} \circ g^{(1)} )
\widehat{\Upsilon_{w,\I}^{(2)}} ( - 2^{-k} \circ g^{(2)} ),
\end{align*}
which shows that
\begin{align*} 
\norm{ K_{k,w, \mathcal{R}^d_{\leq D \log_2 S}, \mathcal{R}^{d'}_{\leq \log_2 S}, \I} 
-
K_{k,w,S,\I} }_{\ell^1(\G_0)}
\lesssim
S^{D^2} 2^{-D^2 w} \lesssim 2^{-w}.
\end{align*}

To bound the operators defined by the kernels $K_{k,w,S,\I}$ we use again a high order $T^\ast T$ argument, so it suffices to prove that
\begin{equation} \label{id:802n}
\norm{ \{(\mathcal{K}_{k,w,S,\I})^\ast\mathcal{K}_{k,w,S,\I}\}^r f }_{\ell^2(\G_0)}
\lesssim
2^{-w} \norm{ f }_{\ell^2(\G_0)}.
\end{equation}
The proof of \eqref{id:802n} proceeds along the same lines as the
proof of Lemma \ref{laj10}. However, there are some subtle differences
arising from the fact that we can only hope for a rapid decay with
respect to $w$, which might be much smaller than $k$. In particular,
this is the reason why we had to  replace the function $\phi_{k}$ by
$\phi_{k,0}$.  For the convenience of the reader we shall provide the
details.

In view of \eqref{pro15.7}--\eqref{pro15.11} we have
\begin{equation*}
	\{(\mathcal{K}_{k,w,S,\I})^\ast\mathcal{K}_{k,w,S,\I}\}^r f = f\ast K_{k,w,S,\I}^{r},
\end{equation*}
where
\begin{equation*}
	K_{k,w,S,\I}^{r}(y):=
	\ind{ \abs{2^{-k}\circ y} \lesssim 1}\int_{\T^d\times\T^{d'}}\ex\big(y.\theta\big)\Pi_{k,w,S,\I}^{r}\big(\theta\big)\Omega_{k,w,S,\I}^{r}\big(\theta^{(2)}\big)\,d\theta^{(1)}d\theta^{(2)},
\end{equation*}
and the multipliers $\Pi_{k,w,S,\I}^{r}$ and $\Omega_{k,w,S,\I}^{r}$ are given by
\begin{equation*}
	\begin{split}
		\Pi^{r}_{k,w,S,\I}&\big(\theta\big):=\sum_{h_j^{(1)},g_j^{(1)}\in\Z^d}
		\Big\{\prod_{j=1}^r\overline{L_{k,w,\mathcal{R}^d_{\leq D \log_2 S},\I,0}(h_j^{(1)})}L_{k,w,\mathcal{R}^d_{\leq D \log_2 S},\I,0}(g_j^{(1)})\Big\} \\ 
		&\times \ex\big(\theta^{(1)}{.}\sum_{1\leq j\leq r}(h_j^{(1)}-g_j^{(1)})\big)\\
		&\times\ex\Big(-\theta^{(2)}{.}\big\{\sum_{1\leq j\leq r}R_0(h_j^{(1)},h_j^{(1)}-g_j^{(1)})+\sum_{1\leq l<j\leq r}R_0(-h_l^{(1)}+g_l^{(1)},-h_j^{(1)}+g_j^{(1)})\big\}\Big),
	\end{split}
\end{equation*}
where $L_{k,w,\A,\I,0}$ is defined as $L_{k,w,\A,\I}$ except that we replace $\phi_{k}^{(1)}$ by $\phi_{k,0}^{(1)}$.
With $F_{k,0}$ defined in a similar way as in \eqref{pro26} (we replace $\eta_{\leq \delta k}$ by $\eta_{\leq D}$) we have
\begin{equation*}
	\Omega_{k,w,S,\I}^{r}\big(\theta^{(2)}\big)
	:=\Big|\int_{\T^{d'}}F_{k,0}(\theta^{(2)}-\xi^{(2)})\Xi_{k,w,\mathcal{R}^{d'}_{\leq \log_2 S},\I}(\xi^{(2)})
	\,d\xi^{(2)}\Big|^{2r}.
\end{equation*}
We first analyze the kernel $\Omega_{k,w,S,\I}^{r}$.
Note that 
\begin{align*}
	& \int_{\T^{d'}}F_{k,0}(\theta^{(2)}-\xi^{(2)})\Xi_{k,w,\mathcal{R}^{d'}_{\leq \log_2 S},\I}(\xi^{(2)})
	\,d\xi^{(2)} \\
	& =
	\sum_{a^{(2)}/Q \in \mathcal{R}^{d'}_{\leq \log_2 S} \cap[0,1)^{d'}} 
	\sum_{g^{(2)} \in \Z^{d'}} \eta_{\leq D} (2^{-k} \circ g^{(2)} ) 
	\ex\big( - g^{(2)}{.} (\theta^{(2)} - a^{(2)}/Q)\big) \\
	& \qquad \times
	\Big\{\prod_{(l_1,l_2)\in Y'_d}2^{-k(l_1+l_2)}\Big\}
	\widehat{\Upsilon_{w,\I}^{(2)}} ( - 2^{-k} \circ g^{(2)} ).
\end{align*}
Notice that we may replace the factor
$\eta_{\leq D} (2^{-k} \circ g^{(2)} )$ by $1$ above,
at the expence of $\ell^1$ error term
$O(S^{2rD^2} 2^{-D^2 w}) \lesssim 2^{-w}$ (here we have used the fact
that integration with respect to $\theta$ produces a delta and
trivializes summation in $y$). After this replacement we can use the
Poisson summation formula and we end up with
\[
\sum_{a^{(2)}/Q \in \mathcal{R}^{d'}_{\leq \log_2 S} \cap[0,1)^{d'}}  
\sum_{M \in \Z^{d'}} 
\Upsilon_{w,\I}^{(2)} \big( 2^{k} \circ (\theta^{(2)} - a^{(2)}/Q - M) \big).
\]
This means that we can deal with a simpler kernel
\begin{align*}
	K_{k,w,S,\I}^{r,1}(y) &:=
	\ind{ \abs{2^{-k}\circ y} \lesssim 1}
	\sum_{a^{(2)}/Q \in \mathcal{R}^{d'}_{\leq \log_2 S} \cap[0,1)^{d'}} \ex\big(y^{(2)}.a^{(2)}/Q\big)
	\int_{\T^d\times\R^{d'}}\ex\big(y.\theta\big) \\
	& \qquad \times 
	\Pi_{k,w,S,\I}^{r}\big(\theta^{(1)},\theta^{(2)} + a^{(2)}/Q \big)
	\big( \Upsilon_{w,\I}^{(2)} \big( 2^{k} \circ \theta^{(2)} \big) \big)^{2r}\,d\theta^{(1)}d\theta^{(2)}.
\end{align*}
We now focus on $\Pi_{k,w,S,\I}^{r}$.
As in \eqref{hun5}--\eqref{hun6} we may write
\begin{equation} \label{laj11SS}
	\begin{split}
		\Pi_{k,w,S,\I}^{r}&\big(\theta^{(1)},\theta^{(2)}\big)=\int_{(\T^d)^{2r}}\mathcal{V}_{k,0}^r(\theta^{(1)},\theta^{(2)};\zeta^{(1)}_1,\xi^{(1)}_1,\ldots,\zeta^{(1)}_r,\xi^{(1)}_r)\\
		&\times\prod_{1\leq j\leq r}\big\{\overline{S_k(\zeta^{(1)}_j)}\,\overline{\Psi_{k,w,\mathcal{R}^d_{\leq D \log_2 S},\I}(\zeta^{(1)}_j)}S_k(\xi^{(1)}_j)\Psi_{k,w,\mathcal{R}^d_{\leq D \log_2 S},\I}(\xi^{(1)}_j)\big\} \\
		& \times d\xi^{(1)}_1d\zeta^{(1)}_1\ldots d\xi^{(1)}_rd\zeta^{(1)}_r,
	\end{split}
\end{equation}
where 
\begin{equation*}
	\begin{split}
		\mathcal{V}_{k,0}^r&(\theta^{(1)},\theta^{(2)};\zeta^{(1)}_1,\xi^{(1)}_1,\ldots,\zeta^{(1)}_r,\xi^{(1)}_r)\\
		&=\sum_{h_j,g_j\in\Z^d}\prod_{1\leq j\leq r}\Big\{\overline{\phi_{k,0}^{(1)}(h_j)}\ex\big((\theta^{(1)}-\zeta_j^{(1)}){.}h_j\big)\phi_{k,0}^{(1)}(g_j)\ex\big(-(\theta^{(1)}-\xi_j^{(1)}){.}g_j\big)\Big\}\\
		&\qquad\times\ex\Big(-\theta^{(2)}{.}\big\{\sum_{1\leq j\leq r}R_0(h_j,h_j-g_j)+\sum_{1\leq l<j\leq r}R_0(-h_l+g_l,-h_j+g_j)\big\}\Big).
	\end{split}
\end{equation*}
Further, proceeding as in the proof of Lemma~\ref{Vrbounds} we see that for $|2^{k}\circ\theta^{(2)}|\lesssim 2^{\delta w}$ and $a^{(2)}/Q \in \mathcal{R}^{d'}_{\leq \log_2 S} \cap[0,1)^{d'}$ we have
\begin{equation*}
	\begin{split}
		\mathcal{V}_{k,0}^r&(\theta^{(1)},\theta^{(2)} + a^{(2)}/Q;\zeta^{(1)}_1,\xi^{(1)}_1,\ldots,\zeta^{(1)}_r,\xi^{(1)}_r)\\
		&=\mathcal{W}_Q^r(a^{(2)};\underline{b},\underline{c}) \mathcal{Z}_{k,0}^r(\theta^{(2)};\beta_1,\gamma_1,\ldots,\beta_r,\gamma_r)+O(2^{-D^3 k}),
	\end{split}
\end{equation*}
where $\underline{b},\underline{c}\in\Z^{rd}$ and $\beta_j, \gamma_j \in [-1/(2Q),1/(2Q)]^d$ are defined in \eqref{laj14}.
Here $\mathcal{W}_Q^r(a^{(2)};b,c)$ is defined in \eqref{Vrbounds3}
and $\mathcal{Z}_{k,0}^r$ is a modification of \eqref{Vrbounds4}, i.e.
\begin{equation*}
	\begin{split}
		\mathcal{Z}_{k,0}^r&(\theta^{(2)};\beta_1,\gamma_1,\ldots,\beta_r,\gamma_r):=\int_{\R^{2rd}}\Big\{\prod_{l=1}^d2^{kl}\Big\}^{2r}\\
		&\times\prod_{1\leq j\leq r}\Big\{\eta_{\leq D}(x_j)\ex\big(-(2^k\circ\beta_j){.}x_j\big)\eta_{\leq D}(y_j)\ex\big((2^k\circ\gamma_j){.}y_j\big)\Big\}\\
		&\times\ex\Big(-(2^k\circ\theta^{(2)}){.}\big\{\sum_{1\leq j\leq r}R_0(y_j,y_j-x_j)+\sum_{1\leq l<j\leq r}R_0(-y_l+x_l,-y_j+x_j)\big\}\Big)\,dx_jdy_j.
	\end{split}
\end{equation*}
Further, we have an analogue of \eqref{hun7}, namely
\begin{equation*}
	\begin{split}
		&\big|\mathcal{V}_{k,0}^r(\theta^{(1)},\theta^{(2)} + a^{(2)}/Q;\zeta^{(1)}_1,\xi^{(1)}_1,\ldots,\zeta^{(1)}_r,\xi^{(1)}_r)\big|\\
		&\qquad\lesssim\Big\{\prod_{1\leq l\leq d}2^{kl}\Big\}^{2r}\min_{\substack{1\leq j\leq r\\1\leq l\leq d}}\big[1+2^{kl-\delta w}\|\theta^{(1)}_l-\zeta^{(1)}_{j,l}\|_Q+2^{kl-\delta w}\|\theta^{(1)}_l-\xi^{(1)}_{j,l}\|_Q\big]^{-D},
	\end{split}
\end{equation*}
for any $\theta^{(1)}=(\theta^{(1)}_{l})_{l\in\{1,\ldots,d\}}\in\T^d$, $\zeta^{(1)}_j=(\zeta^{(1)}_{j,l})_{l\in\{1,\ldots,d\}}\in\T^d$, and $\xi^{(1)}_j=(\xi^{(1)}_{j,l})_{l\in\{1,\ldots,d\}}\in\T^d$.
Using this we proceed as in Step 2 of the proof of Lemma~\ref{laj10}. 
Having a rapid decay unless $2^{kl}\|\theta_l^{(1)}-\xi_{j,l}^{(1)}\|_{Q}\leq 2^{2\delta w}$ and $2^{kl}\|\theta_l^{(1)}-\zeta_{j,l}^{(1)}\|_{Q}\leq 2^{2\delta w}$ for all $j\in\{1,\ldots,r\}$ and $l\in\{1,\ldots,d\}$ we 
expand the cutoff functions $\Psi_{k,w,\mathcal{R}^d_{\leq D \log_2 S},\I}$ in \eqref{laj11SS} and we use Lemma~\ref{Skappr} to obtain
\[
\norm{ K_{k,w,S,\I}^{r,1} - K_{k,w,S,\I}^{r,2} }_{\ell^1(\G_0)}
\lesssim
S^{4rD(d+d')} 2^{-Dw/4} 
\lesssim 2^{-w},
\]
where
\begin{align*}
	K_{k,w,S,\I}^{r,2}(y) &:=
	\ind{ \abs{2^{-k}\circ y} \lesssim 1}
	\sum_{a^{(2)}/Q \in \mathcal{R}^{d'}_{\leq \log_2 S} \cap[0,1)^{d'}} 
	\sum_{\sigma\in [\mathcal{R}^d_{\leq D \log_2 S}+(\Z_Q/Q)^d]\cap[0,1)^d }\mathcal{C}(a^{(2)}/Q,\sigma) \\
	& \qquad \times
	\ex\big(y^{(2)}.a^{(2)}/Q\big)
	\int_{\R^{d+d'}}\ex\big(y.\theta\big)  
	\eta_{\leq 2\delta' w + D} (2^k\circ(\theta^{(1)}-\sigma)) \\
	& \qquad \times  \int_{\R^{2rd}}
	\mathcal{Z}_{k,0}^r \big(\theta^{(2)};\theta^{(1)} - \xi_j^{(1)} - \sigma, \theta^{(1)} - \zeta_j^{(1)} - \sigma \big) \\
	& \qquad \times 
	\Big\{\prod_{j=1}^r \Upsilon_{w,\I}^{(1)} (2^k\circ \xi_j^{(1)}) \Upsilon_{w,\I}^{(1)}(2^k\circ \zeta_j^{(1)})
	J_k (\xi_j^{(1)}) \overline{J_k (\zeta_j^{(1)})} \Big\} \, d\xi_1^{(1)} d\zeta_1^{(1)} \ldots d\xi_r^{(1)} d\zeta_r^{(1)}  \\
	& \qquad \times
	\big( \Upsilon_{w,\I}^{(2)} \big( 2^{k} \circ \theta^{(2)} \big) \big)^{2r}\,d\theta^{(1)}d\theta^{(2)}.
\end{align*}
Here $\mathcal{C}(a^{(2)}/Q,\sigma)$ is defined as in \eqref{laj27.5} with 
\begin{equation*}
	\iota_Q(\sigma;\underline{b},\underline{c}):=
	\begin{cases}
		1\qquad&\text{ if }\,\,\sigma-b_j/Q,\sigma-c_j/Q\in\mathcal{R}^d_{\leq D \log_2 S}\text{ for any }j\in\{1,\ldots,r\}; \\
		0\qquad&\text{ otherwise}.
	\end{cases}
\end{equation*}
Note that $\mathcal{C}(a^{(2)}/Q,\sigma)$ satisfies the estimate
\begin{equation}\label{laj11wSS}
	|\mathcal{C}(a^{(2)}/Q,\sigma)|
	\lesssim 
	Q^{3rd} Q_1^{-2r/\overline{C}}
	\lesssim 
	Q^{3rd} \lesssim S^{3rd},
\end{equation}
for any $a^{(2)}/Q\in\mathcal{R}^{d'}_{\leq \log_2 S} \cap[0,1)^{d'}$
and
$\sigma\in [\mathcal{R}^d_{\leq D \log_2 S}+(\Z_Q/Q)^d]\cap[0,1)^d$,
where $Q_1$ is a denominator of the first component of $\sigma$ and
$\overline{C}$ is the constant from Proposition~\ref{minarcscom}.
Therefore it suffices to deal with the kernel $K_{k,w,S,\I}^{r,2}$.
Next, we focus on the integral over $\xi_j^{(1)},\zeta_j^{(1)}$
above. Proceeding as in Step 3 of the proof of Lemma~\ref{laj10} we
are able to prove that up to an error term $O(2^{-Dw})$ this integral
is equal to $I_{k,w} (\theta^{(1)} - \sigma, \theta^{(2)})$, where
\begin{equation*}
	\begin{split}
		& I_{k,w} (\theta^{(1)}, \theta^{(2)})
		:=\int_{\R^{2rd}}\prod_{1\leq j\leq r}\big\{\widehat{\Upsilon_{w,\I}^{(1)}} (-x'_j) \widehat{\Upsilon_{w,\I}^{(1)}}(y'_j)\ex\big(-(2^k\circ\theta^{(1)}){.}(x'_j-y'_j)\big)\big\}\\
		&\times\Big\{\int_{\R^{2r}}\ex\big(-(2^k\circ\theta^{(2)}){.}T(\underline{x}',\underline{y}',\underline{u},\underline{v})\big)\prod_{1\leq j\leq r}\big\{\chi(u_j)\chi(v_j)\big\}\ex\big(-(2^k\circ\theta){.}D(\underline{v},\underline{u})\big)\,d\underline{u}d\underline{v}\Big\}d\underline{x}'d\underline{y}'.
	\end{split}
\end{equation*}
Therefore we have
\[
\norm{ K_{k,w,S,\I}^{r,2} - K_{k,w,S,\I}^{r,3} }_{\ell^1(\G_0)}
\lesssim
S^{4rD(d+d')} 2^{-Dw/2} 
\lesssim 2^{-w},
\]
where
\begin{align*}
	& K_{k,w,S,\I}^{r,3}(y) :=
	\ind{ \abs{2^{-k}\circ y} \lesssim 1}
	\sum_{a^{(2)}/Q \in \mathcal{R}^{d'}_{\leq \log_2 S} \cap[0,1)^{d'}} 
	\sum_{\sigma\in [\mathcal{R}^d_{\leq D \log_2 S}+(\Z_Q/Q)^d]\cap[0,1)^d}\mathcal{C}(a^{(2)}/Q,\sigma) \\
	& \qquad \times
	\ex\big(y{.}(\sigma,a^{(2)}/Q)\big)
	\int_{\R^{d+d'}} \ex\big(y.\theta\big)  
	\eta_{\leq 2\delta' w + D} (2^k\circ\theta^{(1)})  
	\big( \Upsilon_{w,\I}^{(2)} \big( 2^{k} \circ \theta^{(2)} \big) \big)^{2r}
	I_{k,w} (\theta) \,d\theta^{(1)}d\theta^{(2)}.
\end{align*}
Next, proceeding as in Step 4 of the proof of Lemma~\ref{laj10} we conclude
\begin{align*} 
	& \eta_{\leq 2\delta' w + D} (2^k\circ\theta^{(1)})  
	\big( \Upsilon_{w,\I}^{(2)} \big( 2^{k} \circ \theta^{(2)} \big) \big)^{2r} I_{k,w} (\theta) \\
	& =
	\eta_{\leq\delta w/2}(2^k\circ\theta^{(1)}) \eta_{\leq\delta w/2}(2^k\circ\theta^{(2)}) 
	\big( \Upsilon_{w,\I}^{(2)} \big( 2^{k} \circ \theta^{(2)} \big) \big)^{2r}
	\big( \Upsilon_{w,\I}^{(1)} ( 0 ) \big)^{2r} P(2^k\circ\theta)
	+ O(2^{-w/\delta}),
\end{align*}
where $P$ is defined in \eqref{eq:19}. Therefore using \eqref{laj11wSS} we obtain
\[
\norm{ K_{k,w,S,\I}^{r,3} - K_{k,w,S,\I}^{r,4} }_{\ell^1(\G_0)}
\lesssim
2^{-w/(2\delta)}
S^{4r(d+d')}  
\lesssim 2^{-w},
\]
where
\begin{align*}
	& K_{k,w,S,\I}^{r,4}(y) :=
	\ind{ \abs{2^{-k}\circ y} \lesssim 1}
	\sum_{a^{(2)}/Q \in \mathcal{R}^{d'}_{\leq \log_2 S} \cap[0,1)^{d'}} 
	\sum_{\sigma\in [\mathcal{R}^d_{\leq D \log_2 S}+(\Z_Q/Q)^d]\cap[0,1)^d}\mathcal{C}(a^{(2)}/Q,\sigma) 
	\\
	& \qquad \times\ex\big(y{.}(\sigma,a^{(2)}/Q)\big) 
	\Big\{\prod_{(l_1,l_2)\in Y_d}2^{-k(l_1+l_2)}\Big\}
	\int_{\R^{d+d'}} \ex\big[(2^{-k} \circ y).\theta\big]  
	\eta_{\leq\delta w/2}(\theta^{(1)}) \eta_{\leq\delta w/2}(\theta^{(2)}) \\
	& \qquad \times
	\big( \Upsilon_{w,\I}^{(2)} (\theta^{(2)}) \big)^{2r}
	\big( \Upsilon_{w,\I}^{(1)} ( 0 ) \big)^{2r} P(\theta) \,d\theta^{(1)}d\theta^{(2)}.
\end{align*}
Finally,  to prove \eqref{id:802n}
it suffices to show that
\[
\norm{K_{k,w,S,\I}^{r,4} }_{\ell^1(\G_0)}  
\lesssim 2^{-w}.
\]
If $1 \in \I$, then $\Upsilon_{w,\I}^{(1)} ( 0 ) = 0$ and there is nothing to prove. Otherwise, since $\I \ne \emptyset$ we need to have $2 \in \I$. This means that $|\theta^{(2)}| \gtrsim 2^{\delta w}$ and using \eqref{maj34} with $\iota=0$ together with \eqref{laj11wSS} we have
\[
\norm{K_{k,w,S,\I}^{r,4} }_{\ell^1(\G_0)} 
\lesssim 
S^{4r(d+d')} 2^{-w/(2\delta)}
\lesssim
2^{-w}.
\]
This proves \eqref{id:802n} and consequently the proof of Lemma~\ref{lem:2} is completed.

\end{proof}

\appendix
\section{Proof of Proposition \ref{gio55}}
\label{sec:app}
In this section we prove the estimates \eqref{gio51con} and \eqref{eq:12con}. We begin with proving \eqref{eq:12con}, which will be needed in the proof of \eqref{gio51con}. 

\subsection{Proof of inequality \eqref{eq:12con}} We examine the definitions \eqref{gio52} and \eqref{gio3.5}, and rewrite
\begin{align*}
\widetilde{W}_{k, w+1}(x) - \widetilde{W}_{k, w}(x)
=
\phi_{k}(x) \sum_{\emptyset \ne \I \subseteq \{1,2\}} S_{k,w,\I} (x),
\end{align*}
where for $\I \subseteq \{1,2\}$ we define 
\begin{equation} \label{def:Skw}
\begin{split}
S_{k,w,\I} (x)
& :=
S_{k,w,\I}^{(1)} (x^{(1)}) S_{k,w,\I}^{(2)} (x^{(2)}), \\
S_{k,w,\I}^{(1)} (x^{(1)}) 
& := 
\Big\{\prod_{l\in\{1,\ldots,d\}} \tau^{-kl }\Big\}
\int_{\R} \chi (u) 
\widehat{\Upsilon_{w,\I}^{(1)}} (A_0^{(1)} (u) - \tau^{-k} \circ x^{(1)} ) \,du, \\
S_{k,w,\I}^{(2)} (x^{(2)}) 
& := 
\Big\{\prod_{(l_1,l_2)\in Y'_d}\tau^{-k(l_1+l_2)}\Big\}
\widehat{\Upsilon_{w,\I}^{(2)}} ( - \tau^{-k} \circ x^{(2)} ),
\end{split}
\end{equation}
and $\Upsilon_{w,\I}^{(1)}$ and $\Upsilon_{w,\I}^{(2)}$ are defined in \eqref{def:Ups}. Let $\mathcal{S}_{k,w,\I} f := f \ast_{\G_0^\#} S_{k,w,\I}$. Notice that
\begin{align*}
\norm{\phi_{k} S_{k,w,\I} - S_{k,w,\I}}_{L^1(\G_0^\#)}
\lesssim \tau^{-Dk}, \qquad \I \ne \emptyset, \quad 0 \le w < k.
\end{align*}
Therefore, to prove \eqref{eq:12con} it suffices to show that if $w\geq 0$ and $\I \ne \emptyset$ then
\begin{align*}
\Big\|\sum_{k>w}\varkappa_{k} \mathcal{S}_{k,w,\I} f \Big\|_{L^2(\G_0^{\#})}
\lesssim \tau^{-w/D}\|f\|_{L^2(\G_0^{\#})},
\end{align*}
provided that $|\varkappa_k|\le1$. In view of the Cotlar-Stein lemma it suffices to prove that
\begin{align} \label{CC1.2}
\| \mathcal{S}_{j,w,\I} \mathcal{S}_{k,w,\I}^{*} \|_{L^2(\G_0^{\#}) \to L^2(\G_0^{\#})}
+
\| \mathcal{S}_{j,w,\I}^{*} \mathcal{S}_{k,w,\I} \|_{L^2(\G_0^{\#}) \to L^2(\G_0^{\#})}
\lesssim \tau^{-2 w/D} \tau^{- |k-j|/D},
\end{align}
uniformly in $0\leq w < j \le k$ and $\I \ne \emptyset$. We will prove the estimates only for the first term in the left-hand side above, since the second term can be treated in a similar way. 

With $\widetilde{\delta} = (\delta_{l_1 l_2})_{(l_1, l_2) \in Y_d}$,
$\delta_{l_1 l_2} = \delta$ if $(l_1, l_2) \in Y'_d$ and $\delta_{l_10} = \delta'$ as before, it is easy to see that
\begin{equation}\label{AA2.1}
\begin{split}
& |S_{k,w,\I} (x)|+\sum_{(l_1,l_2)\in Y_d} \tau^{k(l_1+l_2) - \delta_{l_1 l_2} w} 
\big|(\partial_{x_{l_1l_2}} S_{k,w,\I} )(x)\big|\\
& \lesssim
\Big\{\prod_{(l_1,l_2)\in Y_d}\tau^{-k(l_1+l_2) + \delta_{l_1 l_2} w}\Big\}
\int_{\R} \chi (u) \Big\langle \tau^{\tilde{\delta} w} \big( A_0 (u) - \tau^{-k} \circ x \big) \Big\rangle^{-D} \, du,
\end{split}
\end{equation}
uniformly in $x \in \G_0^{\#}$, $0 \le w < k$. Observe that for every $\theta \in \R^{d+d'}$ we also have
\begin{align} \label{SFT}
\widehat{S_{k,w,\I} } (\theta) 
=
\Upsilon_{w,\I}^{(1)} (\tau^k \circ \theta^{(1)}) \Upsilon_{w,\I}^{(2)} (\tau^k \circ \theta^{(2)}) 
\int_{\R} \chi (u) \ex\big(- \theta.A_0(\tau^k u) \big) \, du.
\end{align}

\paragraph{\bf{Step 1.}} 
We prove first the bounds \eqref{CC1.2} when $k-j\geq w$.
Using \eqref{SFT} we have $
\int_{\G_0^{\#}} S_{k,w,\I} (x) \,dx= 0$ for $\I \ne \emptyset$. Therefore the kernels $\mathbb{K}_{k,j}$ of $\mathcal{S}_{j,w,\I} \mathcal{S}_{k,w,\I}^{*}$ satisfy the bounds
\begin{align} \label{id:410}
\norm{\mathbb{K}_{k,j} }_{L^1(\G_0^{\#})}
\le 
\int_{\G_0^{\#}} \abs{S_{j,w,\I} (y)} \int_{\G_0^{\#}}  
\abs{S_{k,w,\I} (x \cdot y) - S_{k,w,\I} (x) } \, dx \, dy.
\end{align}
Using now the bounds \eqref{AA2.1} we obtain
\begin{align*}
\abs{S_{k,w,\I} (x \cdot y) - S_{k,w,\I} (x) }
& \lesssim
\tau^{-(k-j)/2} 
\Big\{\prod_{(l_1,l_2)\in Y_d}\tau^{-k(l_1+l_2) + \delta_{l_1 l_2} w}\Big\}
\langle \tau^{-j} \circ y \rangle \\
& \times
\int_{\R} \chi (u) \Big\langle \tau^{\tilde{\delta} w} \big( A_0 (u) - \tau^{-k} \circ x \big) \Big\rangle^{-D/8 + 1} \, du
\Big\langle \tau^{\tilde{\delta} w} \big( \tau^{-k} \circ y \big) \Big\rangle^{D/4},
\end{align*}
for any $x,y \in \G_0^{\#}$, provided that $k-j\geq w$. Therefore, using \eqref{id:410},
\begin{align*} 
\norm{\mathbb{K}_{k,j} }_{L^1(\G_0^{\#})}
& \lesssim 
\tau^{-(k-j)/2} 
\int_{\G_0^{\#}}  
\Big\{\prod_{(l_1,l_2)\in Y_d}\tau^{-j(l_1+l_2) + \delta_{l_1 l_2} w}\Big\} \\
&\times
\int_{\R} \chi (v) \Big\langle \tau^{\tilde{\delta} w} \big( A_0 (v) - \tau^{-j} \circ y \big) \Big\rangle^{-D/4} 
\, dv \, dy \lesssim 
\tau^{-(k-j)/2}.
\end{align*}
This proves \eqref{CC1.2} provided that $k-j\geq w$.

{\bf{Step 2.}} Assume now that $k-j\leq w$. Using a high order $T^\ast T$ argument it suffices to prove that if $0\leq w<k$ and $\I \ne \emptyset$ then
\begin{align} \label{CC6.2}
\|(\mathcal{S}_{k,w,\I}^{*} \mathcal{S}_{k,w,\I})^{r} \|_{L^2(\G_0^{\#}) \to L^2(\G_0^{\#})}
\lesssim \tau^{-w}.
\end{align}
Using the formulas \eqref{pro15.7}--\eqref{pro15.11} we see that
$(\mathcal{S}_{k,w,\I}^{*} \mathcal{S}_{k,w,\I})^{r} f = f \ast_{\G_0^\#}  \mathbb{K}^r_{k}$, where
\begin{equation}\label{CC6.20}
\mathbb{K}^r_{k} (z) =
\int_{\R^d\times\R^{d'}} \ex\big( \theta.z \big)\big( \Upsilon_{w,\I}^{(2)} (\tau^k \circ \theta^{(2)}) \big)^{2r}I^r_{k,w,\I} (\theta^{(1)},\theta^{(2)})\, d\theta,
\end{equation}
and
\begin{align*}
& I^r_{k,w,\I} (\theta)
:=
\int_{\R^{2rd}} \Big\{\prod_{i=1}^{r} \overline{S_{k,w,\I}^{(1)} (h_i^{(1)})} S_{k,w,\I}^{(1)} (g_i^{(1)}) \Big\}\ex\big( \theta^{(1)}{.}\sum_{1\leq i\leq r}(h_i^{(1)}-g_i^{(1)})\big)\\
&\times
\ex\Big(-\theta^{(2)}{.}\big\{\sum_{1\leq i\leq r}R_0(h_i^{(1)},h_i^{(1)}-g_i^{(1)})+\sum_{1\leq p<i\leq r}R_0(-h_p^{(1)}+g_p^{(1)},-h_i^{(1)}+g_i^{(1)})\big\}\Big)\,dh_i^{(1)}dg_i^{(1)}.
\end{align*}
Using the definitions \eqref{def:Skw}, \eqref{laj26}, and \eqref{pro0.4}, and making the changes of variables $h_i^{(1)}=\tau^k\circ(A_0^{(1)}(v_i)+y_i)$, $g_i^{(1)}=\tau^k\circ(A_0^{(1)}(u_i)+x_i)$ we rewrite
\begin{equation}\label{CC6.21}
\begin{split}
& I^r_{k,w,\I} (\theta)=
\int_{\R^{2rd}}\prod_{i=1}^{r} \big\{\widehat{\Upsilon^{(1)}_{w,\I}}(y_i)\widehat{\Upsilon^{(1)}_{w,\I}}(-x_i)\ex\big(-(\tau^k\circ\theta^{(1)}).(x_i-y_i)\big)\big\}\\
&\times\Big\{\int_{\R^{2r}}\ex\big(-(\tau^k\circ\theta^{(2)}).T(\underline{x},\underline{y},\underline{u},\underline{v})\big)\prod_{i=1}^r\{\chi(u_j)\chi(v_j)\}\ex\big(-(\tau^k\circ\theta).D(\underline{v},\underline{u})\big)\,d\underline{u}d\underline{v}\Big\}\,d\underline{x}d\underline{y}.
\end{split}
\end{equation}

In view of \eqref{AA2.1} we have
\begin{align*}
\norm{S_{k,w,\I}(x) \ind{|\tau^{-k} \circ x| \ge 10^{d} d^{10} } }_{L^1(\G_0^{\#})}
\lesssim 
\tau^{-\delta Dw/2}.
\end{align*}
To prove \eqref{CC6.2} it suffices to show that for a large fixed constant $C_r\gg 1$ we have 
\begin{align*}
\norm{\mathbb{K}^r_{k}(x) \ind{|\tau^{-k} \circ x| \le C_r } }_{L^1(\G_0^{\#})}\lesssim \tau^{- w}.
\end{align*}
In view of \eqref{CC6.20}, for this is suffices to show that for any $(\theta^{(1)},\theta^{(2)})\in\R^d\times \R^{d'}$ we have
\begin{equation}\label{CC6.22}
\big|\big( \Upsilon_{w,\I}^{(2)} (\tau^k \circ \theta^{(2)}) \big)^{2r}I^r_{k,w,\I} (\theta^{(1)},\theta^{(2)})\big|\lesssim \big|\Upsilon_{w,\I}^{(2)} (\tau^k \circ \theta^{(2)})\big|^{2r}\tau^{-4w}(1+\tau^{-2\delta'w}|\tau^k\circ\theta^{(1)}|)^{-1/\delta}.
\end{equation}

This is similar to the proof in Steps 3 and 4 of Lemma~\ref{laj10}. Indeed, first we integrate by parts many times in  $x_i$ (or in $y_i$) in the identity \eqref{CC6.21} to see that
\begin{equation*}
\big|\big( \Upsilon_{w,\I}^{(2)} (\tau^k \circ \theta^{(2)}) \big)^{2r}I^r_{k,w,\I} (\theta^{(1)},\theta^{(2)})\big|\lesssim \big|\Upsilon_{w,\I}^{(2)} (\tau^k \circ \theta^{(2)})\big|^{2r}(1+\tau^{-2\delta'w}|\tau^k\circ\theta^{(1)}|)^{-D}
\end{equation*}
for any $(\theta^{(1)},\theta^{(2)})\in\R^d\times\R^{d'}$. It remains to prove \eqref{CC6.22} if $|\tau^k\circ\theta^{(2)}|\leq 2 \tau^{\delta w+4}$ and $|\tau^k\circ\theta^{(1)}|\leq\tau^{3\delta' w}$. In this case we can use Proposition \ref{minarcscon} as in Step 4 in Lemma \ref{laj10} to prove a suitable decay if $|\tau^k\circ\theta|\geq\tau^{\delta w-4}$. Finally, if $|\tau^k\circ\theta|\leq\tau^{\delta w-4}$ then we may assume that $1\in\I$, so
\begin{equation*}
\int_{\R^{d}}\widehat{\Upsilon^{(1)}_{w,\I}}(x)x^\beta=0
\end{equation*}
for any multi-index $\beta=(\beta_1,\ldots, \beta_d)\in\N^d$. This is similar to \eqref{laj28.5} and can be used to show that $\big|I^r_{k,\I} (\theta)\big|\lesssim\tau^{-Dw}$ if $|\tau^k\circ\theta|\leq\tau^{\delta w-4}$. This finishes the proof of inequality \eqref{eq:12con}.  \qed

\subsection{Proof of inequality \eqref{gio51con}}
The space $X=\G_0^{\#}$ endowed with the Lebesgue measure
$\mu_{\G_0^{\#}}=|\cdot|$ and the quasi-metric
\begin{align}
\label{eq:34}
\mathfrak q_{\G_0^{\#}}(x, y):=\sup_{(l_1, l_2)\in Y_d}\Big(\big|[x\cdot y^{-1}]_{l_1l_2}\big|^{1/(l_1+l_2)}\Big), \qquad x, y\in \G_0^{\#}
\end{align}
defines a space of homogeneous type $(\G_0^{\#}, \mathcal B (\G_0^{\#}), \mu_{\G_0^{\#}},  \mathfrak q_{\G_0^{\#}})$. This in turn allows us to
associate a system of dyadic cubes for $X$  in the sense of Christ
\cite[Theorem 11]{Ch2}.

Following \cite[Section 3 and 4, pp. 6721--6726]{JSW} we can define the martingale sequence 
$\mathbb E_kf(x)=\mathbb E[f|\mathcal F_k](x)$ for $k\in\Z$, $x\in \G_0^{\#}$, and $f\in L_{\rm loc}^1(\G_0^{\#})$, where  
$\mathcal F_k$ is the filtration corresponding to the system of Christ's dyadic cubes, see \cite[formula (27), p. 6721]{JSW} and \cite[Lemma 3.1, p. 6721]{JSW}. 

An important ingredient in the proof of inequality \eqref{gio51con}
will be L{\'e}pingle's inequality \cite{Le,MSZ1},
which asserts that for every $1<p<\infty$ and  $2<\rho<\infty$
and every  $f\in L^p(\G_0^{\#})$ one has
\begin{align}
\label{eq:33}
\norm{V^{\rho} (\mathbb{E}_{k}f : k \in \Z ) }_{L^p(\G_0^{\#})}
\lesssim_{p, \rho}
\|f\|_{L^p(\G_0^{\#})}.
\end{align}

We now establish certain variational estimates necessary to prove \eqref{gio51con}.
In a similar way as in \eqref{gio52}, let us define new kernels on $\G_0^\#$ by setting
\begin{equation*}
W_{k}(x)
:=
\int_{\R^d\times\R^{d'}}\eta_{0}(\tau^k\circ\xi^{(1)})\eta_{0}(\tau^k\circ\xi^{(2)})\ex(x{.}\xi)J_k(\xi^{(1)})\,d\xi,
\qquad x \in \G_0^\#,\quad \tau>1.
\end{equation*}

Observe that 
\begin{equation*}
W_{k}(x)
=
\int_{\R} \tau^{-k} \chi(\tau^{-k}u) \psi_k (x - A_0(u)) \,du,
\qquad x \in \G_0^\#,
\end{equation*}
where for $k \in \Z$ and $x \in \G_0^\#$ we set
\begin{equation*}
\psi_k (x) 
:=
\Big\{\prod_{(l_1,l_2)\in Y_d}\tau^{-k(l_1+l_2)}\Big\} \psi (\tau^{-k} \circ x),
\qquad 
\psi(x):= \widehat{\eta}_{0}(-x^{(1)}) \widehat{\eta}_{0}(-x^{(2)}).
\end{equation*}

The main result of this subsection is stated below.

\begin{lemma} \label{lem:var}
Let $2<\rho<\infty$ be given. Then for any $g \in L^2(\G_0^\#)$ one has
\begin{align} \label{var:con}
\norm{V^{\rho} (g\ast_{\G_0^\#} W_{k} : k \in \Z ) }_{L^2(\G_0^\#)}
\lesssim_{\rho, \tau}
\|g\|_{L^2(\G_0^\#)}.
\end{align}
\end{lemma}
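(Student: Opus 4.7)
The plan is to combine L\'epingle's martingale inequality \eqref{eq:33} with a square-function comparison between $g\ast_{\G_0^\#}W_k$ and (a fixed multiple of) its dyadic martingale approximation $\mathbb{E}_k g$, where $\mathbb{E}_k$ denotes conditional expectation against the Christ dyadic filtration on $(\G_0^\#,\mathfrak q_{\G_0^\#},\mu_{\G_0^\#})$ normalized so that generation-$k$ cubes have $\mathfrak q_{\G_0^\#}$-diameter comparable to $\tau^k$. Let $c_\chi:=\int_\R\chi(u)\,du$ denote the total mass of the approximation-to-identity $W_k$. Using the triangle inequality
\[
V^\rho(g\ast W_k)\le V^\rho(c_\chi\mathbb{E}_k g)+V^\rho(g\ast W_k - c_\chi\mathbb{E}_k g)
\]
together with the elementary bound $V^\rho(c_k:k\in\Z)\le 2\|c\|_{\ell^\rho(\Z)}\le 2\|c\|_{\ell^2(\Z)}$, valid for $\rho\ge 2$, L\'epingle's estimate \eqref{eq:33} handles the martingale term and the problem is reduced to the square-function bound
\begin{equation}\label{eq:planSF}
\Big\|\Big(\sum_{k\in\Z}|g\ast_{\G_0^\#}W_k - c_\chi\mathbb{E}_k g|^2\Big)^{1/2}\Big\|_{L^2(\G_0^\#)}\lesssim\|g\|_{L^2(\G_0^\#)}.
\end{equation}

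I would then establish \eqref{eq:planSF} by almost-orthogonality against the martingale Littlewood--Paley decomposition $g=\sum_{j\in\Z}P_j g$ with $P_j g:=\mathbb{E}_j g - \mathbb{E}_{j+1}g$; these pieces are mutually $L^2$-orthogonal and $\sum_j\|P_j g\|_{L^2}^2=\|g\|_{L^2}^2$. Since $\mathbb{E}_k g=\sum_{j\ge k}P_j g$, one has
\[
g\ast W_k - c_\chi\mathbb{E}_k g \;=\; \sum_{j<k}(P_j g)\ast W_k \;+\; \sum_{j\ge k}\bigl((P_j g)\ast W_k - c_\chi P_j g\bigr),
\]
and the heart of the argument is the off-diagonal estimate
\begin{equation}\label{eq:planOD}
\bigl\|(P_j g)\ast W_k - c_\chi P_j g\,\mathbf{1}_{\{j\ge k\}}\bigr\|_{L^2(\G_0^\#)}\lesssim \tau^{-c|j-k|}\|P_j g\|_{L^2(\G_0^\#)}
\end{equation}
for some $c=c(d,\tau)>0$, uniformly in $j,k\in\Z$. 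Granted \eqref{eq:planOD}, Cauchy--Schwarz in $j$ yields $\|g\ast W_k - c_\chi\mathbb{E}_k g\|_{L^2}^2\lesssim\sum_j\tau^{-c|j-k|}\|P_j g\|_{L^2}^2$, and a Fubini summation over $k$ produces \eqref{eq:planSF}.

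The off-diagonal bound \eqref{eq:planOD} is proved separately in the two regimes using the explicit representation
\[
W_k(x)=\int_{\R}\tau^{-k}\chi(\tau^{-k}u)\psi_k(x-A_0(u))\,du
\]
and the Schwartz decay of $\psi_k$. For $j<k$, the martingale increment $P_j g$ has mean zero on every generation-$(j+1)$ cube $Q$, so for a fixed reference point $y_0\in Q$ one writes $(P_j g\,\mathbf{1}_Q)\ast W_k(x)=\int_Q P_j g(y)\bigl[W_k(y^{-1}\cdot x)-W_k(y_0^{-1}\cdot x)\bigr]\,dy$; differentiating $W_k$ under the integral and using $\mathfrak q_{\G_0^\#}(y,y_0)\lesssim\tau^{j+1}\ll\tau^k$ produces the pointwise inequality $|W_k(y^{-1}\cdot x)-W_k(y_0^{-1}\cdot x)|\lesssim\tau^{j-k}\Phi_k(y_0^{-1}\cdot x)$ with $\|\Phi_k\|_{L^1}\lesssim 1$, which together with $L^2$-orthogonality of the summation over $Q\in\mathcal{Q}_{j+1}$ yields the $\tau^{-(k-j)/C}$ decay. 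For $j\ge k$, $P_j g$ is constant on generation-$j$ cubes and, since $W_k$ is $\mathfrak q_{\G_0^\#}$-localized at scale $\tau^k\le\tau^j$ with total mass $c_\chi$, a Taylor expansion of $P_j g$ against the $\tau^k$-scale kernel $W_k$ yields the gain $\tau^{-(j-k)/C}$.

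The main obstacle is precisely \eqref{eq:planOD} in the nilpotent, non-isotropic setting: the kernel $W_k$ is not an ordinary bump but a curved Radon-type average along the moment curve $A_0$, so one must carefully reconcile the non-isotropic dilations $\tau^k\circ$, the $\mathfrak q_{\G_0^\#}$-geometry of Christ's cubes built from \eqref{eq:34}, and the pointwise derivative and tail estimates on $W_k$ inherited from the $\I=\emptyset$ specialization of \eqref{AA2.1}. Nevertheless, once the quasi-metric framework is in place the argument parallels the standard proof of $L^2$-variational inequalities for smooth convolution averages on spaces of homogeneous type, and the required compatibility between the curve $A_0$ and the $\mathfrak q_{\G_0^\#}$-balls is essentially built into the homogeneous-type structure constructed at the start of this subsection.
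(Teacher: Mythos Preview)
Your overall strategy is sound, but it differs from the paper's proof. The paper does not compare $g\ast W_k$ directly to the martingale; instead it writes $W_k=\mu_0\psi_k+\mathbb K_k$, where $\mu_0=c_\chi$, $\psi_k$ is an honest non-isotropic bump at scale $\tau^k$, and $\mathbb K_k$ has integral zero. The variation of $g\ast(\mu_0\psi_k)$ is reduced to L\'epingle via the Jones--Seeger--Wright martingale comparison (precisely your off-diagonal argument, but only for the elementary kernel $\psi_k$), while the square function $(\sum_k|g\ast\mathbb K_k|^2)^{1/2}$ is handled by a purely kernel-side Cotlar--Stein estimate \eqref{id:841} using $\int\mathbb K_k=0$ and the smoothness of $\mathbb K_k$; no martingale structure enters this second step. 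The paper thus isolates the curved part of $W_k$ into a self-contained almost-orthogonality argument and cites \cite{JSW} for the bump-versus-martingale comparison, whereas your route carries the curve through the martingale comparison directly.

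Two of your justifications for the off-diagonal estimate are incorrect as stated. For $j\ge k$, ``Taylor expansion of $P_jg$'' is meaningless: $P_jg$ is piecewise constant on generation-$j$ cubes. The gain comes instead from Christ's small-boundary property: $(P_jg)\ast W_k(x)-c_\chi P_jg(x)=\int W_k(y)\,[P_jg(y^{-1}x)-P_jg(x)]\,dy$ vanishes unless $x$ and $y^{-1}x$ lie in different generation-$j$ cubes, and the measure of such $x$ in each cube $Q$ is $\lesssim(\mathfrak q_{\G_0^\#}(y,0)/\tau^j)^\epsilon|Q|$, giving the $\tau^{-\epsilon(j-k)}$ decay after integrating against $|W_k|$. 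For $j<k$, the functions $(P_jg\,\mathbf 1_Q)\ast W_k$ over $Q\in\mathcal Q_{j+1}$ are \emph{not} $L^2$-orthogonal---their supports overlap at scale $\tau^k\gg\tau^{j+1}$---so your final step fails as written; after the pointwise difference bound one should instead dominate $|(P_jg)\ast W_k(x)|\lesssim\tau^{j-k}\,(|P_jg|\ast\widetilde\Phi_k)(x)$ pointwise and apply Young's inequality. With these two repairs your argument is complete.
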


\begin{proof}
We reduce the matters to L{\'e}pingle's inequality for bounded martingales \eqref{eq:33}.  

{\bf{Step 1.}} Let $\mu_0:=\int_{\mathbb{R}} \chi(x)dx$ and define
\begin{align*} 
T_k g(x) := g \ast_{\G_0^\#} W_{k}(x) -  g \ast_{\G_0^\#} (\mu_0\psi_k)(x)
=: g \ast_{\G_0^\#} \mathbb{K}_{k}(x),\qquad  x \in \G_0^\#.
\end{align*}
Observe that 
\begin{align}
\label{eq:30}
\begin{split}
\norm{V^{\rho} (g\ast_{\G_0^\#} W_{k} : k \in \Z ) }_{L^2(\G_0^\#)}
&\lesssim
\norm{V^{\rho} (g\ast_{\G_0^\#} \psi_k : k \in \Z ) }_{L^2(\G_0^\#)}\\
&+
\norm[\Big]{\Big( \sum_{k \in \Z} |T_k g|^2 \Big)^{1/2} }_{L^2(\G_0^\#)}.
\end{split}
\end{align}
As in the Jones--Seeger--Wright paper \cite{JSW} we can conclude that
\begin{align}
\label{eq:29}
\norm{V^{\rho} (g\ast_{\G_0^\#} \psi_k : k \in \Z ) }_{L^2(\G_0^\#)}
\lesssim_{\rho, \tau}
\|g\|_{L^2(\G_0^\#)}.
\end{align}
Indeed, let  $\mathbb{E}_k f$ denote the martingale sequence, as above, and define the martingale difference operator $\mathbb{D}_k = \mathbb{E}_k - \mathbb{E}_{k-1}$ and proceeding as in the proof of \cite[Lemma 3.2, p. 6722]{JSW} we are able to prove that there is a constant $\gamma > 0$ such that for any $f \in L^2(\G_0^\#)$ the estimate
\begin{align*} 
\norm{ (\mathbb{D}_{m} f) \ast_{\G_0^\#} \psi_{M_0(k+m) + b} - \mathbb{E}_{k+m} \mathbb{D}_{m} f }_{L^2(\G_0^\#)}
\lesssim
\tau^{-\gamma \abs{k}} \norm{ \mathbb{D}_{m} f }_{L^2(\G_0^\#)},
\end{align*}
holds uniformly in $k,m \in \Z$, and $b \in \Z_{M_0}$; here $M_0 \in \N$ is fixed but large constant such that $\delta=2^{-M_0}$ in the construction of Christ's dyadic cubes, see \cite[Theorem 11]{Ch2}. This estimate and a simple square function argument (see \cite[Section 4, p. 6724]{JSW}) reduces \eqref{eq:29} to L{\'e}pingle's inequality \eqref{eq:33} and the claim follows.

{\bf{Step 2.}}
The proof will be completed if we estimate the square function from \eqref{eq:30}. 
By Khintchine's inequality it suffices to show that for every  $f \in L^2(\G_0^\#)$ one has
\begin{align*} 
\norm[\Big]{\sum_{k\in\Z}\varkappa_k T_k g }_{L^2(\G_0^\#)}
\lesssim 
\norm{g }_{L^2(\G_0^\#)},
\end{align*}
for any coefficients $\varkappa_k\in[-1,1]$.
Using the Cotlar-Stein lemma it remains to prove that
\begin{align} \label{id:841}
\norm{\mathbb{K}_{k}^* \ast_{\G_0^\#} \mathbb{K}_{j} }_{L^1(\G_0^\#)}
+
\norm{\mathbb{K}_{j} \ast_{\G_0^\#} \mathbb{K}_{k}^* }_{L^1(\G_0^\#)}
\lesssim 
\tau^{-|k-j|}, \qquad k \ge j.
\end{align}
We prove only the first estimate since the second one is analogous. Note that
\begin{align} \label{id:842}
\abs{\mathbb{K}_{k}^* \ast_{\G_0^\#} \mathbb{K}_{j} (x)}
\le
\int_{\G_0^\#} \abs{\mathbb{K}_{j} (y)} 
\abs[\big]{\mathbb{K}_{k} (x^{-1} \cdot y) - \mathbb{K}_{k} (x^{-1})} \, dy,
\end{align}
since we have $\int_{\G_0^\#} \mathbb{K}_{j}(x)dx = 0$.
Further, using the estimate
\begin{align*} 
\abs{\psi_k (x\cdot y - z) - \psi_k ( x - z) }
\lesssim
\tau^{-|k-j|} \Big\{\prod_{(l_1,l_2)\in Y_d}\tau^{-k(l_1+l_2)}\Big\} 
\langle \tau^{-j} \circ y \rangle^{D+1}
\langle \tau^{-k} \circ x \rangle^{-D/2 + 1},
\end{align*}
which holds
uniformly in $k \ge j$, $\abs{\tau^{-k} \circ z} \lesssim 1$, and $x,y \in \G_0^\#$, we obtain
\begin{align*} 
\abs[\big]{\mathbb{K}_{k} (x \cdot y) - \mathbb{K}_{k} (x)} 
\lesssim
\tau^{-|k-j|} \Big\{\prod_{(l_1,l_2)\in Y_d}\tau^{-k(l_1+l_2)}\Big\} 
\langle \tau^{-j} \circ y \rangle^{D+1}
\langle \tau^{-k} \circ x \rangle^{-D/2 + 1}.
\end{align*}
Combining this with \eqref{id:842} and a simple estimate
\begin{align*} 
\abs{\mathbb{K}_{j} (y)} 
\lesssim
\Big\{\prod_{(l_1,l_2)\in Y_d}\tau^{-j(l_1+l_2)}\Big\} 
\langle \tau^{-j} \circ y \rangle^{-4D},
\end{align*}
we conclude 
\begin{align*}
\abs{\mathbb{K}_{k}^* \ast_{\G_0^\#} \mathbb{K}_{j} (x)}
\lesssim
\tau^{-|k-j|} \Big\{\prod_{(l_1,l_2)\in Y_d}\tau^{-k(l_1+l_2)}\Big\} 
\langle \tau^{-k} \circ x \rangle^{-D/8}, \qquad x \in \G_0^\#.
\end{align*}
This shows \eqref{id:841} and the proof of Lemma~\ref{lem:var} is completed.
\end{proof}

We now  prove inequality \eqref{gio51con}. Note that 
\begin{align*}
\big\|V^{\rho}(f\ast_{\G_0^{\#}} \widetilde{W}_{k,k}: k\ge 0)\big\|_{L^2(\G_0^{\#})}&\le
\big\|V^{\rho}(f\ast_{\G_0^{\#}} W_{k}: k\in\Z)\big\|_{L^2(\G_0^{\#})}\\
&+\Big\|\Big(\sum_{k\ge 0}|f\ast_{\G_0^{\#}} (\widetilde{W}_{k,0}- W_{k})|^2\Big)^{1/2}\Big\|_{L^2(\G_0^{\#})}\\
&+\sum_{w\in\N}\Big\|\Big(\sum_{k>w}|f\ast_{\G_0^{\#}} (\widetilde{W}_{k,w+1}- \widetilde{W}_{k,w})|^2\Big)^{1/2}\Big\|_{L^2(\G_0^{\#})}.
\end{align*}
The $\rho$-variations are bounded due to Lemma \ref{lem:var}. The first square function is bounded due to the following pointwise bound
\begin{align*}
|f\ast_{\G_0^{\#}} (\widetilde{W}_{k,0}- W_{k})(x)|\lesssim
\tau^{-k/2}  |f| \ast_{\G_0^{\#}} E_k(x),
\end{align*}
where
\begin{align*}
E_k(h)&:=\Big\{\prod_{(l_1,l_2)\in Y_d}2^{-k(l_1+l_2)}\Big\} \langle 2^{-k} \circ h \rangle^{-D}, \qquad h\in \G_0^{\#}.
\end{align*}
Appealing to Khintchine's inequality and \eqref{eq:12con} we  conclude that
the second square function is bounded by a constant multiple of $2^{- w/D}\|f\|_{L^2(\G_0^{\#})}$, which completes the proof of \eqref{gio51con}. \qed

\section{Proof of Proposition~\ref{lem:max_sh}: Shifted maximal function} \label{ssec:max_sh}

Using the definition of $J_k(\xi)$, (see \eqref{gio3.5}), and \eqref{kio7} we obtain
\begin{align*}
W_{k,w, Q}(h)
& =
\phi_{k}(h)
\Big( \prod_{(l_1, l_2) \in Y_d} Q \beta_{l_1 l_2} 2^{-k(l_1 + l_2)} \Big)
\int_{\R} \chi (x)
\widehat{\eta}_0 \big( \beta^{(1)} \big(2^{-k} \circ h^{(1)} - A_0^{(1)}(x) \big) \big) \\
& \qquad \qquad \qquad \times
\widehat{\eta}_0 \big( \beta^{(2)} (2^{-k} \circ h^{(2)} ) \big)
\,dx,
\end{align*}
where $\beta=(\beta^{(1)},\beta^{(2)})=(\beta_{l_1l_2}) \in \R^{d + d'}$, $\beta_{l_1 l_2}= 2^{\lfloor \delta w \rfloor}$ if $l_2 \ne 0$, $\beta_{l_1l_2}= 2^{\lfloor\delta' w \rfloor} $ if $l_2 = 0$. We define the quasi-norm on $\mathfrak q_{\beta}:\R^{Y_d}\to[0,\infty)$ by
\begin{align} \label{def:norm}
\mathfrak q_{\beta} (x) = \sup_{(l_1, l_2) \in Y_d} (\beta_{l_1 l_2} 
\abs{x_{l_1 l_2}})^{1/(l_1 + l_2)}.
\end{align}
Since $\mathfrak q_{\beta} (\lambda\circ x)=\lambda\mathfrak q_{\beta} (x)$, we have
\begin{align}\label{aloh1}
\abs{W_{k,w, Q}(h)}
\lesssim
\int_{\R} \chi (u) \Big( \prod_{(l_1, l_2) \in Y_d} Q \beta_{l_1 l_2} 2^{-k(l_1 + l_2)} \Big)
\Big( 1+ 2^{-k} \mathfrak q_{\beta} (h - A_0(2^k u) ) \Big)^{-D}\, du.
\end{align}

For $Q\in\Z_+$, $h \in \HH_Q$, and $u \in [-2,2]$ we define
\begin{equation}\label{Alne10}
M_{Q,w,u} f (h):=
\sup_{k \in \NN,\,2^{k/2} \ge 8Q 2^{w/8}}
\Big( \prod_{(l_1, l_2) \in Y_d} Q \beta_{l_1 l_2} 2^{-k(l_1 + l_2)} \Big) 
\sum_{\Set{y \in \HH_Q}{\mathfrak q_{\beta} ( h\cdot y^{-1} - A_0(2^{k}u) ) < 2^{k} } } 
\abs{f(y)},
\end{equation}
and notice that, as a consequence of \eqref{aloh1},
\begin{align*} 
\abs[\big]{ f \ast_{\HH_Q}W_{k,w, Q} (h) } 
\lesssim
\sum_{n=0}^{\infty} 2^{-nD/2} 
\int_{-2}^2 M_{Q, w, 2^{-n} u} f (h)\,du,
\end{align*}
for any $h\in\HH_Q$, integer $k$ satisfying $2^{k/2}\geq 8Q2^{w/8}$, and $f \in \ell^{p}(\HH_Q)$, uniformly in $Q$ and $w$. Therefore, for Proposition~\ref{lem:max_sh} it suffices to prove the following:

\begin{theorem} \label{thm:II.2}
For any $Q \in \Z_+$, $w\in\N$, and $u\in[-2,2]$ we have
\begin{equation} \label{II.3}
\begin{split}
&\norm{M_{Q, w, u}}_{\ell^1(\HH_Q) \to \ell^{1,\infty}(\HH_Q)}\lesssim (w+1),\\
&\norm{M_{Q,w, u}}_{\ell^p(\HH_Q) \to \ell^{p}(\HH_Q)}\lesssim_p (w+1),\qquad p\in(1,\infty].
\end{split}
\end{equation}
\end{theorem}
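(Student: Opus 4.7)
The plan is to establish the $\ell^{\infty}$ bound trivially, then the weak $(1,1)$ bound via a Calder\'on--Zygmund decomposition adapted to the anisotropic quasi-metric $\mathfrak q_{\beta}$ on $\HH_Q$ defined in \eqref{def:norm}, and finally to interpolate to obtain the strong $\ell^p$ bounds for $p\in(1,\infty]$. The $\ell^{\infty}$ bound is immediate by counting: the constraint $2^{k/2}\geq 8Q 2^{w/8}$, combined with $\beta_{l_1 l_2}\leq 2^{\delta'w}$, forces $2^{k(l_1+l_2)}/(Q\beta_{l_1l_2})\gtrsim 1$ for every $(l_1,l_2)\in Y_d$, so the box $\{z\in\HH_Q:\mathfrak q_{\beta}(z-A_0(2^ku))<2^k\}$ contains $\simeq V_k:=\prod_{(l_1,l_2)\in Y_d} 2^{k(l_1+l_2)}/(Q\beta_{l_1l_2})$ lattice points, and the prefactor in \eqref{Alne10} is precisely comparable to $V_k^{-1}$.

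The core of the argument is the weak $(1,1)$ bound. I would view $(\HH_Q,\mathfrak q_{\beta})$ as a space of homogeneous type of homogeneous dimension $D=\sum_{(l_1,l_2)\in Y_d}(l_1+l_2)$, equipped with the Christ-type dyadic structure \cite{Ch2}, and perform the standard Calder\'on--Zygmund decomposition of $f\in\ell^1(\HH_Q)$ at level $\lambda$, writing $f=g+\sum_i b_i$ with $\|g\|_{\ell^{\infty}}\leq C\lambda$, each $b_i$ supported on a dyadic cube $\mathcal Q_i$ of $\mathfrak q_{\beta}$-radius $s_i\simeq 2^{j_i}$, $\sum_i|\mathcal Q_i|\lesssim\|f\|_{\ell^1}/\lambda$, and $\|b_i\|_{\ell^1}\lesssim\lambda|\mathcal Q_i|$. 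The $g$-part contributes $O(1)\|f\|_{\ell^1}/\lambda$ via the $\ell^{\infty}$ bound. The key geometric claim is
\[
\bigl|\{h\in\HH_Q:M_{Q,w,u}b_i(h)>\lambda/C\}\bigr|\lesssim (w+1)|\mathcal Q_i|,
\]
after which weak $(1,1)$ with constant $O(w+1)$ follows by summing in $i$. To prove the claim, observe that $M_{Q,w,u}b_i(h)>\lambda/C$ at some scale $k$ forces both $V_k\lesssim|\mathcal Q_i|$ (equivalently $2^k\lesssim s_i$) and that the shifted ball $\{y:\mathfrak q_{\beta}(h\cdot y^{-1}-A_0(2^ku))<2^k\}$ must intersect $\mathcal Q_i$. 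Using the formula $[h\cdot y^{-1}]^{(1)}=h^{(1)}-y^{(1)}$ coming from \eqref{picu4.2} together with $\mathfrak q_{\beta}(A_0(2^ku))\lesssim 2^{k+\delta'w}$, the set of admissible $h$ at scale $k$ lies, up to an $O(2^k)$ $\mathfrak q_{\beta}$-enlargement, in a translate of $\mathcal Q_i$ by $A_0(2^ku)$, and hence has measure $O(|\mathcal Q_i|)$. I would then split scales: for $k\leq j_i-\delta'w$ the shift has $\mathfrak q_{\beta}$-size $\leq s_i$, so all these translates collapse into a single bounded enlargement of $\mathcal Q_i$ of measure $O(|\mathcal Q_i|)$; for the remaining $O(w+1)$ dyadic scales $k\in[j_i-\delta' w,\, j_i+O(1)]$, each contributes a translate of measure $\simeq|\mathcal Q_i|$, yielding the total bound $O((w+1)|\mathcal Q_i|)$.

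The main obstacle I anticipate is implementing this geometric argument rigorously in the non-abelian setting. The second-layer entry of $h\cdot y^{-1}$ carries the non-commutative term $R_0(y^{(1)}-h^{(1)},y^{(1)})$, so the set of admissible $h$ is only a \emph{group}-translate rather than a Euclidean translate of $\mathcal Q_i$, and one must verify that the central-variable deviation produced by this bilinear term stays within $\mathfrak q_{\beta}$-scale $2^k$ on the relevant range of $(h^{(1)},y^{(1)})$ (namely $|h^{(1)}-y^{(1)}|\lesssim 2^k\cdot 2^{\delta'w}/\beta_{l_1 0}^{1/l_1}$). This should follow from the explicit component-wise expansion of $R_0$ together with the anisotropic scaling of $\beta$, but it is the technical heart of the argument. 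Once the weak $(1,1)$ bound is established, Marcinkiewicz interpolation against the $\ell^{\infty}$ bound yields the strong $\ell^p$ estimates with constant $\lesssim(w+1)^{1/p}\leq(w+1)$ for every $p\in(1,\infty]$.
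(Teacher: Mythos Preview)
Your approach is correct and the key geometric idea---that the preimage of a fixed cube $\mathcal Q_i$ under the shift $A_0(2^ku)$ occupies at most $O(w+1)$ distinct quasi-balls of size $|\mathcal Q_i|$ as $k$ ranges over the admissible scales---is exactly what the paper isolates. The packaging, however, differs. The paper does not run a Calder\'on--Zygmund decomposition; instead it introduces the unshifted Hardy--Littlewood maximal function $\widetilde M_{Q,w}$ over the quasi-balls $B_{\beta,\HH_Q}$, proves by Vitali that $\widetilde M_{Q,w}$ is weak $(1,1)$ with constant $O(1)$, and then establishes the \emph{level-set comparison}
\[
\bigl|\{M_{Q,w,u}f\ge C_3\lambda\}\bigr|\lesssim(w+1)\,\bigl|\{\widetilde M_{Q,w}f\ge\lambda\}\bigr|.
\]
This inequality is proved via two geometric lemmas: Lemma~\ref{lem:F21} (each shifted ball sits inside an unshifted ball of comparable radius) and the covering Lemma~\ref{Alne1}, which constructs explicit points $x_0,\dots,x_{w+10}$ with the property that any center $z$ whose shifted ball lands in $B(x,2^n)$ must lie in $\bigcup_j B(x_j,C_12^n)$. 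The proof of Lemma~\ref{Alne1} is precisely the non-abelian computation you flag as the obstacle: the points $x_s$ are chosen so that the $R_0$-correction in the second layer is absorbed (see \eqref{a10.2}--\eqref{id:199}), and the crucial inequality $\beta_{l_1l_2}\le\beta_{l0}$ (i.e., $\delta<\delta'$) is what makes the bilinear term harmless. Your scale-splitting argument would need exactly this lemma to justify ``each $E_k$ has measure $\lesssim|\mathcal Q_i|$'' at the intermediate scales $k\in[j_i-\delta'w,\,j_i]$; once that is in hand the two proofs are equivalent. The paper's route is slightly leaner (no Christ cubes, no good/bad split), while yours is the more classical template.
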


\subsection{Proof of Theorem~\ref{thm:II.2}} We begin with some simple observations related to the quasi-distance $\mathfrak q_\beta$ and the associated quasi-balls $B_{\beta, \HH_Q}(x, r)$ defined for any $x \in \G_0^\#$ and $r>0$ by
\begin{equation}\label{aloh3}
\begin{split}
&B_{\beta}(x, r)=\Set{y \in \G^\#_0}{\mathfrak q_{\beta} (x\cdot y^{-1}) < r},\\
&B_{\beta, \HH_Q}(x, r)= \Set{y \in \HH_Q}{\mathfrak q_{\beta} (x\cdot y^{-1}) < r}= B_{\beta}(x, r) \cap \HH_Q.
\end{split}
\end{equation}
We record first several simple properties, which follow directly from the definition \eqref{def:norm} and the observation that $1\leq \beta_{l_1l_2}\leq\beta_{l'_10}$ for any $(l_1,l_2)\in Y_d$ and $l'_1\in\{1,\ldots,d\}$.

\begin{lemma} \label{lem:21}
The following relations holds uniformly for any $x,y \in \G_0^\#$:
\begin{itemize}
\item[(a)] 
$\mathfrak q_{\beta} (x) \ge 0$ for every $x \in \G_0^\#$ and $\mathfrak q_{\beta} (x) = 0$ if and only if $x = 0$,

\item[(b)]
$\mathfrak q_{\beta} (x + y)+\mathfrak q_{\beta} (x \cdot y) 
\lesssim \mathfrak q_{\beta} (x)  + \mathfrak q_{\beta} (y)$,

\item[(c)]
$\mathfrak q_{\beta} (x^{-1}) \simeq \mathfrak q_{\beta} (x)$,

\item[(d)]
$1 + \mathfrak q_{\beta} (x) \lesssim 1 + \abs{\beta x} \lesssim (1 + \mathfrak q_{\beta} (x))^{2d}$, where $\beta x := (\beta_{l_1 l_2} x_{l_1 l_2})_{l_1 l_2}$.
\end{itemize}
\end{lemma}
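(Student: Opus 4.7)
My plan is to verify the four items essentially from the definition \eqref{def:norm}, using two structural facts about the weights, namely $\beta_{l_1 l_2}\geq 1$ for every $(l_1,l_2)\in Y_d$ and the domination property $\beta_{l_1 l_2}\leq \min(\beta_{l_1 0},\beta_{l_2 0})$ for $l_2\geq 1$ (which follows from $\delta<\delta'$ and the definition of $\beta$ displayed just before Lemma \ref{lem:21}). Part (a) is immediate since each coordinate appears with a strictly positive weight.

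For part (b), the additive bound $\mathfrak{q}_\beta(x+y)\lesssim \mathfrak{q}_\beta(x)+\mathfrak{q}_\beta(y)$ is obtained coordinatewise from $|x_{l_1 l_2}+y_{l_1 l_2}|\leq |x_{l_1 l_2}|+|y_{l_1 l_2}|$ and subadditivity of $t\mapsto t^{1/(l_1+l_2)}$ on $[0,\infty)$. For the group product we use \eqref{gro2}: the components with $l_2=0$ behave exactly as in the additive case, while for $l_2\geq 1$ there is the extra term $x_{l_1 0}y_{l_2 0}$. Setting $a=(\beta_{l_1 0}|x_{l_1 0}|)^{1/l_1}\leq \mathfrak{q}_\beta(x)$ and $b=(\beta_{l_2 0}|y_{l_2 0}|)^{1/l_2}\leq \mathfrak{q}_\beta(y)$, and using $\beta_{l_1 l_2}\leq \beta_{l_1 0}\beta_{l_2 0}$ (a consequence of $\beta_{l_2 0}\geq 1$), we obtain
\[
\bigl(\beta_{l_1 l_2}|x_{l_1 0}y_{l_2 0}|\bigr)^{1/(l_1+l_2)}\leq \bigl(a^{l_1}b^{l_2}\bigr)^{1/(l_1+l_2)}\leq a+b\leq \mathfrak{q}_\beta(x)+\mathfrak{q}_\beta(y),
\]
by weighted AM--GM. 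This is the only place where the non-commutative structure plays a role, and it is the main (mild) obstacle; everything else reduces to a coordinatewise comparison.

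Part (c) follows from the explicit formula for the inverse: $[x^{-1}]_{l_1 0}=-x_{l_1 0}$ and $[x^{-1}]_{l_1 l_2}=-x_{l_1 l_2}+x_{l_1 0}x_{l_2 0}$ for $l_2\geq 1$. The first gives the $l_2=0$ contribution directly, and the second is handled by the same AM--GM estimate as above, yielding $\mathfrak{q}_\beta(x^{-1})\lesssim \mathfrak{q}_\beta(x)$; applying this to $x^{-1}$ in place of $x$ gives the reverse inequality.

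For part (d), note that $l_1+l_2\in\{1,\ldots,2d\}$ for every $(l_1,l_2)\in Y_d$. If $t:=\mathfrak{q}_\beta(x)$ then by definition there is $(l_1,l_2)$ with $\beta_{l_1 l_2}|x_{l_1 l_2}|=t^{l_1+l_2}$, so $|\beta x|\geq t^{l_1+l_2}\geq \min(t,t^{2d})$; distinguishing $t\leq 1$ and $t\geq 1$ gives $1+\mathfrak{q}_\beta(x)\lesssim 1+|\beta x|$. Conversely, for every coordinate $\beta_{l_1 l_2}|x_{l_1 l_2}|\leq t^{l_1+l_2}\leq (1+t)^{2d}$, so summing over the finitely many coordinates yields $1+|\beta x|\lesssim (1+\mathfrak{q}_\beta(x))^{2d}$.
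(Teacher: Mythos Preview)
Your proof is correct and follows exactly the approach the paper indicates: the paper simply states that the lemma follows directly from the definition \eqref{def:norm} together with the observation that $1\leq \beta_{l_1l_2}\leq\beta_{l'_10}$ for any $(l_1,l_2)\in Y_d$ and $l'_1\in\{1,\ldots,d\}$, which is precisely the structural fact you identify and use. You have simply filled in the details that the paper omits.
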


We start with a simple lemma concerning the cardinality of the quasi-balls $B_{\beta, \HH_Q} (x, r)$.

\begin{lemma} \label{lem:23}
For any $x \in \G_0^\#$ and $r\geq 2Q2^{\delta'w}$ we have
\begin{align*}
\abs{B_{\beta, \HH_Q} (x, r) } 
\simeq 
\prod_{(l_1, l_2) \in Y_d}\frac{r^{l_1 + l_2}}{Q \beta_{l_1 l_2} }.
\end{align*}
\end{lemma}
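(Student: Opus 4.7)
The plan is to exploit the step-two nilpotent structure of $\G_0^\#$ to reduce the count to a product of one-dimensional integer point counts on intervals. Parametrize $y \in \HH_Q$ as $y_{l_1 l_2} = Q\tilde{y}_{l_1 l_2}$ with $\tilde{y}_{l_1 l_2} \in \Z$, and set $z := x \cdot y^{-1}$. Using the explicit inversion formula $y^{-1} = (-y^{(1)}, -y^{(2)} + R_0(y^{(1)}, y^{(1)}))$ and the product rule \eqref{picu4.2}, one computes directly
\begin{align*}
z_{l_1 0} &= x_{l_1 0} - Q\tilde{y}_{l_1 0}, \qquad l_1 \in \{1,\ldots,d\},\\
z_{l_1 l_2} &= x_{l_1 l_2} - Q\tilde{y}_{l_1 l_2} + Q^2 \tilde{y}_{l_1 0}\tilde{y}_{l_2 0} - Q x_{l_1 0}\tilde{y}_{l_2 0}, \qquad (l_1,l_2) \in Y'_d.
\end{align*}
Thus by the definition \eqref{def:norm}, $\mathfrak{q}_\beta(z) < r$ is equivalent to the system
\begin{equation*}
|z_{l_1 l_2}| < r^{l_1 + l_2}/\beta_{l_1 l_2} \qquad \text{for all } (l_1, l_2) \in Y_d.
\end{equation*}

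The key observation is that the nilpotent interaction only couples the second-level coordinates to the first-level coordinates, and does so through an \emph{additive shift} of the center of the permissible interval. Concretely, I would count by first freezing the first-level variables $\tilde{y}^{(1)} = (\tilde{y}_{l_1 0})_{l_1 \in \{1,\ldots,d\}}$, then counting the second-level variables $\tilde{y}^{(2)}$. For each $l_1$, the constraint is $|\tilde{y}_{l_1 0} - x_{l_1 0}/Q| < r^{l_1}/(Q\beta_{l_1 0})$, an interval. For each $(l_1, l_2) \in Y'_d$ and each fixed $\tilde{y}^{(1)}$, the constraint is that $\tilde{y}_{l_1 l_2}$ lies in an interval of length $2r^{l_1+l_2}/(Q\beta_{l_1 l_2})$ whose center depends on $\tilde{y}^{(1)}$ and $x$ but \emph{not} on any other second-level coordinate. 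Hence the second-level constraints decouple for fixed $\tilde{y}^{(1)}$, and the number of valid $\tilde{y}^{(2)}$ is independent of the choice of $\tilde{y}^{(1)}$.

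The main (and essentially only) technical point is to verify that each of these intervals has length at least $2$, so that the integer point count on an interval of length $L$ is comparable to $L$ (without a $+1$ additive error dominating). This is where the hypothesis $r \geq 2Q\,2^{\delta' w}$ enters: since $\beta_{l_1 l_2} \leq 2^{\delta' w}$ for every $(l_1,l_2) \in Y_d$, one has
\begin{equation*}
\frac{r^{l_1+l_2}}{Q\beta_{l_1 l_2}} \geq \frac{(2Q\,2^{\delta' w})^{l_1+l_2}}{Q\,2^{\delta' w}} = 2^{l_1+l_2}\, Q^{l_1+l_2-1}\, 2^{\delta' w(l_1+l_2-1)} \geq 2,
\end{equation*}
since $l_1 + l_2 \geq 1$. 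Consequently the number of integer values of $\tilde{y}_{l_1 l_2}$ in the relevant interval is $\simeq r^{l_1+l_2}/(Q\beta_{l_1 l_2})$ for the upper bound, and also $\gtrsim r^{l_1+l_2}/(Q\beta_{l_1 l_2})$ for the lower bound.

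Multiplying these independent counts over $(l_1,l_2) \in Y_d$ yields
\begin{equation*}
|B_{\beta, \HH_Q}(x,r)| \simeq \prod_{l_1 \in \{1,\ldots,d\}} \frac{r^{l_1}}{Q\beta_{l_1 0}} \cdot \prod_{(l_1,l_2) \in Y'_d} \frac{r^{l_1+l_2}}{Q\beta_{l_1 l_2}} = \prod_{(l_1,l_2) \in Y_d} \frac{r^{l_1+l_2}}{Q\beta_{l_1 l_2}},
\end{equation*}
as claimed. There is no genuine obstacle beyond bookkeeping: the step-two structure is benign here because the commutator term in the product rule affects only the center, not the width, of the permissible window for $\tilde{y}^{(2)}$, so the counting factorizes cleanly.
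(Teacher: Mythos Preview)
Your proof is correct and follows essentially the same approach as the paper: compute $x\cdot y^{-1}$ explicitly via \eqref{picu4.2}, observe that the resulting constraints are a system of independent interval conditions (first on $y^{(1)}$, then on each $y^{(2)}_{l_1l_2}$ with a center shifted by $y^{(1)}$), and count lattice points. The paper's own proof is in fact terser---it simply records the identity $(x\cdot y^{-1})^{(2)}=x^{(2)}-y^{(2)}+R_0(y^{(1)}-x^{(1)},y^{(1)})$, writes down the resulting description of $B_{\beta,\HH_Q}(x,r)$, and declares the bound to follow---so your version fills in exactly the details (the decoupling and the interval-length check using $r\ge 2Q\,2^{\delta'w}$) that the paper leaves implicit.
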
  

\begin{proof}
Observe that for $x, y \in \G_0^\#$ we have 
\begin{align} \label{id:450}
(x \cdot y^{-1})^{(1)} = x^{(1)} - y^{(1)}, \qquad
(x \cdot y^{-1})^{(2)} = x^{(2)} - y^{(2)} 
+ R_0 (y^{(1)} - x^{(1)}, y^{(1)}).
\end{align}
Therefore
\begin{equation}\label{Id451}
\begin{split}
&B_{\beta,\HH_Q}(x, r) =
\big\{y^{(1)} \in (Q\Z)^d,\,y^{(2)}\in(Q\Z)^{d'}:\,\beta_{l 0} \abs{x_{l 0} - y_{l 0}}< r^{l}\text{ for any }l\in\{1,\ldots,d\}\\
&\qquad\text{ and }\beta_{l_1 l_2} \abs{x_{l_1 l_2} - y_{l_1 l_2} 
+ R_0 (y^{(1)} - x^{(1)}, y^{(1)})_{l_1 l_2} } < r^{l_1 + l_2}\text{ for any }(l_1,l_2)\in Y'_d\big\}.
\end{split}
\end{equation}
This desired volume bounds follow.
\end{proof} 

Next, we prove two facts concerning the quasi-norm $\mathfrak q_{\beta}$ and shifted balls.

\begin{lemma} \label{lem:F21}
There exists a universal constant $C_0\geq 1$ such that
for any $x \in \HH_Q$, $u \in [-2,2]$, and any $k \in \NN$ satisfying $2^{k/2}\geq 2Q2^{\delta'w}$, there is $z \in \HH_Q$ such that
\begin{align} \label{5.1}
\Set[\big]{y \in \HH_Q}{\mathfrak q_{\beta} \big( x\cdot y^{-1} - A_0(2^{k}u) \big) < 2^{k} }
\subseteq
B_{\beta, \HH_Q}(z, C_0 2^{k}).
\end{align}
\end{lemma}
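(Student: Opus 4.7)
My plan is to exploit the observation that the shifted set on the left has group-quasi-diameter of order $2^k$, so that choosing $z$ to be any one of its elements will give the required containment. Concretely, if the set is empty I pick $z=x\in\HH_Q$ (vacuously), and if it contains some $y_0$ I set $z:=y_0$. Writing $a:=A_0(2^k u)\in\G_0^\#$ and parametrizing points of the shifted set by $s:=x\cdot y^{-1}\in\HH_Q$, the defining inequality becomes $\mathfrak q_\beta(s-a)<2^k$, and the key algebraic identity
\begin{equation*}
z\cdot y^{-1} = y_0\cdot y^{-1} = (s_0^{-1}\cdot x)\cdot (x^{-1}\cdot s) = s_0^{-1}\cdot s, \qquad s_0 := x \cdot y_0^{-1},
\end{equation*}
reduces the entire lemma to proving the estimate $\mathfrak q_\beta(s_0^{-1}\cdot s)\lesssim_d 2^k$ whenever $s_0,s\in\HH_Q$ satisfy $\mathfrak q_\beta(s-a),\,\mathfrak q_\beta(s_0-a)<2^k$.

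For this estimate I will compute $s_0^{-1}\cdot s$ explicitly from the product rule \eqref{picu4.2}, written in terms of the error vectors $e:=s-a$ and $e_0:=s_0-a$. Since $a^{(2)}=0$, a short calculation gives
\begin{equation*}
[s_0^{-1}\cdot s]^{(1)} = e^{(1)}-e_0^{(1)}, \qquad
[s_0^{-1}\cdot s]^{(2)}_{l_1l_2} = (e^{(2)}-e_0^{(2)})_{l_1l_2} + \big(a^{(1)}+e_0^{(1)}\big)_{l_1 0}\,(e_0^{(1)}-e^{(1)})_{l_2 0}.
\end{equation*}
The first component and the term $(e^{(2)}-e_0^{(2)})$ are immediate from the defining bounds $|e_{l_1 l_2}|,|(e_0)_{l_1 l_2}|<2^{k(l_1+l_2)}/\beta_{l_1 l_2}$. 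The purely quadratic piece $(e_0^{(1)})_{l_1 0}(e_0^{(1)}-e^{(1)})_{l_2 0}$ is then controlled via the same bounds together with the elementary relations $\beta_{l_1 l_2}\leq\beta_{l_1 0}$ and $\beta_{l_2 0}\geq 1$.

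The hard part will be the linear cross term $a^{(1)}_{l_1 0}(e_0^{(1)}-e^{(1)})_{l_2 0} = (2^k u)^{l_1}\,(e_0^{(1)}-e^{(1)})_{l_2 0}$, whose naive bound is of size $2^{k(l_1+l_2)}/\beta_{l_2 0}$. For this to fit into the group ball I need the stronger inequality $\lesssim 2^{k(l_1+l_2)}/\beta_{l_1 l_2}$, i.e.\ $\beta_{l_1 l_2}/\beta_{l_2 0}\lesssim 1$. This is precisely where the separation of scales $\delta<\delta'$ from \eqref{ConstantsStr} will be used: for $l_2\geq 1$ one has $\beta_{l_1 l_2}/\beta_{l_2 0} = 2^{\lfloor \delta w\rfloor -\lfloor \delta' w\rfloor}\leq 1$, so the cross term is absorbed. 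Combining the three contributions and taking $(l_1+l_2)$-th roots yields $\mathfrak q_\beta(s_0^{-1}\cdot s)\leq C_0 2^k$ with $C_0 = C_0(d)$, which suffices after a cosmetic adjustment of $C_0$ to pass from non-strict to strict inequality. Note that the hypothesis $2^{k/2}\geq 2Q 2^{\delta' w}$ plays no role in the containment itself; it will only be needed downstream to guarantee that the resulting quasi-balls carry enough $\HH_Q$-lattice points for the volume comparisons appearing in the proof of Theorem~\ref{thm:II.2}.
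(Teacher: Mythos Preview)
Your proof is correct and takes a genuinely different, more elementary route than the paper. The paper constructs $z\in\HH_Q$ explicitly by solving the lattice approximation system \eqref{5.2}--\eqref{6.2}, and this is precisely where the hypothesis $2^{k/2}\geq 2Q\,2^{\delta'w}$ enters: it guarantees that each coordinate box has width at least $Q$, so a lattice point can be found. In contrast, you pick $z$ as any element $y_0$ of the shifted set itself (or $x$ if the set is empty), and then reduce via the group identity $y_0\cdot y^{-1}=s_0^{-1}\cdot s$ to a direct estimate on $\mathfrak q_\beta(s_0^{-1}\cdot s)$ with $s,s_0$ additively close to $a=A_0(2^ku)$. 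Your treatment of the cross term $a^{(1)}_{l_10}(e_0^{(1)}-e^{(1)})_{l_20}$ via $\beta_{l_1l_2}\leq\beta_{l_20}$ is exactly the same mechanism the paper uses implicitly (see the remark preceding Lemma~\ref{lem:21}), so the analytic core is identical; what you gain is that the lattice hypothesis becomes irrelevant for the containment, as you note. The paper's explicit construction has the minor feature that its $z$ depends only on $(x,u,k)$ and not on the choice of $y_0$, but this plays no role downstream: Lemma~\ref{Alne1} works with any $z$ satisfying \eqref{Alne2}.
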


\begin{proof} We choose $z \in \HH_Q$ satisfying the inequalities
\begin{align} \label{5.2}
\beta_{l_1 0} \abs{z_{l_1 0} - x_{l_1 0} + (2^{k} u)^{l_1} } 
& \le 
2^{k l_1}, \quad l_1\in\{1,\ldots,d\}, \\ \label{6.2}
\beta_{l_1 l_2} \abs{z_{l_1 l_2} - x_{l_1 l_2} + 
R_0 (x^{(1)} - z^{(1)}, x^{(1)} - A_0^{(1)}(2^{k} u))_{l_1 l_2} } 
& \le  
2^{ k (l_1 + l_2)}, \quad (l_1, l_2) \in Y'_d.
\end{align}
This is indeed possible due to the assumption $Q2^{\delta'w+1} \le 2^{k/2}$. Using \eqref{id:450} we see that for any $y \in \HH_Q$ satisfying $\mathfrak q_{\beta} \big( x\cdot y^{-1} - A_0(2^{k}u) \big)< 2^{k}$ we have
\begin{align} \label{5.3}
\beta_{l_1 0} \abs{x_{l_1 0} - y_{l_1 0} - (2^{k} u)^{l_1} } 
& <
2^{k l_1}, \qquad l_1\in\{1,\ldots,d\}, \\ \label{5.4}
\beta_{l_1 l_2} \abs{x_{l_1 l_2} - y_{l_1 l_2} + 
R_0 (y^{(1)} - x^{(1)}, y^{(1)} )_{l_1 l_2} } 
& <
2^{ k(l_1 + l_2)}, \qquad (l_1, l_2) \in Y'_d.
\end{align}
We want to show that $y \in B_{\beta, \HH_Q}(z, C_0 2^{k})$ for some large constant $C_0$. Using \eqref{5.2} and \eqref{5.3}
\begin{align*} 
\beta_{l_1 0} \abs{z_{l_1 0} - y_{l_1 0} } 
\le 2^{kl_1+1}, \qquad 1 \le l_1 \le d.
\end{align*}
To finish the proof of Lemma~\ref{lem:F21} it is enough to show that
\begin{align} \label{6.1}
\beta_{l_1 l_2} \abs{z_{l_1 l_2} - y_{l_1 l_2} + 
R_0 (y^{(1)} - z^{(1)}, y^{(1)} )_{l_1 l_2} } 
& \lesssim
2^{ k (l_1 + l_2)}, \qquad (l_1, l_2) \in Y'_d.
\end{align}
This follows by combining the bounds \eqref{5.2}--\eqref{5.4} and the identity
\begin{align*}
& z_{l_1 l_2} - y_{l_1 l_2} + R_0 (y^{(1)} - z^{(1)}, y^{(1)} )_{l_1 l_2}=x_{l_1 l_2} - y_{l_1 l_2} + R_0 (y^{(1)} - x^{(1)}, y^{(1)} )_{l_1 l_2} \\
& \qquad \quad +
z_{l_1 l_2} - x_{l_1 l_2} + 
R_0 (x^{(1)} - z^{(1)}, x^{(1)} - A_0^{(1)}(2^{k} u))_{l_1 l_2} \\
& \qquad \quad +
R_0 \big(x^{(1)} - z^{(1)} - A_0^{(1)}(2^{k} u) + A_0^{(1)}(2^{k} u), 
y^{(1)} - x^{(1)} + A_0^{(1)}(2^{k} u) \big)_{l_1 l_2}.
\end{align*}
This completes the proof of the lemma.
\end{proof}

\begin{lemma}\label{Alne1}
There is a constant $C_1\geq 1$ such that for any $u\in[-2,2]$, $x\in\HH_Q$, and $n\in\Z$ satisfying $2^{n/2}\geq Q 2^{\delta'w+3}$ there is a sequence of points $\{x_0,x_1,\ldots,x_{w+10}\}\subseteq \HH_Q$, $x=x_{w+10}$, with the following property: if $z\in\HH_Q$, $k\leq n$ satisfies $2^{k/2}\geq Q 2^{\delta'w+1}$,  and
\begin{align} \label{Alne2}
\Set[\big]{y \in \HH_Q}{\mathfrak q_{\beta} \big( z\cdot y^{-1} - A_0 (2^{k}u) \big) < 2^{k} }\subseteq B_{\beta, \HH_Q}(x,2^{n}),
\end{align}
then
\begin{align}\label{Alne3}
B_{\beta, \HH_Q}(z, 2^k)\subseteq\bigcup_{j\in\{0,\ldots,w+10\}} B_{\beta, \HH_Q}(x_j, C_12^n).
\end{align}
\end{lemma}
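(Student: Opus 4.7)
The plan is to exhibit a set of at most $w+11$ candidate centers depending only on $x$, $n$, $u$, $w$, $Q$, and to show that for every valid pair $(z,k)$ one of these centers suffices to cover $B_{\beta, \HH_Q}(z, 2^k)$ by a single quasi-ball of radius $C_1 2^n$. The first step is a structural reduction via Lemma~\ref{lem:F21}: one picks a lattice point $z' \in \HH_Q$ so that the shifted set in \eqref{Alne2} is contained in $B_{\beta, \HH_Q}(z', C_0 2^k)$. Since the shifted set is non-empty (thanks to $2^{k/2}\ge Q 2^{\delta' w+1}$) and lies inside $B_{\beta, \HH_Q}(x, 2^n)$, the quasi-triangle inequality of Lemma~\ref{lem:21} forces $z' \in B_{\beta, \HH_Q}(x, C 2^n)$. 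A direct inspection of the defining inequalities \eqref{5.2}--\eqref{6.2} for $z'$ in the proof of Lemma~\ref{lem:F21} further shows that $z \approx A_0(2^k u)\cdot z'$ in $\G_0^\#$ (both layers), with quasi-error $\lesssim 2^k$.

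Next I would split the admissible range of $k$ at the threshold $k_\ast := \lceil n-\delta' w\rceil$. For \emph{small} scales $k \le k_\ast$, the shift satisfies $\mathfrak q_\beta(A_0(2^k u)) \lesssim 2^{k+\delta' w} \lesssim 2^n$, so picking any $y_0$ in the shifted set and applying Lemma~\ref{lem:21}(b) twice (once for the vector translation by $A_0(2^k u)$ and once for the group product) yields $\mathfrak q_\beta(x\cdot z^{-1}) \lesssim 2^n$, hence $B_{\beta, \HH_Q}(z, 2^k)\subseteq B_{\beta, \HH_Q}(x, C_1 2^n)$. Thus $x_{w+10}:=x$ alone handles all such pairs. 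For \emph{large} scales $k_\ast < k \le n$ there are at most $\lceil \delta' w\rceil+1 \le w+10$ integer values; for each such $k$ I would choose $x^{(k)}\in\HH_Q$ approximating $A_0(2^k u)\cdot x\in\G_0^\#$ within the lattice, and the hypothesis $Q 2^{\delta' w+3}\le 2^{n/2}$ keeps the rounding error below $2^{n/2}$. The crucial cancellation
\[
(A_0(2^k u)\cdot x)\cdot (A_0(2^k u)\cdot z')^{-1}=A_0(2^k u)\cdot (x\cdot z'^{-1})\cdot A_0(2^k u)^{-1}
\]
together with the step-2 conjugation formula then shows that this conjugate differs from $x\cdot z'^{-1}$ only by an antisymmetric central correction of quasi-norm $\lesssim 2^n$, so that $\mathfrak q_\beta(x^{(k)}\cdot z^{-1}) \lesssim 2^n$ and $B_{\beta, \HH_Q}(z, 2^k)\subseteq B_{\beta, \HH_Q}(x^{(k)}, C_1 2^n)$.

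Listing the $x^{(k)}$'s in the $w+10$ available slots $\{x_0,\ldots,x_{w+9}\}$, padding with $x$ if necessary, and setting $x_{w+10}:=x$ produces the required covering. The main obstacle lies in the large-$k$ case: even though $\mathfrak q_\beta(A_0(2^k u))$ may be as large as $2^{n+\delta' w}$, the step-2 structure of $\G_0^\#$ combined with the arithmetic inequality $\delta<\delta'$ (so that $\beta_{l_1 l_2}/\beta_{l_2 0}=2^{\delta w-\delta' w}\le 1$) forces the commutator terms $R_0(A_0^{(1)}(2^k u), h^{(1)})-R_0(h^{(1)},A_0^{(1)}(2^k u))$ arising in the conjugation to have quasi-norm $\lesssim 2^n$ whenever $h=x\cdot z'^{-1}$ has first-layer extent bounded by $C 2^n$. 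Verifying this absorption entry-by-entry in $Y_d$ is the heart of the argument.
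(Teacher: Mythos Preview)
Your strategy is essentially the same as the paper's: one defines, for each scale $s=n-k$ with $0\le s\le w+10$, a center $x_s\in\HH_Q$ that is morally $A_0(2^{n-s}u)\cdot x$, and then verifies that $\mathfrak q_\beta(z\cdot x_s^{-1})\lesssim 2^n$ using the containment \eqref{Alne2} together with the inequality $\beta_{l_1l_2}\le\beta_{l0}$ (equivalently $\delta\le\delta'$). The paper does not route through Lemma~\ref{lem:F21} or the conjugation identity; it picks a point $y$ in the shifted set directly, defines $x_s$ by explicit coordinate inequalities tailored to the group product (its display \eqref{a10.2}), writes $z\cdot x_s^{-1}=E+I$ with $I=A_0(2^ku)\cdot y\cdot x_s^{-1}$, and checks $\mathfrak q_\beta(E),\mathfrak q_\beta(I)\lesssim 2^n$ coordinate by coordinate. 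Your conjugation viewpoint is a cleaner conceptual wrapper for the same computation, and your sharper threshold $k_\ast=\lceil n-\delta'w\rceil$ (versus the paper's $n-w-10$) would work too; both give at most $w+O(1)$ large scales.

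There is one genuine slip. Defining $x^{(k)}$ as the naive $\HH_Q$-rounding of $A_0(2^ku)\cdot x$ controls only the \emph{Euclidean} difference, not the group quasi-distance $\mathfrak q_\beta\big(x^{(k)}\cdot(A_0(2^ku)\cdot x)^{-1}\big)$: the second-layer component of the latter contains a term $R_0\big((x^{(k)}-A_0(2^ku)\cdot x)^{(1)},\,(A_0(2^ku)\cdot x)^{(1)}\big)$, and the second argument involves $x^{(1)}$, which is unbounded. So ``rounding error below $2^{n/2}$'' does not, as written, give the group-distance bound you need to combine with your conjugation estimate. The fix is easy and is precisely what the paper does: choose $x^{(k)}$ so that its second layer already absorbs the large $R_0$ contribution (the paper's inequalities \eqref{a10.2}), or equivalently pick $x^{(k)}$ as any lattice point in the nonempty set $B_{\beta,\HH_Q}(A_0(2^ku)\cdot x,\,2^{n/2})$, whose nonemptiness follows from Lemma~\ref{lem:23} and the hypothesis $2^{n/2}\ge Q\,2^{\delta'w+3}$. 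With this correction the rest of your argument goes through.
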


\begin{proof} For any $s\geq 0$ we define a point $x_s=\widetilde{x}\in\HH_Q$ such that the inequalities
\begin{equation} \label{a10.2}
\begin{split}
&\beta_{l 0} \abs{\widetilde{x}_{l 0} - x_{l 0} - (2^{n-s} u)^{l} } \le 2^{n l},\\
&\beta_{l_1 l_2} \abs[\big]{\widetilde{x}_{l_1 l_2} - x_{l_1 l_2} +R_0 \big(x^{(1)} - \widetilde{x}^{(1)} , 
x^{(1)} + A_0^{(1)}(2^{n-s} u) \big)_{l_1 l_2} + (2^{n-s} u)^{l_1 + l_2}} \le  2^{n(l_1 + l_2)},
\end{split}
\end{equation}
for any $l\in\{1,\ldots,d\}$ and any $(l_1,l_2)\in Y'_d$. Such a choice is possible because of the assumption $2^{n/2}\geq Q 2^{\delta'w+4}$, and, in fact, we can set $x_s=x$ if $s\geq 10+w$.

Given these points $\{x_0,\ldots,x_{w+10}\}$, assume now that $k=n-s$, $s\geq 0$, is an integer and $z\in\HH_Q$ is a point such that the inclusion \eqref{Alne2} holds. With $\widetilde{x}=x_s$ we would like to show that $B_{\beta, \HH_Q}(z, 2^k)\subseteq B_{\beta, \HH_Q}(\widetilde{x}, C_12^n)$. In view of Lemma \ref{lem:21} it suffices to show that
\begin{align} \label{id:199}
\mathfrak q_{\beta} (z\cdot \widetilde{x}^{-1}) \lesssim 2^n.
\end{align}
To see this we fix a point $y \in \HH_{Q}$ such that $\mathfrak q_{\beta} \big( z\cdot y^{-1} - A_0 (2^{k}u) \big)\leq 2^{k}$, and notice that $z\cdot \widetilde{x}^{-1}= E + I$, where $\mathfrak q_{\beta} (E) \lesssim 2^n$ and $I = A_0(2^ku)\cdot y\cdot\widetilde{x}^{-1}$ satisfies
\begin{align*} 
I^{(1)} & = y^{(1)} - \widetilde{x}^{(1)} + A_0^{(1)}(2^{k}u), \\
I^{(2)} & = y^{(2)} - \widetilde{x}^{(2)} +R_0(\widetilde{x}^{(1)},\widetilde{x}^{(1)})+R_0(A_0^{(1)}(2^ku),y^{(1)})-R_0(A_0^{(1)}(2^ku)+y^{(1)},\widetilde{x}^{(1)}).
\end{align*}
We would like to see that $\mathfrak q_{\beta} (I) \lesssim 2^n$. Since $y\in B_{\beta, \HH_Q}(x,2^{n})$ we have
\begin{equation*}
\begin{split}
&\beta_{l 0} \abs{x_{l 0} - y_{l 0}}< 2^{nl}\qquad l\in\{1,\ldots,d\}\\
&\beta_{l_1 l_2} \abs{x_{l_1 l_2} - y_{l_1 l_2} 
+ R_0 (y^{(1)} - x^{(1)}, y^{(1)})_{l_1 l_2} } < 2^{n(l_1 + l_2)}\qquad (l_1,l_2)\in Y'_d,
\end{split}
\end{equation*}
see \eqref{Id451}. Combining these inequalities with \eqref{a10.2} and recalling that $\beta_{l0}\gtrsim\beta_{l_1l_2} \ge 1$ it follows easily that $\mathfrak q_{\beta} (I) \lesssim 2^n$, as desired.
\end{proof}

Now we are ready to complete the proof of Theorem~\ref{thm:II.2}.

\begin{proof}[Proof of Theorem~\ref{thm:II.2}]

{\bf{Step 1.}} We define an auxiliary maximal function 
\begin{align*}
\widetilde{M}_{Q,w} f (h):=
\sup_{h\in B_{\beta,\HH_Q}(g,2^k),\,2^{k/2}\geq Q2^{w/8}} |B_{\beta,\HH_Q}(g,2^k)|^{-1}\sum_{y \in B_{\beta,\HH_Q}(g,2^k)} \abs{f(y)}, \qquad h \in \HH_Q,
\end{align*}
where the supremum is taken over all the quasi-balls $B_{\beta,\HH_Q}(g,2^k)$ that contain $h$. For any $f \in \ell^{1}(\HH_Q)$ and $\lambda>0$ we define the set
\begin{equation}\label{Alne5}
\mathcal{O}_\lambda:=\{h\in\HH_Q:\,\widetilde{M}_{Q,w} f (h)\geq\lambda\}.
\end{equation}
By a standard Vitali covering argument (using also Lemma \ref{lem:21} (b)) we can select a maximal finite family of disjoint balls $B^j_{\beta,\HH_Q}=B_{\beta,\HH_Q}(g_j,2^{k_j})$, $2^{k_j/2}\geq Q2^{w/8}$, $j\in J(\lambda,f)$, such that
\begin{equation}\label{Alne6}
\begin{split}
&|B^j_{\beta,\HH_Q}|^{-1}\sum_{y \in B^j_{\beta,\HH_Q}} \abs{f(y)}\geq\lambda\qquad\text{ for any }j\in J(\lambda,f),\\
&\bigcup_{j\in J(\lambda,f)}B^j_{\beta,\HH_Q}\subseteq \mathcal{O}_\lambda\subseteq\bigcup_{j\in J(\lambda,f)}\widetilde{B}^j_{\beta,\HH_Q},
\end{split}
\end{equation}
where $\widetilde{B}^j_{\beta,\HH_Q}=B^j_{\beta,\HH_Q}(g_j,C_22^{k_j})$ is a fixed multiple of the quasi-ball $B^j_{\beta,\HH_Q}$ for a suitable constant $C_2\geq 1$. In particular,
\begin{equation}\label{Alne7}
|\mathcal{O}_\lambda|\simeq\sum_{j\in J(\lambda,f)}|\widetilde{B}^j_{\beta,\HH_Q}|\simeq \sum_{j\in J(\lambda,f)}|B^j_{\beta,\HH_Q}|\lesssim \|f\|_{\ell^1(\HH_Q)}/\lambda,
\end{equation}
so the operator $\widetilde{M}_{Q,w}$ is a bounded operator from $\ell^1(\HH_Q)$ to $\ell^{1,\infty}(\HH_Q)$, uniformly in $Q$ and $w$.

{\bf{Step 2.}} To complete the proof of the theorem it suffices to show that there is a constant $C_3\geq 1$ sufficiently large such that 
\begin{align} \label{aV.1}
\abs{\Set{h \in \HH_Q}{ M_{Q, w, u} f (h) \geq C_3\lambda} }
\lesssim (1+w)
\abs{\Set{h \in \HH_Q}{\widetilde{M}_{Q,w} f (h)\geq \lambda} },
\end{align}
for every $\lambda > 0$. Using the definition \eqref{Alne10}, we see that if 
$M_{Q,w,u}f(z)\geq C_3\lambda$ then there is an integer $k$ satisfying $2^{k/2}\geq 8 Q2^{w/8}$ such that
\begin{align} \label{aV.2}
\Big( \prod_{(l_1, l_2) \in Y_d} Q \beta_{l_1 l_2} 2^{-k(l_1 + l_2)} \Big)
\sum_{\Set{y \in \HH_Q}{\mathfrak q_{\beta} ( z\cdot y^{-1} - A_0(2^{k}u) ) < 2^{k} } } 
|f(y)| 
\geq C_3\lambda.
\end{align}
Using Lemma~\ref{lem:F21} we know that there is $\widetilde{z} \in \HH_Q$ such that
\begin{align}\label{Alne20}
\Set[\big]{y \in \HH_Q}{\mathfrak q_{\beta} ( z\cdot y^{-1} - A_0(2^{k}u) ) < 2^{k} }
\subseteq
B_{\beta, \HH_Q}(\widetilde{z}, C_0 2^{k}).
\end{align}
Using Lemma \ref{lem:23} and \eqref{aV.2}, and assuming that $C_3$ is sufficiently large it follows that
\begin{align} \label{Alne21}
|B_{\beta, \HH_Q}(\widetilde{z}, 2^{k+a})|^{-1}
\sum_{y\in B_{\beta, \HH_Q}(\widetilde{z}, 2^{k+a})} |f(y)| \geq 2\lambda,
\end{align}
where $a$ is the smallest integer with the property that $2^a\geq C_0$. Therefore $B_{\beta, \HH_Q}(\widetilde{z}, 2^{k+a})\subseteq\mathcal{O}_\lambda$ (see the definition \eqref{Alne5}), so the ball $B_{\beta, \HH_Q}(\widetilde{z}, 2^{k+a})$ intersects one of the selected balls $B_{\beta,\HH_Q}^j$ for some $j\in J(\lambda,f)$. Therefore
\begin{align} \label{Alne22}
B_{\beta, \HH_Q}(\widetilde{z}, 2^{k+a})\subseteq \widetilde{B}_{\beta,\HH_Q}^j
\subseteq 
B_{\beta,\HH_Q}(g_j,2^{k_j + b})\qquad\text{ for some }j\in J(\lambda,f),
\end{align}
where $b \in \N$ is a universal constant such that $C_2 \le 2^b$ and $k+a\le k_j + b$.

On the other hand, we use Lemma \ref{Alne1} (with $n = k_j + b$ and $x=g_j$), starting from the inclusion \eqref{Alne20}, and \eqref{Alne3}, so
\begin{equation*}
z\in \bigcup_{i\in\{0,\ldots,w+10\}}B_{\beta,\HH_Q}(g_j^i,C_12^{k_j+b}),
\end{equation*}
for suitable points $g_j^i\in\HH_Q$ (that do not depend on $k$). Consequently we get
\begin{equation*}
\Set{z \in \HH_Q}{ M_{Q, w, u} f (z) \geq C_3\lambda}\subseteq\bigcup_{j\in J(\lambda,f)}\bigcup_{i\in\{0,\ldots,w+10\}}B_{\beta,\HH_Q}(g_j^i,C_1 2^{k_j+b}),
\end{equation*}
The desired estimate \eqref{aV.1} follows using also \eqref{Alne7}, which completes the proof of the theorem.
\end{proof}

\end{document}